\newcommand{\ave}[1]{\langle #1\rangle}
\newcommand{\R}{\mathbb{R}}
\def\f{\frac}
\def\d{d}
\def\Xint#1{\mathchoice
{\XXint\displaystyle\textstyle{#1}}%
{\XXint\textstyle\scriptstyle{#1}}%
{\XXint\scriptstyle\scriptscriptstyle{#1}}%
{\XXint\scriptscriptstyle\scriptscriptstyle{#1}}%
\!\int}
\def\XXint#1#2#3{{\setbox0=\hbox{$#1{#2#3}{\int}$}
\vcenter{\hbox{$#2#3$}}\kern-.5\wd0}}
\def\dashint{\Xint-}
\newcommand{\esssup}[0]{\operatornamewithlimits{ess\,sup}}
\newcommand{\essinf}[0]{\operatornamewithlimits{ess\,inf}}
\theoremstyle{plain}
\newtheorem{thm}[equation]{Theorem}
\newtheorem{lem}[equation]{Lemma}
\newtheorem{prop}[equation]{Proposition}
\newtheorem{cor}[equation]{Corollary}
\theoremstyle{definition}
\newtheorem{defn}[equation]{Definition}
\theoremstyle{remark}
\newtheorem{rem}[equation]{Remark}
\numberwithin{equation}{section}
\title{Properties and applications of partial multiple weights for fractional integrals}
\author{Wang Dinghuai$^1$\quad Yin Huicheng$^{2,}$\footnote{Wang Dinghuai (\texttt{Wangdh1990@126.com}) is supported by NSFC (No.~12101010), and
Yin Huicheng
    (\texttt{huicheng@} \texttt{nju.edu.cn}, \texttt{05407@njnu.edu.cn}) is supported by the NSFC
    (No.~12331007) and by the National key research and development program of China (No. 2020YFA0713803). }
    \vspace{0.5cm}\\
\small
1.  School of Mathematics and Statistics, Anhui Normal University, Wuhu 241002, China.\\
\small
2. School of Mathematical Sciences, Nanjing Normal University, Nanjing 210023, China.\\
}
\begin{document}

\date{}
\maketitle
\thispagestyle{empty}

\begin{abstract}
In this paper, through the introduction of partial multiple weights,
we firstly study the related Rubio de Francia extrapolation theorem within the framework of partial Muckenhoupt classes
and further obtain the corresponding extrapolation theorem for two types of off-diagonal estimates.
Secondly, we establish some weighted estimates for fractional integrals associated with partial Muckenhoupt weights.
As applications, several basic inequalities (including the Fefferman-Phong inequality, the degenerate Poincar\'{e} inequality
and the Caffarelli-Kohn-Nirenberg inequality) related to partial Muckenhoupt weights are derived.
Meanwhile, our results can give the characterization of the commutators of fractional integrals,
which yields a partial answer to an open question proposed by D. Cruz-Uribe in the paper [D. Cruz-Uribe, Two weight inequalities
for fractional integral operators and commutators, World Scientific Publishing Co. Pte. Ltd., Hackensack, NJ, 2017, 25-85].

\vskip 0.2 true cm

\noindent
\textbf{Keywords.} Partial multiple weight, Rubio de Francia extrapolation, fractional integral,

\qquad  \quad diagonal estimate, off-diagonal estimate, commutator

\vskip 0.2 true cm
\noindent
\textbf{2020 Mathematical Subject Classification.}  42B25, 42B20, 42B35
\end{abstract}

\tableofcontents

\section{Introduction}
\subsection{Open question and main results}\label{1.1-C}

For $x\in\R^d$ ($d\ge 1$) and $0<\alpha<d$, the fractional integral $I_{\alpha}$ of a measurable function $f$ is defined by
$$I_{\alpha}(f)(x): = \int_{\R^d} \frac{f(y)}{|x-y|^{d-\alpha}} \, dy.$$
Let $[b, I_{\alpha}]:=bI_{\alpha}(\cdot)-I_{\alpha}(b\cdot)$ be the commutator of fractional integral $I_{\alpha}$
with the measurable function $b(x)$. In addition, for a cube $Q\subset \R^d$
and a continuous convex function $\Phi:[0,\infty)\rightarrow [0,\infty)$
which is strictly increasing and fulfills $\Phi(0)=0$ and $\frac {\Phi(t)}{t}\rightarrow \infty$ as $t\rightarrow \infty$,
we define the normalized Luxemburg norm of
$f$ on $Q$ as
$$\|f\|_{\Phi,Q}:=\inf \big\{\lambda>0: \frac{1}{|Q|}\int_{Q}\Phi\big(\frac{|f(x)|}{\lambda}\big)dx\leq 1\big\}.$$
A function $f\in\text{BMO}$ (see \cite{JN1961} and \cite{FS1972}) means
$\| f \|_{\rm BMO}: = \sup_{Q} \frac{1}{|Q|} \int_Q |f(x)-\ave{f}_{Q}| \, dx < \infty,$
where the supremum is taken over all cubes $Q$ in $\R^d$ and $\ave{f}_{Q}:=\f{1}{|Q|}\int_Qf(x)dx$.
In \cite{CM2012} (see also \cite{CU2016}), the authors showed that if $b \in \text{BMO}$, $0<\alpha<d,$ $1 < p \leq q < \infty$,
and $(u, \sigma)$ is a pair of weights satisfying
\begin{equation}\label{AB}
\sup_{Q} |Q|^{\frac{\alpha}{d} + \frac{1}{q} - \frac{1}{p}} \|u^{\frac{1}{q}}\|_{A,Q} \|\sigma^{\frac{1}{p'}}\|_{B,Q} < \infty,
\end{equation}
\begin{equation}\label{CD}
\sup_{Q} |Q|^{\frac{\alpha}{d} + \frac{1}{q} - \frac{1}{p}} \|u^{\frac{1}{q}}\|_{C,Q} \|\sigma^{\frac{1}{p'}}\|_{D,Q} < \infty,
\end{equation}
where $\f{1}{p'}+\f{1}{p}=1$, $A(t) = t^{q} {\ln}^{2q - 1 + \delta}(e + t)$, $B(t) = t^{p'} {\ln}^{p' - 1 + \delta}(e + t),$
$C(t) = t^{q} {\ln}^{q - 1 + \delta}(e + t)$ and $D(t) = t^{p'} {\ln}^{2p' - 1 + \delta}(e + t)$ with $\delta>0$ being any fixed constant,
then the strong $(p,q)$ estimate $[b, I_{\alpha}](\cdot \sigma): L^{p}(\sigma)\rightarrow L^{q}(u)$ holds.
In particular, at the end of the paper \cite{CU2016}, the author proposed the following open question and
pointed out that ``nothing is known about this question but it merits further investigation"

{\bf Open question:} Can anything be said about $b$ if there exists a pair of weights $(u, \sigma)$ (or
perhaps a family of such pairs) such that $[b,I_{\alpha}](\cdot \sigma): L^{p}(\sigma) \rightarrow L^{q}(u)$?
\qquad \qquad \qquad \qquad \qquad \qquad \qquad {\bf (OQ)}

Many previous results have indicated that the BMO space is perhaps an appropriate condition for studying the boundedness of commutators (see \cite{C2016, C1982, CRW1976, GLW2020,W2023}). However, so far the open question {\bf (OQ)} remains unsolved.
In this paper, by introducing some kinds of partial multiple weights and deriving new Rubio de Francia extrapolation theorems (see \cite{RDF}) ,
we will investigate  {\bf (OQ)}. Before stating main results, we next list or introduce some necessary notations
and conceptions for reader's convenience.

{\bf $\bullet$} For $0< p<\infty,$ the Lebesgue space $L^{p}(\R^d)$
and the weak Lebesgue space $L^{p,\infty}(\R^d)$
are defined by $\{f: \|f\|_{L^{p}(\R^d)}:=\big(\int_{\R^d}|f(x)|^{p}dx\big)^{\frac{1}{p}}<\infty\}$
and
$\{f: \|f\|_{L^{p,\infty}(\R^d)}:=\sup_{t>0}t|\{x\in \R^d: |f(x)|>t\}|^{\frac{1}{p}}<\infty\},$ respectively.
In addition, for nonnegative measurable function $\sigma(x)$, $L^{p}(\sigma)=\{f: \|f\|_{L^{p}(\sigma)}:=\big(\int_{\R^d}|f(x)|^{p}\sigma(x)dx\big)^{\frac{1}{p}}<\infty\}$.

{\bf $\bullet$} For $0< p<\infty$ and $0< q\leq \infty$, the Lorentz space $L^{p,q}(\R^d)$ is defined by
$\{f: \|f\|_{L^{p,q}(\R^d)}:=\big\|t^{1-\frac1q}|\{x\in \R^d: |f(x)|>t\}|^{\frac{1}{p}}\big\|_{L^{q}(0,\infty)}<\infty\}.$

{\bf $\bullet$} For the measurable set $E, 0<|E|<\infty,$ define
$\ave{f}_{E}:=\frac{1}{|E|}\int_{E}f(x)dx.$
When $1\leq s< p\leq \infty$, $f\in M^{p}_{s}(\R^d)$ (Morrey space) means
$\|f\|_{M^{p}_{s}(\R^d)}:=\sup_{Q}|Q|^{\frac{1}{p}}\ave{|f|^{s}}^{\frac1s}_{Q}<\infty$
for any cube $Q\subset\R^d$.

{\bf $\bullet$} A non-negative locally integrable function
$w\in A_p = A_p(\mathbb{R}^{d})$ ($1 < p < \infty$) means that for any cube $Q\subset\mathbb{R}^{d}$,
$[w]_{A_p} := \sup_{Q}\, \ave{w}_Q  \ave{w^{1-p'}}^{p-1}_Q
< \infty$ holds. If $[w]_{A_\infty}:=\sup_Q \, \ave{w}_Q \exp\big( \ave{\log w^{-1}}_Q  \big)<\infty$
or $[w]_{A_1}:= \sup_Q \, \ave{w}_Q\esssup_Q w^{-1}<\infty$,
then one says $w\in A_\infty(\mathbb{R}^{d})$ or $w\in A_1(\mathbb{R}^{d})$, respectively.

{\bf $\bullet$}  For $0<q\leq \infty$ and $\vec{p}=(p_1, \ldots,  p_m)$ with $1 \le p_1, \ldots, p_{m} \le \infty$,
we call $\vec{w}=(w_1, \ldots, w_m)\in A_{\vec p, q} = A_{\vec p,q}(\R^{d})$ if
$0<w_i<\infty, \text{a.e., $i = 1, \ldots, m$}$, and
\begin{equation}\label{YH-1}
[\vec{w}]_{A_{\vec p, q}}
:=\sup_Q \, \ave{w^q}_Q^{\frac 1 q} \prod_{i=1}^{m} \ave{w_i^{-p_i'}}_Q^{\frac 1{p_i'}}< \infty\quad\text{for $w := \prod_{i=1}^{m} w_i.$}
\end{equation}
If $p_i = 1$ or $q = \infty$, then
$\ave{w_i^{-p_i'}}_Q^{\frac 1{p_i'}}$ or $\ave{w^q}_Q^{\frac 1 q}$ in \eqref{YH-1}
will be replaced by $\esssup_Q w_i^{-1}$ or $\esssup_Q w$, respectively.

{\bf $\bullet$} For $\vec{r}=(r_{1},\ldots,r_{m+1})$ with $1\leq r_{1},\ldots,r_{m+1}\leq \infty$,
$0<q\leq \infty$ and $\vec{p}=(p_1, \ldots,  p_m)$ with $1 \le p_1, \ldots, p_{m} \le \infty$,
we say $\vec w\in A_{(\vec p, q), \vec r}$ if
\begin{equation}\label{YH-2}
[\vec w]_{A_{(\vec p, q), \vec r}}:=\sup_{Q}\big\langle w^{\frac{q r'_{m+1}}{r'_{m+1}-q }} \big\rangle^{\frac{1}{q }-\frac{1}{r'_{m+1}}}_{Q}
\prod_{i=1}^{m} \big\langle w_{i}^{\frac{p_{i}r_{i}}{r_{i}-p_{i}}} \big\rangle^{\frac{1}{r_{i}}-\frac{1}{p_{i}}}_{Q}<\infty.
\end{equation}
When $r_{m+1}= 1$, the term corresponding to $w$ in \eqref{YH-2} will be replaced
by $\ave{w^q}_{Q}^\frac{1}{q}$. Analogously, when $p_i = r_i$, the term corresponding to $w_i$ is
replaced by $\esssup_Q w_i^{-1}$. Obviously, $A_{(\vec p,q), (1,...,1)}=A_{\vec p, q}$.

It is pointed out that the definition of $A_{\vec{p},q}$ essentially comes from the standard $m$-linear definition
in \cite{M2009}, with the key difference being that the exponents $p_i$ are allowed to be infinite. In addition, for the number $q$ with $1/q=1/p:=\sum_{i}1/p_{i}$, $A_{\vec{p},q}=A_{\vec p,p}$ has been provided in \cite{LMV2021}.

Next we introduce the partial multiple weight which will be repeatedly used later.
\begin{defn}\label{defn:Apqu}
Given $0<q\leq \infty$, $\vec{p}=(p_1, \ldots,  p_m)$ with $1 \le p_1, \ldots, p_{m} \le \infty$,
and $\vec{w}=(w_1, \ldots, w_m)$.
We call $\vec{w}\in A^{u}_{\vec p, q}$ if $(\vec w,u)\in A_{(\vec p,\infty),q},$
meanwhile $[\vec{w}]_{A^{u}_{\vec{p}, q}} := [(\vec{w},u)]_{A_{(\vec{p},\infty), q}}$ is defined.
For $\vec{r}=(r_{1},\ldots,r_{m+1})$ with $1\leq r_{1},\ldots,r_{m+1}\leq \infty$, define
$[\vec{w}]_{A^{u}_{(\vec{p}, q),\vec{r}}} := [(\vec{w},u)]_{A_{((\vec{p},\infty), q),\vec r}}$.
\end{defn}

Note that if $u\in A_1$, then $A^{u}_{\vec p,q}\subset A_{\vec p,q}$. Indeed, for $\vec w\in A_{\vec p,q}^{u}$ with $u\in A_{1}$,
it holds that
\begin{equation*}
\begin{aligned}
\ave{w}^{\frac{1}{q}}_{Q}&=\big\langle u^{-q}\cdot \big(uw\big)^{q}\big\rangle^{\frac{1}{q}}_{Q}\leq \|u^{-1}\|_{L^{\infty}(Q)}\big\langle \big(uw\big)^{q}\big\rangle^{\frac{1}{q}}_{Q}.
\end{aligned}
\end{equation*}
This yields $[w]_{A_{\vec{p},q}}\leq [w]_{A^{u}_{\vec{p},q}}$. In addition, $A^{u}_{\vec{p},q}=A_{\vec{p},q}$ for $u\equiv 1$.

\vskip 0.3 true cm
Our main result can stated as follows.
\begin{thm}\label{main}
Let $0\leq \beta<\alpha<d$, $1 < p\leq q< \infty$, $\frac \beta d=\frac{1}{p}-\frac{1}{q}$ and $1\leq s<\frac{d}{\alpha-\beta}$.
Then $b\in {\rm BMO}$ if and only if $\|[b,I_{\alpha}](f)\|_{L^{q}(u^{q}w^{q})}\lesssim \|u\|_{M^{\frac{d}{\alpha-\beta}}_{s}(\R^d)}\|f\|_{L^{p}(w^{p})}$ for any $u \in A_{1} \bigcap M^{\frac{d}{\alpha-\beta}}_{s}(\R^d)$, $w\in A_{p,q}^{u}$ and $f\in L^{p}(w^{p})$.
\end{thm}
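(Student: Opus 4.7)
The plan is to prove the biconditional in two stages. For sufficiency (assuming $b\in\text{BMO}$), I would invoke the off-diagonal Rubio de Francia extrapolation theorem for partial multiple weights established earlier in the paper. First, secure the weighted inequality at one convenient pair of exponents $(p_0,q_0)$ satisfying $1/p_0-1/q_0=\beta/d$, for example via a pointwise sparse domination $|[b,I_\alpha]f|\lesssim\mathcal A_{\mathcal S,\alpha,b}f$, where $\mathcal A_{\mathcal S,\alpha,b}$ is a fractional sparse operator carrying both an $|Q|^{\alpha/d}\langle|f|\rangle_Q$ factor and a $|b-\langle b\rangle_Q|$ piece. Testing this bound against the definition of $A^u_{p_0,q_0}$ and invoking the John-Nirenberg inequality yields the required dependence on $\|u\|_{M^{d/(\alpha-\beta)}_s}$, after which the off-diagonal partial multiple weight extrapolation theorem extends the estimate to every admissible pair $(p,q)$ with $1/p-1/q=\beta/d$.

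For the necessity direction, the plan follows a Chanillo-type median argument adapted to the partial weight setting. Fix a cube $Q_0$ and let $m$ be a median of $b$ on $Q_0$, so that $E_{\pm}:=\{x\in Q_0:\pm(b(x)-m)\ge 0\}$ each satisfy $|E_\pm|\ge |Q_0|/2$. Taking $f=\chi_{E_-}$ and exploiting $b(x)-b(y)\ge 0$ on $E_+\times E_-$, one obtains, for $x\in E_+$,
\[
[b,I_\alpha](\chi_{E_-})(x)\gtrsim |Q_0|^{\alpha/d-1}|E_-|(b(x)-m)\gtrsim |Q_0|^{\alpha/d}(b(x)-m).
\]
The symmetric statement with $f=\chi_{E_+}$ provides a pointwise lower bound for $(m-b)_+$ on $E_-$. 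I would next construct a $Q_0$-adapted admissible pair: $u$ a truncated-power $A_1$-weight with $\|u\|_{M^{d/(\alpha-\beta)}_s}\approx |Q_0|^{(\alpha-\beta)/d}$, and $w\in A^u_{p,q}$ comparable to $1$ on $Q_0$ with uniformly bounded constant. Substituting into the hypothesized inequality, the scaling identity $1/p-1/q=\beta/d$ makes the powers of $|Q_0|$ cancel, and a Jensen reduction on $E_\pm$ yields $\frac{1}{|Q_0|}\int_{Q_0}|b-m|\,dx\lesssim 1$ uniformly in $Q_0$, which is equivalent to $b\in\text{BMO}$.

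The principal technical obstacle lies in the necessity step: one must exhibit, uniformly over all cubes $Q_0$, an admissible pair $(u,w)$ with uniformly controlled $A_1$, Morrey and $A^u_{p,q}$ constants, such that $u^qw^q$ is bounded below on $E_+$ and both $\|u\|_{M^{d/(\alpha-\beta)}_s}$ and $\|\chi_{E_-}\|_{L^p(w^p)}$ scale correctly with $|Q_0|$. The partial multiple weight class was designed precisely to decouple the Morrey factor $u$ from the Muckenhoupt factor $w$, which is what permits such a construction; nevertheless the verification of the partial $A^u_{p,q}$ condition for the chosen pair, with the uniform scale invariance needed to assemble the cube-by-cube estimates into a single BMO bound, is the delicate point where the new class pays off.
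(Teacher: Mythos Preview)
Your plan reverses the roles of extrapolation relative to the paper, and in doing so the necessity argument acquires a genuine gap.

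\textbf{Sufficiency.} The paper does not extrapolate here. Instead it follows Chanillo: the good-$\lambda$ inequality for the auxiliary maximal commutator $\mathcal C(b,f)$ reduces matters to weighted bounds for $I_\alpha$ and $M_{\alpha,s_0}$ in the partial class $A^u_{p,q}$, and these are obtained directly (for every admissible $(p,q)$) from the pointwise bilinear inequality
\[
M_\alpha(f)(x)\le \|u\|_{L^{\frac{d}{\alpha-\beta},1}}\,\mathcal M_{\beta,2}(f,u^{-1})(x),
\]
together with Theorem~\ref{Mam}. The Morrey factor $\|u\|_{M^{d/(\alpha-\beta)}_s}$ enters precisely through this pointwise estimate. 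Your sparse route might be made to work, but the sentence ``testing against $A^u_{p_0,q_0}$ and John--Nirenberg yields the required dependence on $\|u\|_{M^{d/(\alpha-\beta)}_s}$'' is where the whole difficulty hides: a standard sparse form for $[b,I_\alpha]$ produces averages $|Q|^{\alpha/d}\langle|f|\rangle_Q$, and nothing in the $A^u_{p,q}$ condition alone converts $|Q|^{\alpha/d}$ into $|Q|^{\beta/d}$ times a Morrey norm of $u$. You would need to insert $u$ by hand exactly as the paper does.

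\textbf{Necessity.} Here the paper \emph{does} use extrapolation---but in the opposite direction from what you propose. Theorem~\ref{D-OD} shows that the hypothesis (valid for all $u\in A_1\cap M^{d/(\alpha-\beta)}_s$ and $w\in A^u_{p,q}$) implies the classical bound $\|[b,I_\alpha]f\|_{L^{q_2}(v^{q_2})}\lesssim\|f\|_{L^{p_2}(v^{p_2})}$ for every $v\in A_{p_2,q_2}$ with $1/p_2-1/q_2=\alpha/d$; then $b\in\mathrm{BMO}$ follows from the known classical result. Your direct cube-adapted construction is exactly the strategy the paper warns against in Remark~\ref{HC-3}: it only yields $|Q|^{(\alpha-\beta)/d}\langle u\rangle_Q\langle|b-b_Q|\rangle_Q\lesssim 1$, which does not give BMO. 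Your refinement---choosing a truncated-power $u$ adapted to $Q_0$ and $w\approx 1$---runs into a concrete obstruction: for such $u$ one has $w\equiv 1\in A^u_{p,q}$ with a uniform constant only when $q<d/(\alpha-\beta)$ (otherwise $\langle u^q\rangle_Q^{1/q}\|u^{-1}\|_{L^\infty(Q)}$ blows up on large cubes), while Theorem~\ref{main} places no such upper bound on $q$. You correctly identify this construction as ``the delicate point'', but the paper's resolution is not to push the construction through---it is to extrapolate out of the partial class entirely.
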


\begin{rem}\label{HC-1}
When $u \in A_{1}\bigcap M^{\frac{d}{\alpha-\beta}}_{s}(\R^d)$ and $w\in A_{p,q}^{u}$, Proposition \ref{cha:Apq} below implies that
both $w^{-p'}$ and $u^{q}w^{q}$ satisfy the reverse H\"{o}lder inequality.
This leads to that $(w^{-p'}, u^{q}w^{q})$ satisfies \eqref{AB} and \eqref{CD}.
\end{rem}

\begin{rem}\label{HC-2}
In order to establish the boundedness of commutators of fractional integral operators,
it requires to derive two-weight estimates for fractional integral and for maximal operators in general.
To this end, the testing condition introduced in \cite{S1984,S1988} or
the ``$A_p$ bump" condition proposed in \cite{N1983} and \cite{P1994} are usually utilized. In this paper, 
by introducing a new approach that  the fractional integrals can be controlled 
through the bilinear fractional maximal functions, we give a direct proof for the weights in the 
corresponding partial Muckenhoupt class.
\end{rem}

\begin{rem}\label{HC-3}
In terms of \cite[Corollary 2.4]{CR2018}, $b\in {\rm BMO}$ is a necessary condition for the boundedness of $[b,I_{\alpha}]$ from $L^{p}(v^{p})$  to $L^{q}(v^{q})$, where $0<\alpha<d$, $1<p<q<\infty$, $\frac \alpha d=\frac{1}{p}-\frac{1}{q}$ and $v\in A_{p,q}$. The
proof can be outlined as: for any cube $Q$, there exist a cube $Q'$ and some function sequence $(f_{j}, g_{j})_{j=1}^{\infty}$ such that
\begin{equation}\label{rem-necessity-eq1}
|b(x)-b_{Q'}|\chi_{Q}(x)=|Q|^{-\frac{\alpha}{d}}\sum_{j}a_{j}g_{j}(x)[b,I_{\alpha}](f_{j})(x),
\end{equation}
where $\sum_{j}|a_{j}|<\infty$, $\mu(Q')\approx \mu(Q)$ for any $\mu\in A_{\infty}$,
$\|f_j\|_{L^{p}(w^{p})}\lesssim w^{p}|Q|^{\frac{1}{p}}$ with $supp~f_j \subset Q'$,
and $\|g_j\|_{L^{q'}(w^{-q'})}\lesssim w^{-q'}|Q|^{\frac{1}{q'}}$ with~ $supp~g_j \subset Q$.
Therefore, it holds that
\begin{align*}
&\frac{1}{|Q|}\int_{Q}|b(x)-b_{Q}|dx\lesssim \frac{2}{|Q|}\int_{Q}|b(x)-b_{Q'}|dx \\
&\lesssim |Q|^{-\frac{\alpha}{d}}\sum_{j}|a_{j}|\|g_{j}\|_{L^{q'}(w^{-q'})}\|[b,I_{\alpha}](f_{j})\|_{L^{q}(w^{q})}\\
&\lesssim |Q|^{-\frac{\alpha}{d}}\sum_{j}|a_{j}|\|g_{j}\|_{L^{q'}(w^{-q'})}\|f\|_{L^{p}(w^{p})}\lesssim 1.
\end{align*}
Then $b\in {\rm BMO}$.

Note that for $u \in A_{1} \cap M^{\frac{d}{\alpha-\beta}}_{s}(\R^d)$ and $w\in A_{p,q}^{u}$,
applying \eqref{rem-necessity-eq1} to obtain
\begin{align*}
\frac{1}{|Q|}\int_{Q}|b(x)-b_{Q}|dx &\lesssim |Q|^{-\frac{\alpha}{d}}\sum_{j}|a_{j}|\|g_{j}\|_{L^{q'}(w^{-q'})}\|[b,I_{\alpha}](f_{j})\|_{L^{q}(u^{q}w^{q})}\|u^{-1}\|_{L^{\infty}(Q)}\\
&\lesssim |Q|^{-\frac{\alpha}{d}}\sum_{j}|a_{j}|\|g_{j}\|_{L^{q'}(w^{-q'})}\|f\|_{L^{p}(w^{p})}\|u^{-1}\|_{L^{\infty}(Q)}.
\end{align*}
This yields
\begin{equation}\label{rem-necessity-eq2}
|Q|^{\frac{\alpha-\beta}{d}}\Big(\frac{1}{|Q|}\int_{Q}u(x)dx\Big)\Big(\frac{1}{|Q|}\int_{Q}|b(x)-b_{Q}|dx\Big)\lesssim 1.
\end{equation}
Obviously, $b\in {\rm BMO}$ can not be derived from \eqref{rem-necessity-eq2}.
Therefore, we have to look for other approach to show the necessity part in Theorem \ref{main}.
\end{rem}

In order to prove the necessity of Theorem \ref{main}, we will establish the following extrapolation theorem for the  off-diagonal estimate.

\begin{thm}\label{D-OD}
Let $0\leq \beta <\alpha<d$, $1< p_1\leq q_1<\infty$, $\frac{\beta}{d}=\frac{1}{p_1}-\frac{1}{q_1}$
and $1\leq s<\frac{d}{\alpha-\beta}$. If it holds that for any $u\in A_{1}\bigcap M^{\frac{d}{\alpha-\beta}}_{s}(\R^d)$
and $w\in A_{p_1,q_1}^{u}$,
$\|g\|_{L^{q_1}(u^{q_1}w^{q_1})}\lesssim \|u\|_{M^{\frac{d}{\alpha-\beta}}_{s}(\R^d)}\|f\|_{L^{p_1}(w^{p_1})}$ is true when $(g,f)\in\mathcal{F}$,
where $\mathcal F$ is a collection of 2-tuples of non-negative functions,
then we have that for all weight $v\in A_{p_2,q_2}$ with $1<p_2<q_2<\infty$ and $\frac 1{p_2} -\frac 1{q_2} =\frac \alpha d$,
$$\|g\|_{L^{q_2}(v^{q_2})}\lesssim \|f\|_{L^{p_2}(v^{p_2})} \quad \text{when $(g,f)\in\mathcal{F}$.}$$
\end{thm}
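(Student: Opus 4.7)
My plan is to prove Theorem~\ref{D-OD} by a Rubio de Francia-type off-diagonal extrapolation adapted to the partial multiple weight class $A^{u}_{p_1,q_1}$. The overarching idea is that, from any $v\in A_{p_2,q_2}$ (with full gap $\frac{1}{p_2}-\frac{1}{q_2}=\frac{\alpha}{d}$), I must manufacture an admissible pair $(u,w)$ with $u\in A_{1}\cap M^{\frac{d}{\alpha-\beta}}_{s}(\R^d)$ and $w\in A^{u}_{p_1,q_1}$, so that applying the hypothesis and unwinding the construction recovers the target $L^{p_2}\to L^{q_2}$ bound. The Morrey factor $\|u\|_{M^{\frac{d}{\alpha-\beta}}_{s}(\R^d)}$ on the right-hand side of the hypothesis is precisely what compensates for the deficit $\alpha-\beta$ between the two off-diagonal gaps.

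First I would dualize. Assuming $q_2>q_1$ (the case $q_2<q_1$ is symmetric),
\begin{equation*}
\|g\|_{L^{q_2}(v^{q_2})}^{q_1}=\sup\Big\{\int_{\R^d}g^{q_1}\,h\,v^{q_2}\,dx:\ h\geq 0,\ \|h\|_{L^{(q_2/q_1)'}(v^{q_2})}\leq 1\Big\},
\end{equation*}
so it suffices to control the right-hand integral uniformly in $h$. For each such $h$, I would apply a Rubio de Francia iteration $Rh:=\sum_{k\geq 0} M^{k}h/(2\|M\|)^{k}$, where $M$ is a suitable maximal-type operator bounded on $L^{(q_2/q_1)'}(v^{q_2})$. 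The output $U:=(Rh)^{1/q_1}$ dominates $h^{1/q_1}$, preserves the norm up to a factor of $2$, and, by an appropriate choice of $M$, lies simultaneously in $A_{1}$ and $M^{\frac{d}{\alpha-\beta}}_{s}(\R^d)$ with constants controlled by $[v]_{A_{p_2,q_2}}$: $A_{1}$-membership is the classical Rubio de Francia output, while the Morrey bound would be extracted from the reverse H\"older inequality enjoyed by $v^{q_2}\in A_\infty$ together with a fractional/Morrey-weighted variant of $M$. Setting $u:=U$ and $w:=(hv^{q_2})^{1/q_1}U^{-1}$, so that $u^{q_1}w^{q_1}=hv^{q_2}$, I would then verify $w\in A^{u}_{p_1,q_1}$ by expanding Definition~\ref{defn:Apqu} and combining the $A_{1}$ property of $U$ with the $A_{p_2,q_2}$ condition on $v$.

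At this point the hypothesis provides $\int_{\R^d}g^{q_1}u^{q_1}w^{q_1}\,dx\lesssim \|u\|_{M^{\frac{d}{\alpha-\beta}}_{s}(\R^d)}^{q_1}\|f\|_{L^{p_1}(w^{p_1})}^{q_1}$, and I would close the argument with H\"older's inequality (with exponents dictated by the gap relation and by the factorization $uw=(hv^{q_2})^{1/q_1}$) to pass from $\|f\|_{L^{p_1}(w^{p_1})}$ to $\|f\|_{L^{p_2}(v^{p_2})}$, absorbing the remaining factors into $[v]_{A_{p_2,q_2}}$. The main obstacle will be the Rubio de Francia step: producing a single iteration whose output lies simultaneously in $A_{1}$ and $M^{\frac{d}{\alpha-\beta}}_{s}(\R^d)$ with sharp dependence on $[v]_{A_{p_2,q_2}}$, and then confirming that $w$ really lies in the \emph{partial} class $A^{u}_{p_1,q_1}$ rather than merely in $A_{p_1,q_1}$. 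This latter verification is delicate because the partial supremum in Definition~\ref{defn:Apqu} couples $u$ non-trivially to the remaining weights, and one must exploit the factorization $uw=(hv^{q_2})^{1/q_1}$ together with the reverse H\"older integrability of $v^{q_2}$ to match the structure of $[\vec w]_{A^{u}_{(\vec p,q),\vec r}}$.
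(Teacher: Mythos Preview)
Your dualization idea is close in spirit to what the paper does, but the specific choices you make do not close, and two structural steps are missing entirely.

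\textbf{The Morrey condition on $u$ is the real obstacle.} You propose to iterate a maximal operator bounded on $L^{(q_2/q_1)'}(v^{q_2})$ and claim that $U=(Rh)^{1/q_1}$ lands in $M^{\frac{d}{\alpha-\beta}}_{s}$. But $h$ only lies in a \emph{weighted} Lebesgue space, and nothing about a weighted $L^p$ bound, the $A_1(v^{q_2})$ property of $Rh$, or the reverse H\"older of $v^{q_2}$ yields a uniform \emph{unweighted} Morrey bound $\sup_Q |Q|^{(\alpha-\beta)/d}\langle (Rh)^{s/q_1}\rangle_Q^{1/s}<\infty$. Trying to insert $v^{q_2}$ via H\"older forces negative moments $\langle v^{-\gamma}\rangle_Q$ with $\gamma$ that can exceed what $v\in A_{p_2,q_2}$ provides. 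The paper circumvents this by dualizing $\|g^{q_1}v^{q_1}\|_{L^{q_2/q_1}}$ in \emph{unweighted} Lorentz space, so that the dual function $W$ lies in $L^{(q_2/q_1)',\infty}$ and hence $U:=W^{1/q_1}\in L^{d/(\alpha-\beta),\infty}$ (this uses $\frac{1}{q_1}-\frac{1}{q_2}=\frac{\alpha-\beta}{d}$, available only after reducing to $p_1=p_2$); Kolmogorov then gives the Morrey bound, and $u:=M(U^{r})^{1/r}$ supplies $A_1$ via Coifman--Rochberg. No Rubio de Francia iteration is used for $u$.

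\textbf{The choice of $w$ and the $A^{u}_{p_1,q_1}$ check.} Your $w=(hv^{q_2})^{1/q_1}U^{-1}$ makes $w^{-p_1'}$ contain $h^{-p_1'/q_1}$, so the $A^{u}_{p_1,q_1}$ supremum blows up where $h$ vanishes. The paper keeps $w=v$ throughout and proves, in Lemma~\ref{D-OD-3-lem} (via Proposition~\ref{Apq-Appu}), that $v\in A_{p,q_2}$ implies $v\in A^{u}_{p,q_1}$ for the specific $u$ above, because $r_u$ sits just below $d/(\alpha-\beta)$ and $v^{q_2}$ has a small reverse H\"older gain. This is where ``the factorization together with reverse H\"older'' actually enters, and it requires $q_2<r_u$, i.e.\ $q_2<\frac{d}{\alpha-\beta}$.

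\textbf{Two missing structural steps.} First, the paper uses Theorem~\ref{ex-Apqu} (extrapolation inside $A^{u}_{p,t}$) to reduce to $p_1=p_2$; without this, your final ``H\"older to pass from $L^{p_1}(w^{p_1})$ to $L^{p_2}(v^{p_2})$'' cannot be made to balance. Second, the single--step argument only works when $q_2<\frac{d}{\alpha-\beta}$ (equivalently $\alpha\le\frac{d+\beta}{2}$). For larger $\alpha$ the paper iterates through a finite sequence $\beta=\beta_1<\beta_2<\cdots<\beta_{k_0}$ via Theorem~\ref{D-OD-2}, each step staying in the regime where the Morrey/weak-$L^p$ construction is valid, until $\alpha\le\frac{d+\beta_{k_0}}{2}$ and Theorem~\ref{D-OD-3} finishes. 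Your proposal has no mechanism to handle the range $q_2\ge\frac{d}{\alpha-\beta}$.
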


It is pointed out that the proof of Theorem \ref{D-OD} is based on the following three new Rubio de Francia extrapolation theorems
for partial Muckenhoupt weights. By the way, the classical Rubio de Francia extrapolation theorem (see \cite{RDF}) implies that if an operator $T$ is
bounded on $L^{p_0}(w)$ for some fixed $p_0$
($1 \leq p_0 < \infty$) and for any $w\in A_{p_0}$, then $T$ is also bounded on $L^p(w)$ for all $1 < p < \infty$ and $w\in A_p$.

\begin{thm}\label{ex-Apqu}
Let $u\in A_{1}$, $1\leq p_{0}<\infty$ and $0<q_{0}, t_{0}<\infty$. Assume that for any $w\in A^{u}_{p_{0},t_{0}}$,
\begin{equation*}
\begin{aligned}
\Big(\int_{\R^d}g(x)^{q_{0}}w(x)^{q_{0}}dx\Big)^{\frac 1{q_{0}}  }
\lesssim \Big(\int_{\R^d}f(x)^{p_{0}}w(x)^{p_{0}}dx\Big)^{\frac 1{p_{0}}}\quad\text{holds when $(g,f)\in\mathcal{F}$,}
\end{aligned}
\end{equation*}
where $\mathcal F$ is a collection of 2-tuples of non-negative functions.
Then for all $1<p<\infty$ and $0<q,t<\infty$ with
$\frac1{p}-\frac1{p_{0}}=\frac1{q}-\frac1{q_{0}}=\frac{1}{t}-\frac{1}{t_{0}},$
and for all $v\in A_{p,t}^{u}$, we have
\begin{equation*}
\begin{aligned}
\Big(\int_{\R^d}g(x)^{q}v(x)^{q}dx\Big)^{\frac 1q }\lesssim \Big(\int_{\R^d}f(x)^{p}v(x)^{p}dx\Big)^{\frac 1p }
\quad \text{when $(g,f)\in\mathcal{F}$.}
\end{aligned}
\end{equation*}
\end{thm}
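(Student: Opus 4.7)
My plan is to prove Theorem \ref{ex-Apqu} by the Rubio de Francia extrapolation method, adapted to the off-diagonal partial multiple weight framework. Set $\gamma := 1/p - 1/p_{0} = 1/q - 1/q_{0} = 1/t - 1/t_{0}$. The case $\gamma = 0$ is immediate, so I assume $\gamma \neq 0$; the three shifts then share a common sign, and I split into the subcases $\gamma < 0$ and $\gamma > 0$.

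In the subcase $\gamma < 0$ (so $p > p_{0}$, $q > q_{0}$, $t > t_{0}$), I would first dualize in the unweighted $L^{q/q_{0}}$:
\[
\|g\|_{L^{q}(v^{q})}^{q_{0}} = \sup_{\|h\|_{L^{(q/q_{0})'}} = 1} \int_{\R^{d}} g^{q_{0}} v^{q_{0}} h \, dx,
\]
apply the classical Rubio de Francia iteration $\mathcal{R}\phi := \sum_{k\geq 0} M^{k}\phi / (2\kappa)^{k}$ to $h^{1/q_{0}}$ in the unweighted $L^{1/|\gamma|}$ (which is meaningful since $1 \leq p_{0} < p$ forces $|\gamma| < 1$), and set $w := v \cdot \mathcal{R}(h^{1/q_{0}})$. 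The defining properties of $\mathcal{R}$ give $\mathcal{R}(h^{1/q_{0}}) \geq h^{1/q_{0}}$, $\|\mathcal{R}(h^{1/q_{0}})\|_{L^{1/|\gamma|}} \leq 2$, and $\mathcal{R}(h^{1/q_{0}}) \in A_{1}$ uniformly in $h$; thus $w^{q_{0}} \geq v^{q_{0}} h$. After verifying $w \in A^{u}_{p_{0}, t_{0}}$, the hypothesis yields $\int g^{q_{0}} v^{q_{0}} h \lesssim \|f\|_{L^{p_{0}}(w^{p_{0}})}^{q_{0}}$, and H\"older's inequality with exponent $p/p_{0} > 1$ applied to $\int f^{p_{0}} w^{p_{0}} = \int f^{p_{0}} v^{p_{0}} \mathcal{R}(h^{1/q_{0}})^{p_{0}}$ produces a factor of $\|\mathcal{R}(h^{1/q_{0}})\|_{L^{p_{0}(p/p_{0})'}}^{p_{0}}$. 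The identity $p_{0}(p/p_{0})' = q_{0}(q/q_{0})' = 1/|\gamma|$, a direct consequence of $\gamma_{p} = \gamma_{q}$, matches this factor with the RdF control and closes the estimate.

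In the subcase $\gamma > 0$ (so $p < p_{0}$), I would apply $\mathcal{R}$ directly to $f$ in $L^{p}(v^{p})$, obtaining $F := \mathcal{R}f \geq f$ with $F \in A_{1}$ and $\|F\|_{L^{p}(v^{p})} \leq 2\|f\|_{L^{p}(v^{p})}$, and set $w := F^{(p-p_{0})/p_{0}} v^{p/p_{0}}$. Since $(p-p_{0})/p_{0} < 0$ and $F \geq f$, the pointwise estimate $w^{p_{0}} \leq f^{p-p_{0}} v^{p}$ gives $\int f^{p_{0}} w^{p_{0}} \leq \int f^{p} v^{p}$. The hypothesis then yields $\|g\|_{L^{q_{0}}(w^{q_{0}})} \lesssim \|f\|_{L^{p}(v^{p})}^{p/p_{0}}$, and a final H\"older inequality on $\int g^{q} v^{q} = \int (gw)^{q} (v/w)^{q}$ with exponents $q_{0}/q > 1$ and $(q_{0}/q)'$, together with $v/w = (Fv)^{(p_{0}-p)/p_{0}}$ and the exponent identity $qq_{0}(p_{0}-p)/((q_{0}-q)p_{0}) = p$ (again from $\gamma_{p} = \gamma_{q}$), closes the estimate.

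The main obstacle is verifying that the constructed weight $w$ lies in the \emph{partial} class $A^{u}_{p_{0}, t_{0}}$, uniformly in the dual parameter ($h$ in the first subcase, implicitly $f$ through $F = \mathcal{R}f$ in the second). The partial condition couples $w$ with the fixed auxiliary weight $u$, so the classical Jones factorization does not apply directly. The assumption $u \in A_{1}$ is essential here: it lets me absorb $u$-dependent factors such as $\|u^{-1}\|_{L^{\infty}(Q)} \langle u \rangle_{Q} \leq [u]_{A_{1}}$ uniformly across cubes, while the common shift $\gamma$ in the three exponents ensures that the $t$-parameter in the $A^{u}$-constant lands at exactly $t_{0}$. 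I expect this verification to rely on preparatory structural results for the $A^{u}_{\vec p, q}$-classes---openness, reverse H\"older inequalities, and a partial-weight analogue of Jones factorization---developed elsewhere in the paper.
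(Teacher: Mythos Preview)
Your overall strategy---Rubio de Francia iteration plus an off-diagonal factorization step---matches the paper's, but the specific construction you propose does not go through, and the gap is precisely at the point you flag as ``the main obstacle.'' In your subcase $\gamma<0$ you set $w=v\cdot\mu$ with $\mu=\mathcal R(h^{1/q_0})$ a \emph{classical} $A_1$ weight. To verify $w\in A^u_{p_0,t_0}$ you can indeed H\"older the factor $\langle (uv\mu)^{t_0}\rangle_Q^{1/t_0}\le \langle (uv)^{t}\rangle_Q^{1/t}\langle \mu^{1/|\gamma|}\rangle_Q^{|\gamma|}$, but on the dual side you are left with $\langle (v\mu)^{-p_0'}\rangle_Q^{1/p_0'}$. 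Using $\mu\in A_1$ only gives the pointwise bound $\mu^{-1}\le [\mu]_{A_1}\langle\mu\rangle_Q^{-1}$, which reduces this to $[\mu]_{A_1}\langle\mu\rangle_Q^{-1}\langle v^{-p_0'}\rangle_Q^{1/p_0'}$; and since $p_0'>p'$, the quantity $\langle v^{-p_0'}\rangle_Q^{1/p_0'}$ is \emph{not} controlled by $\langle v^{-p'}\rangle_Q^{1/p'}$ from $v\in A^u_{p,t}$ (reverse H\"older only buys a small increment, not an arbitrary one). The residual $\langle\mu^{1/|\gamma|}\rangle_Q^{|\gamma|}\langle\mu\rangle_Q^{-1}$ is moreover $\ge 1$ by Jensen, so there is no cancellation to rescue the estimate. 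The same obstruction appears in your subcase $\gamma>0$: to get $F=\mathcal Rf\in A_1$ you must iterate the \emph{unweighted} maximal function, whose $L^p(v^p)$-boundedness would require $v^p\in A_p$, which does not follow from $v\in A^u_{p,t}$; iterating a weighted maximal function instead would give only a weighted $A_1$ condition, not the classical one your factorization step uses.

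The paper resolves exactly this by abandoning the classical algorithm and introducing a \emph{$u$-adapted} maximal operator
\[
M_{u,\gamma}(h)(x)=u(x)^{1/\gamma}\sup_{Q\ni x}\langle h\rangle_Q\,\langle u^{-1/\gamma}\rangle_Q,
\]
with $\gamma:=\tfrac1{t}+\tfrac1{p'}=\tfrac1{t_0}+\tfrac1{p_0'}$ (note: a different $\gamma$ from yours). This operator is bounded on $L^{t\gamma}(v^t)$ whenever $v\in A^u_{p,t}$ (Lemma~\ref{ex-lem1}), so the associated Rubio de Francia sum $R_\gamma$ is well defined, and crucially $u^{-1}R_\gamma(h)^{\gamma}\in A^u_{1,1/\gamma}$ with uniform constant (Lemma~\ref{ex-lem2}). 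It is this \emph{partial} $A^u_{1,1/\gamma}$ condition---not ordinary $A_1$---that feeds into the factorization Proposition~\ref{FAC-1} and produces a weight in $A^u_{p_0,t_0}$. The structural results you hope exist ``elsewhere in the paper'' are Proposition~\ref{FAC-1} and Lemmas~\ref{ex-lem1}--\ref{ex-lem2}, but they are calibrated to $R_\gamma$, not to the classical $\mathcal R$; a classical $A_1$ weight $\mu$ does not in general satisfy $u^{-1}\mu^{\gamma}\in A^u_{1,1/\gamma}$, since that condition bounds $\langle\mu\rangle_Q^{\gamma}\,\|u\mu^{-\gamma}\|_{L^\infty(Q)}\,\|u^{-1}\|_{L^\infty(Q)}$, which couples the pointwise behaviour of $u$ and $\mu$ in a way ordinary $A_1$ cannot control.
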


In addition, we have the following extrapolation theorems for two types of off-diagonal estimates.

\begin{thm}\label{D-OD-2}
Let $0\leq \beta_1 <\beta_2<\alpha<d$, $1\leq p\leq q_{1}<q_2<\frac d{\beta_2-\beta_1}$, $\frac{1}{p}=\frac{\beta_1}{d}+\frac{1}{q_1}=\frac{\beta_2}{d}+\frac{1}{q_2}$, $1\leq s_{1}<\frac{d}{\alpha-\beta_{1}}$, $1\leq s_{2}<\frac{d}{\alpha-\beta_{2}}$ and $\frac1{s_1}>\frac1{s_2}+\frac d{\beta_2-\beta_1}$. Assume that for any $u_{1}\in A_{1}\bigcap M^{\frac{d}{\alpha-\beta_{1}}}_{s_{1}}(\R^d)$ and $w_1\in A_{p_{1},q_{1}}^{u_{1}}$,
$\|g\|_{L^{q_{1}}(u_1^{q_{1}}w_{1}^{q_{1}})}\lesssim \|u_1\|_{M^{\frac{d}{\alpha-\beta_{1}}}_{s_{1}}(\R^d)}\|f\|_{L^{p}(w_1^{p})}$ holds when $(g,f)\in\mathcal{F}$,
where $\mathcal F$ is a collection of 2-tuples of non-negative functions.
Then for all weight $u_2\in A_{1}\bigcap M^{\frac{d}{\alpha-\beta_{2}}}_{s_2}(\R^d)$ and $w_{2}\in A^{u_2}_{p,q_2}$, we have
$\|g\|_{L^{q_2}(u_2^{q_2}w_2^{q_2})}\lesssim \|u_2\|_{M^{\frac{d}{\alpha-\beta_{2}}}_{s_{2}}(\R^d)}\|f\|_{L^{p}(w_{2}^{p})}$ when $(g,f)\in\mathcal{F}$.
\end{thm}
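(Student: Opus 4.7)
My plan is a Rubio de Francia extrapolation adapted to the partial weight class $A^{u}_{p,q}$. The central algebraic identities are $\tfrac{1}{q_1}-\tfrac{1}{q_2}=\tfrac{\beta_2-\beta_1}{d}=:\tfrac{\gamma}{d}$, and (under the hypothesis as written) $\tfrac{1}{s_1}-\tfrac{1}{s_2}>\tfrac{\gamma}{d}$, which will provide the H\"older slack needed to bridge the two Morrey scales $M^{d/(\alpha-\beta_i)}_{s_i}$.

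By duality on the target norm, it suffices to control $\int g\,h\,(u_2 w_2)^{q_2}\,dx$ uniformly over non-negative $h$ with $\|h\|_{L^{q_2'}((u_2 w_2)^{q_2})}\le 1$. Proposition~\ref{cha:Apq} certifies a reverse H\"older inequality for $(u_2 w_2)^{q_2}$, so the maximal operator $M_\mu$ associated to $\mu:=(u_2 w_2)^{q_2}\,dx$ is bounded on $L^{q_2'}(\mu)$; a standard Rubio de Francia iteration then produces $H\ge h$ with $\|H\|_{L^{q_2'}(\mu)}\le 2$ and $H$ of $A_1(\mu)$-type. Next I would construct an auxiliary partial weight $u_1$ as a product of positive powers of $u_2$, $w_2$, and $H$, and take $w_1:=w_2$, the exponents being dictated by the H\"older split
\[
\int g\,h\,(u_2 w_2)^{q_2}\,dx \le \|g\|_{L^{q_1}((u_1 w_2)^{q_1})}\cdot\bigl\|H\,(u_2 w_2)^{q_2}/(u_1 w_2)\bigr\|_{L^{q_1'}(dx)}.
\]
The natural choice $u_1 w_2 = H^{\eta}(u_2 w_2)^{q_2/q_1}$ with $\eta:=(q_1'-q_2')/q_1'$ makes the second factor collapse to $\bigl(\int H^{q_2'}(u_2 w_2)^{q_2}\,dx\bigr)^{1/q_1'}\le 2^{q_2'/q_1'}$. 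Applying the hypothesis at $(p,q_1)$ with $(u_1, w_2)$ then yields $\|g\|_{L^{q_1}((u_1 w_2)^{q_1})}\lesssim\|u_1\|_{M^{d/(\alpha-\beta_1)}_{s_1}(\R^d)}\|f\|_{L^p(w_2^p)}$, and the target follows once $\|u_1\|_{M^{d/(\alpha-\beta_1)}_{s_1}(\R^d)}\lesssim\|u_2\|_{M^{d/(\alpha-\beta_2)}_{s_2}(\R^d)}$ is established.

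Three structural verifications on $u_1$ are required: $u_1\in A_1$; the Morrey bound just stated; and $w_2\in A^{u_1}_{p,q_1}$. The Morrey bound is obtained by H\"older in the defining supremum with exponent $\sigma:=s_1 s_2/(s_2-s_1)$, after noting that the excess factor $H^{\eta}(u_2 w_2)^{q_2/q_1-1}$ is of $L^{d/\gamma}$-type with norm controlled by $\|h\|_{L^{q_2'}(\mu)}$; the gap hypothesis $\sigma<d/\gamma$ is exactly what turns this H\"older step into the cube-scale correction $|Q|^{-\gamma/d}$ converting the outer Morrey exponent $(\alpha-\beta_2)/d$ into $(\alpha-\beta_1)/d$. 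The $A_1$ property of $u_1$ follows by the product-of-$A_1$-powers principle combined with a Jones factorization handling of the factor $w_2^{q_2/q_1-1}$ using the partial $A^{u_2}_{p,q_2}$ structure, and $w_2\in A^{u_1}_{p,q_1}$ reduces to algebraic comparison of the defining suprema. The principal obstacle I expect is the $A_1$ check for $u_1$, since $w_2$ is not in $A_1$ in general; closing this requires either a Jones-type factorization or a further iteration tailored to $w_2$, with the gap condition again playing a decisive role in the bookkeeping.
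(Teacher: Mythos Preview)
Your duality-and-Rubio-de-Francia skeleton is reasonable, and the H\"older algebra you set up does collapse the dual factor and does yield the Morrey bound once you observe that $(H^{\eta}(u_2 w_2)^{q_2/q_1-1})^{d/\gamma}=H^{q_2'}(u_2 w_2)^{q_2}$. The genuine gap is exactly where you flag it, and it does not close with Jones factorization. Your $u_1=H^{\eta}u_2^{q_2/q_1}w_2^{q_2/q_1-1}$ carries $u_2$ to an exponent $q_2/q_1>1$ and $w_2$ to a positive exponent; neither factor is $A_1$, and the product-of-$A_1$-powers principle requires exponents summing to less than $1$, not factors that are merely $A_\infty$. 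Jones factorization would only replace $w_2^{q_2/q_1-1}$ by a ratio of $A_1$ weights, reintroducing a negative $A_1$ power that is incompatible with the $A_1$ conclusion. There is a second, related obstruction: your iterate $H$ is $A_1$ with respect to $\mu=(u_2w_2)^{q_2}\,dx$, not with respect to Lebesgue measure, so $H^{\eta}$ carries no Lebesgue-$A_1$ information. And in the check $w_2\in A^{u_1}_{p,q_1}$ the term $\|u_1^{-1}\|_{L^\infty(Q)}$ contains the factor $(\inf_Q w_2)^{-(q_2/q_1-1)}$, which is uncontrolled by the $A^{u_2}_{p,q_2}$ data.

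The paper's route sidesteps all of this by keeping $w_2$ out of $u_1$. Instead of $L^{q_2'}$ duality it uses Lorentz duality on $\|g^{q_1}(u_2w_2)^{q_1}\|_{L^{q_2/q_1}}\lesssim\|g^{q_1}(u_2w_2)^{q_1}\|_{L^{q_2/q_1,r}}$ for some $r<q_2/q_1$; the dual function then lies in $L^{(q_2/q_1)',r'}\subset L^{(q_2/q_1)',\infty}$, so its $q_1$-th root $U$ is in $L^{d/\gamma,\infty}$ globally. One then sets $u_1=M(U^{d/\gamma-\epsilon_0})^{1/(d/\gamma-\epsilon_0)}\cdot M(u_2^{s_2-\epsilon_0})^{1/(s_2-\epsilon_0)}$: this is a Coifman--Rochberg product (Lemma~\ref{A1}) and hence $A_1$ automatically, it dominates $u_2U$ pointwise, and its Morrey norm is controlled via boundedness of $M$ on Morrey spaces together with Kolmogorov's inequality. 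The inclusion $w_2\in A^{u_1}_{p,q_1}$ is then a separate structural fact (Lemma~\ref{Apqu1-Appu2}): multiplying $u_2$ by an extra $A_1$ factor $u$ of reverse H\"older index $r_u=d/\gamma-\epsilon_0$ shifts $A^{u_2}_{p,q_2}$ into $A^{u_1}_{p,q_1}$ precisely when $1/q_1=1/q_2+1/r_u$, which one arranges by choosing $\epsilon_0$ against the self-improvement $w_2\in A^{u_2}_{p,q_2(1+\epsilon)}$. The moral is that the maximal-function construction gives $A_1$ for free and decouples $u_1$ from $w_2$; your construction entangles them and the $A_1$ check cannot be rescued.
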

Especially, for the case of $\beta_{2}=\alpha$ in Theorem \ref{D-OD-2}, one has
\begin{thm}\label{D-OD-3}
Let $0\leq \beta<\alpha<d$, $1\leq p\leq q_{1}<q_2<\frac d{\alpha-\beta}$, $\frac{1}{p}=\frac{\beta}{d}+\frac{1}{q_1}=\frac{\alpha}{d}+\frac{1}{q_2}$ and $1\leq s<\frac{d}{\alpha-\beta}$. Assume that for any $u\in A_{1}\bigcap M^{\frac{d}{\alpha-\beta}}_{s}(\R^d)$
and $w\in A_{p,q_{1}}^{u}$,
$\|g\|_{L^{q_{1}}(u^{q_{1}}w^{q_{1}})}\lesssim \|u\|_{M^{\frac{d}{\alpha-\beta}}_{s}(\R^d)}\|f\|_{L^{p}(w^{p})}$ holds when $(g,f)\in\mathcal{F}$.
Then for all weight $v\in A_{p,q_2}$, one has
$\|g\|_{L^{q_2}(v^{q_2})}\lesssim \|f\|_{L^{p}(v^{p})}$ for $(g,f)\in\mathcal{F}$.
\end{thm}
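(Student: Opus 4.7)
The plan is to treat Theorem \ref{D-OD-3} as the boundary case $\beta_2 = \alpha$, $u_2 \equiv 1$ of Theorem \ref{D-OD-2}: in this limit the output-side Morrey condition becomes vacuous and the partial class $A^{u_2}_{p,q_2}$ reduces to the classical Muckenhoupt class $A_{p,q_2}$, so the conclusion of Theorem \ref{D-OD-2} is precisely the statement to be proved. Since Theorem \ref{D-OD-2} is formally restricted to $\beta_2 < \alpha$, I would re-run its proof at this endpoint, following the standard Rubio-de-Francia scheme: duality on the output side, construction of a Rubio-de-Francia majorant lying simultaneously in $A_1$ and in the Morrey space, and reduction to the hypothesis.

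Concretely, fix $v \in A_{p,q_2}$ and by duality write
\[\|g\|_{L^{q_2}(v^{q_2})} = \sup_{H \ge 0,\ \|H\|_{L^{q_2'}} \le 1} \int g v H.\]
For each admissible $H$, construct via a Rubio-de-Francia iteration, built on a maximal operator bounded on both $L^{q_2'}(\R^d)$ and $M^{d/(\alpha-\beta)}_s(\R^d)$, a function $u \ge H$ with $[u]_{A_1} + \|u\|_{M^{d/(\alpha-\beta)}_s} \lesssim 1$. Take $w = v$ and verify $w \in A^u_{p,q_1}$ uniformly in $H$: Hölder's inequality with exponent $q_2/q_1$ splits $\langle (uv)^{q_1}\rangle_Q^{1/q_1}$ as $\langle u^{s^\ast}\rangle_Q^{1/s^\ast}\langle v^{q_2}\rangle_Q^{1/q_2}$ with $s^\ast = q_1q_2/(q_2-q_1) = d/(\alpha-\beta)$; the $A_1$-reverse Hölder inequality transfers the first factor to the Morrey exponent $s < s^\ast$, where the Morrey bound on $u$ yields the volume factor $|Q|^{-(\alpha-\beta)/d}$, while the second combines with $\langle v^{-p'}\rangle_Q^{1/p'}$ via the $A_{p,q_2}$ condition on $v$. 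Then apply the hypothesis to $(u,v)$ and pair with a Hölder-type inequality to recover the dual integral.

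The main obstacle is the closing step, namely extracting an $\|f\|_{L^p(v^p)}$ bound for $\int gvH$ from the available estimate $\|g\|_{L^{q_1}(u^{q_1}v^{q_1})} \lesssim \|u\|_{M^{d/(\alpha-\beta)}_s}\|f\|_{L^p(v^p)}$. Since $q_1' > q_2'$ while $H$ is only $L^{q_2'}$-normalized, a naive Hölder factorization $\int gvH \le \|guv\|_{L^{q_1}}\,\|H/u\|_{L^{q_1'}}$ demands a non-trivial estimate $\|H/u\|_{L^{q_1'}} \lesssim 1$; this integrability surplus, of exponent $1/q_2' - 1/q_1' = (\alpha-\beta)/d$, has to be absorbed by the Morrey norm of $u$, whose exponent $d/(\alpha-\beta)$ matches the gap precisely. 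The quantitative balance would couple the Rubio-de-Francia bound $\|u\|_{L^{q_2'}} \lesssim \|H\|_{L^{q_2'}}$, the $A_1$ estimate $\langle u^{-1}\rangle_Q \lesssim 1/\langle u\rangle_Q$, and the Morrey control, typically through a layer-cake decomposition of $H$; this is the technical heart of the argument and is the step most sensitive to the endpoint $\beta_2 = \alpha$.
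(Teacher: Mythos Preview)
Your overall strategy (dualize on the output side, build an $A_1\cap M^{d/(\alpha-\beta)}_s$ majorant, feed it into the hypothesis) is the right one, and it is exactly what the paper does. But the way you set up the duality creates the very obstruction you diagnose, and the paper's fix is to dualize at a different level.

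The gap you flag is real: from $\|g\|_{L^{q_1}(u^{q_1}v^{q_1})}\lesssim\|f\|_{L^p(v^p)}$ and $H\in L^{q_2'}$, you cannot in general close $\int gvH\le\|guv\|_{L^{q_1}}\|H/u\|_{L^{q_1'}}$, and no layer--cake argument will rescue this, because $H\in L^{q_2'}$ alone does not force $H\in M^{d/(\alpha-\beta)}_s$; your Rubio de Francia iteration therefore has no input to propagate into the Morrey norm of $u$, and the Morrey bound $\|u\|_{M^{d/(\alpha-\beta)}_s}\lesssim 1$ is unjustified.

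The paper avoids both problems in one stroke: it dualizes $g^{q_1}v^{q_1}\in L^{q_2/q_1}$ (with a Lorentz refinement), so the dual function is $U^{q_1}$ with $U\in L^{d/(\alpha-\beta),\infty}$, using $q_1q_2/(q_2-q_1)=d/(\alpha-\beta)$. Two things now happen cleanly. First, the closing step becomes trivial: once $u\ge U$ one has $\int g^{q_1}v^{q_1}U^{q_1}\le\|g\|_{L^{q_1}(u^{q_1}v^{q_1})}^{q_1}$, no $L^{q_1'}$ factor ever appears. Second, the majorant is built by a single maximal function, $u=M(U^{r})^{1/r}$ with $r$ slightly below $d/(\alpha-\beta)$; then $u\in A_1$ automatically, Kolmogorov's inequality embeds $L^{d/(\alpha-\beta),\infty}$ into $M^{d/(\alpha-\beta)}_s$, and the boundedness of $M$ on Morrey spaces yields $\|u\|_{M^{d/(\alpha-\beta)}_s}\lesssim\|U\|_{L^{d/(\alpha-\beta),\infty}}$. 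Membership $v\in A^u_{p,q_1}$ then follows from the reverse H\"older property of $A_{p,q_2}$ weights together with the known critical index $r_u=d/(\alpha-\beta)-\varepsilon_0$ of $u$, exactly by the H\"older splitting you describe. No iteration, and no delicate balancing, is needed.
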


For the sufficiency proof of Theorem \ref{main}, the following multilinear extrapolation theorem is crucial for
estimating the bound of the product functions.

\begin{thm}\label{ex-Appqu}
Let $ \mathcal F$ be a collection of $(m+1)$-tuples of non-negative functions and $u\in A_{1}$. 	Assume that
 there are a vector $\vec{r}=(r_1,\dots,r_{m+1})$ with $1\le r_1,\dots,r_{m+1}\leq \infty$, some exponents $0<q<\infty$
 and $\vec p=(p_1,\dots, p_m)$ with $1\le p_1,\dots, p_m\leq \infty$, and $\vec{r}\preceq (\vec{p}, q)$ such that for any given
 $\vec w=(w_1,\dots, w_m) \in A^u_{(\vec p, q), \vec{r}}$, the inequality
$\|f\|_{L^{q}(w^{q})} \lesssim \prod_{i=1}^m\|f_i\|_{L^{p_i}(w_i^{p_{i}})}$ holds when $(f,f_1,\dots, f_m)\in \mathcal F$,
where $\frac1p:=\frac1{p_1}+\dots+\frac1{p_m}>\max\{0,\frac{1-q}{q}\}$,
$f:=\prod_{i=1}^m f_i$, $w:=\prod_{i=1}^m w_i$, $\vec{r}\preceq (\vec{p}, q)$ means $r_1\le p_1, ..., r_m\le p_m, r_{m+1}'>q$.
Then for all $0<q^*<\infty$, $\vec p^*=(p^*_1, \ldots, p^*_m)$ with $1<p^*_1,\dots, p^*_m\leq \infty$, and $\vec{r}\prec(\vec p^*,q^*)$,
and for all weight
$\vec v=(v_1,\dots, v_m) \in A^u_{(\vec p^*, q^*), \vec{r}}$, we have that  for $(f,f_1,\dots,f_m)\in \mathcal F$,
\begin{equation*}\label{extrapol:C}
\|f\|_{L^{q^*}(v^{q^*})}\lesssim\prod_{i=1}^m \|f_i\|_{L^{p^*_i}(v_i^{p^*_{i}})},
\end{equation*}
where $\frac{1}{p^*}:=\frac1{p^*_1}+\ldots+\frac1{p^*_m}>0$, $\frac{1}{p}-\frac{1}{q}=\frac{1}{p^*}-\frac{1}{q^*}$ and $v:=\prod_{i=1}^m v_i$.	
\end{thm}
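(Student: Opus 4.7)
The plan is to follow the Rubio de Francia extrapolation scheme generalized to the multilinear partial-weight bumped setting. Given a target weight $\vec v = (v_1, \dots, v_m) \in A^u_{(\vec p^*, q^*), \vec r}$, the strategy is to construct, via a Rubio de Francia iteration algorithm, auxiliary weights $\vec W = (W_1,\dots,W_m) \in A^u_{(\vec p, q), \vec r}$ whose defining averages are controlled by those of $\vec v$. The hypothesized estimate then applies to $\vec W$, and the conclusion at $(\vec p^*, q^*)$ is recovered by unwinding through H\"older's inequality.

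The first step is a dual reformulation. Writing $\|f\|_{L^{q^*}(v^{q^*})}$ as a supremum of $\int f\cdot h\cdot v^{q^*}\,dx$ over non-negative test functions $h$ normalized in the appropriate dual space, and similarly dualizing each $\|f_i\|_{L^{p_i^*}(v_i^{p_i^*})}$, one reduces the target inequality to a chain of pointwise and integral bounds that involve $h$, $v$, $u$, and the $v_i$'s. The strict conditions $\vec r \prec (\vec p^*, q^*)$, namely $r_i < p_i^*$ for $i\le m$ and $r'_{m+1}>q^*$, guarantee positive gap exponents $\theta_i$ that make the ensuing iterations well-defined.

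The second step is the iteration itself. I define iteration operators $\mathcal R_i(\varphi) := \sum_{k\ge 0} M_{r_i}^k\varphi / (2\|M_{r_i}\|_{L^{s_i}})^k$, where $M_{r_i}\varphi := (M(\varphi^{r_i}))^{1/r_i}$ and $s_i>r_i$ is chosen via the scaling identity $\frac 1p - \frac 1q = \frac 1{p^*}-\frac 1{q^*}$ combined with $\theta_i$. Each $\mathcal R_i$ enjoys the standard three properties of the Rubio de Francia algorithm: domination $\varphi\le\mathcal R_i\varphi$, the operator bound $\|\mathcal R_i\varphi\|_{L^{s_i}}\le 2\|\varphi\|_{L^{s_i}}$, and the $A_1$-type inequality $M_{r_i}(\mathcal R_i\varphi)\le 2\|M_{r_i}\|_{L^{s_i}}\,\mathcal R_i\varphi$, which forces a suitable power of $\mathcal R_i\varphi$ to lie in an explicit Muckenhoupt class. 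A companion iteration in the $q$-slot, tied to the bump $r_{m+1}$, handles the dual function $h$.

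Setting $W_i := (\mathcal R_i\varphi_i)^{\alpha_i}\,v_i^{\beta_i}$ with $(\alpha_i,\beta_i)$ chosen so that the $A^u_{(\vec p, q), \vec r}$ averages of $\vec W$ factor through those of $\vec v$, one verifies $\vec W \in A^u_{(\vec p, q), \vec r}$ uniformly in $Q$; the partial weight $u\in A_1$ enters only passively through its reverse H\"older inequality and the bound $\|u^{-1}\|_{L^\infty(Q)}$ supplied by $A_1$. Applying the hypothesis to $(f,f_1,\dots,f_m)\in\mathcal F$ at the weight $\vec W$, pairing back with $h$, and collapsing the iterations via H\"older's inequality closes the chain. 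The main obstacle is precisely this Muckenhoupt verification: matching the $m+1$ averages defining $[\vec W]_{A^u_{(\vec p, q), \vec r}}$ with those of $[\vec v]_{A^u_{(\vec p^*, q^*), \vec r}}$ is a delicate algebraic balance that depends crucially on the scaling $\frac 1p - \frac 1q = \frac 1{p^*} - \frac 1{q^*}$, with the boundary cases $p_i = \infty$ or $p_i^* = \infty$ (where the corresponding averages degenerate to essential suprema, mirroring the treatment of $u$) requiring separate handling.
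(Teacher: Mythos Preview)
Your approach differs substantially from the paper's, and the difficulty you yourself flag --- verifying that the constructed $\vec W$ lies in $A^u_{(\vec p, q), \vec r}$ --- is not a bookkeeping detail but the heart of the matter, and you have not resolved it. Constructing each $W_i$ by an independent Rubio de Francia iteration and then hoping the coupled multilinear condition $[\vec W]_{A^u_{(\vec p, q),\vec r}}<\infty$ follows is precisely where direct multilinear extrapolation schemes run into trouble: the product $W=\prod_i W_i$ appears in the $\delta_{m+1}$-average, so the $m+1$ defining averages are not separable, and your proposal gives no mechanism for controlling this cross term. Your claim that the partial weight $u$ ``enters only passively'' is also suspect: already in the one-variable theory the Rubio de Francia algorithm has to be built from the modified maximal function $M_{u,\gamma}$ of \eqref{def-Mur} (see Lemma~\ref{ex-lem2}), not from ordinary $M_{r_i}$, in order to produce $A^{u}_{1,1/\gamma}$ weights rather than plain $A_1$ weights.

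The paper avoids the simultaneous construction entirely. It varies one exponent at a time (say $p_m\mapsto p_m^*$ with $p_1,\dots,p_{m-1}$ held fixed) and uses the structural Lemma~\ref{ex-lem-main} to rewrite the multilinear condition $\vec w\in A^u_{(\vec p,q),\vec r}$ as a \emph{single-variable} condition $W\in A^u_{p_m/r_m,\,\delta_{m+1}/r_m}(\widehat w)$ in the weighted measure $\widehat w\,dx$, where $\widehat w$ is a fixed $A_\infty$ weight built from the frozen $w_1,\dots,w_{m-1}$. The one-variable partial-weight extrapolation (Theorem~\ref{ex-Apqu}) then moves $p_m$ to $p_m^*$; Lemma~\ref{ex-lem-main}$(ii)$ translates the output back into the multilinear class, and iterating over the coordinates finishes. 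Thus the only Rubio de Francia construction ever performed is the linear one already established in Theorem~\ref{ex-Apqu}, and the delicate ``algebraic balance'' you anticipate is replaced by the explicit change-of-variables identities \eqref{LHS-rew}--\eqref{RHS-rew}.
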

\begin{rem}
When $p_i=\infty$ in Theorem \ref{ex-Appqu}, $L^{p_i}(w_i^{p_i})=L^{\infty}(w_i)$ holds
for a given locally integrable and almost everywhere positive function $w_i$.
\end{rem}
As a direct corollary of Theorem \ref{ex-Appqu}, by taking $u\equiv 1$, one has the
following multivariable Rubio de Francia extrapolation theorem.

\begin{cor}\label{ex-Appq}
Under the conditions of Theorem \ref{ex-Appqu} with $u\equiv 1$ and the same restrictions on $q^*$, $\vec p^*=(p^*_1, \ldots, p^*_m)$,
$\vec{r}=(r_1,\dots,r_{m+1})$ with $\vec{r}\prec(\vec p^*,q^*)$ and $\vec{s}=(s_1,\dots, s_m)$ with $\vec{r}\prec(\vec{s},q^*)$,
then  for $\vec v=(v_1,\dots, v_m) \in A_{(\vec p^*, q^*), \vec{r}}$,
the following inequalities hold
\begin{equation*}\label{extrapol:C}
\|f\|_{L^{q^*}(v^{q^*})} \lesssim \prod_{i=1}^m \|f_i\|_{L^{p^*_i}(v_i^{p^*_{i}})}\quad\text{when $(f,f_1,\dots,f_m)\in \mathcal F$}.
\end{equation*}
\end{cor}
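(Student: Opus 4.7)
The plan is to derive Corollary \ref{ex-Appq} as an immediate specialization of Theorem \ref{ex-Appqu} by setting $u\equiv 1$. The constant function $1$ clearly lies in $A_1$, so the hypothesis on $u$ in Theorem \ref{ex-Appqu} is met, and it remains only to check that the partial multiple weight classes appearing in Theorem \ref{ex-Appqu} collapse to the ordinary multiple weight classes appearing in Corollary \ref{ex-Appq} under this choice.

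Concretely, I would first recall Definition \ref{defn:Apqu}, which gives
\begin{equation*}
[\vec w]_{A^{u}_{(\vec p,q),\vec r}}=[(\vec w,u)]_{A_{((\vec p,\infty),q),\vec r}},
\end{equation*}
and then specialize to $u\equiv 1$. Since $p_{m+1}=\infty$ and $u\equiv 1$, the factor contributed by $u$ in the product on the right-hand side of \eqref{YH-2} (with the exponent-$\infty$ convention) is identically $1$, and the product weight $(\prod_{i=1}^m w_i)\cdot u$ reduces to $w=\prod_{i=1}^m w_i$. Therefore $A^{u}_{(\vec p,q),\vec r}=A_{(\vec p,q),\vec r}$ and $A^{u}_{(\vec p^*,q^*),\vec r}=A_{(\vec p^*,q^*),\vec r}$ as sets, with the same quasi-norms. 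The hypothesis of Corollary \ref{ex-Appq} then matches the hypothesis of Theorem \ref{ex-Appqu}, and invoking the latter yields the desired conclusion.

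There is no real obstacle: the only point to verify carefully is the interpretation of the $u$-slot in \eqref{YH-2} when $p_{m+1}=\infty$, so that the identification of the two weight classes is genuinely an equality and not just a containment. Once this bookkeeping is done, no further analysis is required and Theorem \ref{ex-Appqu} applies verbatim.
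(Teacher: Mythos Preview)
Your proposal is correct and matches the paper's own treatment exactly: the paper states Corollary \ref{ex-Appq} as ``a direct corollary of Theorem \ref{ex-Appqu}, by taking $u\equiv 1$'' without further proof, and indeed notes just after Definition \ref{defn:Apqu} that $A^{u}_{\vec{p},q}=A_{\vec{p},q}$ when $u\equiv 1$. Your bookkeeping verification that the $u$-slot contributes trivially is precisely the content needed.
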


\begin{rem}
When \( m = 1 \) and $r=1$, Theorem \ref{ex-Appqu} is a special case of Theorem \ref{ex-Apqu} with \( q = t \).
In addition, motivated by \cite{D2011} and \cite{LMPT2010}, there is an interesting problem to study the
sharp bound of $\big(L^{p_{1}}(w_{1}^{p_{1}})\times\cdots\times L^{p_{m}}(w_{m}^{p_{m}}), L^{q}(w^{q})\big)$
for the operator in $A_{\vec p,t}$ (or $A_{\vec p,t}^u$).
\end{rem}

\begin{rem}
Under Theorem \ref{ex-Apqu}, the proof of Theorem \ref{ex-Appqu} is only a minor modification of the corresponding
proof in \cite[Theorem 1.1]{LMO2020}
(or \cite[Theorem 1.3]{ALM2023}). However, for the completeness and convenience of readers, we still provide a detailed proof
in the paper.
\end{rem}

\subsection{Sketch on the proofs of Theorems \ref{ex-Apqu}-\ref{ex-Appqu}}\label{1.3-CC}

In order to prove Theorem \ref{ex-Apqu}, it requires to establish
some basic factorization results related to partial multiple weights, which
are presented in Proposition \ref{FAC-1}. Meanwhile,  a modified maximal function (see \eqref{def-Mur}) is introduced  and
a suitable $A^{u}_{1,q}$ weight is constructed (see Lemma \ref{ex-lem2}) by us.

\vspace{0.2cm}

To prove Theorem \ref{D-OD-2} (or Theorem \ref{D-OD-3}), we first utilize the embedding
theorem and duality method in Lorentz spaces to establish
\begin{equation}\label{WD}
\|g\|^{q_1}_{L^{q_2}(u_2^{q_2}w_2^{q_2})}=\sup_{\|U^{q_1}\|_{L^{(\frac{q_2}{q_1})',r'}(\R^d)}\neq 0} \Big|\int_{\R^d}g(x)^{q_1}u_2^{q_1}w_2(x)^{q_1}U(x)^{q_1} dx\Big|\|U^{q_1}\|^{-1}_{L^{(\frac{q_2}{q_1})',r'}(\R^d)},
\end{equation}
where $U^{q_1}\in L^{(\frac{q_2}{q_1})',r'}(\R^d)\subset L^{(\frac{q_2}{q_1})',\infty}(\R^d)$ and further $U\in L^{\frac{d}{\beta_2-\beta_1},\infty}(\R^d)$.
On the other hand, for any $w_2\in A^{u_2}_{p,q_2}$ and $U\in L^{\frac{d}{\beta_2-\beta_1},\infty}(\R^d)$,
it follows from the crucial result in Lemma \ref{D-OD-2-lem} that there exists a function $u_1\in A_{1}\bigcap M^{\frac{d}{\alpha-\beta_1}}_{s_{1}}(\R^{d})$ such that $w_2\in A^{u_1}_{p,q_1}$. Consequently, the boundedness of $(g,f)\in \mathcal{F}$ in $(L^{p}(w_1^p), L^{q_1}(u_1^{q_1}w_1^{q_1}))$ for $w_1\in A^{u_1}_{p,q_1}$ implies $g \in L^{q_2}(u_2^{q_2}w_2^{q_2})$ by \eqref{WD},
and then the proof of Theorem \ref{D-OD-2} is completed.

\vspace{0.2cm}

The proof of Theorem \ref{ex-Appqu} comes from the following two steps:
at first, a restricted version of  Theorem \ref{ex-Appqu}
is derived when all the related exponents except one are fixed.
In this process, we establish a new characterization of the weighted class $A^u_{\vec p, q, \vec r}$
(see  Lemma \ref{ex-lem-main}), which allows us to show the
multivariable extrapolation result from a one-variable extrapolation case.
Subsequently, by iterating the result in the first step,
the corresponding  conclusion on the case of $\vec p$ can
be improved into the more generic case of $\vec p^*$.

\subsection{Sketch on the proof of Theorem \ref{D-OD}}\label{1.3-C}

The proof of Theorem \ref{D-OD} can be taken as follows

{\bf $\bullet$} By applying Theorem \ref{ex-Apqu} with parameters $q = t$ and $q_0 = t_0$,
one can reduce the general case to the special situation of $p_{1}=p_{2}=:p$ in Theorem \ref{D-OD}.

{\bf $\bullet$} When $\alpha\leq \frac{d+\beta}{2}$, this implies $q_2<\frac{d}{\alpha-\beta}$ in Theorem \ref{D-OD-3}.
Then it follows from Theorem \ref{D-OD-3} that Theorem \ref{D-OD} holds directly.

{\bf $\bullet$} When $\alpha>\frac{d+\beta}{2}$, one can select an increasing sequence $\beta_k=\beta+(\frac{3}{2}\alpha-\frac d2-\beta)(1-\frac 1{2^{k-1}})$ ($k\in\Bbb N$)
to fulfill $\alpha\leq \frac{d+\beta_{k_0}}{2}$ for some  $\beta_{k_0}$.
Obviously, we have
\begin{equation}\label{WY-1}
0\leq \beta_k <\beta_{k+1}<\alpha<d, \lim_{k\rightarrow \infty}\beta_k=\frac32\alpha-\frac d2.
\end{equation}
In addition, a direct calculation yields
$2\beta_{k+1}-\beta_k=\frac{3}{2}\alpha-\frac d2<d$.
Together with $\frac{1}{p}=\frac{\beta_k}{d}+\frac{1}{q_k}=\frac{\beta_{k+1}}{d}+\frac{1}{q_{k+1}}$, one has
\begin{equation}\label{WY-2}
1< p\leq q_{k}<q_{k+1}<\frac d{\beta_{k+1}-\beta_k}.
\end{equation}
Let $s_{k}$ be defined as
$\frac{1}{s_k}=\frac{1}{s}+\frac{\beta_k-\beta}{d}+\big(1-\frac{1}{2^{k-1}}\big)\big(\frac{\alpha-\beta}{d}-\frac{1}{s}\big)$.
Then
\begin{equation}\label{WY-3}
1\leq s_{k}<\frac{d}{\alpha-\beta_k}, 1\leq s_{k+1}<\frac{d}{\alpha-\beta_{k+1}}, \frac1{s_k}>\frac1{s_{k+1}}+\frac {\beta_{k+1}-\beta_k}d.
\end{equation}
In terms of \eqref{WY-1}, \eqref{WY-2} and \eqref{WY-3}, Theorem \ref{D-OD-2} can be applied to prove
Theorem \ref{D-OD} when $\beta_1, \beta_2, s_1$ and $s_2$ are replaced by $\beta_k, \beta_{k+1}, s_k$ and $s_{k+1}$, respectively.
On the other hand, due to $\frac32\alpha-\frac d2>2\alpha-d$ and $\alpha=\frac{d+2\alpha-d}{2}< \frac{d + \beta_{k_0}}{2}$ for some $\beta_{k_0}$,
then Theorem \ref{D-OD} follows from Theorems \ref{D-OD-2} and \ref{D-OD-3}.

\subsection{Partial multiple weighted estimates for fractional operators}\label{1.4-C}

It is well known that the fractional integrals have a large number of applications in partial differential equations.
An exposition about the properties of operator $I_{\alpha}$ can be found in the books \cite{Stein1970} and
\cite{G2004}. The weighted inequalities of fractional integrals have also been extensively studied
(see \cite{MW1974}, \cite{CF1974}, \cite{FM1974}, \cite{P1990}, \cite{ST1991}, \cite{S1984,S1988},
\cite{N1983} and \cite{P1994}). In particular, such a weighted inequality with suitable weight function $U$
can be derived (also see \cite{FP1983})
$$\|fU\|_{L^{p}(\R^d)}\leq C\|\nabla f\|_{L^{p}(\R^d)}.$$

Based on Subsections \ref{1.3-CC}-\ref{1.3-C}, we can establish some weighted estimates for the fractional integrals 
in relation to partial multiple weights.
\begin{thm}\label{Lp-Lq-Ia}
Let $0 \leq \beta < \alpha < d$, $1<r<s<\infty$ and $1< p \leq q < \infty$ such that $\frac{1}{p} - \frac{1}{q} = \frac{\beta}{d}$.
For $U \in M^{\frac{d}{\alpha-\beta}}_{s}(\R^d)$, $u=M(U^{r})^{1/r}$ with $M(U^{r})$ being the Hardy-Littlewood maximal function
of $U^{r}$ and $w\in A_{p,q}^{u}$, it holds that
$\|I_{\alpha}(f)U\|_{L^{q}(w^{q})}\lesssim \|U\|_{M^{\frac{d}{\alpha-\beta}}_{s}(\R^d)}\|f\|_{L^{p}(w^p)}.$
\end{thm}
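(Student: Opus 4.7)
The plan is to reduce the problem to a diagonal partial-multiple-weighted bound for $I_{\alpha}$ in which the outer weight is an $A_{1}$ function (via Coifman-Rochberg), and then to invoke the off-diagonal extrapolation machinery developed in Subsection~\ref{1.3-C} to sweep across admissible exponents.

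First, I would set $u := M(U^{r})^{1/r}$. Since $U^{r}\le M(U^{r})$ pointwise one has $U\le u$ a.e., so
$$\|I_{\alpha}(f)\,U\|_{L^{q}(w^{q})} \le \|I_{\alpha}(f)\|_{L^{q}(u^{q}w^{q})},$$
and it suffices to bound the right-hand side by $\|U\|_{M^{d/(\alpha-\beta)}_{s}(\R^d)}\|f\|_{L^{p}(w^{p})}$. Two facts about $u$ drive the argument. Coifman-Rochberg applies because $0<1/r<1$ and gives $u\in A_{1}$. For the Morrey norm, a direct rewriting shows
$$\|u\|_{M^{d/(\alpha-\beta)}_{s}(\R^d)}^{\,r}=\|M(U^{r})\|_{M^{d/(r(\alpha-\beta))}_{s/r}(\R^d)},$$
and since $s/r>1$ (and $s/r\le d/(r(\alpha-\beta))$) the Hardy-Littlewood maximal operator is bounded on this Morrey space, so $\|u\|_{M^{d/(\alpha-\beta)}_{s}}\ls \|U\|_{M^{d/(\alpha-\beta)}_{s}}$. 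Thus $u\in A_{1}\cap M^{d/(\alpha-\beta)}_{s}(\R^d)$ with Morrey norm comparable to that of $U$.

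Second, with $u\in A_{1}\cap M^{d/(\alpha-\beta)}_{s}(\R^d)$ and $w\in A_{p,q}^{u}$, the residual inequality
$$\|I_{\alpha}(f)\|_{L^{q}(u^{q}w^{q})}\ls \|u\|_{M^{d/(\alpha-\beta)}_{s}(\R^d)}\|f\|_{L^{p}(w^{p})}$$
would be proved in two stages. The first stage establishes one tractable base pair $(p_{0},q_{0})$ via the pointwise domination of $I_{\alpha}$ by a bilinear fractional dyadic maximal operator alluded to in Remark~\ref{HC-2}, combined with the $A_{1}$ control $\|u^{-1}\|_{L^{\infty}(Q)}\ls \langle u\rangle_{Q}^{-1}$ and the Morrey testing $|Q|^{(\alpha-\beta)/d}\langle u^{s}\rangle_{Q}^{1/s}\le \|u\|_{M^{d/(\alpha-\beta)}_{s}}$ on each cube; these combine to give the requisite bilinear two-weight bound in the class $A_{p_{0},q_{0}}^{u}$ with constant proportional to $\|u\|_{M^{d/(\alpha-\beta)}_{s}}$. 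The second stage applies the off-diagonal extrapolation of Theorem~\ref{D-OD-2} to the family $\mathcal F=\{(I_{\alpha}f,f)\}$ with $u$ held fixed, upgrading the base pair to every admissible $(p,q)$ with $1/p-1/q=\beta/d$ while preserving the linear dependence on $\|u\|_{M^{d/(\alpha-\beta)}_{s}}$ that Theorem~\ref{D-OD-2} is precisely engineered to carry.

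The main obstacle is the base-case bilinear bound in the first stage of Step~2: one must verify that the bilinear fractional maximal operator which pointwise dominates $I_{\alpha}$ obeys a two-weight inequality whose constant scales linearly with $\|u\|_{M^{d/(\alpha-\beta)}_{s}(\R^d)}$, rather than with the larger quantity $\|u\|_{L^{d/(\alpha-\beta),\infty}(\R^d)}$ that would follow automatically from the Morrey embedding. This in turn demands a careful pairing of the Morrey testing condition against the defining supremum of $A_{p,q}^{u}$ — exploiting the partial factor $\|u^{-1}\|_{L^{\infty}(Q)}$ in that supremum — and is where the partial multiple weighted formalism introduced in Definition~\ref{defn:Apqu} pays off. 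Once this base estimate is secured, the Coifman-Rochberg reduction of Step~1 and the extrapolation of Step~2 are essentially formal and deliver Theorem~\ref{Lp-Lq-Ia} in full generality.
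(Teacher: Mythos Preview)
Your Step~1 is correct and matches the paper exactly: the reduction $U\le u=M(U^r)^{1/r}$, the Coifman--Rochberg argument giving $u\in A_1$, and the Morrey boundedness of $M$ giving $\|u\|_{M^{d/(\alpha-\beta)}_s}\lesssim\|U\|_{M^{d/(\alpha-\beta)}_s}$ are all exactly as in the paper.

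Step~2, however, diverges from the paper and contains two genuine gaps. First, there is no pointwise domination of $I_\alpha$ by a bilinear fractional maximal operator; Remark~\ref{HC-2} does not assert one. What the paper actually does is use the Muckenhoupt--Wheeden good-$\lambda$ inequality (Lemma~\ref{Ia-Ma}) to pass from $I_\alpha$ to $M_\alpha$ \emph{in weighted norm}, which requires $u^qw^q\in A_\infty$ (this holds by Corollary~\ref{cor-Apqu}). Only after that reduction does the pointwise step \eqref{Ma-Mb} enter, converting $M_\alpha(f)$ into $\|u\|_{M^{d/(\alpha-\beta)}_s}\,\mathcal M_{\beta,2}(f,u^{-1})$. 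The bilinear bound then comes from Theorem~\ref{Mam} applied to $(w,u)\in A_{(p,\infty),q}$, which is precisely the condition $w\in A_{p,q}^u$; this already covers \emph{every} admissible pair $(p,q)$ with $1/p-1/q=\beta/d$ at once.

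Second, your invocation of Theorem~\ref{D-OD-2} is a misreading of that result. Theorem~\ref{D-OD-2} does not sweep over $(p,q)$ pairs with $u$ and $\beta$ held fixed; it moves from one off-diagonal gap $\beta_1$ to a larger gap $\beta_2$, changing the Morrey class and the weight $u_1\to u_2$, and its hypothesis must hold for \emph{all} $u_1\in A_1\cap M^{d/(\alpha-\beta_1)}_{s_1}$, not for one fixed $u$. The extrapolation that does move within a fixed gap is Theorem~\ref{ex-Apqu}, but even that is unnecessary here because the bilinear maximal estimate of Theorem~\ref{Mam} (crucially extended to allow $p_2=\infty$ via Corollary~\ref{ex-Appq}) handles all $(p,q)$ directly. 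In short: replace your two-stage plan by the chain $I_\alpha\to M_\alpha$ (Lemma~\ref{Ia-Ma}) $\to\mathcal M_{\beta,2}$ (pointwise, \eqref{Ma-Mb}) $\to$ Theorem~\ref{Mam}, and no extrapolation between $(p,q)$ pairs is needed.
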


\begin{rem}
By the definition of $u$, it is easy to know $u\in A_{1}\bigcap M^{\frac{d}{\alpha-\beta}}_s(\R^d)$. Therefore, for $0 \leq \beta < \alpha < d$, $1<p\leq q<\infty$, $\frac{\beta}{d}=\frac 1p-\frac 1q$, $u\in A_{1}\bigcap M^{\frac{d}{\alpha-\beta}}_s(\R^d)$ and $w\in A_{p,q}^u$,
it  follows from Theorem \ref{Lp-Lq-Ia} that
$\|I_{\alpha}(f)\|_{L^{q}(u^{q}w^{q})}\lesssim \|u\|_{M^{\frac{d}{\alpha-\beta}}_{s}(\R^d)}\|f\|_{L^{p}(w^p)}.$
\end{rem}

\subsection{Sketch on the proof of Theorem \ref{main}}\label{1.2-C}

At first, we now illustrate the proof on the sufficient condition in Theorem \ref{main}
(namely, for $b\in {\rm BMO}$, $[b,I_{\alpha}]$ is bounded from $L^{p}(w^{p})$ to $L^{q}(u^{q}w^{q})$ for any $w\in A_{p,q}^{u}$).
For $0\leq \alpha< md$,  the $m$-linear fractional maximal function is defined by
$$\mathcal{M}_{\alpha,m}(\vec{f})(x)=\sup_{Q\ni x}|Q|^{\frac{\alpha}{d}}\prod_{i=1}^{m}\ave{f_{i}}_{Q}.$$
Denote by $\mathcal{M}_{0,m}=\mathcal{M}_{m}$ and $\mathcal{M}_{\alpha,1}=M_{\alpha}$.
Note that for $u\in A_{1}\bigcap L^{\frac{d}{\alpha-\beta},1}(\R^d)$ with $0\leq \beta< \alpha$,
\begin{equation}\label{Ma-Mb}
\begin{aligned}
M_{\alpha}(f)(x)\leq \sup_{Q\ni x}|Q|^{\frac{\beta}{d}}\ave{f}_{Q}\ave{u^{-1}}_{Q}\cdot |Q|^{\frac{\alpha-\beta}{d}}\ave{u}_{Q}\leq \|u\|_{L^{\frac{d}{\alpha-\beta},1}(\R^d)}\mathcal{M}_{\beta,2}(f,u^{-1})(x).
\end{aligned}
\end{equation}
Due to \eqref{Ma-Mb}, in order to show the sufficiency in Theorem \ref{main},
it suffices to estimate the bound of the mapping $\big(L^{p}(w^{p})\times L^{\infty}(u), L^{q}(u^{q}w^{q})\big)$ for
$\mathcal{M}_{\beta,2}(f,u^{-1})$, where $L^{\infty}(u)=\{g: gu\in L^{\infty}(\R^d)\}$.

Note that when $1< p_{1}, p_{2}< \infty$ with $\frac{\beta}{d}=\frac{1}{p_{1}}+\frac{1}{p_{2}}-\frac{1}{q}$, the
following inequality
\begin{equation}\label{Mb2}
\begin{aligned}
\|\mathcal{M}_{\beta,2}(f_{1},f_{2})\|_{L^{q}(w_{1}^{q}w_{2}^{q})}\lesssim \|f_{1}\|_{L^{p_{1}}(w_{1}^{p_{1}})}\|f_{2}\|_{L^{p_{2}}(w_{1}^{p_{2}})}
\end{aligned}
\end{equation}
has been proved by \cite{LOPTT2009} for $\beta=0$ and \cite{M2009} for $\beta>0$.
For $p_{2}=\infty$, we will apply the multilinear extrapolation theorem (Corollary \ref{ex-Appq})
to show \eqref{Mb2}.

Combining \eqref{Ma-Mb} and \eqref{Mb2} yields the partial weighted estimate for the fractional operators in Theorem \ref{ex-Appqu}.
Then by the arguments similar to those in \cite{CF1974} together with Theorem \ref{Lp-Lq-Ia}, we can derive the
corresponding weighted estimate for $[b, I_{\alpha}]$
in Theorem \ref{main}, where $b\in {\rm BMO}$.

Secondly, we explain how to prove the necessity in Theorem \ref{main} (namely,  if $\|[b,I_{\alpha}](f)\|_{L^{q}(u^{q}w^{q})}\lesssim \|u\|_{M^{\frac{d}{\alpha-\beta}}_s(\R^d)}\|f\|_{L^{p}(w^{p})}$ for any $u\in A_1\cap M^{\frac{d}{\alpha-\beta}}_s(\R^d)$, $w\in A_{p,q}^{u}$ and $f\in L^{p}(w^{p})$, then $b\in {\rm BMO}$ holds).
Note that  the extrapolation Theorem \ref{D-OD} together with \cite[Corollary 2.4]{CR2018}
will derive the  necessity  estimate in Theorem \ref{main} directly.

The structure of the paper is as follows. In Section \ref{pro}, we will outline the basic properties of partial multiple weights,
which will play essential roles in the subsequent proofs on the weighted estimates and extrapolation theorems.
The proofs of our main results on extrapolation (including Theorems \ref{D-OD}-
\ref{ex-Appqu}) are presented
in Section \ref{extra}. Section \ref{estimates} contains the proof of Theorem \ref{Lp-Lq-Ia},
meanwhile the weighted estimates for fractional integrals in Lebesgue spaces and their applications are given.
In Section \ref{Thm1.1}, based on the results in Sections \ref{extra}-\ref{estimates},
the characterization of the commutators is established and then Theorem \ref{main} is shown.

\vskip 0.2 true cm

Throughout this paper, the following notations are used.

\vskip 0.1 true cm

{\bf $\bullet$} $A\lesssim B$ means $A\le CB$ with some generic positive constant $C$
and  $A\approx B$ means $A\lesssim B\lesssim A$.

{\bf $\bullet$} \text{$w(E):=\int_{E}w(x)dx$ and $\ave{f}_{E,w}:=\frac{1}{w(E)}\int_{E}f(x)dw=\frac{1}{w(E)}\int_{E}f(x)w(x)dx$}
for the locally integrable and almost everywhere positive function $w$.

\section{Properties of partial multiple weights}\label{pro}
Motivated by the properties of classical Muckenhoupt weights, we
start to investigate the related properties for the partial multiple weights.

\subsection{Characterization of the partial multiple weights}

Note that the author in \cite{M2009} established the following properties of the weights in the class $A_{\vec p, q}$,
which is a variant version of Theorem 3.6 in \cite{LOPTT2009}.

\begin{lem}(\cite[Theorem 3.4]{M2009})
If $1<p_{1},\ldots,p_{m}<\infty$ and $\vec w\in A_{\vec p,q}$, then
$\big(\prod_{i=1}^{m}w_{i}\big)^{q}\in A_{mq}$ and $w_{i}^{-p_{i}}\in A_{mp'_{i}}$.
\end{lem}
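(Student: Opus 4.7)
The plan is to derive each of the two single-weight Muckenhoupt conditions directly from the multilinear $A_{\vec p,q}$ condition by applying H\"older's inequality to the product factorization $w=\prod_{i=1}^{m} w_i$. From the definition of $A_{\vec p,q}$ one extracts the two controlled averages $\langle w^{q}\rangle_{Q}^{1/q}$ and $\langle w_i^{-p'_i}\rangle_{Q}^{1/p'_i}$, and the goal is to use this pair to manufacture the averages that appear in the $A_{mq}$ and $A_{mp'_i}$ conditions, namely $\langle w^{-q/(mq-1)}\rangle_{Q}^{mq-1}$ and $\langle w_i^{p'_i/(mp'_i-1)}\rangle_{Q}^{mp'_i-1}$.

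For the $A_{mq}$ membership of $w^{q}$, the factor $\langle w^{q}\rangle_{Q}$ is already present, so the task reduces to controlling $\langle w^{-q/(mq-1)}\rangle_{Q}^{mq-1}$. I would expand this average as $\int_{Q}\prod_{i} w_i^{-q/(mq-1)}\,dx$ and apply H\"older's inequality with exponents $r_i := p'_i(mq-1)/q$, so that each $L^{r_i}(Q)$-norm becomes a power of $\langle w_i^{-p'_i}\rangle_{Q}$. Raising the resulting estimate to the $(mq-1)$-th power yields exactly $\prod_i \langle w_i^{-p'_i}\rangle_{Q}^{q/p'_i}$, which multiplied by the bound on $\langle w^{q}\rangle_{Q}$ extracted from the $A_{\vec p,q}$ definition produces precisely $[\vec w]_{A_{\vec p,q}}^{q}$.

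For the $A_{mp'_i}$ membership of $w_i^{-p'_i}$ the approach is dual: I would use the identity $w_i = w\prod_{j\neq i} w_j^{-1}$ to rewrite the companion average as $\int_{Q} w^{p'_i/(mp'_i-1)}\prod_{j\neq i} w_j^{-p'_i/(mp'_i-1)}\,dx$, then apply H\"older with exponents $s_0 = q(mp'_i-1)/p'_i$ for the $w$-factor and $s_j = p'_j(mp'_i-1)/p'_i$ for each remaining factor. This produces a product of powers of $\langle w^{q}\rangle_{Q}$ and $\langle w_j^{-p'_j}\rangle_{Q}$ which, combined with the direct $A_{\vec p,q}$ control of $\langle w_i^{-p'_i}\rangle_{Q}$, delivers the $A_{mp'_i}$ bound.

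The main obstacle is bookkeeping of the H\"older exponents. H\"older's inequality requires $\sum_i 1/r_i = 1$ (and similarly $1/s_0 + \sum_{j\neq i}1/s_j = 1$), but a short computation shows these sums equal $1$ exactly when $p=q$. In the general regime $p \le q$ the sums are strictly less than $1$, and one must introduce an additional trivial $L^{s_*}(Q)$-factor to absorb the slack; this generates a power of $|Q|$ that has to cancel against the volume factors produced by the other norms. Checking this cancellation is algebraic but demands care, and it is the step where the hypothesis on the relationship between $\vec p$ and $q$ genuinely enters.
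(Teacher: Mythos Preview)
Your approach is correct and is essentially the same H\"older-splitting argument the paper uses (the paper does not prove the cited lemma separately but establishes the sharper Proposition~\ref{cha:Apq}, which gives $w^{q}\in A_{mq+1-q/p}$ and $w_i^{-p_i'}\in A_{mp_i'+p_i'/q-p_i'/p}$ by exactly this method). One clarification: your concern about a stray power of $|Q|$ is unfounded. If you phrase everything in terms of \emph{normalized} averages $\langle\,\cdot\,\rangle_Q$, then H\"older with $\sum_i 1/r_i<1$ is handled by inserting the constant function $1$ with the residual exponent $s_*$, and $\langle 1\rangle_Q^{1/s_*}=1$ contributes nothing; no cancellation is needed, and the hypothesis $q\ge p$ enters only to guarantee $\sum_i 1/r_i\le 1$. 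The paper sidesteps even this by choosing its H\"older exponents so that they sum to exactly $1$, which is precisely what produces the sharper Muckenhoupt indices in Proposition~\ref{cha:Apq}.
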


Next, we establish a more generalized result as follows.
\begin{prop}\label{cha:Apq}
Assume $0<q\leq \infty$, $\vec p=(p_1, \ldots,  p_m )$ with $1 \le p_1, \ldots, p_m  \le \infty$, $\vec{w}=(w_1, \ldots, w_m )$
and $w = \prod_{i=1}^m w_i$. Then
$\big[w_i^{-p_i'}\big]_{A_{mp_i'+p_i'/q-p_i'/p }} \le [\vec{w}]_{A_{\vec p, q}}^{p_i'}$ ($i = 1, \ldots, m$)
and $[w^q]_{A_{mq+1-q/p}} \le  [\vec{w}]_{A_{\vec p, q}}^{q}$.

When $p_i = 1$, the corresponding estimate above is interpreted as $[w_i^{\frac{1}{m+\frac 1q -\frac 1p }}]_{A_1} \le [\vec{w}]_{A_{\vec p, q}}^{\frac{1}{m+\frac 1q -\frac 1p }}$; when $q=\infty$,
one has $[w^{-\frac{1}{m-\frac 1p }}]_{A_1} \le [\vec{w}]_{A_{\vec p, q}}^{\frac{1}{m-\frac 1p }}$.

Conversely, it holds that
$[\vec{w}]_{A_{\vec p, q}} \le [w^q]_{A_{mq+1-q/p}}^{\frac{1}{q}} \prod_{i=1}^m \Big[w_i^{-p_i'}\Big]_{A_{mp_i'
+p_i'/q-p_i'/p}}^{\frac{1}{p_i'}}.
$
\end{prop}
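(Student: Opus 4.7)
The plan is to prove both Muckenhoupt embeddings via direct applications of Hölder's inequality that exploit the identities $w=\prod_j w_j$ and $w_i=w\prod_{j\ne i}w_j^{-1}$. Fix a cube $Q$. To obtain $[w_i^{-p_i'}]_{A_{r_i}}\le [\vec w]_{A_{\vec p,q}}^{p_i'}$ with $r_i=mp_i'+p_i'/q-p_i'/p$, I will bound $\ave{w_i^{p_i'/(r_i-1)}}_Q$ by applying Hölder to $w_i^{p_i'/(r_i-1)}=w^{p_i'/(r_i-1)}\prod_{j\ne i}w_j^{-p_i'/(r_i-1)}$ with exponents $a_0=q(r_i-1)/p_i'$ and $a_j=p_j'(r_i-1)/p_i'$; the normalization $\sum_k 1/a_k=1$ reduces exactly to the defining equation $r_i=p_i'(m+1/q-1/p)$ after using $\sum_j 1/p_j'=m-1/p$. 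Raising the resulting estimate to the $(r_i-1)$ power and multiplying by $\ave{w_i^{-p_i'}}_Q$ produces $[\vec w]_{A_{\vec p,q}}^{p_i'}$. The embedding $[w^q]_{A_s}\le [\vec w]_{A_{\vec p,q}}^q$ with $s=mq+1-q/p$ is entirely analogous: I factor $w^{-q/(s-1)}=\prod_i w_i^{-q/(s-1)}$ and apply Hölder with exponents $b_i=p_i'(s-1)/q$, whose normalization $\sum 1/b_i=1$ is precisely the identity $s=1+q(m-1/p)$. The boundary cases $p_i=1$ and $q=\infty$ cause the relevant Hölder factor to collapse into an $L^\infty$ bound but do not alter the structure of the argument.

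\textbf{Converse direction.} For the converse, the plan is to multiply together the elementary scalar Muckenhoupt bounds $\ave{w^q}_Q^{1/q}\le [w^q]_{A_s}^{1/q}\ave{w^{-q/(s-1)}}_Q^{-(s-1)/q}$ and $\ave{w_i^{-p_i'}}_Q^{1/p_i'}\le [w_i^{-p_i'}]_{A_{r_i}}^{1/p_i'}\ave{w_i^{p_i'/(r_i-1)}}_Q^{-(r_i-1)/p_i'}$. After collecting factors this yields the target product of individual Muckenhoupt constants, divided by the residual
\[
\Xi_Q:=\ave{w^{-q/(s-1)}}_Q^{(s-1)/q}\prod_i \ave{w_i^{p_i'/(r_i-1)}}_Q^{(r_i-1)/p_i'},
\]
so the task reduces to showing $\Xi_Q\ge 1$ for every cube $Q$. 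Here I will invoke Jensen's inequality for the convex exponential (namely $\ave{f^\alpha}_Q^\beta\ge\exp(\alpha\beta\ave{\log f}_Q)$ for any real $\alpha$ and any $\beta>0$), which produces $\ave{w^{-q/(s-1)}}_Q^{(s-1)/q}\ge e^{-\ave{\log w}_Q}$ and $\ave{w_i^{p_i'/(r_i-1)}}_Q^{(r_i-1)/p_i'}\ge e^{\ave{\log w_i}_Q}$; the decisive point is that the factors $(-1)$ and $(+1)$ in front of the log-averages come directly from the exponent products $\alpha\beta$. Because $\prod_i w_i=w$, summing the log-averages makes the exponents cancel exactly, and $\Xi_Q\ge e^0=1$ follows. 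Taking the supremum over $Q$ completes the converse.

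\textbf{Main obstacle.} The most delicate point in both directions is the algebraic bookkeeping that pins down the Hölder exponents: the unit-sum normalization $\sum 1/a_k=1$ (or $\sum 1/b_i=1$) must turn out to be \emph{equivalent} to (and not merely consistent with) the prescribed formulas for $r_i$ and $s$, so that the forward direction produces the sharp exponents in the statement. Once these identities are verified, the forward step is a single Hölder application per weight, and the converse reduces to the clean Jensen-based cancellation outlined above, with no further estimates needed.
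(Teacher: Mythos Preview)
Your proposal is correct and, in the forward direction, essentially identical to the paper's proof: the paper chooses exactly the same H\"older exponents (your $a_0,a_j$ and $b_i$ coincide with the paper's $s_j,s_i$), and the unit-sum checks you describe are precisely the computations carried out there.

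For the converse, both you and the paper reduce the problem to the same key inequality: in your notation $\Xi_Q\ge 1$, which after unwinding the identities $q/(s-1)=1/(m-1/p)$ and $p_i'/(r_i-1)=1/(m-1+1/p_i+1/q-1/p)$ is exactly the paper's displayed inequality \eqref{cha:Apq:eq3}. The paper proves this by a two-step H\"older argument (first $1\le\ave{w^{-\alpha_1}}_Q^{1/\alpha_1}\ave{w^{\alpha_2}}_Q^{1/\alpha_2}$, then splitting the positive-power factor into the $w_i$ pieces), whereas you use Jensen's inequality for the exponential to pass to log-averages and let the identity $\log w=\sum_i\log w_i$ force exact cancellation. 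Your route is a bit more direct and makes the cancellation structurally transparent; the paper's route stays within H\"older and avoids any appeal to integrability of $\log w_i$. Either argument is short and yields the same constant.
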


\begin{proof}
For any fixed $j \in \{1, \ldots, m\}$, we next show
$$[w_j^{-p_j'}]_{A_{mp_j'+p_j'/q-p_j'/p }} \le [\vec{w}]_{A_{\vec p, q}}^{p_j'}.$$
Note that
\begin{equation}\label{cha:Apq:eq1}
\frac{1}{p} + \sum_{i \ne j} \frac{1}{p_i'} = m - 1 + \frac{1}{p_j}.
\end{equation}
Define $s_j$ as
\begin{equation}\label{cha:Apq:eq1-sj}
\frac{1}{s_j} = \frac{1}{m - 1 + \frac{1}{p_j}+\frac 1q-\frac 1p}\cdot \frac 1q,
\end{equation}
and for $i \ne j$, set
\begin{equation}\label{cha:Apq:eq1-si}
\frac{1}{s_i} = \frac{1}{m - 1 + \frac{1}{p_j}+\frac 1q-\frac 1p} \cdot\frac{1}{p_i'}.
\end{equation}
Collecting \eqref{cha:Apq:eq1}, \eqref{cha:Apq:eq1-sj} and \eqref{cha:Apq:eq1-si} yields $\sum_i \frac{1}{s_i} = 1$.
Due to
\begin{equation}\label{cha:Apq:eq2}
\big[w_j^{-p_j'}\big]_{A_{mp_j'+p_j'/q-p_j'/p}} = \sup_Q \, \big\langle w_j^{-p_j'}\big\rangle_Q     \big\langle w_j^{\frac{q}{s_j}}\big\rangle_Q^{\frac{s_j}{q}},
\end{equation}
then it follows from
$\frac{p_j'}{mp_j'+\frac{p_j'}{q}- \frac{p_j'}{p}-1}= \frac{1}{m - 1 + \frac{1}{p_j}+\frac 1q- \frac 1p}$
and the H\"older's inequality that
$$
\big\langle w_j^{-p_j'}\big\rangle_Q^{\frac 1{p_j'}}   \big\langle w_j^{\frac{q}{s_j}}\big\rangle_Q^{\frac{s_j}{q}} = \big\langle w_j^{-p_j'}\big\rangle_Q^{\frac 1{p_j'}}
\big\langle w^{\frac{q}{s_j}} \prod_{i \ne j } w_i^{-\frac{q}{s_j}}\big\rangle_Q^{\frac{s_j}{q}} \le \ave{w^q}_Q^{\frac{1}{q}}\prod_{i=1}^{m} \ave{ w_i^{-p_i'} }_Q^{\frac{1}{p_i'} } \le [\vec{w}]_{A_{\vec p,q}}.
$$
When $p_j=1$, by
$\esssup_Q w_j^{-1} \big\langle w_j^{\frac{1}{m+\frac 1q -\frac 1p }}\big\rangle_Q^{m+\frac 1q -\frac 1p }\le [\vec{w}]_{A_{\vec p,q}},$
one has
$\big[w_j^{\frac {1}{m+\frac 1q -\frac 1p }}\big]_{A_1}^{m+\frac 1q -\frac 1p } \le [\vec{w}]_{A_{\vec p, q}}.$

We next estimate $[w^q]_{A_{mq+1-q/p}}$. Note that
\begin{equation}\label{cha:Apq:eq4}
[w^q]_{A_{mq+1-q/p}} = \sup_Q \, \ave{w^q}_Q \big\langle w^{-\frac{1}{m-\frac 1p }}\big\rangle_Q^{mq-\frac qp}.
\end{equation}
Define $s_i$ by $\frac{1}{m-\frac 1p } \cdot s_i=p_i'$.
By $\sum_i \frac{1}{s_i} = 1$,
it follows from H\"older's inequality that
\begin{align*}
\ave{w^q}_Q \big\langle w^{-\frac{1}{m-\frac 1p }}\big\rangle_Q^{mq-\frac qp} \le \ave{w^q}_Q \prod_{i=1}^m \big\langle  w_i^{-p_i'}\big\rangle_Q^{(\frac{1}{m-\frac 1p })\frac{1}{p_i'}(mq-\frac qp)}
= \big[ \ave{w^q}_Q^{\frac{1}{q}} \prod_{i=1}^{m} \ave{  w_i^{-p_i'}}_Q^{\frac{1}{p_i'}} \big]^{q} \le [\vec{w}]_{A_{\vec p, q}}^{q}.
\end{align*}
In the case of $q=\infty$,  one has
\begin{multline*}
\big[w^{-\frac{1}{m-\frac 1p }}\big]_{A_1}^{m-\frac 1p } = \sup_Q \big\langle w^{-\frac{1}{m-\frac 1p }} \big \rangle_Q^{m-\frac 1p } \esssup_Q w
\le \sup_Q\big[ \prod_{i=1}^{m} \langle w_i^{-p_i'} \rangle^{\frac 1p _i'}_Q \big]\esssup_Q w = [\vec{w}]_{A_{\vec p, q}}.
\end{multline*}
To establish the boundedness of $ [\vec{w}]_{A_{\vec p, q}}$, we first show
\begin{equation}\label{cha:Apq:eq3}
1 \le \big\langle w^{-\frac{1}{m-\frac 1p }} \big\rangle_Q^{m-\frac{1}{p}} \prod_{i=1}^{m} \big\langle w_i^{\frac{1}{m - 1 + \frac{1}{p_i}+\frac 1q -\frac 1p}} \big\rangle_Q^{m-1+\frac{1}{p_i}+\frac 1q -\frac 1p}.
\end{equation}
In fact, for $\alpha_{1},\alpha_{2}>0$,  one has that by H\"{o}lder's inequality
$$1\leq \ave{w^{-\alpha_{1}}}^{\frac1{\alpha_1}}\ave{w^{\alpha_2}}^{\frac1{\alpha_2}}.$$
Setting  $\alpha_1 = \frac{1}{m-\frac 1p }$ and $\alpha_2 = \frac{1}{n(m-1)+\frac{m}{q}-\frac{m-1}{p}}$ yields
$$
1 \le \ave{ w^{-\frac{1}{m-\frac 1p }} }_Q^{m-\frac{1}{p}} \big\langle w^{ \frac{1}{m(m-1)+\frac{m}{q}-\frac{m-1}{p}} }\big\rangle_Q^{m(m-1)+\frac{m}{q}-\frac{m-1}{p}}.
$$
In addition, define $r_i$ from $\frac{1}{m(m-1)+\frac{m}{q}-\frac{m-1}{p}}\cdot r_i = \frac{1}{m - 1 + \frac{1}{p_i}+\frac 1q-\frac 1p}$.
By H\"older's inequality, we arrive at
$$
\big\langle w^{ \frac{1}{m(m-1)+\frac{m}{q}-\frac{m-1}{p}} }\big\rangle_Q^{m(m-1)+\frac{m}{q}-\frac{m-1}{p}} \le \prod_{i=1}^m \big\langle w_i^{\frac{1}{m - 1 + \frac{1}{p_i}+\frac 1q-\frac 1p} }\big\rangle_Q^{ m - 1 + \frac{1}{p_i}+\frac 1q-\frac 1p},
$$
and then \eqref{cha:Apq:eq3} is proved.

It follows from \eqref{cha:Apq:eq3} that
\begin{align*}
[\vec{w}]_{A_{\vec p, q}}&\le \sup_Q \Big[ \ave{w^q}_Q  \big\langle w^{-\frac{1}{m-\frac 1p}} \big\rangle_Q^{m-\frac 1p} \big]^{\frac 1 q}
 \prod_{i=1}^m \big[ \ave{w_i^{-p_i'}}_Q \big \langle  w_i^{\frac{p_i'}{mp_i'+p_i'/q-p_i'/p-1}}\big \rangle_Q^{mp_i'+p_i'/q-p_i'/p- 1} \big]^{\frac 1{p_i'}}\\
& \le [w^q]_{A_{mq+1-q/p}}^{\frac{1}{q}} \prod_{i=1}^m \big[w_i^{-p_i'}\big]_{A_{mp_j'+p_j'/q-p_j'/p}}^{\frac{1}{p_i'}},
\end{align*}
where \eqref{cha:Apq:eq2} and \eqref{cha:Apq:eq4} are used.
\end{proof}

The following conclusion for the duality of multilinear weights can be easily obtained.
\begin{prop}\label{dua:Apq}
Set $\vec p=(p_1, \ldots,  p_m )$ with $1 < p_1, \ldots, p_m  < \infty$ and $1 \leq q\leq \infty$.
Let $\vec{w}=(w_1, \ldots, w_m )\in A_{\vec p, q}$ with $w = \prod_{i=1}^m w_i$ and define
$\vec w^{\, i}= (w_1, \ldots, w_{i-1}, w^{-1}, w_{i+1}, \ldots, w_m )$,
$\vec p^{\,i}= (p_1, \ldots, p_{i-1}, q', p_{i+1}, \ldots, p_m )$.
Then $[\vec{w}^{\,i}]_{A^{u}_{\vec p^{\, i}, p_i'}} =
[\vec{w}]_{A_{\vec p, q}}.$
\end{prop}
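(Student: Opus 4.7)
The statement is essentially an algebraic identity, so my plan is to verify directly that both supremum expressions have exactly the same integrand on each cube $Q$, and then conclude equality of the $\sup$'s. The key bookkeeping step is to identify the product of the components of $\vec w^{\,i}$: since
$$\prod_{j=1}^{m}(\vec w^{\,i})_j = \Big(\prod_{j\ne i} w_j\Big)\cdot w^{-1} = \frac{w}{w_i}\cdot w^{-1} = w_i^{-1},$$
the ``output weight'' entering the definition of $A_{\vec p^{\,i},p_i'}$ for the tuple $\vec w^{\,i}$ is $w_i^{-1}$, not $w^{-1}$.

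With this in hand I would simply expand the defining expressions side by side. The factor corresponding to the output weight in $[\vec w^{\,i}]_{A_{\vec p^{\,i},p_i'}}$ becomes $\langle(w_i^{-1})^{p_i'}\rangle_Q^{1/p_i'}=\langle w_i^{-p_i'}\rangle_Q^{1/p_i'}$, which matches the $i$-th factor of the product in $[\vec w]_{A_{\vec p,q}}$. For $j\ne i$, the $j$-th entry of $\vec p^{\,i}$ is still $p_j$, so the factor $\langle w_j^{-p_j'}\rangle_Q^{1/p_j'}$ appears identically on both sides. Finally, the $i$-th factor in the product defining $[\vec w^{\,i}]_{A_{\vec p^{\,i},p_i'}}$ involves the weight $w^{-1}$ with exponent $(q')'=q$, yielding $\langle(w^{-1})^{-q}\rangle_Q^{1/q}=\langle w^q\rangle_Q^{1/q}$, which is exactly the output factor of $[\vec w]_{A_{\vec p,q}}$. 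Taking the supremum over $Q$ on both sides produces the claimed equality.

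The only subtleties are endpoint conventions: when $q=\infty$ one has $q'=1$, and the $i$-th component of $\vec p^{\,i}$ becomes $1$, so the factor in question collapses to $\operatorname*{ess\,sup}_Q w$ on both sides, consistent with the $q=\infty$ clause in \eqref{YH-1}. A symmetric remark applies if any $p_j=1$, although the hypothesis $1<p_1,\ldots,p_m<\infty$ rules this out on the other coordinates. I do not expect any real obstacle here: the identity is a direct cancellation once the product $\prod_j(\vec w^{\,i})_j$ is correctly computed. (I also read the superscript $u$ on $A^{u}_{\vec p^{\,i},p_i'}$ in the statement as a typographical slip, since no auxiliary weight $u$ is introduced in the hypothesis; if instead a genuine partial-weight duality is meant, the same line-by-line computation goes through, with the $u$-factor appearing identically on both sides via Definition \ref{defn:Apqu}.)
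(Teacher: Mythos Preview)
Your proposal is correct and follows essentially the same approach as the paper: both arguments reduce to computing $\prod_j(\vec w^{\,i})_j = w_i^{-1}$ and then observing that the factors defining $[\vec w^{\,i}]_{A_{\vec p^{\,i},p_i'}}$ and $[\vec w]_{A_{\vec p,q}}$ coincide term by term (the paper simply sets $i=1$ without loss of generality and writes out the identity in one line). Your reading of the superscript $u$ as a typographical slip is also consistent with the paper's own proof, which drops the $u$ in the computation.
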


\begin{proof}
Without loss of generality and for convenience,  $i=1$ is taken.
Due to $w^{-1} \prod_{i=2}^m w_i = w_1^{-1}$, then $[\vec{w}^{\,i}]_{A_{\vec p^{\, i},p_i'}} = \ave{w_1^{-p_1'}}_Q^{\frac{1}{p_1'}} \ave{w^{q}}_Q^{\frac{1}{q}} \prod_{i=2}^m \ave{ w_i^{-p_i'}}_Q^{\frac{1}{p_i'}} = [\vec{w}]_{A_{\vec p, q}}.$
Therefore,  Proposition \ref{dua:Apq} is proved.
\end{proof}

\medskip

According to the definition of $A^{u}_{\vec{p}, q}$ and the above propositions, we have directly
\begin{cor}\label{cor-Apqu}
Let $0<q\leq \infty$, $\vec p=(p_1, \ldots,  p_m )$ with $1 \le p_1, \ldots, p_m  \le \infty$, $\vec{w}=(w_1, \ldots, w_m )$ 
and $w = \prod_{i=1}^m w_i$.
Then
\begin{align*}
&[u^{-1}]_{A_{m+1+ 1/q -1/p  }} \le [\vec{w}]_{A^{u}_{\vec p, q}},\\
&[w_i^{-p_i'}]_{A_{(m+1 )p_i'+p_i'/q-p_i'/p }} \le [\vec{w}]_{A^{u}_{\vec p, q}}^{p_i'}, \qquad i = 1, \ldots, m,\\
&[u^{q}w^q]_{A_{(m+1 )q+1-q/p}} \le  [\vec{w}]_{A^{u}_{\vec p, q}}^{q}.
\end{align*}
When $p_i = 1$,  the corresponding estimate above is interpreted as $[w_i^{\frac{1}{m+1 +\frac 1q -\frac 1p }}]_{A_1} \le [\vec{w}]_{A_{\vec p, q}}^{\frac{1}{m+1 +\frac 1q -\frac 1p }}$; when $q=\infty$,
$[w^{-\frac{1}{m+1 -\frac 1p }}]_{A_1} \le [\vec{w}]_{A^u_{\vec p, q}}^{\frac{1}{m+1 -\frac 1p }}$ holds.

Conversely,
$[\vec{w}]_{A^{u}_{\vec p, q}} \le [u^{q}w^q]_{A_{(m+1 )q+1-q/p}}^{\frac{1}{q}} [u^{-1}]_{A_{m+1+1/q -1/p  }}\prod_{i=1}^m [w_i^{-p_i'}]_{A_{(m+1 )p_i'+p_i'/q-p_i'/p}}^{\frac{1}{p_i'}}$.
\end{cor}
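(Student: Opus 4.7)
The plan is to recognize that Corollary \ref{cor-Apqu} is essentially a direct specialization of Proposition \ref{cha:Apq} once we unfold the definition of $A^u_{\vec p, q}$. Recall that, by Definition \ref{defn:Apqu}, the class $A^u_{\vec p, q}$ is exactly the class of $(m+1)$-tuples $(w_1,\dots,w_m,u)$ belonging to $A_{(\vec p, \infty), q}$, with the convention that the exponent vector is extended to $(p_1,\dots,p_m,\infty)$ and $[\vec w]_{A^u_{\vec p, q}} = [(\vec w,u)]_{A_{(\vec p,\infty),q}}$. So the entire content of the corollary is obtained by applying Proposition \ref{cha:Apq} to the augmented tuple $(\vec w, u)$ with augmented exponent vector $(\vec p, \infty)$, and then simplifying the resulting indices.

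Concretely, first I would note that when the exponent vector is enlarged to $(\vec p, \infty)$, the harmonic sum $\frac 1 p$ is unchanged because $\frac{1}{\infty}=0$, while the number of weights grows from $m$ to $m+1$, and the product weight becomes $u \cdot w = u \prod_{i=1}^m w_i$. Plugging this into the three bounds of Proposition \ref{cha:Apq} (with $m$ replaced by $m+1$) gives: for the $u$-slot, since its exponent is $\infty$ and hence its conjugate is $1$, the identity $[w_j^{-p_j'}]_{A_{mp_j'+p_j'/q-p_j'/p}} \le [\vec w']_{A_{\vec p',q}}^{p_j'}$ specializes to $[u^{-1}]_{A_{m+1+1/q-1/p}} \le [\vec w]_{A^u_{\vec p, q}}$; for each $i=1,\dots,m$, the same inequality gives $[w_i^{-p_i'}]_{A_{(m+1)p_i'+p_i'/q-p_i'/p}}\le [\vec w]_{A^u_{\vec p, q}}^{p_i'}$; finally, the product-weight bound $[(w')^q]_{A_{(m+1)q+1-q/p}}\le [\vec w']_{A_{\vec p', q}}^q$ becomes precisely $[u^q w^q]_{A_{(m+1)q+1-q/p}}\le [\vec w]_{A^u_{\vec p, q}}^q$.

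The $p_i=1$ and $q=\infty$ degenerate cases then follow from the parallel degenerate cases stated in Proposition \ref{cha:Apq}, with $m$ replaced by $m+1$ throughout. For the converse inequality, one simply applies the converse half of Proposition \ref{cha:Apq} to the enlarged tuple $(\vec w,u)$: each of the $m+1$ weight factors in the $A_{(\vec p,\infty),q}$-constant is dominated by the square-root (or $p_i'$-th/$q$-th root) of the corresponding $A_s$-constant, and assembling them yields the claimed product bound involving $[u^q w^q]_{A_{(m+1)q+1-q/p}}^{1/q}$, $[u^{-1}]_{A_{m+1+1/q-1/p}}$, and $\prod_i [w_i^{-p_i'}]_{A_{(m+1)p_i'+p_i'/q-p_i'/p}}^{1/p_i'}$.

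There is essentially no genuine obstacle here: the only thing that requires care is the bookkeeping of the indices when extending from an $m$-tuple to an $(m+1)$-tuple, in particular verifying that (i) $1/p$ is preserved under the extension by $\infty$, (ii) each index $mp_i'+p_i'/q-p_i'/p$ correctly becomes $(m+1)p_i'+p_i'/q-p_i'/p$ after enlargement, and (iii) the $u$-factor falls into the degenerate $p_{m+1}'=1$ regime, producing the unusually clean index $m+1+1/q-1/p$. Once these accounting details are checked, the corollary follows with no further argument.
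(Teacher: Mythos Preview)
Your proposal is correct and matches the paper's own treatment exactly: the paper simply states that the corollary follows ``directly'' from the definition of $A^u_{\vec p,q}$ and Proposition~\ref{cha:Apq}, which is precisely your argument of applying Proposition~\ref{cha:Apq} to the augmented $(m+1)$-tuple $(\vec w,u)\in A_{(\vec p,\infty),q}$ and tracking the index shifts. Your bookkeeping of the three points (preservation of $1/p$, the shift $m\to m+1$ in each index, and the $p_{m+1}'=1$ degeneration for the $u$-slot) is exactly what is needed.
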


\begin{cor}
Assume $\vec p=(p_1, \ldots,  p_m )$ with $1 < p_1, \ldots, p_m  < \infty$ and $1 \leq q\leq \infty$.
Let $\vec{w}=(w_1, \ldots, w_m )\in A^{u}_{\vec p, q}$ with $w = \prod_{i=1}^m w_i$ and define
$\vec w^{\, i}= (w_1, \ldots, w_{i-1}, u^{-1}w^{-1}, w_{i+1}, \ldots, w_m )$,
$\vec p^{\,i}= (p_1, \ldots, p_{i-1}, q', p_{i+1}, \ldots, p_m )$.
Then $[\vec{w}^{\,i}]_{A^{u}_{\vec p^{\, i}, p_i'}} =
[\vec{w}]_{A^{u}_{\vec p, q}}.$
\end{cor}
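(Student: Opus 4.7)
My plan is to adapt the proof of Proposition \ref{dua:Apq} to the partial setting. The distinguished weight $u$ simply occupies an additional ``infinity-slot'' in the enlarged vector $(\vec w^{\,i},u)$ that appears identically on both sides of the claimed identity, so the argument reduces to a direct bookkeeping using Definition \ref{defn:Apqu} together with the conventions in \eqref{YH-1}.

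First, fixing $i$, I would compute the total product of the components of $\vec w^{\,i}$: since $w=\prod_j w_j$,
\begin{equation*}
\prod_{j=1}^m (\vec w^{\,i})_j = u^{-1}w^{-1}\cdot\prod_{j\neq i}w_j = u^{-1}w_i^{-1},
\end{equation*}
so the ``product-with-$u$'' appearing inside the supremum defining $[(\vec w^{\,i},u)]_{A_{(\vec p^{\,i},\infty),p_i'}}$ simplifies to $w_i^{-1}$. Consequently the outer $p_i'$-factor on the $\vec w^{\,i}$ side equals $\langle w_i^{-p_i'}\rangle_Q^{1/p_i'}$, matching the $i$-th inner factor on the $\vec w$ side.

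Next, because $(\vec p^{\,i})_i=q'$ and $(q')'=q$, the $i$-th inner factor on the $\vec w^{\,i}$ side is
\begin{equation*}
\langle (u^{-1}w^{-1})^{-q}\rangle_Q^{1/q}=\langle (uw)^q\rangle_Q^{1/q},
\end{equation*}
which is precisely the outer factor in $[\vec w]_{A^u_{\vec p,q}}$. All the remaining inner factors $\langle w_j^{-p_j'}\rangle_Q^{1/p_j'}$ ($j\neq i$) together with the $\langle u^{-1}\rangle_Q$ factor from the infinity-slot pass through unchanged, so rearranging under the supremum over cubes yields $[\vec w^{\,i}]_{A^u_{\vec p^{\,i},p_i'}}=[\vec w]_{A^u_{\vec p,q}}$.

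There is no genuine obstacle here — the argument is purely a permutation of factors in the defining suprema. The only subtle point worth flagging is the endpoint $q=\infty$: on the $\vec w$ side the outer factor then becomes $\esssup_Q(uw)$, while on the $\vec w^{\,i}$ side one has $(\vec p^{\,i})_i=q'=1$, so by the convention stated below \eqref{YH-1} the $i$-th inner factor is likewise replaced by $\esssup_Q(u^{-1}w^{-1})^{-1}=\esssup_Q(uw)$. The endpoint $q=1$ is handled symmetrically with $\infty$ appearing in the $i$-th slot of $\vec p^{\,i}$ instead. Hence the identity holds over the full stated range $1\le q\le\infty$.
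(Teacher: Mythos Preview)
Your proof is correct and follows exactly the approach the paper intends: the corollary is stated without proof in the paper as an immediate consequence of Definition~\ref{defn:Apqu}, obtained by the same permutation-of-factors argument as Proposition~\ref{dua:Apq} with the extra $u$-slot carried along unchanged. One minor remark: throughout the paper the factor attached to the $\infty$-slot is written as $\|u^{-1}\|_{L^\infty(Q)}=\esssup_Q u^{-1}$ rather than $\langle u^{-1}\rangle_Q$ (see e.g.\ \eqref{power1-1} and the proof of Proposition~\ref{Apq-Appu}), but this does not affect your argument since, as you correctly observe, the same factor appears identically on both sides.
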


\subsection{Power weights in partial Muckenhoupt weights}
If there exist two constants $r>1$ and $C>0$ such that for any cube $Q\subset \R^d$,
\begin{equation}\label{RHr}
\ave{w^{r}}_{Q}^{\frac{1}{r}}\leq C\ave{w}_{Q},
\end{equation}
then $ w $ is said to fulfill the reverse H\"{o}lder condition of order $r$
and is written by $ w \in RH_{r}$.
It follows from H\"{o}lder inequality that $ w \in RH_{r}$ implies $ w \in RH_{s}$ for $s<r$.
In addition, if $w\in RH_{r}$ with $r>1$, then $ w \in RH_{r+\epsilon}$ for some $\epsilon>0$.
Denote by
$r_{ w }=\sup\{r>1: w \in RH_{r}\}$, which is called the {\it critical index} of $ w $ for the reverse H\"{o}lder condition.

We next investigate that for which scope of real number $b$,  $|x|^b\in A_{p, q}^{u}$ holds when $u\in A_{1}$.

\begin{prop}\label{power1}
Assume $u\in A_{1}$, $1<p<\infty$ and $0<q<r_{u}$. Then $|x|^{b} \in A_{p,q}^{u}$ if $-\frac dq+\frac d{r_{u}}<b<\frac d{p'}.$
\end{prop}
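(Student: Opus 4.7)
The plan is to unwind the definition of $A^u_{p,q}$ from Definition \ref{defn:Apqu} and reduce the whole estimate to the classical characterization of power weights in the two-weight class $A_{p,\tilde q}$ for a well-chosen auxiliary exponent $\tilde q$.

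First I would spell out the defining quantity. With $\vec{w}=(|x|^{b})$, $\vec{p}=(p)$, and the auxiliary exponent $\infty$ attached to $u$ (whose dual equals $1$), formula \eqref{YH-1} reads
$$[|x|^b]_{A^u_{p,q}}=\sup_Q \ave{(|x|^b u)^q}_Q^{1/q}\,\ave{|x|^{-bp'}}_Q^{1/p'}\,\ave{u^{-1}}_Q.$$
Because $q<r_u$ is strict and the hypothesis $b>-d/q+d/r_u$ is strict, one may pick $r\in(q,r_u)$ so close to $r_u$ that $-d/q+d/r<b$ continues to hold; for this $r$, the definition of the critical index forces $u\in RH_r$.

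Next I would apply H\"older's inequality with dual exponents $(r/q,r/(r-q))$ to the first factor,
$$\ave{(|x|^bu)^q}_Q^{1/q}\le\ave{|x|^{bqr/(r-q)}}_Q^{(r-q)/(qr)}\ave{u^r}_Q^{1/r},$$
and invoke $u\in RH_r$ to replace $\ave{u^r}_Q^{1/r}$ by $\lesssim \ave{u}_Q$. Since $u\in A_1$ gives the pointwise bound $\ave{u^{-1}}_Q\le\esssup_Q u^{-1}\le[u]_{A_1}\ave{u}_Q^{-1}$, the factors $\ave{u}_Q$ and $\ave{u}_Q^{-1}$ then cancel. Setting $\tilde q:=qr/(r-q)$, the estimate collapses into
$$[|x|^b]_{A^u_{p,q}}\lesssim [u]_{A_1}\,[u]_{RH_r}\sup_Q\ave{|x|^{b\tilde q}}_Q^{1/\tilde q}\ave{|x|^{-bp'}}_Q^{1/p'}=[u]_{A_1}\,[u]_{RH_r}\,[|x|^b]_{A_{p,\tilde q}}.$$

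Finally I would invoke the classical Muckenhoupt--Wheeden criterion for power weights: $|x|^b\in A_{p,\tilde q}$ iff $-d/\tilde q<b<d/p'$. Since $-d/\tilde q=-d/q+d/r$, the choice of $r$ yields exactly $-d/q+d/r<b<d/p'$, so $[|x|^b]_{A^u_{p,q}}<\infty$ as desired. The only real obstacle is the bookkeeping of the auxiliary exponent $r$: it must simultaneously satisfy $q<r<r_u$ and $-d/q+d/r<b$, and the feasibility of this combination rests precisely on the two strict inequalities $q<r_u$ and $b>-d/q+d/r_u$ in the hypothesis together with the supremum definition of $r_u$. Apart from this exponent-juggling, the argument is just a clean interplay of H\"older's inequality, the reverse H\"older property of $A_1$ weights, and the classical power-weight criterion.
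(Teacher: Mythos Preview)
Your proof is correct and is in fact a cleaner version of the paper's argument. The paper proves \eqref{power1-1} by splitting into two cases according to the sign of $b$: for $0\le b<d/p'$ it uses the same H\"older split as you (with $s:=r/q$ in place of your $r$, so that your $\tilde q$ is the paper's $qs'$) together with $|x|^{-bp'}\in A_1\subset A_{\tilde p}$, while for $-d/q+d/r_u<b<0$ it carries out a separate H\"older decomposition with exponents tailored to $r_u$ and then invokes $|x|^{bq}\in A_1$ directly. Your observation that, once $u$ is stripped off via reverse H\"older and $A_1$, the entire remaining estimate is exactly the classical Muckenhoupt--Wheeden condition $|x|^b\in A_{p,\tilde q}$ (valid iff $-d/\tilde q<b<d/p'$) lets you treat both signs of $b$ at once and avoids the case distinction. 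The only mild subtlety worth flagging is that the paper's working definition of $[\,\cdot\,]_{A^u_{p,q}}$ carries the factor $\|u^{-1}\|_{L^\infty(Q)}$ rather than $\ave{u^{-1}}_Q$ (cf.\ \eqref{power1-1}); since $u\in A_1$ makes these comparable, your chain $\ave{u^{-1}}_Q\le\esssup_Q u^{-1}\le [u]_{A_1}\ave{u}_Q^{-1}$ covers either reading.
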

\begin{proof}
It suffices to show  that for any cube $Q=Q(x_{Q},r_{Q})$,
\begin{equation}\label{power1-1}
\ave{|\cdot|^{bq}u^{q}}_{Q}^{\frac 1q }\ave{|\cdot|^{-bp'}}^{\frac 1{p'}}_{Q}\|u^{-1}\|_{L^{\infty}(Q)}\lesssim 1.
\end{equation}
For $0\leq b<\frac d{p'}$, it is easy to know $|x|^{-bp'}\in A_{1}\subset A_{\tilde{p}}$ with $\tilde{p}=\frac{p'}{qs'}+1$
and $1<s<\frac {r_{u}}q$. This yields
\begin{equation}\label{power1-2}
\ave{|\cdot|^{-bp'}}^{\frac 1{p'}}_{Q} \ave{|\cdot|^{bqs'}}_{Q}^{\frac1{qs'}}\lesssim 1.
\end{equation}
In addition, it follows from H\"{o}lder inequality and \eqref{RHr} that
\begin{equation}\label{power1-3}
\begin{aligned}
\ave{|\cdot|^{bq}u^{q}}_{Q}^{\frac 1q }&\leq \ave{u^{qs}}_{Q}^{\frac1{qs}}  \ave{|\cdot|^{bqs'}}^{\frac1{qs'}}_{Q}\lesssim \ave{u}_{Q}\ave{|\cdot|^{bqs'}}^{\frac1{qs'}}_{Q}.
\end{aligned}
\end{equation}
Collecting \eqref{power1-2} and \eqref{power1-3} derives \eqref{power1-1} for $0\leq b<\frac d{p'}$.

For $-\frac dq + \frac d{r_{u}}<b< 0$, we have $0<\frac{dq}{r_{u}}-bq<d$ and
\begin{equation*}
\begin{aligned}
\ave{|\cdot|^{bq}u^{q}}_{Q}&\leq \langle |\cdot|^{bq\cdot \frac{b-\frac d{r_{u}}}{b}}\rangle^{\frac{b}{b-\frac d{r_{u}} }}
\langle u^{q\cdot \frac{d-br_{u}}{d}}\rangle^{\frac{d}{d-br_{u}}}
\lesssim \ave{u}_{Q}^{q}\langle |\cdot|^{bq- \frac{dq}{r_{u}} } \rangle^{\frac{b}{b-\frac d{r_{u}}  }}
\lesssim \ave{u}_{Q}^{q}\ave{|\cdot|^{bq}}_{Q}.
\end{aligned}
\end{equation*}
Then \eqref{power1-1} is shown by $|x|^{bq}\in A_{1}$.
\end{proof}

In addition, the proof on Proposition \ref{power1} can be suitably modified to obtain the following result for
$A_{p, q}^{u}$ with $p=1$.
\begin{prop}\label{power2}
Let $u\in A_{1}$. For $-\frac d{q}+\frac {d}{r_{u}}<b\leq 0$, $|x|^{b}\in  A_{1, q}^{u}$ holds.
\end{prop}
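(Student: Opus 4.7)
The plan is to verify the definitional inequality characterizing $A_{1,q}^u$, namely that $\ave{|x|^{bq} u^q}_Q^{1/q}\,\esssup_Q |x|^{-b}\,\|u^{-1}\|_{L^\infty(Q)}$ is bounded uniformly in cubes $Q \subset \R^d$. The boundary case $b=0$ is immediate: since $q<r_u$, the reverse Hölder inequality yields $\ave{u^q}_Q^{1/q}\lesssim \ave{u}_Q$, after which $\ave{u}_Q\|u^{-1}\|_{L^\infty(Q)}\lesssim [u]_{A_1}$ closes the estimate. Thus the substantive case is $-d/q+d/r_u<b<0$.

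For this case I would adapt the corresponding argument in the proof of Proposition \ref{power1}, now with the simplification $p=1$. Applying Hölder's inequality with the conjugate pair $\frac{b-d/r_u}{b}$ and $\frac{d-br_u}{d}$ gives $\ave{|x|^{bq} u^q}_Q \leq \ave{|x|^{bq-qd/r_u}}_Q^{b/(b-d/r_u)} \ave{u^{q(d-br_u)/d}}_Q^{d/(d-br_u)}$. The crucial arithmetic check is $q(d-br_u)/d < r_u$, which rearranges exactly to $b>-d/q+d/r_u$, precisely the hypothesis; this permits the reverse Hölder inequality in the form $\ave{u^{q(d-br_u)/d}}_Q^{d/(q(d-br_u))}\lesssim \ave{u}_Q$. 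For the power-weight factor, one notes that $|x|^{bq-qd/r_u}$ is a classical $A_1$ weight since its exponent lies in $(-d,0]$ under the hypotheses, leading to the pointwise-in-$Q$ bound $\ave{|x|^{bq-qd/r_u}}_Q^{b/(b-d/r_u)}\lesssim \ave{|x|^{bq}}_Q$.

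Assembling these estimates produces $\ave{|x|^{bq} u^q}_Q^{1/q}\lesssim \ave{u}_Q\,\ave{|x|^{bq}}_Q^{1/q}$, so the quantity to bound factors as $\bigl(\ave{u}_Q\|u^{-1}\|_{L^\infty(Q)}\bigr)\bigl(\ave{|x|^{bq}}_Q^{1/q}\esssup_Q |x|^{-b}\bigr) \lesssim 1$, where the first factor is controlled by $[u]_{A_1}$ and the second is the standard $A_{1,q}$ estimate for $|x|^b$, valid because $b\in(-d/q,0]$ follows from $-d/q+d/r_u>-d/q$. The main obstacle is really the bookkeeping of Hölder exponents: the choice is dictated by keeping the power appearing on $u$ strictly below the reverse Hölder index $r_u$, and this constraint is exactly what makes the lower bound $-d/q+d/r_u$ sharp in the statement.
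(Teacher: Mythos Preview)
Your proposal is correct and matches the paper's intended approach. The paper does not give a standalone proof of this proposition but simply states that the proof of Proposition~\ref{power1} ``can be suitably modified'' for $p=1$; your argument is precisely that modification, replacing the factor $\ave{|\cdot|^{-bp'}}_Q^{1/p'}$ by $\esssup_Q |x|^{-b}$ and closing with $|x|^{bq}\in A_1$ exactly as the paper does in the range $-\frac{d}{q}+\frac{d}{r_u}<b<0$.
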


We point out that the conditions $q<r_{u}$ and $-\frac dq+\frac d{r_{u}}<b<\frac d{p'}$
in Proposition \ref{power1} are optimal, which can be illustrated as follows.
\begin{prop}\label{power3}
Let $1<p<\infty$ and $u=|x-x_{A}|^{-a}$ with $0<a<d$ and $x_{A}\in \mathbb{R}^{d}\backslash \{0\}$. Then the conditions $q<r_{u}$ and $-\frac dq+\frac d{r_{u}}<b<\frac d{p'}$ are necessary for $|x|^{b}\in  A_{p,q}^{u}$.
\end{prop}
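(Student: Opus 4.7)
The plan is to unpack the $A^{u}_{p,q}$ condition as the uniform bound
$$
K(Q) := \ave{|\cdot|^{bq} u^{q}}_{Q}^{1/q}\, \ave{|\cdot|^{-bp'}}_{Q}^{1/p'}\, \|u^{-1}\|_{L^{\infty}(Q)} \lesssim 1,
$$
and to violate this bound on carefully chosen cubes. A short preliminary computation gives $r_{u}=d/a$, since $|x-x_{A}|^{-ar}$ is locally integrable exactly when $ar<d$ and the reverse H\"older inequality for $u$ degenerates at that threshold. Thus $q<r_{u}$ reads $aq<d$, and $-d/q+d/r_{u}=a-d/q$.

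For the upper thresholds $b<d/p'$ and $q<r_{u}$, small cubes will suffice, because failure amounts to a local non-integrability. If $b\geq d/p'$, I would take $Q_{r}=Q(0,r)$ with $0<r\leq |x_{A}|/2$; then $|x|^{-bp'}$ has a non-integrable singularity at the origin and $\ave{|\cdot|^{-bp'}}_{Q_{r}}=\infty$, so $K(Q_{r})=\infty$. If $aq\geq d$, I would instead take $Q_{r}=Q(x_{A},r)$ with $0<r\leq |x_{A}|/2$; on such cubes $|x|^{bq}\approx |x_{A}|^{bq}$ is bounded above and below, while $|x-x_{A}|^{-aq}$ is not locally integrable at $x_{A}$, so once again $K(Q_{r})=\infty$, contradicting $|x|^{b}\in A^{u}_{p,q}$.

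The hard part is the lower threshold $b>a-d/q$, where I would work with large cubes $Q_{r}=Q(0,r)$ as $r\to\infty$. On these $\|u^{-1}\|_{L^{\infty}(Q_{r})}\approx r^{a}$, and the already-established $b<d/p'$ yields $\ave{|\cdot|^{-bp'}}_{Q_{r}}^{1/p'}\approx r^{-b}$. The main calculation is the lower bound
$$
\int_{Q_{r}} |x|^{bq}|x-x_{A}|^{-aq}\, dx \gtrsim \int_{|x|\leq |x_{A}|/2}|x|^{bq}|x-x_{A}|^{-aq}\, dx+\int_{2|x_{A}|\leq |x|\leq r/2} |x|^{(b-a)q}\, dx,
$$
obtained by using $|x-x_{A}|\approx |x_{A}|$ on the first region and $|x-x_{A}|\approx |x|$ on the second. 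The first contribution is comparable to $|x_{A}|^{bq+d-aq}$, already giving $\ave{|\cdot|^{bq}u^{q}}_{Q_{r}}^{1/q}\gtrsim r^{-d/q}$ and hence $K(Q_{r})\gtrsim r^{a-b-d/q}$, which blows up when $b<a-d/q$. The borderline case $b=a-d/q$ is the main obstacle: there the second integral equals, up to a constant, $\int_{|x_{A}|}^{r} t^{-1}\, dt\approx \log(r/|x_{A}|)$, upgrading the estimate to $K(Q_{r})\gtrsim (\log r)^{1/q}\to\infty$ as $r\to\infty$, which is precisely what forces the strict inequality $b>a-d/q$. Altogether this establishes the necessity of all three conditions.
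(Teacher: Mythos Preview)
Your proof is correct and follows essentially the same approach as the paper: the same choice of cubes for each threshold (small cubes at $0$ for $b\geq d/p'$, small cubes at $x_{A}$ for $aq\geq d$, and large cubes $Q(0,r)$ for the lower bound), the same decomposition of the integral into an inner region near the origin and an annular region where $|x-x_{A}|\approx |x|$, and the same logarithmic divergence in the borderline case $b=a-d/q$. The only cosmetic difference is that the paper replaces $\|u^{-1}\|_{L^{\infty}(Q_{r})}$ by $\ave{u}_{Q_{r}}^{-1}$ via the $A_{1}$ condition, whereas you estimate $\|u^{-1}\|_{L^{\infty}(Q_{r})}\approx r^{a}$ directly; your route is arguably cleaner since it avoids that extra step.
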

\begin{proof}
If $q\geq r_{u}=\frac da$ and $Q=Q(x_{A},\frac{|x_{A}|}{2})$, then
$\ave{|\cdot-x_{A}|^{-aq}|\cdot|^{bq}}_{Q}\approx \big(\frac{|x_{A}|}{2}\big)^{bq}\ave{|\cdot-x_{A}|^{-aq}}_{Q}=\infty.$
On the other hand, it is obvious that \eqref{power1-1} does not hold when $x_{Q}=0$ and $b\geq \frac d{p'}$.

We now use the contradiction method to show that the condition $-\frac dq+\frac d{r_{u}}<b$ is necessary
when $|x|^{b}\in  A_{p,q}^{u}$. If $b\leq-\frac dq+  \frac d{r_{u}}=-\frac dq+a<0$,  then
\begin{equation}\label{power1-4}
\begin{aligned}
&\ave{|\cdot|^{-bp'}}_{Q}\thickapprox r_{Q}^{-bp'} \qquad \text{for any } Q=Q(0,r_{Q}).
\end{aligned}
\end{equation}
Note that for $r_{Q}\geq 5|x_{A}|$ and $Q=Q(0,r_{Q})$, one has
\begin{equation}\label{power1-5}
\begin{aligned}
&\ave{u}_{Q}=\frac{1}{|Q|}\int_{Q}|x-x_{A}|^{-a}dx\lesssim \frac{1}{|Q|}\int_{B(x_{A},\sqrt{d}r_{Q})}|x-x_{A}|^{-a}dx\lesssim r_{Q}^{-a}.
\end{aligned}
\end{equation}
Then for $b<-\frac dq+a$,
\begin{equation}\label{power1-6}
\begin{aligned}
&\int_{Q(0,r_{Q})}|x-x_{A}|^{-aq}|x|^{bq}dx\geq \int_{B(0,r_{Q})}|x-x_{A}|^{-aq}|x|^{bq}dx\\
&\geq\int_{B(0,\frac{1}{2}|x_{A}|)}|x-x_{A}|^{-aq}|x|^{bq}dx \gtrsim |x_{A}|^{-aq}\int_{B(0,\frac{1}{2}|x_{A}|) }|x|^{bq}dx\gtrsim |x_{A}|^{-aq+bq+d}.
\end{aligned}
\end{equation}
In addition, for $b=-\frac dq+a$, we have that by $|x|\leq |x-x_{A}|+|x_{A}|\leq \frac{3}{2}|x-x_{A}|$ for any $x\in B(0,r_{Q})\backslash B(0,2|x_{A}|)$,
\begin{equation}\label{power1-7}
\begin{aligned}
&\int_{B(0,r_{Q})}|x-x_{A}|^{-aq}|x|^{bq}dx\geq \int_{B(0,r_{Q})\backslash B(0,2|x_{A}|)}|x-x_{A}|^{-aq}|x|^{bq}dx\\
&\geq \big(\frac{3}{2}\big)^{-aq}\int_{B(0,r_{Q})\backslash B(0,2|x_{A}|) }|x|^{-aq+bq}dx\gtrsim \log \frac{r_{Q}}{2|x_{A}|}.
\end{aligned}
\end{equation}
Hence, it follows from \eqref{power1-4}-\eqref{power1-7} that as $r_{Q}\rightarrow +\infty$,
\begin{equation*}
\begin{aligned}
&\frac{1}{\ave{u}_{Q}^{q}|Q|}\int_{Q}|x-x_{A}|^{-aq}|x|^{bq}dx\bigg(\frac{1}{|Q|}\int_{Q}|x|^{-bp'}dx\bigg)^{\frac{q}{p'}}\\
&\gtrsim \left\{
\begin{array}{ll}\vspace{1ex}
\big(\frac{r_{Q}}{|x_{A}|}\big)^{aq-bq-d}\rightarrow +\infty, & b<-\frac dq+a,\\
\log \frac{r_{Q}}{2|x_{A}|}\rightarrow +\infty, &b=-\frac dq+a.
\end{array}
\right.
\end{aligned}
\end{equation*}
Therefore, Proposition \ref{power3} is proved.
\end{proof}

\begin{rem}\label{remark-power}
If $u$ and $w$ are singular at the same point, then the condition $q<r_{u}$ is not necessary in Proposition \ref{power3}.
Indeed, choosing $u(x)=|x|^{-a}$ with $0<a<d$, then $|x|^{b}\in A^{u}_{p,q}$ holds if and only if
\begin{equation*}
\begin{aligned}
\left\{
\begin{array}{ll}\vspace{1ex}
-\frac dq+a<b<\frac d{p'}, & 1<p<\infty,\\
-\frac dq+a<b\leq 0, &p=1.
\end{array}
\right.
\end{aligned}
\end{equation*}
Hence, the condition $q<r_{u}=\frac{d}{a}$ can be removed in Proposition \ref{power3}.
\end{rem}

\subsection{Factorization for partial Muckenhoupt weights}

The standard factorization theorem for \(A_p\) weight means that a weight \(w\in A_p\) holds if and only if \(w = u\mu^{1-p}\)
for some \(u, \mu \in A_1\) (see \cite{J1980}). We now give two factorization results which will be used later.

\begin{prop}\label{FAC-1}
Let $u\in A_{1}$, $1\leq p,p_{0}<\infty$, $0<r, r_{0}<\infty$ with $\frac 1p -\frac 1{p_{0}}=\frac 1r-\frac 1{r_{0}}$ and $\gamma=\frac 1r+\frac 1{p'}$. Then it holds that

(i) when $1\leq p<p_{0}<\infty$, $0<r<r_{0}<\infty$, $\gamma\leq 1$, $w\in A_{p,r}^{u}$ and $u^{-1}\mu^{\gamma}\in A^{u}_{1,1/\gamma}$,
one has $\mu^{\gamma\frac{r-r_{0}}{r_{0}}}w^{\frac{r}{r_{0}}}\in A^{u}_{p_{0},r_{0}}$ and
$[\mu^{\gamma\frac{r-r_{0}}{r_{0}}}w^{\frac{r}{r_{0}}}]_{A^{u}_{p_{0},r_{0}}}\leq [u^{-1}\mu^{\gamma}]_{A_{1,1/\gamma}^{u}}^{\frac{r_{0}-r}{r_{0}}} [w]_{A^{u}_{p,r}}^{\frac{r}{r_{0}}}.$

(ii) when $1\leq p<p_{0}<\infty$, $0<r<r_{0}<\infty$, $\gamma> 1$, $w\in A_{p,r}^{u}$ and $u^{-1}\mu\in A^{u}_{1,1}$, one has $\mu^{\gamma\frac{r-r_{0}}{r_{0}}}w^{\frac{r}{r_{0}}}\in A^{u}_{p_{0},r_{0}}$ and
$[\mu^{\gamma\frac{r-r_{0}}{r_{0}}}w^{\frac{r}{r_{0}}}]_{A^{u}_{p_{0},r_{0}}}\leq [u^{-1}\mu]_{A_{1,1}^{u}}^{\gamma\frac{r_{0}-r}{r_{0}}} [w]_{A^{u}_{p,r}}^{\frac{r}{r_{0}}}.$

(iii) when $1\leq p_{0}<p<\infty$, $0<r_{0}<r<\infty$, $\gamma\leq 1$, $w\in A_{p,r}^{u}$ and $u^{-1}\mu^{\gamma}\in A^{u}_{1,1/\gamma}$,
we have $\mu^{\frac{\gamma(r-r_{0})}{r\gamma-1}}w^{\frac{r(r_{0}\gamma-1)}{r\gamma-1}}\in A^{u}_{p_{0},r_{0}}$ and
$\big[\mu^{\frac{\gamma(r-r_{0})}{(r\gamma-1)r_{0}}}w^{\frac{r(r_{0}\gamma-1)}{(r\gamma-1)r_{0}}}\big]_{A^{u}_{p_{0},r_{0}}}\leq  [u^{-1}\mu^{\gamma}]^{\frac{p_{0}-p}{(p-1)p_{0}}}_{A_{1,1/\gamma}} [w]_{A^{u}_{p,r}}^{\frac{p'}{p'_{0}}}.$

(iv) when $1\leq p_{0}<p<\infty$, $0<r_{0}<r<\infty$, $\gamma> 1$, $w\in A_{p,r}^{u}$ and $u^{-1}\mu\in A^{u}_{1,1}$, we have $\mu^{\frac{\gamma(r-r_{0})}{r\gamma-1}}w^{\frac{r(r_{0}\gamma-1)}{r\gamma-1}}\in A^{u}_{p_{0},r_{0}}$ and
$\big[\mu^{\frac{\gamma(r-r_{0})}{r\gamma-1}}w^{\frac{r(r_{0}\gamma-1)}{r\gamma-1}}\big]_{A^{u}_{p_{0},r_{0}}}\leq  [u^{-1}\mu]^{\frac{p_{0}-p}{(p-1)p_{0}}}_{A_{1,1}} [w]_{A^{u}_{p,r}}^{\frac{p'}{p'_{0}}}.$
\end{prop}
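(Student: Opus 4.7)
The strategy is the same in all four parts: unfold the definition
$$[W]_{A^u_{p_0,r_0}}=\sup_Q\langle (uW)^{r_0}\rangle_Q^{1/r_0}\,\|u^{-1}\|_{L^\infty(Q)}\,\langle W^{-p_0'}\rangle_Q^{1/p_0'}$$
for the weight $W$ named in each case, then estimate separately (a) the integrand $(uW)^{r_0}$ via a pointwise algebraic identity that isolates $(uw)^r$ and pulls the remaining $u$-$\mu$ factor out in $L^\infty(Q)$; and (b) the integrand $W^{-p_0'}$ via a single application of H\"older's inequality that splits it into a $\mu$-piece producing $\langle\mu\rangle_Q$ (or $\langle\mu^\gamma\rangle_Q$) and a $w$-piece producing $\langle w^{-p'}\rangle_Q$.

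For part (i), where $W=\mu^{\gamma(r-r_0)/r_0}w^{r/r_0}$, I would use the pointwise identity $(uW)^{r_0}=(u\mu^{-\gamma})^{r_0-r}(uw)^r$ to get $\langle(uW)^{r_0}\rangle_Q^{1/r_0}\le\|u\mu^{-\gamma}\|_{L^\infty(Q)}^{(r_0-r)/r_0}\langle(uw)^r\rangle_Q^{1/r_0}$, and apply H\"older's inequality to $\mu^{\gamma(r_0-r)p_0'/r_0}\,w^{-rp_0'/r_0}$ with conjugate exponents chosen so that the two pieces become $\langle\mu\rangle_Q^{\gamma(r_0-r)/r_0}$ and $\langle w^{-p'}\rangle_Q^{r/(p'r_0)}$. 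The requirement that the two H\"older exponents sum to $1$ reduces, after a brief algebraic computation, to the hypothesis $\frac 1p-\frac 1{p_0}=\frac 1r-\frac 1{r_0}$. Distributing the single factor $\|u^{-1}\|_{L^\infty(Q)}$ as $\|u^{-1}\|_{L^\infty(Q)}^{(r_0-r)/r_0+r/r_0}$ regroups the product as $X_Q^{(r_0-r)/r_0}\,Y_Q^{r/r_0}$, where $X_Q$ and $Y_Q$ are exactly the integrands defining $[u^{-1}\mu^\gamma]_{A^u_{1,1/\gamma}}$ and $[w]_{A^u_{p,r}}$; taking $\sup_Q$ finishes (i).

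Parts (ii)--(iv) follow the same three steps with two minor modifications. When $\gamma>1$ (parts (ii) and (iv)) the relevant decomposition is $\mu=u\cdot(u^{-1}\mu)$ rather than $\mu^\gamma=u\cdot(u^{-1}\mu^\gamma)$, so the rewriting of $(uW)^{r_0}$ picks up an extra factor $u^{(r_0-r)(1-\gamma)}$ which, since $\gamma>1$, is at most $\|u^{-1}\|_{L^\infty(Q)}^{(r_0-r)(\gamma-1)}$ and is absorbed cleanly into the $\|u^{-1}\|_{L^\infty(Q)}$ term. When $p_0<p$ (parts (iii) and (iv)) the signs of $r-r_0$ and $p_0-p$ reverse; the more involved exponents $\gamma(r-r_0)/(r\gamma-1)$ and $r(r_0\gamma-1)/(r\gamma-1)$ on $\mu$ and $w$ are exactly what is needed to make the analogous factorization of $(uW)^{r_0}$ go through and produce the stated powers $(p_0-p)/((p-1)p_0)$ and $p'/p_0'$ on the right.

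The only obstacle is bookkeeping: checking in each case that the two H\"older exponents sum to $1$ and that the $\|u^{-1}\|_{L^\infty(Q)}$ powers recombine exactly into the factors appearing inside $[u^{-1}\mu^\gamma]_{A^u_{1,1/\gamma}}$ (or $[u^{-1}\mu]_{A^u_{1,1}}$) and $[w]_{A^u_{p,r}}$. Both reductions rest on the single identity $\frac 1p-\frac 1{p_0}=\frac 1r-\frac 1{r_0}$, so once part (i) is carried out in full the remaining three parts amount to transcribing the same calculation with the sign and exponent changes described above.
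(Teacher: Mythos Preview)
Your proposal is correct and follows essentially the same route as the paper's proof. For parts (i)--(ii) your three steps (pointwise identity on $(uW)^{r_0}$, H\"older on $W^{-p_0'}$, and splitting of the $\|u^{-1}\|_{L^\infty(Q)}$ factor) match the paper's argument line by line.

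One small caveat for parts (iii)--(iv) that is slightly more than bookkeeping: when $r_0<r$ the $\mu$-exponent in $W^{-p_0'}$ becomes \emph{negative}, so your H\"older split in step (b) does not directly produce $\langle\mu\rangle_Q$. The paper handles this by first deducing $\mu\in A_1$ from the hypothesis $u^{-1}\mu^\gamma\in A^u_{1,1/\gamma}$ (respectively $u^{-1}\mu\in A^u_{1,1}$), which gives the pointwise bound $\mu(x)^{-a}\le [\mu]_{A_1}^a\langle\mu\rangle_Q^{-a}$ and thus replaces the H\"older step. Likewise, the factorization of $(uW)^{r_0}$ in (iii)--(iv) leaves a factor $(uw)^{r(r_0\gamma-1)/(r\gamma-1)}$ with exponent strictly less than $r$, so an extra Jensen step $\langle(uw)^s\rangle_Q\le\langle(uw)^r\rangle_Q^{s/r}$ is needed before regrouping. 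Both adjustments are routine once noticed, and your overall scheme is the paper's.
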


\begin{proof}
{\bf Case 1. $1\leq p<p_{0}<\infty$, $0<r<r_{0}<\infty$}

\vskip 0.1 true cm

Using H\"{o}lder's inequality with exponents $\frac{r_{0}}{\gamma(r_{0}-r)p'_{0}}$ and $\frac{p'r_{0}}{p'_{0}r}$,
and noting $r_{0}/p'_{0}=r_{0}\gamma-1$ and $r/p'=r\gamma-1$, one obtains
\begin{equation}\label{fac-1}
\begin{aligned}
&\big\langle \big(\mu^{\gamma\frac{r-r_{0}}{r_{0}}}w^{\frac{r}{r_{0}}}\big)^{-p'_{0}} \big\rangle^{\frac 1{p'_{0}}  }_{Q}=\big\langle \mu^{\gamma\frac{(r_{0}-r)p'_{0}}{r_{0}}}w^{-\frac{p'_{0}r}{r_{0}}} \big\rangle^{  \frac 1{p'_{0}}  }_{Q}
\leq \langle \mu\rangle^{\gamma\frac{r_{0}-r}{r_{0}}}_{Q}\ave{w^{-p'}}^{-\frac{r}{r_{0}p'}}_{Q}.
\end{aligned}
\end{equation}

$(i)$ For $\gamma\leq 1,$  by the definition of $A^{u}_{1,1/\gamma}$ and $u^{-1}\mu^{\gamma}\in A^{u}_{1,1/\gamma}$,
we have that for any cube $Q\subset\R^d$ and $x\in Q$,
\begin{equation*}
u(x)\mu(x)^{-\gamma}\leq [u^{-1}\mu^{\gamma}]_{A_{1,1/\gamma}^{u}} \ave{\mu}_{Q}^{-\gamma}\|u^{-1}\|^{-1}_{L^{\infty}(Q)}.
\end{equation*}
Thus,
\begin{equation}\label{fac-2}
\begin{aligned}
\big\langle u^{r_{0}}\mu^{\gamma(r-r_{0})}w^{r} \big\rangle^{1/r_{0}}_{Q}
&=\big\langle \big(u^{-1}\mu^{\gamma}\big)^{r-r_{0}}u^{r}w^{r} \big\rangle^{1/r_{0}}_{Q}\\
&\leq [u^{-1}\mu^{\gamma}]_{A_{1,1/\gamma}^{u}}^{\frac{r_{0}-r}{r_{0}}} \langle \mu\rangle^{-\gamma\frac{r_{0}-r}{r_{0}}}_{Q}\ave{u^{r}w^{r}}^{1/r_{0}}_{Q}\|u^{-1}\|_{L^{\infty}(Q)}^{\frac{r-r_{0}}{r_{0}}}.
\end{aligned}
\end{equation}
Multiplying \eqref{fac-1} by \eqref{fac-2} and $\|u^{-1}\|_{L^{\infty}(Q)}$ yields
$[\mu^{\gamma\frac{r-r_{0}}{r_{0}}}w^{\frac{r}{r_{0}}}]_{A^{u}_{p_{0},r_{0}}}\leq [u^{-1}\mu^{\gamma}]_{A_{1,1/\gamma}^{u}}^{\frac{r_{0}-r}{r_{0}}} [w]_{A^{u}_{p,r}}^{\frac{r}{r_{0}}}.$

$(ii)$ For $\gamma>1$ and $u^{-1}\mu\in A^{u}_{1,1}$, it holds that
\begin{equation*}
u(x)\mu(x)^{-1}\leq [u^{-1}\mu]_{A_{1,1}^{u}} \ave{\mu}_{Q}^{-1}\|u^{-1}\|^{-1}_{L^{\infty}(Q)}
\end{equation*}
and
\begin{equation}\label{fac-3}
\begin{aligned}
&\big\langle u^{r_{0}}\mu^{\gamma(r-r_{0})}w^{r} \big\rangle^{1/r_{0}}_{Q}
=\big\langle u^{(\gamma-1)(r-r_{0})}\big(u^{-\gamma}\mu^{\gamma}\big)^{r-r_{0}}u^{r}w^{r} \big\rangle^{1/r_{0}}_{Q}\\
&\leq [u^{-1}\mu]_{A_{1,1}^{u}}^{\frac{\gamma(r_{0}-r)}{r_{0}}} \|u^{-1}\|_{L^{\infty}(Q)}^{(\gamma-1)(r_{0}-r)}
\langle \mu\rangle^{-\gamma\frac{r_{0}-r}{r_{0}}}_{Q}\ave{u^{r}w^{r}}^{1/r_{0}}_{Q}\|u^{-1}\|_{L^{\infty}(Q)}^{\gamma\frac{r-r_{0}}{r_{0}}}\\
&\leq [u^{-1}\mu]_{A_{1,1}^{u}}^{\frac{\gamma(r_{0}-r)}{r_{0}}}
\langle \mu\rangle^{-\gamma\frac{r_{0}-r}{r_{0}}}_{Q}\ave{u^{r}w^{r}}^{1/r_{0}}_{Q}\|u^{-1}\|_{L^{\infty}(Q)}^{\frac{r-r_{0}}{r_{0}}}.
\end{aligned}
\end{equation}
Multiplying \eqref{fac-1} by \eqref{fac-3} and $\|u^{-1}\|_{L^{\infty}(Q)}$ yields
$\big[\mu^{\gamma\frac{r-r_{0}}{r_{0}}}w^{\frac{r}{r_{0}}}\big]_{A^{u}_{p_{0},r_{0}}}
\leq [u^{-1}\mu]_{A_{1,1}^{u}}^{\gamma\frac{r_{0}-r}{r_{0}}} [w]_{A^{u}_{p,r}}^{\frac{r}{r_{0}}}$.

\vskip 0.1 true cm

{\bf Case 2. $1\leq p_{0}<p<\infty$, $0<r_{0}<r<\infty$}

\vskip 0.1 true cm

For any cube $Q$, we have
$\ave{\mu}_{Q}\|\mu^{-1}\|_{L^{\infty}(Q)}\leq \ave{v}_{Q}\|uv^{-\gamma}\|^{1/\gamma}_{L^{\infty}(Q)}\|u^{-1}\|^{1/\gamma}_{L^{\infty}(Q)}$.
Together with  $\mu\in A_{1}$ and $[\mu]_{A_{1}}\leq [u^{-1}\mu^{\gamma}]^{1/\gamma}_{A^{u}_{1,1/\gamma}}$, this yields
\begin{equation}\label{fac-4}
\begin{aligned}
&\big\langle \big(\mu^{\frac{\gamma(r-r_{0})}{(r\gamma-1)r_{0}}}w^{\frac{r(r_{0}\gamma-1)}{(r\gamma-1)r_{0}}}\big)^{-p'_{0}}
\big\rangle^{\frac 1p '_{0}}_{Q}=\big\langle \mu^{\frac{\gamma(r_{0}-r)p'_{0}}{(r\gamma-1)r_{0}}}w^{-p'} \big\rangle^{\frac 1p '_{0}}_{Q}\\
&\leq [\mu]^{\frac{\gamma(r-r_{0})}{(r\gamma-1)r_{0}}}_{A_{1}}\langle \mu\rangle^{\frac{\gamma(r_{0}-r)}{(r\gamma-1)r_{0}}}_{Q}\ave{w^{-p'}}^{-\frac{1}{p'_{0}}}_{Q}
\leq [u^{-1}\mu^{\gamma}]^{\frac{r-r_{0}}{(r\gamma-1)r_{0}}}_{A_{1,1/\gamma}}\langle \mu\rangle^{\frac{\gamma(r_{0}-r)}{(r\gamma-1)r_{0}}}_{Q}\ave{w^{-p'}}^{-\frac{1}{p'_{0}}}_{Q}.
\end{aligned}
\end{equation}

$(iii)$ For $\gamma\leq 1,$ by the definition of $A^{u}_{1,1/\gamma}$ and $u^{-1}\mu^{\gamma}\in A^{u}_{1,1/\gamma}$,
we obtain that for any cube $Q\subset\R^d$ and $x\in Q$,
$u(x)\mu(x)^{-\gamma}\leq [u^{-1}\mu^{\gamma}]_{A_{1,1/\gamma}^{u}} \ave{\mu}_{Q}^{-\gamma}\|u^{-1}\|^{-1}_{L^{\infty}(Q)}$.
Thus,
\begin{equation}\label{fac-5}
\begin{aligned}
&\big\langle u^{r_{0}}\mu^{\frac{\gamma(r-r_{0})}{r\gamma-1}}w^{\frac{r(r_{0}\gamma-1)}{r\gamma-1}} \big\rangle^{1/r_{0}}_{Q}
=\big\langle \big(u^{-1}\mu^{\gamma}\big)^{\frac{r_{0}-r}{r\gamma-1}}(uw)^{\frac{r(r_{0}\gamma-1)}{r\gamma-1}} \big\rangle^{1/r_{0}}_{Q}\\
&\leq [u^{-1}\mu^{\gamma}]_{A_{1,1/\gamma}^{u}}^{\frac{r-r_{0}}{r_{0}(r\gamma-1)}}
\langle \mu\rangle^{-\frac{\gamma (r_{0}-r)}{r_{0}(r\gamma-1)}}_{Q}
\ave{u^{r}w^{r}}^{\frac{p'}{p'_{0}r}}_{Q}\|u^{-1}\|_{L^{\infty}(Q)}^{\frac{r-r_{0}}{r_{0}(r\gamma-1)}}.
\end{aligned}
\end{equation}
Multiplying \eqref{fac-4} by \eqref{fac-5} and $\|u^{-1}\|_{L^{\infty}(Q)}$ yields
$[\mu^{\gamma\frac{r-r_{0}}{r_{0}}}w^{\frac{r}{r_{0}}}]_{A^{u}_{p_{0},r_{0}}}
\leq [u^{-1}\mu^{\gamma}]_{A_{1,1/\gamma}^{u}}^{\frac{r_{0}-r}{r_{0}}} [w]_{A^{u}_{p,r}}^{\frac{r}{r_{0}}}.$

$(iv)$ It follows from $\gamma>1$ and $u^{-1}\mu\in A^{u}_{1,1}$ that
$u(x)\mu(x)^{-1}\leq [u^{-1}\mu]_{A_{1,1}^{u}} \ave{v}_{Q}^{-1}\|u^{-1}\|^{-1}_{L^{\infty}(Q)}$
and
\begin{equation}\label{fac-6}
\begin{aligned}
&\big\langle u^{r_{0}}\mu^{\frac{\gamma(r-r_{0})}{r\gamma-1}}w^{\frac{r(r_{0}\gamma-1)}{r\gamma-1}} \big\rangle^{1/r_{0}}_{Q}
=\big\langle u^{\frac{(\gamma-1)(r_{0}-r)}{r\gamma-1}}\big(u^{-\gamma}\mu^{\gamma}\big)^{\frac{r_{0}-r}{r\gamma-1}}(uw)^{\frac{r(r_{0}\gamma-1)}{r\gamma-1}} \big\rangle^{1/r_{0}}_{Q}\\
&\leq [u^{-1}\mu]_{A_{1,1}^{u}}^{\frac{\gamma(r-r_{0})}{r_{0}(r\gamma-1)}}
\langle \mu\rangle^{-\frac{\gamma (r_{0}-r)}{r_{0}(r\gamma-1)}}_{Q}
\ave{u^{r}w^{r}}^{\frac{p'}{p'_{0}r}}_{Q}\|u^{-1}\|_{L^{\infty}(Q)}^{\frac{r-r_{0}}{r_{0}(r\gamma-1)}}.
\end{aligned}
\end{equation}
Applying  \eqref{fac-4} and \eqref{fac-6} to get
$\big[\mu^{\gamma\frac{r-r_{0}}{r_{0}}}w^{\frac{r}{r_{0}}}\big]_{A^{u}_{p_{0},r_{0}}}
\leq [u^{-1}\mu]_{A_{1,1}^{u}}^{\frac{r_{0}-r}{r_{0}}} [w]_{A^{u}_{p,r}}^{\frac{r}{r_{0}}}$.
\end{proof}

\subsection{The reverse H\"{o}lder property}

Note that for $w\in A_{\infty}$, there exist two constants $C$ and $\epsilon>0$ depending only on the space dimension $d$ and
$[w]_{A_{\infty}}$ such that for any cube $Q$,
$\ave{w^{1+\epsilon}}^{\frac{1}{1+\epsilon}}_{Q}\leq C \ave{w}_{Q}$.

For $\vec w\in A_{\vec p,q}^{u}$, Corollary \ref{cor-Apqu} implies that $u^{q}w^{q}, w_{i}^{-p_{i}'}\in A_{\infty}$
for $i=1,\ldots, m$.
Then there exist some constants $\epsilon_{0},\epsilon_{1},\cdots \epsilon_{m}>0$ such that for $w\in A_{\vec p,q}^{u}$ and any cube $Q$,
\begin{equation}\label{RHI-0}
\big\langle(uw)^{q(1+\epsilon_{0})}\big\rangle^{\frac{1}{1+\epsilon_{0}}}_{Q}\lesssim \ave{u^{q}w^{q}}_{Q}
\end{equation}
and
\begin{equation}\label{RHI-i}
\big\langle w_{i}^{-p'_{i}(1+\epsilon_{i})}\big\rangle^{\frac{1}{p'_{i}(1+\epsilon_{i})}}_{Q}
\lesssim \ave{w^{-p'_{i}}}^{\frac{1}{p'_{i}}}_{Q}, \qquad i=1,\ldots, m.
\end{equation}

From \eqref{RHI-0} and \eqref{RHI-i}, one can immediately derive the following Proposition \ref{Apqu-q+}.
\begin{prop}\label{Apqu-q+}
Let $u\in A_{1}$, $0<q< \infty$, $1< p_{1},\ldots, p_{m}< \infty$ and $\vec w=(w_{1},\ldots,w_{m})\in A^{u}_{\vec p,q}.$
Then

(i) there exists a constant $\epsilon>0$ such that
$\vec w\in A^{u}_{\vec p,q(1+\epsilon)}\subset A^{u}_{(p_{1}(1+\epsilon),\ldots,p_{m}(1+\epsilon)),q(1+\epsilon)}.$

(ii) there exists a constant $\epsilon$ with \(0 < \epsilon < \min\{p_1 - 1,\cdots,p_{m}-1\}\) such that
$\vec w\in A^{u}_{(p_{1}(1-\epsilon),\ldots,p_{m}(1-\epsilon)),q}$

$\subset A^{u}_{(p_{1}(1-\epsilon),\ldots,p_{m}(1-\epsilon)),q(1-\epsilon)}.$
\end{prop}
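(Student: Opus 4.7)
The plan is to read off, from Definition~\ref{defn:Apqu}, the explicit $A^u_{\vec p,q}$ characteristic
\[
[\vec w]_{A^u_{\vec p,q}} = \sup_Q \ave{(uw)^q}_Q^{1/q} \prod_{i=1}^m \ave{w_i^{-p_i'}}_Q^{1/p_i'}\,\|u^{-1}\|_{L^\infty(Q)},
\]
and then perturb the exponents $q$ and $p_i$ by small amounts. Each perturbation will be absorbed by one of the two tools already at hand: the reverse Hölder inequalities \eqref{RHI-0}--\eqref{RHI-i}, and Jensen's inequality in the form $\ave{f^a}_Q^{1/a}\le \ave{f^b}_Q^{1/b}$ for $0<a\le b$. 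In both parts the factor $\|u^{-1}\|_{L^\infty(Q)}$ is untouched and is simply carried through.

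For part~(i), first fix $\epsilon$ with $0<\epsilon\le \epsilon_0$, where $\epsilon_0$ is the reverse-Hölder exponent in \eqref{RHI-0}. Then
\[
\ave{(uw)^{q(1+\epsilon)}}_Q^{\frac{1}{q(1+\epsilon)}} = \Bigl(\ave{[(uw)^q]^{1+\epsilon}}_Q^{\frac{1}{1+\epsilon}}\Bigr)^{\frac{1}{q}} \lesssim \ave{(uw)^q}_Q^{\frac{1}{q}},
\]
and multiplying by the unchanged $\prod_i \ave{w_i^{-p_i'}}_Q^{1/p_i'}\,\|u^{-1}\|_{L^\infty(Q)}$ yields $\vec w \in A^u_{\vec p, q(1+\epsilon)}$. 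The inclusion $A^u_{\vec p, q(1+\epsilon)} \subset A^u_{(p_1(1+\epsilon),\ldots,p_m(1+\epsilon)), q(1+\epsilon)}$ is then automatic from Jensen: since $(p_i(1+\epsilon))'<p_i'$, one has $\ave{w_i^{-(p_i(1+\epsilon))'}}_Q^{1/(p_i(1+\epsilon))'}\le \ave{w_i^{-p_i'}}_Q^{1/p_i'}$ for each $i$.

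For part~(ii), I will choose $\epsilon>0$ so small that $\epsilon<\min_i(p_i-1)$ and $(p_i(1-\epsilon))'\le p_i'(1+\epsilon_i)$ for every $i$, with $\epsilon_1,\ldots,\epsilon_m$ coming from \eqref{RHI-i}; the latter condition holds for all sufficiently small $\epsilon$ because $(p_i(1-\epsilon))'\to p_i'$ as $\epsilon\to 0^+$. Jensen followed by \eqref{RHI-i} then gives
\[
\ave{w_i^{-(p_i(1-\epsilon))'}}_Q^{\frac{1}{(p_i(1-\epsilon))'}} \le \ave{w_i^{-p_i'(1+\epsilon_i)}}_Q^{\frac{1}{p_i'(1+\epsilon_i)}} \lesssim \ave{w_i^{-p_i'}}_Q^{\frac{1}{p_i'}},
\]
which combined with the unchanged $\ave{(uw)^q}_Q^{1/q}$ and $\|u^{-1}\|_{L^\infty(Q)}$ shows $\vec w \in A^u_{(p_1(1-\epsilon),\ldots,p_m(1-\epsilon)),q}$. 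The final inclusion into $A^u_{(p_1(1-\epsilon),\ldots,p_m(1-\epsilon)),q(1-\epsilon)}$ is again Jensen, since $q(1-\epsilon)<q$ gives $\ave{(uw)^{q(1-\epsilon)}}_Q^{1/(q(1-\epsilon))}\le \ave{(uw)^q}_Q^{1/q}$.

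Since \eqref{RHI-0}--\eqref{RHI-i} have already been established in the paragraph immediately preceding the proposition, no serious obstacle remains; the argument is essentially a routine self-improvement of exponents, and the hardest step is bookkeeping. The one point that requires care is the simultaneous choice of $\epsilon$ in~(ii), which must satisfy both $\epsilon<\min_i(p_i-1)$ and $(p_i(1-\epsilon))'\le p_i'(1+\epsilon_i)$ for all $i$; an elementary computation shows the second condition reduces to $\epsilon\le (p_i-1)\epsilon_i/(1+p_i\epsilon_i)$, so any sufficiently small $\epsilon>0$ does the job.
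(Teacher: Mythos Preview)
Your proof is correct and follows exactly the approach the paper intends: the paper simply states that Proposition~\ref{Apqu-q+} is an immediate consequence of \eqref{RHI-0} and \eqref{RHI-i}, and you have carefully filled in the details, using reverse H\"older to raise exponents and Jensen to lower them, while carrying the $\|u^{-1}\|_{L^\infty(Q)}$ factor through unchanged. The explicit quantitative bound $\epsilon\le (p_i-1)\epsilon_i/(1+p_i\epsilon_i)$ in part~(ii) is a nice touch that the paper omits.
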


It is well known that if $w \in A_{p}$, then $w^{\eta} \in A_{p}$ for $0 < \eta < 1$.
Note that there are some minor differences for the weight in $A_{p,q}^u$.

\begin{prop}\label{aeta}
Let $u\in A_{1}$, $1\leq p_{1},\ldots,p_{m}\leq \infty$, $0<q<\infty$ and $0<\eta<1$. When $q\leq r_{u}$, for $\vec{w}
=(w_{1},\ldots,w_{m})\in  A_{\vec p, q}^{u}$, we have $\vec{w}^{\eta}=(w_{i}^{\eta},\cdots w_{n}^{\eta})\in A_{\vec p,q}^{u}$
and $[\vec w^{\eta}]_{A^{u}_{\vec p,q}}\lesssim [\vec w]^{\eta}_{A^{u}_{\vec p,q}}$.
Conversely, when $q> r_{u}$, there exist $u\in A_{1}$ and $\vec{w}\in  A_{\vec p, q}^{u}$ such that  $\vec{w}^{\eta}\notin A_{\vec p,q}^{u}$.
\end{prop}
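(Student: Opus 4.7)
The plan for the forward direction is to control
\begin{equation*}
[\vec w^\eta]_{A^u_{\vec p, q}} = \sup_Q \langle (u w^\eta)^q \rangle_Q^{1/q}\, \|u^{-1}\|_{L^\infty(Q)}\, \prod_{i=1}^m \langle w_i^{-\eta p_i'} \rangle_Q^{1/p_i'},
\end{equation*}
where $w^\eta=\prod_i w_i^\eta$, by bounding each factor separately. The weight-only factors are immediately handled by Jensen's inequality applied to the concave map $t\mapsto t^\eta$, which gives $\langle w_i^{-\eta p_i'}\rangle_Q^{1/p_i'}\leq \langle w_i^{-p_i'}\rangle_Q^{\eta/p_i'}$ (with the natural $\esssup$ interpretation when $p_i\in\{1,\infty\}$). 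For the mixed factor I would decompose $u^q w^{\eta q}=u^{q(1-\eta)}\cdot (uw)^{q\eta}$ and apply H\"older's inequality with conjugate exponents $1/(1-\eta)$ and $1/\eta$ to obtain
\begin{equation*}
\langle u^q w^{\eta q}\rangle_Q^{1/q}\leq \langle u^q\rangle_Q^{(1-\eta)/q}\,\langle (uw)^q\rangle_Q^{\eta/q}.
\end{equation*}

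The hypothesis $q\leq r_u$ together with $u\in A_1$ yields $u\in A_1\cap RH_q$, so
\begin{equation*}
\langle u^q\rangle_Q^{1/q}\lesssim \langle u\rangle_Q\leq [u]_{A_1}\,\|u^{-1}\|_{L^\infty(Q)}^{-1}.
\end{equation*}
Substituting this and multiplying the three pieces, the exterior $\|u^{-1}\|_{L^\infty(Q)}$ combines with the $(1-\eta)$ reverse-H\"older power as $\|u^{-1}\|_{L^\infty(Q)}^{-(1-\eta)+1}=\|u^{-1}\|_{L^\infty(Q)}^\eta$, and collecting everything gives $[\vec w^\eta]_{A^u_{\vec p,q}}\lesssim [u]_{A_1}^{1-\eta}\,[\vec w]_{A^u_{\vec p,q}}^\eta$, which is the asserted bound.

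For the converse I would construct an explicit power-weight counterexample via Remark \ref{remark-power}. Fix $m=1$ and $1<p<\infty$; take $u(x)=|x|^{-a}$ with $a\in (d/q,d)$ close to $d/q$, so that $r_u=d/a<q$ and $\delta:=-d/q+a>0$ is small. By Remark \ref{remark-power}, $|x|^b\in A^u_{p,q}$ iff $\delta<b<d/p'$; since $\eta<1$ and $\delta>0$ the value $\delta/\eta$ strictly exceeds $\delta$, so choosing $b$ in the non-empty interval $(\delta,\min\{\delta/\eta,d/p'\})$ yields $w_1=|x|^b\in A^u_{p,q}$ while $b\eta\leq \delta$ forces $w_1^\eta=|x|^{b\eta}\notin A^u_{p,q}$. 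The main obstacle I anticipate is the endpoint bookkeeping: the boundary case $q=r_u$ (where $u$ is only in $RH_r$ for $r<r_u$) and the degenerate choices $p_i\in\{1,\infty\}$ must be read via the $A_\infty$ self-improvement and the correct $\esssup$ conventions, and the counterexample must be adapted (e.g., by moving the $|x|^b$ factor to a component with $p_i>1$) when all $p_i=1$; both adjustments are straightforward once the main computation above is in hand.
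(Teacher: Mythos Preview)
Your argument is correct and follows the same route as the paper: split $u^q w^{\eta q}=u^{q(1-\eta)}(uw)^{q\eta}$, apply H\"older, bound the $u$-factor via reverse H\"older and $A_1$, and handle the $w_i$-factors by Jensen; the counterexample via Remark~\ref{remark-power} is also the paper's choice.

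The one place where your write-up is incomplete is exactly the endpoint $q=r_u$ that you flag. Since $r_u=\sup\{r: u\in RH_r\}$ is by self-improvement never attained, your H\"older split with exponents $\tfrac{1}{1-\eta},\tfrac{1}{\eta}$, which needs $u\in RH_q$, genuinely fails there. The ``$A_\infty$ self-improvement'' you allude to is the correct fix, but it must be applied to $(uw)^q\in A_\infty$ (Corollary~\ref{cor-Apqu} and \eqref{RHI-0}), not to $u$: taking $\epsilon_0>0$ from \eqref{RHI-0} and running H\"older with exponents $\tfrac{1+\epsilon_0}{1+\epsilon_0-\eta}$ and $\tfrac{1+\epsilon_0}{\eta}$ gives
\[
\langle u^q w^{q\eta}\rangle_Q^{1/q}
\le
\big\langle u^{\,q(1-\eta)\frac{1+\epsilon_0}{1+\epsilon_0-\eta}}\big\rangle_Q^{\frac{1+\epsilon_0-\eta}{q(1+\epsilon_0)}}
\big\langle (uw)^{q(1+\epsilon_0)}\big\rangle_Q^{\frac{\eta}{q(1+\epsilon_0)}}
\lesssim \langle u\rangle_Q^{1-\eta}\langle (uw)^q\rangle_Q^{\eta/q},
\]
because the $u$-exponent $s=q(1-\eta)\tfrac{1+\epsilon_0}{1+\epsilon_0-\eta}$ is strictly less than $q\le r_u$, so $u\in RH_s$ holds. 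This is precisely the refinement the paper uses, and with it your proof goes through uniformly in $q\le r_u$.
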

\begin{proof}
{\bf Case 1. $q\leq r_{u}$}
\vskip 0.1 true cm

Given $ w \in  A_{\vec p, q}^{u}$, let $\epsilon_{0}>0$ be as in \eqref{RHI-0}. Then,
by $0<(1-\eta)\cdot \frac{1+\epsilon_{0}}{1+\epsilon_{0}-\eta}<1$, we arrive at
\begin{align*}
\ave{u^{q}w^{q\eta}}^{\frac{1}{q}}_{Q}&\leq \big\langle u^{q(1-\eta)\cdot \frac{1+\epsilon_{0}}{1+\epsilon_{0}-\eta}}\big\rangle_{Q}^{\frac{1+\epsilon_{0}-\eta}{q(1+\epsilon_{0})}}
\big\langle(uw)^{q(1+\epsilon_{0})}\big\rangle ^{\frac{\eta}{q(1+\epsilon_{0})}}_{Q}
\lesssim \ave{u}_{Q}^{1-\eta}\ave{u^{q}w^{q}}^{\frac{\eta}{q}}_{Q}.
\end{align*}
Therefore,
\begin{equation*}
\begin{aligned}
\big[\vec w^{\eta}\big]_{A^{u}_{\vec p,q}}
&=\sup_{Q}\ave{u^{q}w^{q\eta}}^{\frac{1}{q}}_{Q}\|u^{-1}\|_{L^{\infty}(Q)}
\prod_{i=1}^{m}\ave{w_{i}^{-p'_{i}\eta}}_{Q}^{\frac{1}{p'_{i}}}\\
&\lesssim \sup_{Q} \ave{u}_{Q}^{1-\eta}\ave{u^{q}w^{q}}^{\frac{\eta}{q}}_{Q}\|u^{-1}\|_{L^{\infty}(Q)}
\prod_{i=1}^{m}\ave{w_{i}^{-p'_{i}}}_{Q}^{\frac{\eta}{p'_{i}}}\lesssim [\vec w]^{\eta}_{A^{u}_{\vec p,q}}.
\end{aligned}
\end{equation*}

{\bf Case 2. $q> r_{u}$}
\vskip 0.1 true cm

Without loss of generality, only $m = 1$ and $\vec{p}=p>1$ are considered.
Taking $u(x)=|x|^{-a}$, where $0<a=d/r_{u}<\min\{d,d/q\}$. If $w(x)=|x|^{-d/q+a+\epsilon}$ with $\epsilon\in (0, \frac{(1-\eta)(-d/q+a)}{1+\eta})$ being small enough such that $0<-d/q+a<-d/q+a+\epsilon<d/p',$
then $w\in A^{u}_{p,q}$ holds by Remark \ref{remark-power}. On the other hand, if $0<\epsilon<\frac{(1-\eta)(-d/q+a)}{1+\eta}$, then
$(-d/q+a+\epsilon)\eta<-d/q+a$
and $w^{\eta}\notin A_{p,q}^{u}$.
\end{proof}

\begin{prop}\label{aeta}
Let $u\in A_{1}$, $1\leq p_{1},\ldots,p_{m}\leq \infty$ and $0<q<\infty$. For $\vec w=(w_{1},\ldots,w_{n})\in A^{u}_{\vec p,q},$
there exists a constant $\epsilon>0$ such that $\vec w^{1+\epsilon}=(w_{1}^{1+\epsilon},\ldots,w_{m}^{1+\epsilon})\in A^{u}_{\vec p,q}$
and $[\vec w^{1+\epsilon}]_{A^{u}_{\vec p,q}}\lesssim [\vec w]^{1+\epsilon}_{A^{u}_{\vec p,q}}$.
\end{prop}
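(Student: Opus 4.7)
The goal is an openness statement for $A^{u}_{\vec p,q}$: each $w_{i}$ may be replaced by $w_{i}^{1+\epsilon}$ without leaving the class, and with quantitative control on the characteristic. The plan is to use the same ingredients as in the preceding $0<\eta<1$ proposition, but with H\"older's inequality applied in the reverse direction, which forces an auxiliary $u^{-q\epsilon}$-factor that must be absorbed by $\|u^{-1}\|_{L^{\infty}(Q)}$. The existence of a good $\epsilon$ comes from the reverse H\"older inequalities \eqref{RHI-0} and \eqref{RHI-i}, which are already in hand thanks to Corollary \ref{cor-Apqu} and the classical openness of $A_{\infty}$.

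More concretely, I would choose $\epsilon\in(0,\min\{\epsilon_{0},\epsilon_{1},\ldots,\epsilon_{m}\})$, where $\epsilon_{0},\ldots,\epsilon_{m}$ are the exponents appearing in \eqref{RHI-0}--\eqref{RHI-i} for the given $\vec w$. For a cube $Q$, the central estimate would split $u^{q}w^{q(1+\epsilon)}=u^{-q\epsilon}\cdot(uw)^{q(1+\epsilon)}$ and apply H\"older with conjugate exponents $\frac{1+\epsilon_{0}}{\epsilon_{0}-\epsilon}$ and $\frac{1+\epsilon_{0}}{1+\epsilon}$: the first integral is bounded trivially by $\|u^{-1}\|_{L^{\infty}(Q)}^{q\epsilon}$, and the second by \eqref{RHI-0}, yielding
\begin{equation*}
\langle u^{q}w^{q(1+\epsilon)}\rangle_{Q}^{1/q}\lesssim \|u^{-1}\|_{L^{\infty}(Q)}^{\epsilon}\,\langle u^{q}w^{q}\rangle_{Q}^{(1+\epsilon)/q}.
\end{equation*}
For each $i$ with $1<p_{i}\leq\infty$, since $1+\epsilon\leq 1+\epsilon_{i}$, Jensen's inequality followed by \eqref{RHI-i} gives directly $\langle w_{i}^{-p_{i}'(1+\epsilon)}\rangle_{Q}^{1/p_{i}'}\lesssim \langle w_{i}^{-p_{i}'}\rangle_{Q}^{(1+\epsilon)/p_{i}'}$; the boundary case $p_{i}=1$ is trivial since $\|w_{i}^{-(1+\epsilon)}\|_{L^{\infty}(Q)}=\|w_{i}^{-1}\|_{L^{\infty}(Q)}^{1+\epsilon}$.

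To finish, I would multiply these estimates together with the outer factor $\|u^{-1}\|_{L^{\infty}(Q)}$, using the identity $\|u^{-1}\|_{L^{\infty}(Q)}^{\epsilon}\cdot\|u^{-1}\|_{L^{\infty}(Q)}=\|u^{-1}\|_{L^{\infty}(Q)}^{1+\epsilon}$, and take the supremum over $Q$ to deduce $[\vec w^{\,1+\epsilon}]_{A^{u}_{\vec p,q}}\lesssim [\vec w]^{1+\epsilon}_{A^{u}_{\vec p,q}}$. The only delicate point --- not really an obstacle --- is the tracking of the extra $u^{-q\epsilon}$ term produced when $u$ is extracted from the product $uw$ inside \eqref{RHI-0}; the $L^{\infty}$-bound on $u^{-1}$ that is built into the partial norm makes this term combine cleanly with the existing $\|u^{-1}\|_{L^{\infty}(Q)}$-factor, so no structural novelty beyond classical $A_{\infty}$-openness is needed.
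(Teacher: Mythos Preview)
Your proposal is correct and follows essentially the same approach as the paper: split $u^{q}w^{q(1+\epsilon)}=u^{-q\epsilon}(uw)^{q(1+\epsilon)}$, absorb $u^{-q\epsilon}$ into $\|u^{-1}\|_{L^{\infty}(Q)}^{\epsilon}$, apply the reverse H\"older inequalities \eqref{RHI-0}--\eqref{RHI-i}, and combine. The only cosmetic difference is that the paper bounds $u^{-q\epsilon}\le\|u^{-1}\|_{L^{\infty}(Q)}^{q\epsilon}$ pointwise before applying \eqref{RHI-0} directly (with $\epsilon=\min\{\epsilon_{0},\ldots,\epsilon_{m}\}$), whereas you interpose a H\"older step with exponents $\tfrac{1+\epsilon_{0}}{\epsilon_{0}-\epsilon}$ and $\tfrac{1+\epsilon_{0}}{1+\epsilon}$; both routes yield the same estimate.
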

\begin{proof}
For $ w \in  A_{\vec p, q}^{u}$, let $\epsilon_{0}, \ldots,\epsilon_{m}>0$ be as in \eqref{RHI-0} and \eqref{RHI-i}. Taking $\epsilon=\min\{\epsilon_{0},\epsilon_{1},\ldots, \epsilon_{m}\}$. Then
\begin{align*}
\big\langle u^{q}w^{q(1+\epsilon)}\big\rangle^{\frac{1}{q}}_{Q}=\big\langle u^{-q\epsilon}(uw)^{q(1+\epsilon)}\big\rangle^{\frac{1}{q}}_{Q}&\leq \|u^{-1}\|_{L^{\infty}(Q)}^{\epsilon}\langle u^{q}w^{q}\rangle ^{\frac{1+\epsilon}{q}}_{Q}
\end{align*}
and
\begin{align*}
\big\langle w_i^{-p'_{i}(1+\epsilon)}\big\rangle^{\frac{1}{p'_{i}(1+\epsilon)}}_{Q}\lesssim \ave{w_i^{-p'_{i}}}^{\frac{1}{p'_{i}}}_{Q}, 
\qquad i=1,\ldots, m.
\end{align*}
This yields $\vec w^{1+\epsilon}\in A^{u}_{\vec p,q}$
and $[\vec w^{1+\epsilon}]_{A^{u}_{\vec p,q}}\lesssim [\vec w]^{1+\epsilon}_{A^{u}_{\vec p,q}}$.
\end{proof}

We end this subsection with the following consequence of the reverse H\"{o}lder property, which provides a characterization 
of partial multiple weights.
\begin{prop}\label{c2}
Let $u\in A_{1}$, $1\leq p_{1},\ldots,p_{m}\leq \infty$, $0<q<\infty$ and $0\leq a <1$.
For $ \vec w=(w_{1},\ldots, w_{m}) \in A_{\vec p, q}^{u^{a}}$, there is a
constant $b=b(n,\vec p, q, a ,[ w ]_{A_{\vec p, q}^{u^{a}}})$ with $b\in (a,1]$
such that $ \vec w \in A_{\vec p, q}^{\tilde{ a }}$. That is,
\begin{align}\label{YH-3}
A_{\vec p,q}^{u^{a}}=\bigcup_{b\in (a ,1]}A_{\vec p,q}^{u^{b}}.
\end{align}
Especially, for $a=0$, $A_{\vec p,q}=\bigcup_{b\in (0,1]}A_{\vec p,q}^{u^{b}}.$
\end{prop}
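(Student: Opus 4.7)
My plan is to prove the two inclusions in \eqref{YH-3} separately. The direction $A^{u^{b}}_{\vec p,q} \subset A^{u^{a}}_{\vec p,q}$ for $b > a$ is essentially immediate: the pointwise bound $u^{a-b} \le \|u^{-1}\|_{L^{\infty}(Q)}^{b-a}$ on a cube $Q$ gives $\langle(u^aw)^q\rangle_Q^{1/q}\le \|u^{-1}\|_{L^\infty(Q)}^{b-a}\langle(u^bw)^q\rangle_Q^{1/q}$, which, combined with $\|u^{-a}\|_{L^\infty(Q)}=\|u^{-1}\|_{L^\infty(Q)}^{a}$, transfers the $A^{u^{b}}$ quantity to the $A^{u^{a}}$ one with no loss.

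The substantive inclusion is the reverse one: given $\vec w \in A^{u^{a}}_{\vec p,q}$, I must exhibit some $b \in (a,1]$ with $\vec w \in A^{u^{b}}_{\vec p,q}$. The strategy rests on two reverse H\"older inequalities. Since $u \in A_{1} \subset A_{\infty}$, there exist $c,\delta > 0$ depending on $d$ and $[u]_{A_{1}}$ with $\langle u^{1+\delta}\rangle_{Q}^{1/(1+\delta)} \le c \langle u\rangle_{Q}$. And since $\vec w \in A^{u^{a}}_{\vec p,q}$, Corollary \ref{cor-Apqu} applied with $u^{a}$ in place of $u$ puts $(u^{a}w)^{q}$ in $A_{\infty}$, so \eqref{RHI-0} furnishes $C,\epsilon > 0$ (depending on $[\vec w]_{A^{u^{a}}_{\vec p,q}}$ and the other data) with $\langle (u^{a}w)^{q(1+\epsilon)}\rangle_{Q}^{1/(1+\epsilon)} \le C\langle (u^{a}w)^{q}\rangle_{Q}$.

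The core step is to factor $u^{b}w = u^{b-a}(u^{a}w)$ and apply H\"older's inequality with conjugate exponents $1+\epsilon$ and $(1+\epsilon)/\epsilon$ to obtain
\begin{equation*}
\langle (u^{b}w)^{q}\rangle_{Q} \le C\,\langle (u^{a}w)^{q}\rangle_{Q} \cdot \langle u^{(b-a)q(1+\epsilon)/\epsilon}\rangle_{Q}^{\epsilon/(1+\epsilon)}.
\end{equation*}
I will then choose $b := a + \min\{1-a,\; \epsilon(1+\delta)/(q(1+\epsilon))\}$, which lies in $(a,1]$ and forces the exponent on $u$ in the last factor to be at most $1+\delta$. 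Jensen's inequality followed by the reverse H\"older for $u$ reduces that factor to $c^{(b-a)q}\langle u\rangle_{Q}^{(b-a)q}$. Taking $q$-th roots and multiplying by $\|u^{-b}\|_{L^{\infty}(Q)} = \|u^{-a}\|_{L^{\infty}(Q)}\|u^{-1}\|_{L^{\infty}(Q)}^{b-a}$, the extra product $\langle u\rangle_Q^{b-a}\|u^{-1}\|_{L^\infty(Q)}^{b-a}$ is absorbed by the $A_{1}$ bound $\langle u\rangle_{Q}\|u^{-1}\|_{L^{\infty}(Q)} \le [u]_{A_{1}}$. Multiplying through by the unchanged factors $\prod_i \langle w_i^{-p_i'}\rangle_Q^{1/p_i'}$ yields $[\vec w]_{A^{u^{b}}_{\vec p,q}} \lesssim [u]_{A_{1}}^{b-a}[\vec w]_{A^{u^{a}}_{\vec p,q}}$, completing the inclusion; the case $a = 0$ is then the stated reformulation via $A^{u^{0}}_{\vec p,q} = A_{\vec p,q}$ from the remark following Definition \ref{defn:Apqu}.

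The delicate part, I expect, will be the coordination between the two reverse H\"older exponents: the self-improvement $\epsilon$ from $\vec w \in A^{u^{a}}_{\vec p,q}$ depends on $[\vec w]_{A^{u^{a}}_{\vec p,q}}$, while $\delta$ depends on $[u]_{A_{1}}$, so the minimum above ties the final exponent $b$ to precisely those quantities, in agreement with the dependence $b = b(n,\vec p, q, a, [\vec w]_{A^{u^{a}}_{\vec p,q}})$ asserted in the statement (with an implicit dependence on $[u]_{A_{1}}$ through $\delta$). The cap $\min\{1-a,\cdot\}$ is the only device guaranteeing $b \le 1$, and it is essential since the bare Hölder balance $\epsilon(1+\delta)/(q(1+\epsilon))$ could a priori exceed $1-a$.
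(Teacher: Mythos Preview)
Your proposal is correct and follows essentially the same route as the paper's proof: the easy inclusion via the pointwise bound $u^{a-b}\le\|u^{-1}\|_{L^\infty(Q)}^{b-a}$, and the hard inclusion via H\"older's inequality against the reverse H\"older self-improvement $\epsilon_0$ of $(u^aw)^q$, followed by the $A_1$ bound $\langle u\rangle_Q\|u^{-1}\|_{L^\infty(Q)}\le[u]_{A_1}$. The one simplification in the paper is that it chooses $b=a+\epsilon_0/(q(1+\epsilon_0))$ so that the H\"older exponent on $u$ is exactly $1$, which makes your second reverse H\"older inequality for $u$ (and hence the parameter $\delta$) unnecessary.
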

\begin{proof}
We first show that for $0\leq a_{1}< a _{2}\leq 1$,
\begin{equation}\label{Apq-a1a2}
A_{\vec p, q}^{u^{a_{2}}}\subset A_{\vec p, q}^{u^{a_{1}}} \qquad \text {and} \qquad [\vec w]_{A_{\vec p, q}^{u^{a_{1}}}}\leq [\vec w]_{A_{\vec p, q}^{u^{a_{2}}}}.
\end{equation}
In fact, for any cube $Q$ and $u\in A_{1}$, one has
\begin{equation*}
\begin{aligned}
\big\langle \big(u^{a_{1}}w\big)^{q}\big\rangle^{\frac{1}{q}}_{Q}&=\big\langle u^{(a_{1}-a_{2})q}\cdot \big(u^{a_{2}}w\big)^{q}\big\rangle^{\frac{1}{q}}_{Q}\leq \|u^{-a_{1}}\|^{-1}_{L^{\infty}(Q)}\|u^{-a_{2}}\|_{L^{\infty}(Q)}\big\langle \big(u^{a_{2}}w\big)^{q}\big\rangle^{\frac{1}{q}}_{Q},
\end{aligned}
\end{equation*}
which yields \eqref{Apq-a1a2}.

Given $ w \in A_{\vec p, q}^{u^{a}}$, let $\epsilon_{0}>0$ be as in \eqref{RHI-0} and
choose $b=a+\frac{\epsilon_{0}}{q(1+\epsilon_{0})}$. Then
\begin{equation*}
\begin{aligned}
\big\langle \big(u^{b}w\big)^{q}\big\rangle^{\frac{1}{q}}_{Q}&=\big\langle u^{(b-a)q}\cdot \big(u^{a}w\big)^{q}\big\rangle^{\frac{1}{q}}_{Q}
\leq \big\langle \big(u^{a}w\big)^{q(1+\epsilon_{0})}\big\rangle^{\frac{1}{q(1+\epsilon_{0})}}_{Q}
\big\langle u^{\frac{(1+\epsilon_{0})(b-a)q}{\epsilon_{0}}}\big\rangle_{Q}^{\frac{\epsilon_{0}}{q(1+\epsilon_{0})}}\lesssim \big\langle \big(u^{a}w\big)^{q}\big\rangle^{\frac{1}{q}}_{Q} \ave{u}_{Q}^{b-a}.
\end{aligned}
\end{equation*}
Due to $\ave{u}_{Q}^{b-a}\|u^{-b}\|^{-1}_{L^{\infty}(Q)}\lesssim \|u^{-a}\|_{L^{\infty}(Q)}$,
then $[\vec{w}]_{A^{u^{b}}_{\vec{p},q}}\lesssim [\vec{w}]_{A^{u^{a}}_{\vec{p},q}}$. This means that \eqref{YH-3}
holds  by \eqref{Apq-a1a2}.
\end{proof}

\subsection{The relation between $A^{u}_{\vec p,q}$ and $A_{\vec p,q}$.}

Note that $A^u_{\vec{p}, q} \subset A_{\vec{p}, q}$ holds when $u \in A_{1}$.
We next study the following reverse embedding.

\begin{prop}\label{Apq-Appu}
Let $u\in A_{1}$, $1\leq p_{1},\ldots,p_{m}\leq \infty$ and $0<q<r_{u}$. Then $A_{\vec p,q}\subset A_{\vec p,\tilde{q}}^{u}$
for $\frac{1}{\tilde{q}}=\frac{1}{q}+\frac{1}{r_{u}}$.
\end{prop}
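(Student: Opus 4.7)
The plan is to verify directly the defining inequality of $A^u_{\vec p,\tilde q}$ from the hypothesis $\vec w\in A_{\vec p,q}$. Writing out both norms,
\[
[\vec{w}]_{A^u_{\vec p, \tilde q}} = \sup_Q \ave{(uw)^{\tilde q}}_Q^{1/\tilde q}\,\|u^{-1}\|_{L^\infty(Q)}\prod_{i=1}^m \ave{w_i^{-p_i'}}_Q^{1/p_i'},\qquad [\vec{w}]_{A_{\vec p, q}} = \sup_Q \ave{w^q}_Q^{1/q}\prod_{i=1}^m \ave{w_i^{-p_i'}}_Q^{1/p_i'},
\]
the factors involving the $w_i$ are identical, so the task reduces to the cube-wise bound
\[
\ave{(uw)^{\tilde q}}_Q^{1/\tilde q}\,\|u^{-1}\|_{L^\infty(Q)}\lesssim \ave{w^q}_Q^{1/q},\qquad\text{uniformly in } Q,
\]
with constant depending only on $[u]_{A_1}$ and the reverse H\"older constant of $u$. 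The degenerate cases $p_i\in\{1,\infty\}$ require no modification, since $\ave{w_i^{-p_i'}}_Q^{1/p_i'}$ is then interpreted as $\esssup_Q w_i^{-1}$ on both sides.

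The entire argument is driven by a single H\"older split of $(uw)^{\tilde q}=u^{\tilde q}\cdot w^{\tilde q}$. I would apply H\"older with the exponents $r_u/\tilde q$ and $q/\tilde q$, which are conjugate precisely because of the defining identity $\frac{1}{\tilde q}=\frac{1}{q}+\frac{1}{r_u}$ (and both exceed $1$ since $\tilde q<q<r_u$). This yields
\[
\ave{(uw)^{\tilde q}}_Q^{1/\tilde q}\le \ave{u^{r_u}}_Q^{1/r_u}\,\ave{w^q}_Q^{1/q}.
\]
The reverse H\"older property $u\in RH_{r_u}$ furnished by the critical index then gives $\ave{u^{r_u}}_Q^{1/r_u}\lesssim \ave{u}_Q$, while $u\in A_1$ yields $\ave{u}_Q\,\|u^{-1}\|_{L^\infty(Q)}\le [u]_{A_1}$. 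Chaining these three estimates produces the required cube-wise bound, and multiplying by $\prod_i \ave{w_i^{-p_i'}}_Q^{1/p_i'}$ followed by $\sup_Q$ gives the quantitative inclusion $[\vec w]_{A^u_{\vec p,\tilde q}}\lesssim [u]_{A_1}\,[\vec w]_{A_{\vec p,q}}$.

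The main obstacle is the application of reverse H\"older precisely at the endpoint exponent $r_u$. Because $r_u=\sup\{r>1:u\in RH_r\}$ is defined as a supremum, it need not be attained, in which case $u\notin RH_{r_u}$ in the strict sense. I plan to bypass this by approximation in the spirit of the proof of Proposition \ref{power1}: pick $r\in(q,r_u)$ with $u\in RH_r$, run the above argument with $r$ in place of $r_u$ to obtain the inclusion $A_{\vec p,q}\subset A^u_{\vec p,\tilde q_r}$ with $\tilde q_r=(1/q+1/r)^{-1}<\tilde q$, and then pass to the limit $r\to r_u^-$, exploiting the monotonicity of $p\mapsto \ave{|f|^p}_Q^{1/p}$ and monotone convergence to recover the endpoint exponent $\tilde q$. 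The rest of the argument is routine bookkeeping.
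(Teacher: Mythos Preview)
Your reduction to the cube-wise bound and the H\"older split are correct, and you rightly identify the obstacle at the critical exponent $r_u$. The gap is in your proposed fix: the limiting argument does not work. For each $r<r_u$ you obtain
\[
\ave{(uw)^{\tilde q_r}}_Q^{1/\tilde q_r}\,\|u^{-1}\|_{L^\infty(Q)}\prod_i\ave{w_i^{-p_i'}}_Q^{1/p_i'}\le [u]_{RH_r}\,[u]_{A_1}\,[\vec w]_{A_{\vec p,q}},
\]
but since $r_u$ is \emph{never} attained when finite (the set $\{r:u\in RH_r\}$ is open at the top by self-improvement), the constant $[u]_{RH_r}$ genuinely blows up as $r\to r_u^-$. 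Monotone convergence on the left only tells you that $\ave{(uw)^{\tilde q_r}}_Q^{1/\tilde q_r}\uparrow\ave{(uw)^{\tilde q}}_Q^{1/\tilde q}$; with the right-hand side diverging, this yields no control at the endpoint $\tilde q$.

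The missing idea is that the needed slack should come from $w$, not from $u$. Since $\vec w\in A_{\vec p,q}$ implies $w^q\in A_\infty$ (Proposition~\ref{cha:Apq}), reverse H\"older gives $\vec w\in A_{\vec p,\,q(1+\epsilon)}$ for some $\epsilon>0$ depending on $\vec w$. Now run your H\"older split with exponents $\frac{q(1+\epsilon)}{\tilde q}$ and its conjugate: the exponent landing on $u$ becomes $\tilde q\bigl(\frac{q(1+\epsilon)}{\tilde q}\bigr)'$, which is \emph{strictly} less than $r_u$ (it equals $r_u$ exactly when $\epsilon=0$). Hence $u$ is evaluated safely inside its reverse H\"older range, and the argument closes. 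This is precisely the route the paper takes.
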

\begin{proof}
For any $\vec w=(w_{1},\ldots,w_{m})\in A_{\vec p,q}$, we have $w^{q}\in A_{\infty}$.
In addition, there is a constant $\epsilon>0$ such that $\vec w\in A_{\vec p,q(1+\epsilon)}$
and $[\vec w]_{A_{\vec p,q(1+\epsilon)}}\lesssim [\vec w]_{A_{\vec p,q}}$. By H\"{o}lder inequality
with exponent $\frac{q(1+\epsilon)}{\tilde{q}}$ and $(\frac{q(1+\epsilon)}{\tilde{q}})'$,
one has that due to $\tilde{q}(\frac{q(1+\epsilon)}{\tilde{q}})'<r_{u}$,
\begin{align*}
[\vec w]_{A^{u}_{\vec p,\tilde{q}}}&=\sup_{Q}\ave{u^{\tilde{q}}w^{\tilde{q}}}^{\frac{1}{\tilde{q}}}_{Q} \big(\prod_{i=1}^m\ave{w_i^{-p_i'}}^{\frac{1}{p_i'}}_{Q}\big) \big\|u^{-1}\big\|_{L^{\infty}(Q)}\\
&\leq \sup_{Q}\ave{w^{q(1+\epsilon)}}^{\frac{1}{q(1+\epsilon)}}_{Q}
\ave{u^{\tilde{q}(\frac{q(1+\epsilon)}{\tilde{q}})'}}^{\frac{1}{(\frac{q(1+\epsilon)}{\tilde{q}})'\tilde{q}}}_{Q}
\big(\prod_{i=1}^m\ave{w_i^{-p_i'}}^{\frac{1}{p_i'}}_{Q}\big) \big\|u^{-1}\big\|_{L^{\infty}(Q)}\\
&\lesssim \sup_{Q}\ave{w^{q(1+\epsilon)}}^{\frac{1}{q(1+\epsilon))}}_{Q}
\big(\prod_{i=1}^m\ave{w_i^{-p_i'}}^{\frac{1}{p_i'}}_{Q}\big)
\ave{u}_{Q} \big\|u^{-1}\big\|_{L^{\infty}(Q)}=[\vec w]_{A_{\vec p, q(1+\epsilon)}^{u}}.
\end{align*}
This, together with  Proposition \ref{Apqu-q+}, derives Proposition \ref{Apq-Appu}.
\end{proof}

We further analyze the connection between $A^{u}_{\vec p,q}$ and $A_{\vec p,q}$, as described below.
\begin{prop}\label{Apq-Apqu}
Assume $u\in A_{1}$, $0<q\leq \infty$ and $1\leq p\leq \infty$.

(i) If $w\in A_{p, q}^{u}$, then $uw\in A_{p, q}.$

(ii) If $w\in A_{p,q}$ with $1<p<r_{u}$, then $u^{-1}w\in A_{\frac{pr_{u}}{r_{u}-p},q}^{u}$. Moreover,
the index $\frac{pr_{u}}{r_{u}-p}$ in $A_{\frac{pr_{u}}{r_{u}-p},q}^{u}$ is sharp.

(iii) If $w\in A_{1,q}$, then $u^{-1}w\in A_{r, q}^{u}$ for any $r\in (r'_{u},\infty)$.
In addition, there exists a function $w$ such that $w\in A_{1,q}$ and $u^{-1}w\notin A_{r'_{u},q}^{u}$.
\end{prop}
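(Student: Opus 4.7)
Part (i) is immediate from unpacking the definitions: since $u^{-1}(x)\le\|u^{-1}\|_{L^{\infty}(Q)}$ on $Q$, one has
$\langle(uw)^{-p'}\rangle_Q^{1/p'}\le\|u^{-1}\|_{L^{\infty}(Q)}\langle w^{-p'}\rangle_Q^{1/p'}$,
and multiplying by $\langle(uw)^q\rangle_Q^{1/q}$ recovers exactly the combination appearing in $[w]_{A_{p,q}^u}$, so $[uw]_{A_{p,q}}\le[w]_{A_{p,q}^u}$.

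For part (ii), my plan is to expand
\[
[u^{-1}w]_{A_{\tilde p,q}^u}=\sup_Q\langle w^q\rangle_Q^{1/q}\,\langle u^{\tilde p'}w^{-\tilde p'}\rangle_Q^{1/\tilde p'}\,\|u^{-1}\|_{L^{\infty}(Q)},\qquad \tilde p=\tfrac{pr_u}{r_u-p},
\]
and exploit the identity $\tfrac{1}{\tilde p'}=\tfrac{1}{p'}+\tfrac{1}{r_u}$. This invites a H\"older split with conjugate pair $(\alpha,\alpha')$ satisfying $\tilde p'\alpha=r_u$ and $\tilde p'\alpha'=p'$, which would produce $\langle u^{r_u}\rangle_Q^{1/r_u}\langle w^{-p'}\rangle_Q^{1/p'}$. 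Because $u\notin RH_{r_u}$ at the critical index, I would compensate using the openness of $A_{p,q}$: since $w\in A_{p,q}$ one has $w\in A_{p-\delta,q}$ for some small $\delta>0$, and for each such $\delta$ there is a unique $r=r(\delta)\in(p,r_u)$ satisfying $\tfrac{1}{\tilde p}=\tfrac{1}{p-\delta}-\tfrac{1}{r}$. Performing the H\"older split with this $r$ and $(p-\delta)'$ in place of $r_u$ and $p'$, then invoking $u\in RH_r$ (valid since $r<r_u$) together with the $A_1$ bound $\langle u\rangle_Q\|u^{-1}\|_{L^{\infty}(Q)}\le[u]_{A_1}$, closes the estimate and yields $[u^{-1}w]_{A_{\tilde p,q}^u}\lesssim[u]_{A_1}[w]_{A_{p-\delta,q}}$. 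For sharpness I will test with power weights $u(x)=|x|^{-a}$ (so $r_u=d/a$, requiring $a\in(0,d/p)$) and $w(x)=|x|^b$: the constraint $r<\tilde p$ is equivalent to $d/r'-a<d/p'$, so for any such $r$ one can choose $b$ in the nonempty interval $\bigl(\max\{d/r'-a,\,-d/q\},\,d/p'\bigr)$, giving $w\in A_{p,q}$ by Proposition \ref{power1} (with $u\equiv1$); on $Q(0,R)$ the factor $\langle|x|^{-(a+b)r'}\rangle_Q^{1/r'}$ then diverges (because $(a+b)r'\ge d$), forcing $u^{-1}w\notin A_{r,q}^u$.

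Part (iii) proceeds in parallel but more cleanly: the endpoint $p=1$ replaces the $L^{p'}$ factor by $\|w^{-1}\|_{L^{\infty}(Q)}$, so the key estimate
$\langle u^{r'}w^{-r'}\rangle_Q^{1/r'}\le\|w^{-1}\|_{L^{\infty}(Q)}\langle u^{r'}\rangle_Q^{1/r'}$
together with $u\in RH_{r'}$ (since $r'<r_u$ whenever $r>r_u'$) and the $A_1$ bound closes the argument. For the endpoint failure at $r=r_u'$, I will take $u(x)=|x|^{-a}$ and $w\equiv1\in A_{1,q}$: on $Q(0,R)$ the factor $\langle u^{r_u}\rangle_Q^{1/r_u}=\bigl(\tfrac{1}{R^d}\int_{|x|<R}|x|^{-d}\,dx\bigr)^{1/r_u}$ is infinite, so $u^{-1}w=|x|^a\notin A_{r_u',q}^u$.

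The main obstacle throughout is the strict failure of $u\in RH_{r_u}$ at the critical index in (ii); the decisive device is the openness of $A_{p,q}$, which provides one extra parameter $\delta$ allowing $r$ to be shifted just below $r_u$ while keeping the final target index $\tilde p$ fixed.
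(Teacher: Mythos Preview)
Your arguments for the positive assertions in (i), (ii), and (iii) are correct and essentially coincide with the paper's: in (ii) you invoke the openness $w\in A_{p,q}\Rightarrow w\in A_{p-\delta,q}$, which is exactly the reverse H\"older inequality for $w^{-p'}$ that the paper uses, so the two H\"older splits are the same computation in different notation.

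There is, however, a scope gap in your sharpness arguments. The proposition is stated for a \emph{given} $u\in A_1$, so the sharpness in (ii) and the endpoint failure in (iii) must be established for arbitrary $u$, not just for $u(x)=|x|^{-a}$. For (iii) your choice $w\equiv 1$ already works in general once you replace the explicit divergence of $\int|x|^{-d}$ by the definition of $r_u$: since $u\notin RH_{r_u}$ one has $\sup_Q\langle u^{r_u}\rangle_Q^{1/r_u}\langle u\rangle_Q^{-1}=\infty$, and combining with $\|u^{-1}\|_{L^\infty(Q)}\ge\langle u\rangle_Q^{-1}$ gives $[u^{-1}]_{A_{r_u',q}^u}=\infty$. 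For (ii) the paper's device is different from yours: rather than fixing $u$ as a power weight, it builds $w$ as a power of the given $u$. Concretely, for $\bar p\in(p,\tilde p)$ one takes $w=u^{1-\gamma}$ with $\gamma=\tfrac{r_u}{2}\bigl(\tfrac{1}{\tilde p'}+\tfrac{1}{\bar p'}\bigr)$; then $\gamma>1$ and $(\gamma-1)p'<r_u$, so $w^{-p'}=u^{(\gamma-1)p'}$ is controlled by reverse H\"older while $w^q=u^{-(\gamma-1)q}$ is controlled by $u\in A_1$, giving $w\in A_{p,q}$. On the other hand $u^{-1}w=u^{-\gamma}$ and $\gamma\bar p'>r_u$, which forces $u^{-1}w\notin A_{\bar p,q}^u$ again by the definition of $r_u$. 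This construction is the idea your power-weight example does not supply for general $u$.
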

\begin{proof}
$(i)$ For $p>1$, by the definition of $A_{p, q}$ and $u\in A_{1}$, we have
\begin{align*}
[uw]_{A_{p,q}}&=\sup_{Q}\ave{u^{q}w^{q}}^{\frac 1q }_{Q}\ave{u^{-p'}w^{-p'}}^{\frac 1p '}_{Q}
\leq \sup_{Q}\ave{u^{q}w^{q}}^{\frac 1q }_{Q} \ave{w^{-p'}}^{\frac 1p '}_{Q} \big\|u^{-1}\big\|_{L^{\infty}(Q)}=[w]_{A_{p, q}^{u}}.
\end{align*}
An analogous argument holds for $p=1$.

$(ii)$ $w\in A_{p,q}$ implies that $w^{-1}\in A_{q',p'}$ holds and there exists a constant $\epsilon>0$ such that for any cube $Q$,
$\big\langle w^{-p'(1+\epsilon)}\big\rangle^{\frac{1}{p'(1+\epsilon)}}_{Q}\lesssim \ave{w^{-p'}}^{\frac{1}{p'}}_{Q}.$
Set $s=\frac{(r_{u}+p')(1+\epsilon)}{r_{u}}$. Then $\frac{p'r_{u}}{r_{u}+p'}s'<r_{u}$ and
\begin{equation}\label{Apq-Apqu-1}
\begin{aligned}
&\big\langle(u^{-1}w)^{-\frac{p'r_{u}}{r_{u}+p'}}\big\rangle^{\frac{r_{u}+p'}{p'r_{u}}}_{Q}\lesssim \langle w^{-p'(1+\epsilon)}\rangle_{Q}^{\frac{1}{p'(1+\epsilon)}} \ave{u^{\frac{p'r_{u}}{r_{u}+p'}s'}}^{\frac{r_{u}+p'}{p'r_{u}s'}}_{Q}\lesssim \ave{w^{-p'}}^{\frac{1}{p'}}_{Q}\ave{u}_{Q}.
\end{aligned}
\end{equation}
It follows from \eqref{Apq-Apqu-1} that
\begin{align*}
[u^{-1}w]_{A_{\frac{pr_{u}}{r_{u}-p},q}^{u}}
&=\sup_{Q}\ave{w^{q}}_{Q}^{\frac{1}{q}}\big\langle(u^{-1}w)^{-\frac{p'r_{u}}{r_{u}+p'}}
\big\rangle^{\frac{p'r_{u}}{r_{u}+p'}}_{Q}\|u^{-1}\|_{L^{\infty}(Q)}\\
&\lesssim \sup_{Q}\ave{w^{q}}_{Q}^{\frac{1}{q}}\ave{w^{-p'}}^{\frac{1}{p'}}_{Q}\ave{u}_{Q}\|u^{-1}\|_{L^{\infty}(Q)}\lesssim [w]_{A_{p,q}}.
\end{align*}
Hence, $w\in A_{p,q}$ implies $u^{-1}w\in A_{\frac{pr_{u}}{r_{u}-p},q}^{u}$.

Next we prove that for any $\bar{p}\in \big(p, p_{u}\big)$ with $p_{u}=\frac{pr_{u}}{r_{u}-p}$, there is a function $w$ such that $w\in A_{p,q}$ and $u^{-1}w\notin A_{\bar{p}, q}^{u}$.

Note that
$\frac{1}{2}(\frac{1}{p'_{u}}+\frac{1}{\bar{p}'})>\frac{1}{p}>\frac{1}{r_{u}}$ and $\frac{1}{2}(\frac{1}{p'_{u}}+\frac{1}{\bar{p}'})< \frac{1}{p'_{u}}=\frac{1}{p}+\frac{1}{r_{u}}$.
Then $1-\frac{r_{u}}{2}(\frac{1}{p'_{u}}+\frac{1}{\bar{p}'})<0$ and $(\frac{r_{u}}{2}(\frac{1}{p'_{u}}+\frac{1}{\bar{p}'}-1)p'<r_{u}$
hold. This yields $w\in A_{p,q}$ for $w=u^{1-\frac{r_{u}}{2}(\frac{1}{p'_{u}}+\frac{1}{\bar{p}'})}$.

On the other hand, by $u^{-1}w=u^{-\frac{r_{u}}{2}(\frac{1}{p'_{u}}+\frac{1}{\bar{p}'})}$ and
$\frac{r_{u}}{2}(\frac{1}{p'_{u}}+\frac{1}{\bar{p}'})\bar{p}'>r_{u},$
we have that for any cube $Q$,
$\ave{(u^{-1}w)^{-\bar{p}'}}_{Q}=\ave{u^{\frac{r_{u}}{2}(\frac{1}{p'_{u}}+\frac{1}{\bar{p}'})\bar{p}'}}_{Q}>\ave{u^{r_{u}}}_{Q}$.
Thus, $w\notin A_{\bar{p},q}^{u}$ by the definition of $r_{u}$.

$(iii)$ By direct computation and $r'<r_{u}$, one has
\begin{align*}
[u^{-1}w]_{A_{r,q}^{u}}
&=\sup_{Q}\ave{w^{q}}_{Q}^{\frac{1}{q}}\ave{(u^{-1}w)^{-r'}}^{\frac{1}{r'}}_{Q}\|u^{-1}\|_{L^{\infty}(Q)}\\
&\lesssim \sup_{Q}\ave{w^{q}}_{Q}^{\frac{1}{q}}\ave{u^{r'}}^{\frac{1}{r'}}_{Q}\|w^{-1}\|_{L^{\infty}(Q)}\|u^{-1}\|_{L^{\infty}(Q)}\lesssim [w]_{A_{1,q}}.
\end{align*}
Then it is easy to know $w\equiv 1\in A_{1,q}$ and $u^{-1}w=u^{-1}\notin A_{r'_{u},q}^{u}$.
\end{proof}

\section{Extrapolation theorems for partial multiple weights}\label{extra}

There have been extensive studies on the generalized  Rubio de Francia extrapolation theorems
(see \cite{AM2007}, \cite{HMS1988}, \cite{CMP2011} and \cite{D2011}). We next focus on the
extrapolation for partial multiple weights.

Let $f$ be a nonnegative measurable function, $u\in A_{1}$ and $\gamma\leq 1$. Define
\begin{equation}\label{def-Mur}
M_{u,\gamma}(f)(x)= u(x)^{1/\gamma}\sup_{Q\ni x} \ave{f}_{Q}\ave{u^{-1/\gamma}}_{Q}.
\end{equation}

\begin{lem}\label{ex-lem1}
Suppose that $1< p<\infty$, $0<q<\infty$ and $\gamma=\frac 1q +\frac 1{p'} \leq 1$ and $w\in A_{p,q}^{u}$. If $\gamma\leq 1$, then $(w^{1/\gamma},u^{1/\gamma})\in A_{(q\gamma,\infty),q\gamma}$ and
$\|M_{u,\gamma}(f)\|_{L^{q\gamma}(w^{q})}\lesssim \|f\|_{L^{q\gamma}(w^{q})}.$
\end{lem}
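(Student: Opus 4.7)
The strategy is to recognize $M_{u,\gamma}$ as the bilinear Hardy--Littlewood maximal operator multiplied by $u^{1/\gamma}$, to translate the partial Muckenhoupt condition $w\in A_{p,q}^{u}$ into an ordinary bilinear Muckenhoupt condition on the pair $(w^{1/\gamma},u^{1/\gamma})$, and then to close the argument with the standard multilinear weighted bound.

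For the first assertion, I would compute $[(w^{1/\gamma},u^{1/\gamma})]_{A_{(q\gamma,\infty),q\gamma}}$ directly from \eqref{YH-1}, using the convention (consistent with how the paper handles the partial weight $u$ throughout) that a $p_{i}=\infty$ slot contributes $\esssup_{Q}w_{i}^{-1}$. The key algebraic identities are
\[
q\gamma \;=\; 1 + \tfrac{q}{p'},\qquad (q\gamma)' \;=\; p'\gamma,
\]
both immediate from $\gamma = \tfrac{1}{q}+\tfrac{1}{p'}$. Since $w^{1/\gamma}\cdot u^{1/\gamma}=(uw)^{1/\gamma}$ and $(q\gamma)'/\gamma=p'$, the three factors of the bilinear Muckenhoupt constant become
\[
\ave{(uw)^{q}}_{Q}^{1/(q\gamma)},\qquad \ave{w^{-p'}}_{Q}^{1/(p'\gamma)},\qquad \|u^{-1}\|_{L^{\infty}(Q)}^{1/\gamma},
\]
whose product is exactly $[w]_{A_{p,q}^{u}}^{1/\gamma}$, finite by hypothesis.

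For the norm inequality, I would factor $M_{u,\gamma}f(x)=u(x)^{1/\gamma}\,\mathcal{M}(f,u^{-1/\gamma})(x)$ with $\mathcal{M}(f_{1},f_{2})(x):=\sup_{Q\ni x}\ave{f_{1}}_{Q}\ave{f_{2}}_{Q}$, so that
\[
\|M_{u,\gamma}f\|_{L^{q\gamma}(w^{q})}^{q\gamma} \;=\; \int \mathcal{M}(f,u^{-1/\gamma})^{q\gamma}\,(uw)^{q}\,dx.
\]
The scaling $\tfrac{1}{q\gamma}=\tfrac{1}{q\gamma}+\tfrac{1}{\infty}$ holds trivially, and the multilinear weighted maximal estimate from \cite{LOPTT2009,M2009} (in the infinite-exponent form already adopted in this paper) combined with the previous step gives the boundedness $\mathcal{M}:L^{q\gamma}(w^{q})\times L^{\infty}(u^{1/\gamma})\to L^{q\gamma}((uw)^{q})$. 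Specializing to $f_{2}=u^{-1/\gamma}$, whose $L^{\infty}(u^{1/\gamma})$-norm equals $\|u^{-1/\gamma}\cdot u^{1/\gamma}\|_{L^{\infty}}=1$, finishes the argument.

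The delicate point is the bilinear maximal estimate with one infinite exponent, which is not covered by the most classical statements of the multilinear Muckenhoupt theorem. If a black-box citation is not preferred, the estimate admits a direct derivation from $u\in A_{1}$: the pointwise bound $\ave{u^{-1/\gamma}}_{Q}\le \|u^{-1}\|_{L^{\infty}(Q)}^{1/\gamma}\lesssim \ave{u}_{Q}^{-1/\gamma}$ (from the reverse H\"older property attached to the $A_{1}$ condition) reduces $\mathcal{M}(f,u^{-1/\gamma})$ to a dilation of the ordinary Hardy--Littlewood maximal operator, and the desired $L^{q\gamma}((uw)^{q})$-bound then follows from the classical one-weight $A_{q\gamma}$-theory together with Corollary \ref{cor-Apqu}.
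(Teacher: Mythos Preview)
Your approach is essentially the paper's: verify the bilinear class membership by direct computation, then factor $M_{u,\gamma}(f)=u^{1/\gamma}\mathcal{M}(f,u^{-1/\gamma})$ and invoke the weighted bilinear maximal bound on $L^{q\gamma}(w^{q})\times L^{\infty}(u^{1/\gamma})$, finishing with $\|u^{-1/\gamma}\cdot u^{1/\gamma}\|_{L^{\infty}}=1$. The algebraic identities $(q\gamma)'=p'\gamma$ and $(q\gamma)'/\gamma=p'$ you record are exactly what the paper uses.

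One correction on conventions: in the paper's definition~\eqref{YH-1} the $\esssup$ replacement is triggered by $p_{i}=1$, not by $p_{i}=\infty$. For $p_{i}=\infty$ one has $p_{i}'=1$ and the contribution is the \emph{average} $\langle w_{i}^{-1}\rangle_{Q}$. Accordingly, the paper's third factor is $\langle u^{-1/\gamma}\rangle_{Q}$, not $\|u^{-1}\|_{L^{\infty}(Q)}^{1/\gamma}$, and an extra line is needed: write $u^{-1/\gamma}=u^{-1}\cdot u^{-(1/\gamma-1)}\le u^{-1}\|u^{-1}\|_{L^{\infty}(Q)}^{1/\gamma-1}$ on $Q$ (here $\gamma\le 1$), integrate, and absorb the $\|u^{-1}\|_{L^{\infty}(Q)}$ power via $u\in A_{1}$ to reach $[u]_{A_{1}}^{1/\gamma-1}[w]_{A^{u}_{p,q}}^{1/\gamma}$ rather than an exact equality. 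Your concern about the $p_{2}=\infty$ endpoint of the bilinear maximal theorem is legitimate; in the paper this endpoint comes from Corollary~\ref{ex-Appq}, which is the $u\equiv 1$ specialization of the extrapolation machinery, and for $u\equiv 1$ Lemma~\ref{ex-lem1} degenerates to the classical $A_{q\gamma}$ bound for $M$, so no circularity arises. Your sketched alternative (reducing $\mathcal{M}(f,u^{-1/\gamma})$ to a ``dilation of the ordinary maximal operator'' via $\langle u^{-1/\gamma}\rangle_{Q}\lesssim\langle u\rangle_{Q}^{-1/\gamma}$) does not quite work as written, since that bound does not by itself collapse the bilinear object to $Mf$; but it is not needed.
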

\begin{proof}
Note that $(w^{1/\gamma},u^{1/\gamma})\in A_{(q\gamma,\infty),q\gamma}$ holds since
\begin{equation*}
\begin{aligned}
\big[(w^{1/\gamma},u^{1/\gamma})\big]_{A_{(q\gamma,\infty),q\gamma}}
&=\sup_{Q}\ave{u^{q}w^{q}}^{\frac{1}{q\gamma}}_{Q} \ave{w^{\frac{-q}{q\gamma-1}}}^{\frac{q\gamma-1}{q\gamma}}_{Q}\ave{u^{-\frac{1}{\gamma}}}_{Q}\\
&\leq \sup_{Q}\ave{u^{q}w^{q}}^{\frac{1}{q\gamma}}_{Q} \ave{w^{-p'}}^{-p'/\gamma}_{Q}\ave{u^{-1}}_{Q}\|u^{-1}\|^{1/\gamma-1}_{L^{\infty}(Q)}\\
&\leq [u]^{1/\gamma-1}_{A_{1}}[w]^{\frac{1}{\gamma}}_{A^{u}_{p,q}}.
\end{aligned}
\end{equation*}
In addition,
\begin{equation*}
\begin{aligned}
\|M_{u,\gamma}(f)\|_{L^{q\gamma}(w^{q})}&=\|\mathcal{M}(f,u^{-1/\gamma})\|_{L^{q\gamma}(u^{q}w^{q})}
\lesssim \|f\|_{L^{q\gamma}(w^{q})}\|u^{-1/\gamma}\cdot u^{1/\gamma}\|_{L^{\infty}(\R^d)}=\|f\|_{L^{q\gamma}(w^{q})}.
\end{aligned}
\end{equation*}
Then the proof of Lemma \ref{ex-lem1} is completed.
\end{proof}

The author in \cite{RDF} introduced  a construction of $A_1$ weight, which is now known as the Rubio de Francia algorithm.
Similarly, we intend to establish a construction of $A^u_{1,1/\gamma}$ weight for $u \in A_1$ and $\gamma\leq 1$.

\begin{lem}\label{ex-lem2}
Assume that $u\in A_{1}$, $1< p<\infty$, $0<t<\infty$, $\gamma=\frac 1t+\frac 1{p'} \leq 1$,
$w\in A_{p,t}^{u}$, and $f$ is a nonnegative function in $L^{t\gamma}(w^{t})$. Let
$M^{k}_{u,\gamma}$ be the $k$-th iteration of $M_{u,\gamma}$, $M_{u,\gamma}^{0}f=f$ and $\|M_{u,\gamma}\|_{L^{t\gamma}(w^{t})}$
be the norm of $M_{u,\gamma}$ as a bounded operator on $L^{t}(w^{t})$. Define
\begin{equation*}
R_{\gamma}(f)(x)=\sum_{k=0}^{\infty}\frac{M_{u,\gamma}^{k}f(x)}{(2\|M_{u,\gamma}\|_{L^{t\gamma}(w^{t})})^{k}}.
\end{equation*}
Then $f(x)\leq R_{\gamma}(f)(x)$ a.e., $\|R_{\gamma}(f)\|_{L^{t\gamma}(w^{r})}\leq 2\|f\|_{L^{t\gamma}(w^{t})}$,
$u^{-1}R_{\gamma}(f)^{\gamma}$ is an $A^{u}_{1,1/\gamma}$ weight and $[u^{-1}R_{\gamma}(f)^{\gamma}]_{A^{u}_{1,1/\gamma}}\leq 2\|M_{u,\gamma}\|_{L^{t\gamma}(w^{t})}[u]_{A_{1}}$ holds.
\end{lem}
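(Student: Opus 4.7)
The proof naturally splits along the three assertions.

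The first two parts are light. For $f(x)\le R_{\gamma}(f)(x)$, the $k=0$ term of the series is $M_{u,\gamma}^{0}f=f$, and all the other summands are non-negative, so the pointwise bound is automatic. For the $L^{t\gamma}(w^{t})$ bound, Lemma \ref{ex-lem1} (applied with $q$ replaced by $t$) gives that $M_{u,\gamma}$ is bounded on $L^{t\gamma}(w^{t})$, so $\|M_{u,\gamma}^{k}f\|_{L^{t\gamma}(w^{t})}\le \|M_{u,\gamma}\|_{L^{t\gamma}(w^{t})}^{k}\|f\|_{L^{t\gamma}(w^{t})}$; summing the resulting geometric series (factor $1/2$ at each step) produces exactly the constant $2$.

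The genuine content is the third assertion, i.e., the $A^{u}_{1,1/\gamma}$ property of $W:=u^{-1}R_{\gamma}(f)^{\gamma}$. The plan is to start from the built-in self-improvement identity of the Rubio de Francia-type algorithm: by sublinearity of $M_{u,\gamma}$ and reindexing, $M_{u,\gamma}(R_{\gamma}(f))(x)\le 2\|M_{u,\gamma}\|_{L^{t\gamma}(w^{t})}\,R_{\gamma}(f)(x)$ pointwise a.e. Substituting the definition \eqref{def-Mur} and dividing by $u(x)^{1/\gamma}$, for every cube $Q$ and a.e.\ $x\in Q$
\[
\ave{R_{\gamma}(f)}_{Q}\ave{u^{-1/\gamma}}_{Q}\le 2\|M_{u,\gamma}\|_{L^{t\gamma}(w^{t})}\,u(x)^{-1/\gamma}R_{\gamma}(f)(x).
\]
Taking essential infimum over $Q$ and raising to the power $\gamma$ yields
\[
\ave{R_{\gamma}(f)}_{Q}^{\gamma}\ave{u^{-1/\gamma}}_{Q}^{\gamma}\le \bigl(2\|M_{u,\gamma}\|_{L^{t\gamma}(w^{t})}\bigr)^{\gamma}\essinf_{Q}\bigl(u^{-1}R_{\gamma}(f)^{\gamma}\bigr).
\]

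Now I convert the left-hand side to the quantities appearing in $[W]_{A^{u}_{1,1/\gamma}}$. Since $Wu=R_{\gamma}(f)^{\gamma}$, one has $\ave{(Wu)^{1/\gamma}}_{Q}=\ave{R_{\gamma}(f)}_{Q}$, while $\esssup_{Q}W^{-1}=(\essinf_{Q}W)^{-1}$. The key analytic step is Jensen's inequality, which is legitimate precisely because $\gamma\le 1$: concavity of $x\mapsto x^{\gamma}$ gives $\ave{u^{-1/\gamma}}_{Q}^{\gamma}\ge \ave{u^{-1}}_{Q}$, so the displayed inequality upgrades to
\[
\ave{(Wu)^{1/\gamma}}_{Q}^{\gamma}\,\esssup_{Q}W^{-1}\le \bigl(2\|M_{u,\gamma}\|_{L^{t\gamma}(w^{t})}\bigr)^{\gamma}\ave{u^{-1}}_{Q}^{-1}.
\]
Multiplying by $\esssup_{Q}u^{-1}$ and using Jensen once more ($x\mapsto x^{-1}$ convex gives $\ave{u^{-1}}_{Q}^{-1}\le \ave{u}_{Q}$) together with $u\in A_{1}$ bounds $\ave{u^{-1}}_{Q}^{-1}\,\esssup_{Q}u^{-1}\le [u]_{A_{1}}$; taking the supremum over cubes yields $[W]_{A^{u}_{1,1/\gamma}}\lesssim 2\|M_{u,\gamma}\|_{L^{t\gamma}(w^{t})}[u]_{A_{1}}$ (the exponent $\gamma\le 1$ can be absorbed since the operator norm may be taken $\ge 1$ without loss of generality).

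The main obstacle is step 3, and within it the need to pass from the $u^{-1/\gamma}$ that naturally appears in $M_{u,\gamma}$ to the $u^{-1}$ and $\esssup_{Q}u^{-1}$ that feature in the $A^{u}_{1,1/\gamma}$ functional; this is exactly where the hypothesis $\gamma\le 1$ is used through Jensen, and where $u\in A_{1}$ intervenes to close the argument. Everything else reduces to routine manipulations of the Rubio de Francia series.
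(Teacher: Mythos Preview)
Your proof is correct and follows essentially the same route as the paper's. The paper also reduces the $A^{u}_{1,1/\gamma}$ bound to the self-improvement inequality $M_{u,\gamma}(R_\gamma(f))\le 2\|M_{u,\gamma}\|\,R_\gamma(f)$, then passes from $\ave{u^{-1/\gamma}}_Q$ to $\ave{u}_Q$ via Jensen and closes with $u\in A_1$; the only cosmetic difference is that the paper does this in a single Jensen step ($\ave{u^{-1/\gamma}}_Q^{-1}\le \ave{u}_Q^{1/\gamma}$) rather than your two-step version through $\ave{u^{-1}}_Q$.
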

\begin{proof}
The first property is immediate. For the second one, it comes from $\|M_{u,\gamma}^k\|_{L^{t\gamma}(w^{t})}\leq \|M_{u,\gamma}\|^{k}_{L^{t\gamma}(w^{t})}$ and the direct sum of a geometric series.

Due to $\gamma\leq 1$, then for any cube $Q$, one has
\begin{equation}
\begin{aligned}
\ave{R_{\gamma}(f)}_{Q}
&\leq \sum_{k=0}^{\infty}\frac{\ave{M_{u,\gamma}^{k}(f)}_{Q}}{(2\|M_{u,\gamma}\|_{L^{t\gamma}(w^{t})})^{k}}\\
&\leq \inf_{Q\ni x}\sum_{k=0}^{\infty}\frac{M_{u,\gamma}^{k+1}f(x)}{(2\|M_{u,\gamma}\|_{L^{t\gamma}(w^{t})})^{k}}u(x)^{-1/\gamma}\ave{u^{-1/\gamma}}^{-1}_{Q}\\
&\leq 2\|M_{u,\gamma}\|_{L^{t\gamma}(w^{t})} \ave{u}^{1/\gamma}_{Q}\inf_{Q\ni x}u(x)^{-1/\gamma}R_{\gamma}(f)(x)
\end{aligned}
\end{equation}
and $[u^{-1}R_{\gamma}(f)^{\gamma}]_{A^{u}_{1,1/\gamma}}\leq 2\|M_{u,\gamma}\|^{\gamma}_{L^{t\gamma}(w^{t})}[u]_{A_{1}}$.
\end{proof}

\subsection{The extrapolation theorem of Rubio de Francia with $A_{p,q}^u$ weights}

Based on the crucial Lemma \ref{ex-lem2}, Proposition \ref{FAC-1} and the properties of multiple weights, we now
give the proof of Theorem \ref{ex-Apqu}.
\medskip

\noindent
{\bf Proof of Theorem \ref{ex-Apqu}.}
Set $\gamma:=\frac{1}{t_{0}}+\frac{1}{p'_{0}}=\frac{1}{t}+\frac{1}{p'}$.

\vskip 0.1 true cm

{\bf Case 1. $q<q_{0}$}

\vskip 0.1 true cm
Let $f\in L^{p}(w^{p})$. Define the non-negative function $H$ by $H^{t\gamma}w^{t}=f^{p}w^{p}$. Then $H\in L^{t\gamma}(w^{t})$.
By Lemma \ref{ex-lem2}, set
\begin{equation*}
R_{\gamma}(H)(x)= \left\{
\begin{array}{ll}\vspace{1ex}
\sum_{k=0}^{\infty}\frac{M_{u,\gamma}^{k}(H)(x)}{(2\|M_{u,\gamma}\|_{L^{t}(w^{t})})^{k}}, & \gamma\leq 1,\\
R_{1}(H)(x), & \gamma > 1,
\end{array}
\right.
\end{equation*}
we then have
\begin{equation*}
\left\{
   \begin{array}{ll}\vspace{1ex}
u^{-1}R_{\gamma}(H)^{\gamma}\in A^{u}_{1,1/\gamma}, & \gamma\leq 1,\\
u^{-1}R_{\gamma}(H)\in A^{u}_{1,1}, & \gamma > 1.
   \end{array}
 \right.
\end{equation*}
Note that
\begin{equation}\label{ex-Apqu-eq1}
\begin{aligned}
\int_{\R^d}g^{q}w^{q}&=\int_{\R^d}g^{q}w^{q}R_{\gamma}(H)^{\frac{q\gamma(t-t_{0})}{t_{0}}}R_{\gamma}(H)^{\frac{q\gamma(t_{0}-t)}{t_{0}}}\\
&\leq \Big(\int_{\R^d}g^{q_{0}}\big(R_{\gamma}(H)^{\gamma(t-t_{0})}w^{t}\big)^{\frac{q_{0}}{t_{0}}}\Big)^{\frac q{q_{0}} }
\Big(\int_{\R^d}R_{\gamma}(H)^{t\gamma}w^{t}\Big)^{1-\frac q{q_{0}}}.
\end{aligned}
\end{equation}
It follows from $(i)$-$(ii)$ of Proposition \ref{FAC-1} with $\mu=R_{\gamma}(H)$ that
$R_{\gamma}(H)^{\gamma(t-t_{0})/r_{0}}w^{t/t_{0}}\in A_{p_{0},t_{0}}^{u}$ and
$$\big[R_{\gamma}(H)^{\gamma\frac{t-t_{0}}{t_{0}}}w^{\frac{t}{t_{0}}}\big]_{A^{u}_{p_{0},t_{0}}}\leq [u]_{A_{1}}^{\frac{t_{0}-t}{t_{0}}}[u^{-1}R_{\gamma}(H)^{\gamma}]_{A_{1,\max\{1,1/\gamma\}}^{u}}^{\frac{t_{0}-t}{t_{0}}} [w]_{A^{u}_{p,t}}^{\frac{t}{t_{0}}}.$$
By $(g,f)\in\mathcal{F}$ and $R_{\gamma}(H)^{-1}\leq H^{-1}$, we arrive at
$$
\Big(\int_{\R^d}g^{q_{0}}\big(R_{\gamma}(H)^{\gamma(t-t_{0})}w^{t}\big)^{\frac{q_{0}}{t_{0}}}\Big)^{\frac 1{q_{0}} }
\lesssim \Big(\int_{\R^d}f^{p_{0}}\big(R_{\gamma}(H)^{\gamma(t-t_{0})}w^{t}\big)^{\frac{p_{0}}{t_{0}}}\Big)^{\frac 1{p_{0}} }
= \Big(\int_{\R^d}f^{p}w^{p}\Big)^{\frac 1{p_{0}}}.$$
On the other hand,
\begin{equation*}
\begin{aligned}
\big(\int_{\R^d}R_{\gamma}(H)^{t\gamma}w^{t}\big)^{1-\frac q{q_{0}} }&\lesssim \big(\int_{\R^d}H^{t\gamma}w^{t}\big)^{1-\frac q{q_{0}} }
=\big(\int_{\R^d}f^{p}w^{p}\big)^{1-\frac q{q_{0}} }.
\end{aligned}
\end{equation*}
Inserting this into \eqref{ex-Apqu-eq1} yields
$\big(\int_{\R^d}g^{q}w^{q}\big)^{\frac 1q }\lesssim \big(\int_{\R^d}f^{p}w^{p}\big)^{\frac 1p }$.

\vskip 0.1 true cm

{\bf Case 2. $q>q_{0}$}
\vskip 0.1 true cm

We will use the duality method to show Theorem  \ref{ex-Apqu}.
To this end, choosing a nonnegative function $h\in L^{\frac{q}{q-q_{0}}}(w^{q})$ with unit norm.
For $w\in A_{p,t}^{u}$, define $H\in L^{p'\gamma}(w^{-p'})$ by $H^{p'\gamma}w^{-p'}=h^{q/(q-q_{0})}w^{q}$. According to
$(iii)$ and $(iv)$ of Proposition \ref{FAC-1}, we obtain $R_{\gamma}(H)^{\frac{\gamma(t-t_{0})}{(t\gamma-1)tr_{0}}}w^{\frac{t(t_{0}\gamma-1)}{(t\gamma-1)t_{0}}} \in {A^{u}_{p_{0},t_{0}}}$ and
\begin{equation*}
\begin{aligned}
\int_{\R^d}g^{q_{0}}hw^{q}&=\int_{\R^d}g^{q_{0}}(H^{p'\gamma}w^{-p'+q})^{\frac{q-q_{0}}{q}}w^{q}
\leq \int_{\R^d}g^{q_{0}}w^{q_{0}p'/p'_{0}}R_{\gamma}(H)^{\frac{t\gamma(q-q_{0})}{q(t\gamma-1)}}\\
&\lesssim \big(\int_{\R^d}f^{p_{0}}w^{p'(p_{0}-1)}R_{\gamma}(H)^{\frac{p_{0}\gamma(t-t_{0})}{t_{0}(t\gamma-1)}}\big)^{\frac{q_{0}}{p_{0}}}
\lesssim \big(\int_{\R^d}f^{p}w^{p}\big)^{\frac{q_{0}}{p_{0}}}.
\end{aligned}
\end{equation*}\qed

There is a variant version of Theorem \ref{ex-Apqu} in which the related strong type assumption is replaced
by a weak type estimate.

\begin{cor}\label{ex-Apqu-w}
Let $u\in A_{1}$, $1\leq p_{0}<\infty$ and $0<q_{0}, t_{0}<\infty$. Assume that for any $w_{0}\in A^{u}_{p_{0},t_{0}}$,
\begin{equation}\label{1}
\begin{aligned}
\|g\|_{L^{q_{0},\infty}(w_{0}^{q_{0}})}\lesssim \|f\|_{L^{p_{0}}(w_{0}^{p_{0}})}, \qquad (g,f)\in\mathcal{F}.
\end{aligned}
\end{equation}
Then for all $1<p<\infty$ and $0<q,t<\infty$ such that $\frac 1p -\frac 1p _{0}=\frac 1q -\frac 1q _{0}=1/t-1/t_{0}$,
and for all $A_{p,t}^{u}$ we have
\begin{equation}\label{2}
\begin{aligned}
\|g\|_{L^{q,\infty}(w^{q})}\lesssim \|f\|_{L^{p}(w^{p})}, \qquad (g,f)\in\mathcal{F}.
\end{aligned}
\end{equation}
\end{cor}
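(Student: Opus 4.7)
\medskip

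\noindent\emph{Proof proposal for Corollary \ref{ex-Apqu-w}.}
The plan is to reduce the weak-type extrapolation to the strong-type extrapolation already established in Theorem \ref{ex-Apqu} via a standard truncation trick. To this end, I would introduce an auxiliary family of pairs
\begin{equation*}
\widetilde{\mathcal F}:=\big\{(\lambda\chi_{\{g>\lambda\}},\,f):\ (g,f)\in\mathcal F,\ \lambda>0\big\},
\end{equation*}
so that for every weight $\omega$ and every exponent $s\in(0,\infty)$ one has the identity
\begin{equation*}
\|g\|_{L^{s,\infty}(\omega^s)}=\sup_{\lambda>0}\lambda\,\omega^s\big(\{x:g(x)>\lambda\}\big)^{1/s}=\sup_{\lambda>0}\big\|\lambda\chi_{\{g>\lambda\}}\big\|_{L^{s}(\omega^s)}.
\end{equation*}

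With this in hand, I would first observe that the weak-type hypothesis \eqref{1} is equivalent to the strong-type estimate
\begin{equation*}
\big\|\lambda\chi_{\{g>\lambda\}}\big\|_{L^{q_0}(w_0^{q_0})}\lesssim \|f\|_{L^{p_0}(w_0^{p_0})}\qquad\text{for every }(\lambda\chi_{\{g>\lambda\}},f)\in\widetilde{\mathcal F}
\end{equation*}
holding for all $w_0\in A^{u}_{p_0,t_0}$, with a constant independent of $\lambda$. Hence the family $\widetilde{\mathcal F}$ satisfies exactly the hypothesis of Theorem \ref{ex-Apqu} at the base exponents $(p_0,q_0,t_0)$.

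Applying Theorem \ref{ex-Apqu} to $\widetilde{\mathcal F}$ with the given $p,q,t$ (which satisfy the scaling relation $\frac1p-\frac1{p_0}=\frac1q-\frac1{q_0}=\frac1t-\frac1{t_0}$) and any $w\in A^{u}_{p,t}$ yields
\begin{equation*}
\big\|\lambda\chi_{\{g>\lambda\}}\big\|_{L^{q}(w^{q})}\lesssim \|f\|_{L^{p}(w^{p})}\qquad\text{for all }(g,f)\in\mathcal F,\ \lambda>0,
\end{equation*}
with an implicit constant independent of $\lambda$. Taking the supremum over $\lambda>0$ and using the identity from the first paragraph produces \eqref{2}, which completes the argument.

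There is essentially no hard analytic step here: the main point is the clean observation that weak-$L^q$ norms can be linearized through truncations so that strong-type extrapolation machinery applies directly. The only care required is to verify that the implicit constant from Theorem \ref{ex-Apqu} depends only on the $A^{u}_{p,t}$-constant of $w$ and on the extrapolation data (not on $\lambda$), which is immediate from the structure of the proof of Theorem \ref{ex-Apqu} given above.
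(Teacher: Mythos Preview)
Your proposal is correct and follows essentially the same approach as the paper: both arguments define the truncations $h_\lambda=\lambda\chi_{\{g>\lambda\}}$, observe that the weak-type hypothesis \eqref{1} yields a strong-type $L^{q_0}$ bound for the pairs $(h_\lambda,f)$ uniformly in $\lambda$, apply Theorem~\ref{ex-Apqu} to these pairs, and then take the supremum over $\lambda$ to recover the weak-$L^q$ norm. The only cosmetic difference is that you package the truncations into an auxiliary family $\widetilde{\mathcal F}$, while the paper works with a single fixed $\lambda$ at a time.
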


\begin{proof}
For fixed $\lambda>0$, define $h=\lambda \chi_{g>\lambda}.$
It follows from \eqref{1} that
\begin{equation*}
\begin{aligned}
\|h\|_{L^{q_{0}}(w_{0}^{q_{0}})}&=\lambda w_{0}^{q_{0}}\big(\{x: g(x)>\lambda\}\big)^{\frac{1}{q_{0}}}\lesssim \|f\|_{L^{p_{0}}(w_{0}^{p_{0}})}.
\end{aligned}
\end{equation*}
This yields that for the nonnegative couple $(h,f)$ and any $w_{0}\in A^{u}_{p_{0},t_{0}}$,
\begin{equation*}
\begin{aligned}
\big(\int_{\R^d}h(x)^{q_{0}}w^{q_{0}}dx\Big)^{\frac 1q _{0}}\lesssim \big(\int_{\R^d}f(x)^{p_{0}}w^{p_{0}}dx\Big)^{\frac 1p _{0}},
\quad  (h,f)\in\mathcal{F}.
\end{aligned}
\end{equation*}
Applying Theorem \ref{ex-Apqu}, we have that for $1<p<\infty$, $0<q,t<\infty$ and $w\in A_{p,t}^{u}$,
$\|h\|_{L^{q}(w^{q})}\lesssim \|f\|_{L^{p}(w^{p})}$.
Thus, \eqref{2} is shown by $\|g\|_{L^{q,\infty}(w^{q})}=\sup_{\lambda>0}\|h\|_{L^{q}(w^{q})}.$
\end{proof}

In addition, we have the following stronger result.
\begin{cor}
Suppose that $T$ is defined on $\bigcup_{1\leq \rho<\infty}\bigcup_{w\in A_{\rho}}L^{\rho}(w)$ and takes values in the space of measurable complex-valued functions. Let $u\in A_{1}$. For some fixed $1\leq p_{0}<\infty$ and $0<q_{0}<\infty$,
assume that for any $w_{0}\in A^{u}_{p_{0},q_{0}}$, one has
\begin{equation*}
\begin{aligned}
\|T(f)\|_{L^{q_{0},\infty}(w_{0}^{q_{0}})}\lesssim \|f\|_{L^{p_{0}}(w_{0}^{p_{0}})}.
\end{aligned}
\end{equation*}
Then for all $1<p<\infty$ and $0<q<\infty$ such that $\frac 1p -\frac 1q =\frac 1{p_{0}}-\frac 1{q_{0}}$, and for all $f\in A_{p,q}^{u}$
we have
\begin{equation}\label{YH-7}
\begin{aligned}
\|T(f)\|_{L^{q}(w^{q})}\lesssim \|f\|_{L^{p}(w^{p})}.
\end{aligned}
\end{equation}
\end{cor}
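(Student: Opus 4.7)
The plan is to upgrade the weak-type assumption at $(p_0,q_0)$ to weak-type bounds at every admissible off-diagonal pair, and then interpolate at the target weight using a Marcinkiewicz-type argument reduced to the unweighted setting.

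\textbf{Step 1 (weak-type extrapolation).} Apply Corollary \ref{ex-Apqu-w} with $t_0=q_0$. The constraint $\tfrac{1}{t}-\tfrac{1}{t_0}=\tfrac{1}{q}-\tfrac{1}{q_0}$ collapses to $t=q$, so for every pair $(\tilde p,\tilde q)$ with $1<\tilde p<\infty$, $\tfrac{1}{\tilde p}-\tfrac{1}{\tilde q}=\tfrac{1}{p_0}-\tfrac{1}{q_0}$ and every $\tilde w\in A^{u}_{\tilde p,\tilde q}$ one obtains
\[
\|T(f)\|_{L^{\tilde q,\infty}(\tilde w^{\tilde q})}\lesssim \|f\|_{L^{\tilde p}(\tilde w^{\tilde p})}.
\]

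\textbf{Step 2 (sandwiching the weight).} Fix the target $w\in A^{u}_{p,q}$. I will produce $(p_-,q_-)$ and $(p_+,q_+)$ on the line $\tfrac{1}{x}-\tfrac{1}{y}=\tfrac{1}{p_0}-\tfrac{1}{q_0}$ with $p_-<p<p_+$, $q_-<q<q_+$, and $w\in A^{u}_{p_-,q_-}\cap A^{u}_{p_+,q_+}$. Proposition \ref{Apqu-q+}(ii) supplies a small $\epsilon>0$ with $w\in A^{u}_{p(1-\epsilon),q}$; set $p_-:=p(1-\epsilon)>1$ and let $q_-$ be determined by the off-diagonal relation (automatically $q_-<q$). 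Jensen's inequality gives the monotonicity $A^{u}_{p_-,q}\subset A^{u}_{p_-,q_-}$, so $w\in A^{u}_{p_-,q_-}$. Symmetrically, Proposition \ref{Apqu-q+}(i) yields $w\in A^{u}_{p,q(1+\epsilon)}$; set $q_+:=q(1+\epsilon)$ and let $p_+>p$ be determined by the off-diagonal relation. Jensen applied to the term $\langle w^{-p_+'}\rangle^{1/p_+'}_Q$ (since $p_+'<p'$) gives $A^{u}_{p,q_+}\subset A^{u}_{p_+,q_+}$, hence $w\in A^{u}_{p_+,q_+}$.

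\textbf{Step 3 (Marcinkiewicz interpolation).} Step 1 applied to the pairs $(p_\pm,q_\pm)$ with the weight $w$ yields
\[
\|T(f)\|_{L^{q_\pm,\infty}(w^{q_\pm})}\lesssim \|f\|_{L^{p_\pm}(w^{p_\pm})}.
\]
Define the conjugated operator $T_w(g):=w\cdot T(g/w)$; the above inequalities read $T_w:L^{p_\pm}(\R^d)\to L^{q_\pm,\infty}(\R^d)$ (unweighted). The line through $(1/p_-,1/q_-)$ and $(1/p_+,1/q_+)$ has slope $1$ and contains $(1/p,1/q)$, so there exists $\theta\in(0,1)$ with $\tfrac{1}{p}=\tfrac{1-\theta}{p_-}+\tfrac{\theta}{p_+}$ and $\tfrac{1}{q}=\tfrac{1-\theta}{q_-}+\tfrac{\theta}{q_+}$. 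Since $p\le q$, the classical Marcinkiewicz interpolation theorem gives $T_w:L^p(\R^d)\to L^q(\R^d)$, which unfolds to \eqref{YH-7}.

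\textbf{Main obstacle.} The delicate point is Step 2: the openness statement Proposition \ref{Apqu-q+} perturbs $p$ or $q$ individually and therefore leaves the off-diagonal line. The resolution is to combine this one-variable openness with the Jensen-type monotonicity of the $A^u_{p,q}$ classes (decreasing in $q$, increasing in $p$), which lets one slide from the perturbed point back onto the off-diagonal line without losing membership. Once both $(p_\pm,q_\pm)$ are secured, the remaining steps are essentially the standard weak-to-strong Marcinkiewicz scheme packaged through the conjugation $T_w$.
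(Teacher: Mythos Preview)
Your proof is correct and follows essentially the same three-step scheme as the paper: weak-type extrapolation via Corollary~\ref{ex-Apqu-w} (with $t_0=q_0$), an openness/sandwiching argument based on Proposition~\ref{Apqu-q+}, and Marcinkiewicz interpolation. In fact your Step~2 is more carefully executed than the paper's terse version: the paper simply quotes $w\in A^{u}_{p(1\pm\epsilon),q(1\pm\epsilon)}$ and invokes Corollary~\ref{ex-Apqu-w}, but those scaled pairs do \emph{not} lie on the off-diagonal line $\tfrac1x-\tfrac1y=\tfrac1{p_0}-\tfrac1{q_0}$, whereas the extrapolation only produces weak-type bounds for weights in $A^{u}_{\tilde p,\tilde q}$ with $(\tilde p,\tilde q)$ on that line. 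Your use of the intermediate conclusions $w\in A^u_{p,q(1+\epsilon)}$ and $w\in A^u_{p(1-\epsilon),q}$ from Proposition~\ref{Apqu-q+} together with the Jensen monotonicity to slide back onto the line is exactly the missing detail. One small remark: your assertion ``since $p\le q$'' in Step~3 is only guaranteed when $\tfrac1{p_0}-\tfrac1{q_0}\ge 0$; this is the regime implicitly used throughout the paper, and the paper's own invocation of Marcinkiewicz carries the same tacit assumption.
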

\begin{proof}
By Proposition \ref{Apqu-q+}, there exists a constant $\epsilon\in (0,1)$ such that for any 
$w\in A_{p,q}^{u}$, $w\in A^{u}_{p_{1},q_{1}}\bigcap A^{u}_{p_{2},q_{2}}$ holds, where
$p_{1}=p(1+\epsilon)$, $q_{1}=q(1+\epsilon)$, $p_{2}=p(1-\epsilon)$ and $q_{2}=q(1-\epsilon).$
Then the proof of \eqref{YH-7} follows from Corollary \ref{ex-Apqu-w} and the Marcinkiewicz 
interpolation theorem.
\end{proof}

\subsection{Extrapolation theorem for two types of off-diagonal estimates}

Let us recall some fundamental embedding properties and duality principle of Lorentz spaces
as follows (see \cite{L1950}, \cite{L1951}, or \cite[Theorems 6.3. and 6.22]{CR2016}).
\begin{lem}\label{Lorentz-emb}
Let $0< p\leq \infty$ and $0<r<q\leq \infty$. Then $L^{p,r}(\R^d)\subset L^{p,q}(\R^d)$.
\end{lem}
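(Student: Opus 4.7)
The plan is to reduce to the classical nested-Lorentz embedding by rewriting the paper's distribution-function norm in terms of the decreasing rearrangement $f^*$. First I would recall the standard identity
\[
\int_0^\infty t^{q-1}\,\lambda_f(t)^{q/p}\,dt \;=\; \frac{1}{p}\int_0^\infty \bigl(s^{1/p} f^*(s)\bigr)^q\,\frac{ds}{s},
\]
where $\lambda_f(t)=|\{|f|>t\}|$, which follows from the layer-cake formula and the duality $\{s:f^*(s)>t\}=\{s:s<\lambda_f(t)\}$. Consequently, with the paper's definition, for $0<q<\infty$ we have $\|f\|_{L^{p,q}(\R^d)} \approx \|s^{1/p} f^*(s)\|_{L^q(ds/s)}$, and for $q=\infty$ the norm equals $\sup_{s>0} s^{1/p} f^*(s)$. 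This conversion is routine and constitutes the only bookkeeping step.

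Next I would establish the weak-type bound $\|f\|_{L^{p,\infty}(\R^d)} \lesssim \|f\|_{L^{p,r}(\R^d)}$, which is the one genuine inequality. Since $f^*$ is non-increasing, for every $t>0$,
\[
t^{r/p} f^*(t)^r \;=\; \frac{r}{p}\int_0^{t} s^{r/p-1}\,ds \cdot f^*(t)^r
\;\le\; \frac{r}{p}\int_0^{t} s^{r/p-1} f^*(s)^r\,ds
\;\le\; \frac{r}{p}\,\|f\|_{L^{p,r}(\R^d)}^{r}.
\]
Taking the supremum over $t>0$ yields the claim.

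For $r<q<\infty$ I would then interpolate by splitting one power of $s^{1/p}f^*(s)$ off:
\[
\|f\|_{L^{p,q}(\R^d)}^{q} \;\approx\; \int_0^\infty \bigl(s^{1/p}f^*(s)\bigr)^{q-r}\bigl(s^{1/p}f^*(s)\bigr)^{r}\,\frac{ds}{s}
\;\le\; \|f\|_{L^{p,\infty}(\R^d)}^{q-r}\,\|f\|_{L^{p,r}(\R^d)}^{r}.
\]
Combining with the weak-type bound from the previous paragraph gives $\|f\|_{L^{p,q}(\R^d)}\lesssim \|f\|_{L^{p,r}(\R^d)}$. When $q=\infty$ the inequality has already been proved in the previous step. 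This covers all cases in the statement.

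There is no real obstacle here; the argument is classical. The only point requiring a little care is the interface between the distribution-function formulation used in the paper and the rearrangement formulation used in the proof, which is handled once and for all by the layer-cake identity above.
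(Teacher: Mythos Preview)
Your proof is correct and is the standard argument for the nesting of Lorentz spaces via the decreasing rearrangement. The paper itself does not supply a proof of this lemma but merely recalls it from the literature (the original Lorentz papers and \cite[Theorem~6.3]{CR2016}), so there is no in-paper argument to compare against.
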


\begin{lem}\label{Lorentz-dua}
Let $1< p<\infty$ and $1\leq q<\infty$. Then $(L^{p,q}(\R^d))^{*}= L^{p',q'}(\R^d)$.
\end{lem}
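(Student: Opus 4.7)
The plan is to prove $(L^{p,q}(\R^d))^* = L^{p',q'}(\R^d)$ in two stages: first embed $L^{p',q'}$ into $(L^{p,q})^*$ via the standard integral pairing, and then show every bounded linear functional on $L^{p,q}$ arises in this form.

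For the forward embedding I would establish a H\"older-type inequality for Lorentz spaces. Starting from the Hardy--Littlewood rearrangement inequality
$$\int_{\R^d} |f(x)g(x)|\,dx \le \int_0^\infty f^*(t)\,g^*(t)\,dt,$$
I would write $f^*(t) g^*(t) = (t^{1/p} f^*(t))(t^{1/p'} g^*(t))\, t^{-1}$ and apply the one-dimensional H\"older inequality on $(0,\infty)$ with respect to the measure $dt/t$, using the conjugate exponents $q$ and $q'$. This yields $\int_{\R^d} |fg|\,dx \lesssim \|f\|_{L^{p,q}} \|g\|_{L^{p',q'}}$, and hence the assignment $g \mapsto \bigl(f \mapsto \int fg\bigr)$ maps $L^{p',q'}$ continuously into $(L^{p,q})^*$. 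The restriction $q \ge 1$ is precisely what lets the H\"older step go through; for $q<1$ the Lorentz ``norm'' is only a quasi-norm and this argument collapses.

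For the reverse inclusion, given $\phi \in (L^{p,q})^*$, I would run a Radon--Nikodym construction. Set $\nu(E) := \phi(\chi_E)$ for measurable sets $E \subset \R^d$ of finite measure; since $\|\chi_E\|_{L^{p,q}}$ is a finite multiple of $|E|^{1/p}$, the set function $\nu$ is well defined and absolutely continuous with respect to Lebesgue measure, while countable additivity follows from continuity of $\phi$ together with absolute continuity of the Lorentz norm under nested limits (valid because $q<\infty$). Radon--Nikodym then furnishes a locally integrable $g$ with $\phi(\chi_E) = \int_E g\,dx$; by linearity and density of compactly supported simple functions in $L^{p,q}$ (again using $q<\infty$), one extends the representation to $\phi(f) = \int_{\R^d} fg\,dx$ for all $f \in L^{p,q}$.

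The main obstacle I anticipate is showing that the Radon--Nikodym derivative $g$ actually lies in $L^{p',q'}$ with $\|g\|_{L^{p',q'}} \lesssim \|\phi\|_{(L^{p,q})^*}$. The natural approach is to test $\phi$ against approximants of $\operatorname{sgn}(g)$ times indicators of super-level sets $\{|g|>\lambda\}$, assembled so as to reconstruct the decreasing rearrangement $g^*$. Using the identification $\|g\|_{L^{p',q'}} \sim \|t^{1/p'} g^*(t)\|_{L^{q'}((0,\infty),\,dt/t)}$ and choosing test functions whose $L^{p,q}$-norm can be computed explicitly in terms of $g^*$, the desired bound follows by taking a supremum over the test family. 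The endpoint $q=1$ (so $q'=\infty$) is handled separately: one renorms $L^{p,1}$ by the averaged rearrangement $f^{**}$ and identifies the dual by a pointwise essential-supremum argument on $g^*$, which is cleaner than the general-$q$ case but still requires care since $L^{p,\infty}$ is not the closure of simple functions.
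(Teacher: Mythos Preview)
The paper does not supply its own proof of this lemma: it is quoted as a classical fact with references to Lorentz's original papers and to \cite[Theorems 6.3 and 6.22]{CR2016}. Your outline is precisely the standard textbook argument one finds in those references---H\"older via Hardy--Littlewood rearrangement for the embedding $L^{p',q'}\hookrightarrow (L^{p,q})^*$, and Radon--Nikodym plus density of simple functions for the converse---so there is nothing to compare against beyond noting that your sketch matches the cited proofs.

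One small technical point worth tightening: when $q>p$ the Lorentz functional $\|\cdot\|_{L^{p,q}}$ is only a quasi-norm, so to make the Radon--Nikodym step clean (in particular the countable-additivity and density arguments) you should pass to the equivalent genuine norm built from the maximal rearrangement $f^{**}(t)=t^{-1}\int_0^t f^*(s)\,ds$, available because $p>1$. You mention this renorming only for the endpoint $q=1$, but it is needed throughout whenever $q>p$.
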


Given $1<s<\infty$  and a nonnegative locally integrable function $f$, it is known that $M(f)^{\frac1s}$ belongs 
to the Muckenhoupt class $A_1$. Below we present a generalization of this property for the systems of two functions.

\begin{lem}\label{A1}
Given $1< s_1,s_2<\infty$ with $\frac1{s_1}+\frac1{s_2}<1$ and two nonnegative locally integrable functions $f_1, f_2$, we have $M(f_{1})^{\frac{1}{s_1}}M(f_2)^{\frac1{s_2}}\in A_1$.
\end{lem}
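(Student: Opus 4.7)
I would verify the $A_1$ condition directly: for every cube $Q$, the goal is to show
$$\frac{1}{|Q|} \int_Q M(f_1)^{1/s_1} M(f_2)^{1/s_2} \, dx \lesssim \essinf_{y \in Q}\, M(f_1)(y)^{1/s_1} M(f_2)(y)^{1/s_2},$$
with an implicit constant independent of $Q$, $f_1$, $f_2$. The strategy mirrors the classical Coifman--Rochberg argument giving $M(f)^{1/s} \in A_1$ for $s > 1$, but with an additional H\"older step that exploits the slack provided by the hypothesis $\frac{1}{s_1} + \frac{1}{s_2} < 1$.

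My first step would be to pick conjugate exponents $p, p'$ with $p < s_1$ and $p' < s_2$; such a choice is available precisely because $\frac{1}{s_1} + \frac{1}{s_2} < 1$ leaves room for $\frac{1}{s_1} < \frac{1}{p} < 1 - \frac{1}{s_2}$. Applying H\"older's inequality on $Q$ with these exponents reduces the task to separately controlling
$$\left( \frac{1}{|Q|} \int_Q M(f_1)^{p/s_1} \, dx \right)^{1/p} \lesssim \essinf_Q M(f_1)^{1/s_1}$$
and the analogous estimate with $(f_1, s_1, p)$ replaced by $(f_2, s_2, p')$. Once both inequalities are established, multiplication yields the $A_1$ estimate.

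To prove each one-function estimate I would use the standard local/far decomposition $f_i = f_i \chi_{2Q} + f_i \chi_{(2Q)^c}$. The far piece is essentially constant on $Q$: a routine comparison of cubes gives $M(f_i \chi_{(2Q)^c})(x) \lesssim \essinf_{y \in Q} M(f_i)(y)$ for $x \in Q$. For the local piece, since $p/s_1 < 1$ (respectively $p'/s_2 < 1$) by construction, Kolmogorov's inequality applied to the weak $(1,1)$ operator $M$ gives
$$\frac{1}{|Q|} \int_Q M(f_i \chi_{2Q})^{\rho} \, dx \lesssim \left( \frac{1}{|Q|} \int_{2Q} f_i \, dy \right)^{\rho} \leq \essinf_{y \in Q}\, M(f_i)(y)^{\rho},$$
where $\rho \in \{p/s_1,\, p'/s_2\}$, and the last inequality holds because $2Q$ is an admissible cube in the supremum defining $M(f_i)(y)$ for every $y \in Q$. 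Summing the local and far contributions, then raising to the power $1/p$ or $1/p'$, gives the required pointwise-type bound. The only real obstacle is the bookkeeping confirming that the strict inequality $\frac{1}{s_1} + \frac{1}{s_2} < 1$ genuinely permits $p < s_1$ and $p' < s_2$ at the same time; this gap is exactly what is missing in the borderline case $\frac{1}{s_1} + \frac{1}{s_2} = 1$, and beyond this point the argument reduces to the standard toolkit.
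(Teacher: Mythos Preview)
Your argument is correct and uses the same core ingredients as the paper's proof: the conjugate pair $(p,p')$ with $p<s_1$ and $p'<s_2$ (which is exactly the paper's choice $p=s_1-\epsilon$, $p'=s_2-\epsilon$ with $\frac{1}{s_1-\epsilon}+\frac{1}{s_2-\epsilon}=1$), the local/far splitting, Kolmogorov for the local part, and the geometric bound for the far part. The only organizational difference is that you apply H\"older on $Q$ \emph{first}, reducing immediately to two independent instances of the classical Coifman--Rochberg estimate $M(f_i)^{p/s_i}\in A_1$, whereas the paper decomposes $f_1$ and $f_2$ simultaneously into $\chi_{3Q}$ and $\chi_{(3Q)^c}$ parts and then treats the four resulting cross terms (applying H\"older only to the local--local piece and handling the mixed pieces via the one-function $A_1$ bound). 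Your ordering is slightly cleaner since it avoids the case analysis; the paper's version makes the interaction between the two maximal functions more explicit. Either way the mathematics is the same, and your observation that the product of essential infima is dominated by the essential infimum of the product closes the argument.
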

\begin{proof}
For any cube $Q$, it suffices to show
\begin{equation}\label{M1M2}
\frac{1}{|Q|}\int_{Q}M(f_{1})(y)^{\frac{1}{s_1}}M(f_{2})(y)^{\frac{1}{s_2}}dy\lesssim M(f_1)(x)^{\frac1{s_1}}M(f_2)(x)^{\frac1{s_2}} 
\quad \text{for almost all~} x\in Q.
\end{equation}
To prove \eqref{M1M2}, we write
$f_i=f_i\chi_{3Q}+f_i\chi_{(3Q)^c}, i=1,2.$
Note that there exits a constant $\epsilon\in (0, \min\{s_1-1,s_2-1\})$ such that $\frac1{s_1-\epsilon}+\frac1{s_2-\epsilon}=1$.
Then it holds that
\begin{align*}
&\frac{1}{|Q|}\int_{Q}M(f_{1}\chi_{3Q})(y)^{\frac{1}{s_1}}M(f_{2}\chi_{3Q})(y)^{\frac{1}{s_2}}dy
\leq \prod_{i=1}^2\Big(\frac{1}{|Q|}\int_{Q}M(f_{i}\chi_{3Q})(y)^{\frac{s_i-\epsilon}{s_i}}dy\Big)^{\frac1{s_i-\epsilon}}\\
&\lesssim \prod_{i=1}^2 \Big(\frac{1}{|Q|}\int_{\R^d}(f_{i}\chi_{3Q})(y)dy\Big)^{\frac1{s_i}}
\lesssim M(f_{1})(x)^{\frac{1}{s_1}}M(f_{2})(x)^{\frac{1}{s_2}}
\end{align*}
in view of Kolmogorov's inequality. And for $f_i\chi_{(3Q)^c}$, one has
$$M(f_i\chi_{(3Q)^c})(y)\lesssim M(f_{i})(x)$$
since any ball $B$ centered at $x$ that gives a nonzero average for $f_{i}\chi_{(3Q)^c}$ must have radius at least the side length of $Q$,
and then $\sqrt{d}B$ contains $x$. Hence \eqref{M1M2} also holds when $f_{i}$ is replaced by $f_i\chi_{(3Q)^c}$ on the left
hand side. In addition, it holds that for almost all $x\in Q$,
\begin{equation*}
\begin{aligned}
&\frac{1}{|Q|}\int_{Q}M(f_{1})(y)^{\frac{1}{s_1}}M(f_{2}\chi_{(3Q)^c})(y)^{\frac{1}{s_2}}dy\\
&\lesssim \frac{1}{|Q|}\int_{Q}M(f_{1})(y)^{\frac{1}{s_1}}dyM(f_2)(x)^{\frac1{s_2}}\lesssim M(f_1)(x)^{\frac1{s_1}}M(f_2)(x)^{\frac1{s_2}}
\end{aligned}
\end{equation*}
and
\begin{equation*}
\frac{1}{|Q|}\int_{Q}M(f_{1}\chi_{(3Q)^c})(y)^{\frac{1}{s_1}}M(f_{2})(y)^{\frac{1}{s_2}}dy\lesssim M(f_1)(x)^{\frac1{s_1}}M(f_2)(x)^{\frac1{s_2}}.
\end{equation*}
Collecting all the estimates above and using the subadditivity property
$M(f+g)^{\frac{1}{s_i}}\leq M(f)^{\frac1{s_i}}+M(g)^{\frac1{s_i}}$, we obtain \eqref{M1M2}.
\end{proof}

The following inclusion holds for partial multiple weight classes.
\begin{lem}\label{Apqu1-Appu2}
Let $u, u_2\in A_{1}$, $1<p\leq q_1<q_2<r_{u}$ and $\frac{1}{q_1}=\frac{1}{q_2}+\frac{1}{r_{u}}$. Then  the inclusion $A^{u_2}_{p,q_{2}}\subset A_{p,q_1}^{u_1}$ holds when $u_1:=uu_2\in A_{1}$.
\end{lem}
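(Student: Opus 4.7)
The strategy is to compare $[w]_{A^{u_1}_{p,q_1}}$ with $[w]_{A^{u_2}_{p,q_2}}$ by writing $u_1^{q_1}w^{q_1} = u^{q_1}(u_2 w)^{q_1}$ on an arbitrary cube $Q$ and applying H\"older's inequality to factor off $u$, which will then be absorbed via the reverse H\"older property of $u$ and the $A_1$ estimate $\ave{u}_Q\|u^{-1}\|_{L^\infty(Q)}\le [u]_{A_1}$. The remaining factor $\langle(u_2 w)^{q}\rangle_Q^{1/q}$ will match the definition of $[w]_{A^{u_2}_{p,q_2}}$ after multiplying by the common $\ave{w^{-p'}}_Q^{1/p'}$.

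A direct H\"older split with conjugate exponents $q_2/q_1$ and $(q_2/q_1)' = q_2/(q_2-q_1)$ forces the exponent on $u$ to be $q_1(q_2/q_1)' = q_1 q_2/(q_2-q_1) = r_u$, in view of the hypothesis $\frac{1}{q_1}=\frac{1}{q_2}+\frac{1}{r_u}$. This is the main obstacle, because $u\in RH_{r_u}$ is not guaranteed by the definition of $r_u$; only $u\in RH_r$ for $r<r_u$ is free. To obtain wiggle room, I would invoke Proposition \ref{Apqu-q+}(i) to fix some $\eta>0$ such that $w\in A^{u_2}_{p,\tilde q_2}$ with $\tilde q_2:=q_2(1+\eta)$ and $[w]_{A^{u_2}_{p,\tilde q_2}}\lesssim[w]_{A^{u_2}_{p,q_2}}$, and then apply H\"older with the exponents $\tilde q_2/q_1$ and $(\tilde q_2/q_1)'$ instead. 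The exponent on $u$ becomes $\tilde r:=q_1\tilde q_2/(\tilde q_2-q_1)$, and $\frac{1}{\tilde r}=\frac{1}{q_1}-\frac{1}{\tilde q_2}>\frac{1}{q_1}-\frac{1}{q_2}=\frac{1}{r_u}$ gives $\tilde r<r_u$, so $u\in RH_{\tilde r}$ and $\ave{u^{\tilde r}}_Q^{1/\tilde r}\lesssim \ave{u}_Q$.

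Assembling these estimates, for every cube $Q$ I expect
\[
\ave{u_1^{q_1}w^{q_1}}_Q^{1/q_1}
\le \ave{(u_2 w)^{\tilde q_2}}_Q^{1/\tilde q_2}\ave{u^{\tilde r}}_Q^{1/\tilde r}
\lesssim \ave{(u_2 w)^{\tilde q_2}}_Q^{1/\tilde q_2}\ave{u}_Q.
\]
Combining with $\|u_1^{-1}\|_{L^\infty(Q)}\le \|u^{-1}\|_{L^\infty(Q)}\|u_2^{-1}\|_{L^\infty(Q)}$, absorbing $\ave{u}_Q\|u^{-1}\|_{L^\infty(Q)}\le[u]_{A_1}$, then multiplying by $\ave{w^{-p'}}_Q^{1/p'}$ and taking the supremum over $Q$ yields
\[
[w]_{A^{u_1}_{p,q_1}}\lesssim [u]_{A_1}\,[w]_{A^{u_2}_{p,\tilde q_2}}\lesssim [u]_{A_1}\,[w]_{A^{u_2}_{p,q_2}},
\]
establishing the inclusion $A^{u_2}_{p,q_2}\subset A^{u_1}_{p,q_1}$.

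The only genuine difficulty is the boundary coincidence $\tilde r = r_u$ at $\tilde q_2=q_2$, which is precisely sidestepped by the openness of the partial weight class (Proposition \ref{Apqu-q+}). The subsidiary verifications are routine: $\tilde q_2>q_1$ (so H\"older is valid) follows from $q_1<q_2<\tilde q_2$; $\tilde r>1$ is immediate from $q_1>1$ and $q_1<\tilde q_2$; the monotonicity making $\tilde r<r_u$ holds because $\tilde q_2\mapsto q_1\tilde q_2/(\tilde q_2-q_1)$ is strictly decreasing. The hypothesis $u_1\in A_1$ enters only to make $A^{u_1}_{p,q_1}$ meaningful in the paper's conventions.
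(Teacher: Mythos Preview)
Your proposal is correct and follows essentially the same route as the paper: invoke the openness of the partial weight class (Proposition \ref{Apqu-q+}(i)) to pass from $q_2$ to $\tilde q_2=q_2(1+\eta)$, apply H\"older with exponents $\tilde q_2/q_1$ and $(\tilde q_2/q_1)'$ so that the resulting exponent on $u$ drops strictly below $r_u$, and then absorb the $u$-factors via $RH_{\tilde r}$ and $[u]_{A_1}$. The paper carries out exactly this computation (with $\epsilon$ in place of your $\eta$), so there is nothing to add.
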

\begin{proof}
For any $w\in A^{u_2}_{p,q_2}$, we have $u_2^{q_2}w^{q_2}\in A_{\infty}$. There is a constant $\epsilon>0$ such that $w\in A^{u_2}_{p,q_2(1+\epsilon)}$
and $[w]_{A^{u_2}_{p,q_2(1+\epsilon)}}\lesssim [w]_{A^{u_2}_{p,q_2}}$. By H\"{o}lder inequality
with exponent $\frac{q(1+\epsilon)}{q_1}$ and $(\frac{q(1+\epsilon)}{q_1})'$,
one has that due to $q_1(\frac{q(1+\epsilon)}{q_1})'<r_{u}$,
\begin{align*}
[w]_{A^{u_1}_{p,q_1}}&=\sup_{Q}\ave{u_1^{q_1}w^{q_1}}^{\frac{1}{q_1}}_{Q} \ave{w^{-p'}}^{\frac{1}{p'}}_{Q}   
\big\|u_1^{-1}\big\|_{L^{\infty}(Q)}\\
&\leq \sup_{Q}\ave{u_2^{q_2(1+\epsilon)}w^{q_2(1+\epsilon)}}^{\frac{1}{q_2(1+\epsilon)}}_{Q}
\ave{u^{q_1(\frac{q_2(1+\epsilon)}{q_1})'}}^{\frac{1}{(\frac{q_2(1+\epsilon)}{q_1})'q_1}}_{Q}
\ave{w^{-p'}}^{\frac{1}{p'}}_{Q} \big\|u_1^{-1}\big\|_{L^{\infty}(Q)}\\
&\lesssim \sup_{Q}\ave{u_2^{q(1+\epsilon)}w^{q(1+\epsilon)}}^{\frac{1}{q(1+\epsilon))}}_{Q}
\ave{w^{-p'}}^{\frac{1}{p'}}_{Q}\big\|u_2^{-1}\big\|_{L^{\infty}(Q)}
\ave{u}_{Q} \big\|u^{-1}\big\|_{L^{\infty}(Q)}\\
&=[w]_{A_{p, q_2(1+\epsilon)}^{u_2}}\lesssim [w]_{A_{p, q_2}^{u_2}}.
\end{align*}
This completes the proof.
\end{proof}

Next, for any $v\in A^{u_2}_{p,q_2}$ and $U\in L^{\frac{d}{\beta_2-\beta_1},\infty}(\R^d)$,  we construct a function $u_1\in A_{1}\bigcap M^{\frac{d}{\alpha-\beta_1}}_{s_{1}}(\R^{d})$ such that $u_1\geq u_2U$ and $v\in A^{u_1}_{p,q_1}$.

\begin{lem}\label{D-OD-2-lem}
Let $0\leq \beta_{1}<\beta_2<\alpha<d$, $1<p\leq q_1< q_2<\frac{d}{\beta_2-\beta_1}$, $\frac{1}{p}=\frac{1}{q_1}+\frac{\beta_1}{d}=\frac{1}{q_2}+\frac{\beta_2}{d}$, $u_{2}\in A_{1}\bigcap M^{\frac{d}{\alpha-\beta_2}}_{s_2}(\R^d)$, $v\in A^{u_2}_{p,q_2}$ and $U\in L^{\frac{d}{\beta_2-\beta_1},\infty}(\R^d)$. There exists a constant $s_{1}>1$ with $\frac1{s_1}>\frac1{s_2}+\frac{\beta_2-\beta_1}{d}$,
and a function $u_1\geq u_2U$ such that $u_1\in A_{1}\bigcap M^{\frac{d}{\alpha-\beta_1}}_{s_{1}}(\mathbb{R}^{d})$, $v\in A_{p,q_1}^{u_1}$ and
\begin{align*}
\|u_1\|_{M^{\frac{d}{\alpha-\beta_1}}_{s_{1}}(\mathbb{R}^{d})}\lesssim \|U\|_{L^{\frac{d}{\beta_2-\beta_1},\infty}(\R^d)}\|u_2\|_{M^{\frac d{\alpha-\beta_2}}_{s_2}(\R^d)}.
\end{align*}
\end{lem}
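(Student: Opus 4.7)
The plan is to construct $u_1$ as a product of two Hardy--Littlewood maximal averages, $u_1:=M(U^{r_1})^{1/r_1}\cdot M(u_2^{r_2})^{1/r_2}$, with carefully chosen $r_1, r_2>1$. First I would fix a small $\epsilon>0$ and set $\theta_\epsilon:=q_2(1+\epsilon)/(q_2(1+\epsilon)-q_1)$, noting that $q_1\theta_\epsilon<d/(\beta_2-\beta_1)$ strictly (with equality only at $\epsilon=0$). Then I would choose $r_1\in(q_1\theta_\epsilon,\,d/(\beta_2-\beta_1))$ and take $r_2>1$ close enough to $1$ that $u_2^{r_2}\in A_1$ (by the sharp reverse H\"older inequality for $u_2\in A_1$) and $1/r_1+1/r_2<1$; both requirements on $r_2$ are simultaneously achievable because $r_1>1$. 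With this choice, $u_1\geq u_2 U$ is immediate from $M(f)\geq f$ a.e., and Lemma \ref{A1} gives $u_1\in A_1$.

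For the Morrey estimate, I would pick $1<a,b<\infty$ with $1/a+1/b=1$, $s_1 a\leq s_2$, and $s_1 b<d/(\beta_2-\beta_1)$; such a splitting exists precisely because $1/s_1>1/s_2+(\beta_2-\beta_1)/d$. H\"older's inequality then bounds $\ave{u_1^{s_1}}_Q^{1/s_1}$ by a product of two averages, the first controlled by $\|u_2\|_{M^{d/(\alpha-\beta_2)}_{s_2}}|Q|^{-(\alpha-\beta_2)/d}$ via Chiarenza--Frasca boundedness of $M$ on Morrey spaces (applied to $u_2^{r_2}$) combined with Jensen's inequality, and the second by $\|U\|_{L^{d/(\beta_2-\beta_1),\infty}}|Q|^{-(\beta_2-\beta_1)/d}$ via boundedness of $M$ on $L^{d/(r_1(\beta_2-\beta_1)),\infty}$ together with local integrability of Lorentz functions at subcritical exponents. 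The two $|Q|$-powers sum to exactly $-(\alpha-\beta_1)/d$, delivering the Morrey estimate with the stated constant.

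The heart of the proof is verifying $v\in A^{u_1}_{p,q_1}$. Writing $g:=M(U^{r_1})^{1/r_1}$ and using $M(u_2^{r_2})^{1/r_2}\approx u_2$ (since $u_2^{r_2}\in A_1$), we have $u_1\approx u_2 g$ pointwise, so $\|u_1^{-1}\|_{L^\infty(Q)}\lesssim\|u_2^{-1}\|_{L^\infty(Q)}/\inf_Q g$. H\"older with conjugate exponents $q_2(1+\epsilon)/q_1$ and $q_2(1+\epsilon)/(q_2(1+\epsilon)-q_1)$, combined with the identity $q_1\theta_\epsilon\cdot(q_2(1+\epsilon)-q_1)/(q_1 q_2(1+\epsilon))=1$, yields
\[\ave{(u_2 v)^{q_1}g^{q_1}}_Q^{1/q_1}\leq\ave{(u_2 v)^{q_2(1+\epsilon)}}_Q^{1/(q_2(1+\epsilon))}\ave{g^{q_1\theta_\epsilon}}_Q^{1/(q_1\theta_\epsilon)}.\]
Since $r_1>q_1\theta_\epsilon$, the Coifman--Rochberg theorem forces $g^{q_1\theta_\epsilon}=M(U^{r_1})^{q_1\theta_\epsilon/r_1}\in A_1$, so $\ave{g^{q_1\theta_\epsilon}}_Q^{1/(q_1\theta_\epsilon)}\lesssim\inf_Q g$; this cancels exactly against the factor $1/\inf_Q g$ in $\|u_1^{-1}\|_{L^\infty(Q)}$. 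Finally, the reverse H\"older property of $(u_2 v)^{q_2}\in A_\infty$ (obtained from $v\in A^{u_2}_{p,q_2}$ via Corollary \ref{cor-Apqu}) absorbs the $\epsilon$-slack, producing $[v]_{A^{u_1}_{p,q_1}}\lesssim [v]_{A^{u_2}_{p,q_2}}$.

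The main obstacle will be orchestrating the parameters in concert: $r_1<d/(\beta_2-\beta_1)$ is needed both for the Lorentz embedding of $g$ and for the Morrey bound, $r_1>q_1\theta_\epsilon$ is needed for the Coifman--Rochberg cancellation that matches $\|u_1^{-1}\|_{L^\infty(Q)}$, while $1/r_1+1/r_2<1$ together with $u_2^{r_2}\in A_1$ keeps $u_1\in A_1$ and secures $M(u_2^{r_2})^{1/r_2}\approx u_2$. The compatibility of these four demands rests on the strict inequality $q_1\theta_\epsilon<d/(\beta_2-\beta_1)$ for positive $\epsilon$, which is precisely the source of the sharp assumption $1/s_1>1/s_2+(\beta_2-\beta_1)/d$ in the lemma.
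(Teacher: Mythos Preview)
Your proposal is correct and follows essentially the same approach as the paper. The paper also sets $u_1=M(U^{r_1})^{1/r_1}\cdot M(u_2^{r_2})^{1/r_2}$ (with the specific choices $r_1=\frac{d}{\beta_2-\beta_1}-\epsilon_0$ and $r_2=s_2-\epsilon_0$), invokes Lemma~\ref{A1} for $u_1\in A_1$, and uses Chiarenza--Frasca together with the Kolmogorov inequality for the Morrey bound; the only cosmetic difference is that the paper packages your H\"older/Coifman--Rochberg cancellation step as the separate Lemma~\ref{Apqu1-Appu2}, whereas you carry it out inline (and make the equivalence $M(u_2^{r_2})^{1/r_2}\approx u_2$ explicit via $u_2^{r_2}\in A_1$, which the paper uses tacitly).
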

\begin{proof}
For any weight $v\in A^{u_2}_{p_2,q_2}$, there exist a sufficiently small constant $\epsilon>0$ such that $v\in A^{u_2}_{p,q_2(1+\epsilon)}$ (see Proposition \ref{Apqu-q+}). Choosing $\epsilon_{0}\in (0, \frac{d}{\beta_2-\beta_1}-q_2)$ such that
$\frac{1}{q_1}=\frac{1}{q_2(1+\epsilon)}+\frac{1}{\frac{d}{\beta_2-\beta_1}-\epsilon_{0}}$.
Define the functions $u$ and $u_1$ by
$u= M(U^{\frac{d}{\beta_2-\beta_1}-\epsilon_{0}})^{\frac{1}{{\frac{d}{\beta_2-\beta_1}-\epsilon_{0}}}}$
and $u_1= uM(u_{2}^{s_2-\epsilon_0})^{\frac1{s_2-\epsilon_0}}$.
By the Kolmogorov inequality, we obtain such an embedding relation
$$\|U\|_{M^{\frac{d}{\beta_2-\beta_1}}_{s}(\R^d)}\lesssim \|U\|_{L^{\frac{d}{\beta_2-\beta_1},\infty}(\R^d)}, 1\leq s< \frac{d}{\beta_2-\beta_1},$$
which ensures that the definition of $u_1$ is well-posed. Moreover, $u_1\in A_1$ follows by Lemma \ref{A1}.
By the boundedness of the Hardy-Littlewood maximal operator $M$ on Morrey spaces
and $\frac1{s_1}>\frac1{s_2}+\frac{\beta_2-\beta_1}d$, one has
\begin{align*}
\|u_1\|_{M^{\frac{d}{\alpha-\beta_1}}_{s_{1}}(\mathbb{R}^{d})}
&\leq \|M(U^{\frac{d}{\beta_2-\beta_1}-\epsilon_{0}})^{\frac{1}{{\frac{d}{\beta_2-\beta_1}-\epsilon_{0}}}}\|
_{M^{\frac{d}{\beta_2-\beta_1}}_{\frac{d}{\beta_2-\beta_1}-\frac{\epsilon_{0}}{2}}(\mathbb{R}^{d})}
\| M(u_{2}^{s_2-\epsilon_0})^{\frac1{s_2-\epsilon_0}}\|_{M^{\frac d{\alpha-\beta_2}}_{s_2}(\R^d)}\\
&\lesssim \|U\|_{M^{\frac{d}{\beta_2-\beta_1}}_{\frac{d}{\beta_2-\beta_1}-\frac{\epsilon_{0}}{2}}(\mathbb{R}^{d})}
\|u_2\|_{M^{\frac d{\alpha-\beta_2}}_{s_2}(\R^d)}\lesssim \|U\|_{L^{\frac{d}{\beta_2-\beta_1},\infty}(\R^d)}\|u_2\|_{M^{\frac d{\alpha-\beta_2}}_{s_2}(\R^d)}.
\end{align*}
In addition, $u^{r}=M(U^{\frac{d}{\beta_2-\beta_1}-\epsilon_{0}})^{\frac{r}{\frac{d}{\beta_2-\beta_1}-\epsilon_{0}}}\in A_{1}$ for any $1<r<\frac{d}{\beta_2-\beta_1}-\epsilon_{0}$, then for any cube $Q$,
$$\ave{u^{r}}_{Q}\leq [u^{r}]_{A_{1}}\inf_{Q\ni x}u(x)^{r}\leq [u^{r}]_{A_{1}}\ave{u}^{r}_{Q}.$$
Hence, $r_{u}=\frac{d}{\beta_2-\beta_1}-\epsilon_{0},$ and $v\in A^{u_1}_{p,q_1}$ by Proposition \ref{Apqu1-Appu2}.
\end{proof}

For the case of $\beta_2=\alpha$ in Lemma \ref{D-OD-2-lem}, one has a similar result as follows.
\begin{lem}\label{D-OD-3-lem}
Let $0\leq \beta<\alpha<d$, $2\alpha-\beta<d$, $1<p< q<\frac{d}{\alpha-\beta}$, $\frac{1}{p}=\frac{1}{q}+\frac{\alpha}{d}=\frac{1}{\tilde{q}}+\frac{\beta}{d}$, $v\in A_{p,q}$ and $U\in L^{\frac{d}{\alpha-\beta},\infty}(\R^d)$. There exists a constant $s_{0}\in (1,\frac{d}{\alpha-\beta})$ with $\frac{d}{\alpha-\beta}-s_{0}$ sufficiently small, 
and a function $u\geq U$ such that $u\in A_{1}\bigcap M^{\frac{d}{\alpha-\beta}}_{s_{0}}(\mathbb{R}^{d})$, $v\in A_{p,\tilde{q}}^{u}$ and
$\|u\|_{M^{\frac{d}{\alpha-\beta}}_{s_{0}}(\mathbb{R}^{d})}\lesssim
\|U\|_{L^{\frac{d}{\alpha-\beta},\infty}(\mathbb{R}^{d})}$.
\end{lem}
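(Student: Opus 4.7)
The plan is to specialize the construction in the proof of Lemma~\ref{D-OD-2-lem} to the degenerate regime $\beta_2=\alpha$ and $u_2\equiv 1$, in which case the auxiliary factor $M(u_2^{s_2-\epsilon_0})^{1/(s_2-\epsilon_0)}$ collapses to $1$ and only the ``$u$''-factor depending on $U$ survives.

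Starting from $v\in A_{p,q}$, Proposition~\ref{Apqu-q+} first furnishes some $\epsilon>0$ with $v\in A_{p,q(1+\epsilon)}$ and a comparable constant. Since $q<d/(\alpha-\beta)$, I will then pick $\epsilon_{0}\in(0,d/(\alpha-\beta)-q)$, sufficiently small depending on $\epsilon$, so that
\begin{equation*}
\frac{1}{\tilde q}=\frac{1}{q(1+\epsilon)}+\frac{1}{\frac{d}{\alpha-\beta}-\epsilon_{0}},
\end{equation*}
set $r:=d/(\alpha-\beta)-\epsilon_{0}$ and $s_{0}:=r$, and define $u:=M(U^{r})^{1/r}$. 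The pointwise bound $u\geq U$ almost everywhere is immediate from Lebesgue differentiation, while the Coifman--Rochberg theorem gives $u\in A_{1}$ (since $r>1$) and, more generally, $u^{s}=M(U^{r})^{s/r}\in A_{1}$ for every $1\leq s<r$. Coupling this with $\inf_{Q}u\leq \ave{u}_{Q}$ yields $\ave{u^{s}}_{Q}\leq [u^{s}]_{A_{1}}\ave{u}_{Q}^{s}$, hence $u\in RH_{s}$ for all $s<r$; in particular the critical reverse H\"older index of $u$ satisfies $r_{u}\geq r$.

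For the Morrey estimate I will chain two embeddings: the weak-type boundedness of $M$ on $L^{d/(r(\alpha-\beta))}(\R^{d})$ yields
\begin{equation*}
\|u\|_{L^{d/(\alpha-\beta),\infty}(\R^{d})}=\|M(U^{r})\|_{L^{d/(r(\alpha-\beta)),\infty}(\R^{d})}^{1/r}\lesssim \|U\|_{L^{d/(\alpha-\beta),\infty}(\R^{d})},
\end{equation*}
and the Kolmogorov-type embedding $L^{p,\infty}(\R^{d})\subset M^{p}_{s_{0}}(\R^{d})$ valid for $s_{0}<p$ then delivers $\|u\|_{M^{d/(\alpha-\beta)}_{s_{0}}(\R^{d})}\lesssim \|U\|_{L^{d/(\alpha-\beta),\infty}(\R^{d})}$. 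Finally, the membership $v\in A_{p,\tilde q}^{u}$ will follow from Lemma~\ref{Apqu1-Appu2} applied with the trivial choice $u_{2}\equiv 1$, $q_{2}=q(1+\epsilon)$, $q_{1}=\tilde q$, $u_{1}=u$: the defining identity for $\epsilon_{0}$ is precisely the scaling relation $\frac{1}{q_{1}}=\frac{1}{q_{2}}+\frac{1}{r_{u}}$ required there (with $r_{u}=r$), and the inclusion $q_{2}<r_{u}$ holds once $\epsilon,\epsilon_{0}$ are chosen small.

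The main subtlety will be the coupled choice of the two small parameters: $\epsilon$ is dictated by the self-improvement of the weight condition on $v$, while $\epsilon_{0}$ must then be tuned so that the reverse H\"older window of $u$ matches the scaling $\frac{1}{\tilde q}=\frac{1}{q}+\frac{\alpha-\beta}{d}$ and at the same time keeps $s_{0}<d/(\alpha-\beta)$, so that the Lorentz-to-Morrey embedding remains in force.
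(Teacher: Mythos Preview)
Your proposal is correct and follows essentially the same approach as the paper: self-improve $v\in A_{p,q}$ to $A_{p,q(1+\epsilon)}$, set $r=\frac{d}{\alpha-\beta}-\epsilon_0$ with the matching relation for $\tilde q$, define $u=M(U^r)^{1/r}$, and then invoke the embedding $A_{p,q(1+\epsilon)}\subset A^{u}_{p,\tilde q}$. The only cosmetic differences are that the paper takes $s_0=\frac{d}{\alpha-\beta}-\frac{\epsilon_0}{2}>r$ and obtains the Morrey bound via the boundedness of $M$ on Morrey spaces (which needs $s_0>r$), whereas you take $s_0=r$ and route through the weak-$L^{d/(\alpha-\beta)}$ boundedness of $M$ plus Kolmogorov; also note that your parenthetical ``$r_u=r$'' should really read $r_u\ge r$, but since Lemma~\ref{Apqu1-Appu2} (equivalently Proposition~\ref{Apq-Appu}) only uses reverse H\"older at exponents below $r$, this does not affect the argument.
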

\begin{proof}
The proof procedure is analogous to that of Lemma \ref{D-OD-2-lem}.
For any weight $v\in A_{p,q}$, there exist a sufficiently small constant $\epsilon>0$ such that $v\in A_{p,q(1+\epsilon)}$.
Choosing $\epsilon_{0}\in (0, \frac{d}{\alpha-\beta}-q)$ such that
$\frac{1}{\tilde{q}}=\frac{1}{q(1+\epsilon)}+\frac{1}{\frac{d}{\alpha-\beta}-\epsilon_{0}}$.
Define the function $u$ and the exponent $s_{0}$ by
$u=M(U^{\frac{d}{\alpha-\beta}-\epsilon_{0}})^{\frac{1}{{\frac{d}{\alpha-\beta}-\epsilon_{0}}}}$
and $s_{0}=\frac{d}{\alpha-\beta}-\frac{\epsilon_{0}}{2}$.
It follows from the Kolmogorov inequality that
$\|U\|_{M^{\frac{d}{\alpha-\beta}}_{s}(\R^d)}\lesssim \|U\|_{L^{\frac{d}{\alpha-\beta},\infty}(\R^d)}$
when $1\leq s< \frac{d}{\alpha-\beta}$,
which ensures that the definition of $u$ is well-posed.
Due to the boundedness of the Hardy-Littlewood maximal operator $M$ on Morrey spaces and
$s_{0}>\frac{d}{\alpha-\beta}-\epsilon_{0}$, then
$$\|u\|_{M^{\frac{d}{\alpha-\beta}}_{s_{0}}(\mathbb{R}^{d})}
=\|M(U^{\frac{d}{\alpha-\beta}-\epsilon_{0}})\|^{\frac{1}{\frac{d}{\alpha-\beta}-\epsilon_{0}}}
_{M^{\frac{d}{(\alpha-\beta)(\frac{d}{\alpha-\beta}-\epsilon_{0})}}_{\frac{s_{0}}{{\frac{d}{\alpha-\beta}-\epsilon_{0}}}}(\mathbb{R}^{d})}
\lesssim \|U\|_{M^{\frac{d}{\alpha-\beta}}_{s_{0}}(\mathbb{R}^{d})}$$
and $v\in A^{u}_{p,\tilde{q}}$ by Proposition \ref{Apq-Appu}.
\end{proof}

\vspace{0.3cm}
\noindent
{\bf Proof of Theorem \ref{D-OD-2}.}
For any $w_2\in A^{u_2}_{p, q_2}$ with $1<p<q_2<\frac{d}{\alpha-\beta_2}$ and $\frac1{p}-\frac1{q_2}=\frac{\beta_2}{d}$, applying Lemmas \ref{Lorentz-emb} and \ref{Lorentz-dua} with $1<r<\frac{q_2}{q_1}$ yields
\begin{equation*}
\begin{aligned}
\|g\|^{q_1}_{L^{q_2}(u_2^{q_2}w_2^{q_2})}&=\|g^{q_1}u_{2}^{q_1}w_2^{q_1}\|_{L^{\frac{q_2}{q_1}}(\R^d)}\lesssim \|g^{q_1}u_{2}^{q_1}w_2^{q_1}\|_{L^{\frac{q_2}{q_1}, r}(\R^d)}\\
&=\sup_{\|W\|_{L^{(\frac{q_2}{q_1})',r'}(\R^d)}\neq 0} \Big|\int_{\R^d}g(x)^{q_1}u_2^{q_1}w_2(x)^{q_1}W(x)dx
\Big|\|W\|^{-1}_{L^{(\frac{q_2}{q_1})',r'}(\R^d)}.
\end{aligned}
\end{equation*}
If $W\in L^{(\frac{q_2}{q_1})',r'}(\R^d)$ (hence $W\in L^{(\frac{q_2}{q_1})',\infty}(\R^d)$), then $U:=W^{\frac1{q_1}}\in L^{\frac{d}{\beta_2-\beta_1},\infty}(\R^d)$ and
\begin{equation}\label{D-OD-2-eq1}
\begin{aligned}
\|g\|^{q_1}_{L^{q_2}(u_2^{q_2}w_2^{q_2})}&\leq \sup_{\|U\|_{L^{\frac{d}{\beta_2-\beta_1},\infty}(\R^d)}\neq 0} \big|\int_{\R^d}g(x)^{q_1}u_2^{q_1}w_2(x)^{q_1}U(x)^{q_1}dx\big|\|U\|^{-q_1}_{L^{\frac{d}{\beta_2-\beta_1},\infty}(\R^d)}.
\end{aligned}
\end{equation}
By Lemma \ref{D-OD-2-lem}, there exists a function $u_1$ such that $u_1\geq u_2 U$, $u_1\in M^{\frac d{\alpha-\beta_1}}_{s_1}(\R^d)$ 
and $w_2\in A^{u_1}_{p,q_1}$. Therefore,
\begin{equation}\label{D-OD-2-eq2}
\begin{aligned}
&\big|\int_{\R^d}g(x)^{q_1}u_2(x)^{q_1}w_2(x)^{q_1}U(x)^{q_1}dx\big|\leq \int_{\R^d}|g(x)|^{q_1}u_1(x)^{q_1}w_2(x)^{q_1}dx\\
&\lesssim \|u_{1}\|^{q_1}_{M^{\frac{d}{\alpha-\beta_1}}_{s_1}(\R^d)}\|f\|^{q_1}_{L^{p}(w_2^{p})}\lesssim \|u_{2}\|^{q_1}_{M^{\frac{d}{\alpha-\beta_2}}_{s_2}(\R^d)}\|U\|_{L^{\frac{d}{\beta_2-\beta_1},\infty}(\R^d)}\|f\|^{q_1}_{L^{p}(w_2^{p})},
\end{aligned}
\end{equation}
where $\frac1{s_1}>\frac1{s_2}+\frac{\beta_2-\beta_1}{d}$. Collecting \eqref{D-OD-2-eq1} and \eqref{D-OD-2-eq2} shows that
for any $w_2\in A^{u_2}_{p,q_2}(\R^d)$,
$\|g\|_{L^{q_2}(u_2^{q_2}w_2^{q_2})}\lesssim \|u_{2}\|_{M^{\frac{d}{\alpha-\beta_2}}_{s_2}(\R^d)}\|f\|_{L^{p}(w_2^{p})}.$
\qed

\vspace{0.3cm}
\noindent
{\bf Proof of Theorem \ref{D-OD-3}.}
The proof is similar to that of Theorem \ref{D-OD-2}, with only a few minor differences in the details.
Indeed, for any $v\in A_{p, q_2}$ with $1<p<q_2<\frac{d}{\alpha-\beta}$ and $\frac1p-\frac1{q_1}=\frac{\beta}{d}$, we have
\begin{equation}\label{D-OD-3-eq1}
\begin{aligned}
\|g\|^{q_1}_{L^{q_2}(v^{q_2})}&\leq \sup_{\|U\|_{L^{\frac{d}{\alpha-\beta},\infty}(\R^d)}\neq 0} \big|\int_{\R^d}g(x)^{q_1}v(x)^{q_1}U(x)^{q_1}dx\big|\|U\|^{-q_1}_{L^{\frac{d}{\alpha-\beta},\infty}(\R^d)}.
\end{aligned}
\end{equation}
In addition, it follows from Lemma \ref{D-OD-3-lem} that
\begin{equation}\label{D-OD-3-eq2}
\begin{aligned}
&\big|\int_{\R^d}g(x)^{q_1}v(x)^{q_1}U(x)^{q_1}dx\big|\leq \int_{\R^d}|g(x)|^{q_1}v(x)^{q_1}u(x)^{q_1}dx\\
&\lesssim \|u\|^{q_1}_{M^{\frac{d}{\alpha-\beta}}_{s}(\R^d)}\|f\|^{q_1}_{L^{p}(v^{p})}\lesssim \|u\|^{q_1}_{M^{\frac{d}{\alpha-\beta}}_{s_{0}}(\R^d)}\|f\|^{p}_{L^{p}(v^{p})}
\lesssim \|U\|^{p}_{L^{\frac{d}{\alpha},\infty}(\R^d)}\|f\|^{p}_{L^{p}(v^{p})},
\end{aligned}
\end{equation}
where $1<s<s_{0}<\frac{d}{\alpha-\beta}$.
Combining \eqref{D-OD-3-eq1} with \eqref{D-OD-3-eq2} yields
$\|g\|_{L^{q}(v^{q})}\lesssim \|f\|_{L^{p}(v^{p})}.$ \qed

\vspace{0.3cm}
\noindent
{\bf Proof of Theorem \ref{D-OD}.} As mentioned in subsection \ref{1.3-C}, according to Theorems \ref{ex-Apqu}, \ref{D-OD-2} and \ref{D-OD-3},
one can get the required conclusion. \qed

\vspace{0.5cm}

At the end of this subsection, we present the converse result of Theorem \ref{D-OD}.
By applying the H\"{o}lder inequality in weak Lebesgue spaces, one knows that for $u\in L^{\frac{d}{\alpha},\infty}(\R^d)$ and $\frac{1}{p}=\frac{1}{q}+\frac{\alpha}{d}$,
\begin{equation}
\|f\|_{L^{p,\infty}_{u^{p}w^{p}}(\R^d)}\leq \|u\|_{L^{\frac{d}{\alpha},\infty}(\R^d)}\|f\|_{L^{q,\infty}_{w^{q}}(\R^d)},
\end{equation}
where $\|f\|_{L^{q,\infty}_{w^{q}}(\R^d)}:=\|fw\|_{L^{q,\infty}(\R^d)}$.

\begin{thm}\label{OD-D}
Suppose that $T$ is defined on $\bigcup_{1\leq \rho<\infty}\bigcup_{w\in A_{\rho}}L^{\rho}(w)$ and takes values in measurable
complex-valued function space. Let $1\leq p_{0}<q_{0}<\infty$, $0<\alpha<d$ and $\frac {1}{p_{0}} -\frac {1}{q_{0}} =\frac \alpha d$. If it holds that for any weight $v\in A_{p_{0},q_{0}}$,
$\|T(f)\|_{L^{q_{0}}(v^{q_{0}})}\lesssim \|f\|_{L^{p_{0}}(v^{p_{0}})},$
then for any $u\in A_{1}\bigcap L^{\frac{d}{\alpha-\beta},\infty}(\R^d)$ and any weight $w\in A_{p,q}$ with $1<p\leq t<q<\infty$,
$\frac 1{p} -\frac 1{q} =\frac \alpha d$ and $\frac 1{p} -\frac 1{t} =\frac \beta d$, one has
$\|T(f)\|_{L^{q}(u^{q}w^{q})}\lesssim \|f\|_{L^{p}(w^{p})}$.
\end{thm}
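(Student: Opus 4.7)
The plan is to combine the classical Rubio de Francia off-diagonal extrapolation theorem with the Hölder inequality in weak Lebesgue spaces recalled immediately before the statement. First, applying the classical off-diagonal extrapolation theorem to the hypothesis upgrades the $(p_0,q_0)$-bound to $T:L^{\rho}(v^{\rho})\to L^{\sigma}(v^{\sigma})$ for every pair $(\rho,\sigma)$ with $1/\rho-1/\sigma=\alpha/d$ and every $v\in A_{\rho,\sigma}$. In particular, for our pair $(p,q)$ and the given $w\in A_{p,q}$, this yields $\|Tf\|_{L^{q}(w^{q})}\lesssim\|f\|_{L^{p}(w^{p})}$.

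Next, I would install the extra factor $u$ on the output side by invoking the cited weak-Lebesgue Hölder inequality with parameters $(p,q,\alpha)$ replaced by $(t,q,\alpha-\beta)$. This substitution is legal because the identity $1/t=1/q+(\alpha-\beta)/d$ is simply the combination of the hypotheses $1/p-1/q=\alpha/d$ and $1/p-1/t=\beta/d$. Combining with the first step gives the weak-type estimate
$$\|Tf\|_{L^{t,\infty}(u^{t}w^{t})}\leq\|u\|_{L^{d/(\alpha-\beta),\infty}(\R^d)}\|Tf\|_{L^{q,\infty}(w^{q})}\lesssim\|u\|_{L^{d/(\alpha-\beta),\infty}(\R^d)}\|f\|_{L^{p}(w^{p})}.$$

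The third step, which is the principal obstacle, is to upgrade this weak $(p,t)$ bound along the $\beta$-scaling line into the strong $(p,q)$ bound along the $\alpha$-scaling line asserted by the theorem, keeping the extra $u^{q}$ factor in the output weight. My plan is to exploit the openness of the class $A_{p,q}$ (via reverse H\"older for $w^{q}$ and $w^{-p'}$, as in Proposition \ref{Apqu-q+}) to find two neighboring $\alpha$-scaled pairs $(p_{1},q_{1})$ and $(p_{2},q_{2})$ with $w$ still in each $A_{p_{j},q_{j}}$; repeating steps one and two at each pair produces distinct weak-type bounds at two endpoints $(p_{j},t_{j})$ on the $\beta$-line. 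Marcinkiewicz real interpolation between these two weak endpoints then yields a Lorentz-type bound of the form $T:L^{p}(w^{p})\to L^{t,q}(u^{t}w^{t})$, using the Lorentz inclusion $L^{p}(w^{p})\hookrightarrow L^{p,q}(w^{p})$ that is valid because $p\leq q$. A final Lorentz H\"older inequality applied to the decomposition $Tf\cdot uw=u\cdot(Tf\cdot w)$ converts this Lorentz estimate into the desired strong $L^{q}(u^{q}w^{q})$ bound. The most delicate part is to track the weighted structure consistently across the real-interpolation step, for which the comparison between $A_{p,q}$ and $A^{u}_{p,t}$ furnished by Lemma \ref{D-OD-3-lem} will serve as the natural bookkeeping device.
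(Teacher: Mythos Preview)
Your outline matches the paper's own proof, which is given only as a schematic diagram together with the remark that one may take $\beta=0$: the listed ingredients are precisely classical off-diagonal extrapolation, the weak-type H\"older embedding displayed just before the statement, and the Marcinkiewicz interpolation theorem. Your Steps~1 and~2 are correct, and the idea of perturbing $(p,q)$ by the openness of $A_{p,q}$ (Proposition~\ref{Apqu-q+}) and then interpolating is exactly the right closing move.

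The difficulty in your Step~3 is real, but it originates in a misprint in the statement rather than in your strategy. The conclusion $\|Tf\|_{L^{q}(u^{q}w^{q})}\lesssim\|f\|_{L^{p}(w^{p})}$ is false in general: with $d=1$, $T=I_{1/2}$, $\beta=0$, $(p,q)=(4/3,4)$, $w\equiv1$, $u(x)=|x|^{-1/2}\in A_{1}\cap L^{2,\infty}$ and $f=\chi_{[1,2]}$, the function $I_{1/2}f$ is bounded below near the origin while $u^{q}=|x|^{-2}$ is not integrable there. The intended target (making the result the genuine converse of Theorem~\ref{D-OD}) is $\|Tf\|_{L^{t}(u^{t}w^{t})}\lesssim\|f\|_{L^{p}(w^{p})}$. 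With that correction your ``final Lorentz H\"older'' device is not only unworkable but unnecessary: after openness produces $w\in A_{p_{1},q_{1}}\cap A_{p_{2},q_{2}}$ with $p_{1}<p<p_{2}$, Steps~1--2 give weak endpoint bounds $\widetilde{T}:L^{p_{j}}\to L^{t_{j},\infty}$ for the transferred operator $\widetilde{T}(F):=T(Fw^{-1})\,uw$, and off-diagonal Marcinkiewicz already delivers the strong bound $\widetilde{T}:L^{p}\to L^{t}$, which is the corrected conclusion. The appeal to Lemma~\ref{D-OD-3-lem} is misplaced: that lemma \emph{constructs} an auxiliary $A_{1}$ majorant of a given $U$, whereas here $u$ is prescribed.
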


The proof of Theorem \ref{OD-D} is based on the combinations of extrapolation arguments, embedding theorems between
the different function spaces
and Marcinkiewicz interpolation theorem.
For readers' convenience,  we present the proof structure in the following picture.
In addition, without loss of generality and for conciseness, it suffices to treat the case
of $\beta=0$.

\begin{figure}[H]
    \centering
    \begin{subfigure}[b]{1.0\textwidth}
        \centering
        \includegraphics[width=17cm]{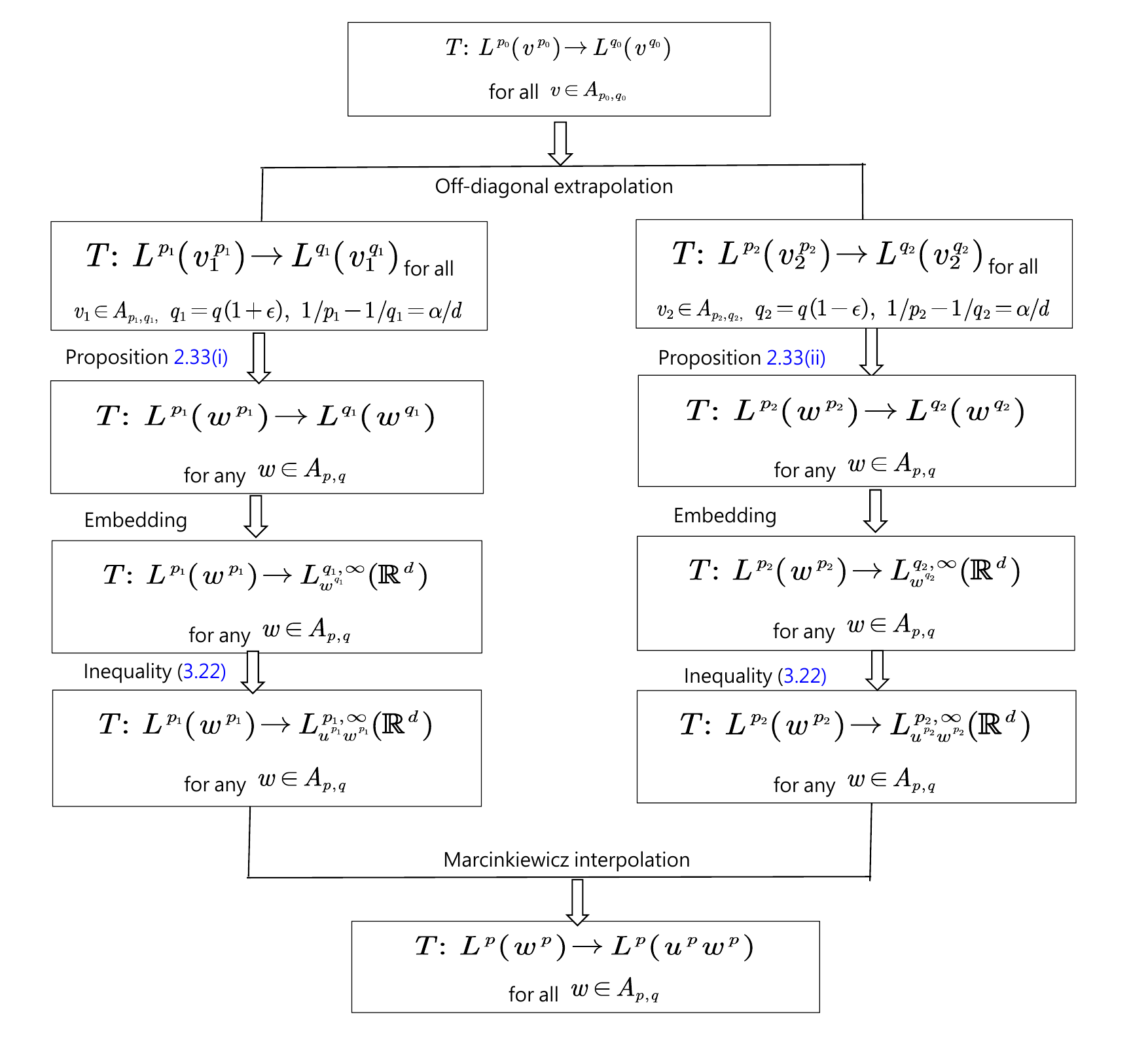}
        \caption{Proof of Theorem \ref{OD-D}}
        \label{fig:sub1}
    \end{subfigure}
    \label{fig:main}
\end{figure}

\subsection{Extrapolation for partial multilinear Muckenhoupt weights}

Given $0<q <\infty$, $\vec{p}=(p_{1},\ldots, p_{m})$ with $1\leq p_{1},\ldots, p_{m}\leq \infty$ and $\vec{r}=(r_{1},\ldots,r_{m+1})$ with $1\leq r_{1},\ldots,r_{m+1}\leq \infty$, when $\vec r\preceq (\vec p,q )$, we set
$$
\frac{1}{r}:=\sum_{i=1}^{m+1}\frac{1}{r_{i}},\qquad \frac{1}{p_{m+1}}:=1-\frac{1}{q }
\qquad\text{and} \qquad \frac{1}{\delta_{i}}=\frac{1}{r_{i}}-\frac{1}{p_{i}}, \quad i=1,\ldots,m+1.
$$
Note that $\frac{1}{r}>1+\frac{1}{p}-\frac{1}{q}>0$ and formally $\frac{1}{p_{m+1}}=\frac{1}{q'}$ could be negative or zero
for $q \leq 1$. A direct computation yields
$$
\sum_{i=1}^{m+1}\frac{1}{p_{i}}=1+\frac{1}{p}-\frac{1}{q} \qquad\text{and} \qquad \sum_{i=1}^{m+1}\frac{1}{\delta_{i}}=\frac{1}{r}-\frac{1}{p}-\frac{1}{q'}.
$$
In addition, $\vec r\preceq (\vec p,q )$ means $r_{i}\leq p_{i}$, $\delta_{i}\geq 0$, $1\leq i\leq m$,
$r_{m+1}<p_{m+1}$ and $\delta^{-1}_{m+1}>0$.
On the other hand, $\vec r\prec (\vec p,p_{0})$ implies $r_{i}<p_{i}$ or $\delta_{i}^{-1}>0$ for $1\leq i\leq m+1$.
In this case, $\vec w\in A_{(\vec p,q), \vec r}^{u}$ can be written as
\begin{equation}\label{YH-4}
[\vec w]_{A_{(\vec p,q), \vec r}}=\sup_{Q}\Big\langle w^{\delta_{m+1}} \Big\rangle^{\frac{1}{\delta_{m+1}}}_{Q}
\prod_{i=1}^{m}\ave{w_{i}^{-\delta_{i}}}^{\frac{1}{\delta_{i}}}_{Q}<\infty,
\end{equation}
when $p_{i}=r_{i}$ $(i.e. \ \delta_{i}^{-1}=0),$ the corresponding term in \eqref{YH-4} will
be replaced by $\esssup_{Q}w_{i}^{-1}$.

\begin{lem}\label{ex-lem-main}
Assume that $0<q<\infty$, $\vec{p}=(p_{1},\ldots,p_{m})$ with $1\leq p_{1},\ldots,p_{m}\leq \infty$,
$\vec{r}=(r_{1},\ldots,r_{m+1})$ with $1\leq r_{1},\ldots,r_{m+1}\leq \infty$
and $\vec{r}\preceq (\vec{p},q )$. Set
\begin{equation}\label{def-rho-kappa}
\frac{1}{\varrho}:=\frac{1}{r}-\frac{1}{r'_{m+1}}+\sum_{i=1}^{m-1}\frac{1}{p_{i}}=\frac{1}{\delta_{m}}+\frac{1}{\delta_{m+1}}>0, \qquad
\kappa :=\sum_{i=1}^{m+1}\frac{1}{\delta_{i}}=\frac{1}{r}-\frac{1}{p}-\frac{1}{q'}>0,
\end{equation}
and for each $1\leq i\leq m-1$,
$\frac{1}{\theta_{i}}:=\kappa -\frac{1}{\delta_{i}}=\Big(\sum_{j=1}^{m+1}\frac{1}{\delta_{j}}\Big)-\frac{1}{\delta_{i}}>0$.
Then it holds that
\begin{list}{$(\theenumi)$}{\usecounter{enumi}\leftmargin=.8cm
	\labelwidth=.8cm\itemsep=0.2cm\topsep=.1cm
	\renewcommand{\theenumi}{\roman{enumi}}}

\item Given  $\vec{w}=(w_1,\dots,w_m)\in A_{(\vec p,q), \vec r}$, write $w:=\prod_{i=1}^mw_i$,
\begin{equation}\label{formula-1}
\widehat{w}:=\big(\prod_{i=1}^{m-1}w_i\big)^{\varrho}
\qquad
\mbox{and}
\qquad
W
:=
w^{r_m} \widehat{w}^{- \frac{r_m}{\delta_{m+1}}}
=
w_m^{r_m}\widehat{w}^{\frac{r_m}{\delta_m}},
\end{equation}
one has
\begin{list}{$(\theenumi.\theenumii)$}{\usecounter{enumii}\leftmargin=.4cm
\labelwidth=.8cm\itemsep=0.2cm\topsep=.1cm
\renewcommand{\theenumii}{\arabic{enumii}}}

\item $w_i^{\theta_i}\in A_{\kappa \theta_i}$ and $\big[w_i^{\theta_i}\big]_{A_{\kappa \theta_i}}
\le [\vec w]_{A_{(\vec p,q), \vec r}}^{\theta_i}$ for $1\le i\le m-1$.

\item $\widehat{w}\in A_{\kappa \varrho}$ and
$[\widehat{w}]_{A_{\kappa \varrho}}\le [\vec w]_{A_{(\vec p,q), \vec r}}^{\varrho}$.

\item $W\in A_{\frac{p_m}{r_m},\frac{\delta_{m+1}}{r_m}}(\widehat{w})$
and $[W]_{A_{\frac{p_m}{r_nm},\frac{\delta_{m+1}}{r_m}} (\widehat{w})}
\le
[\vec w]_{A_{(\vec p,q), \vec r}}^{\delta_{m+1}}$.
\end{list}

\item Given $w_i^{\theta_i}\in A_{\kappa \theta_i}$ for $1\le i\le m-1$,
which leads to
\begin{equation}\label{formula2-1}
\widehat{w}=
\big(\prod_{i=1}^{m-1}w_i\big)^{\varrho}
\in A_{\kappa \varrho}
\end{equation}
and
$W\in A_{\frac{p_m}{r_m},\frac{\delta_{m+1}}{r_m}} (\widehat{w})$.  Set
\begin{equation}\label{formula-2}
w_m
:=
W^{\frac{1}{r_m}} \widehat{w}^{-\frac{1}{\delta_m}}.
\end{equation}
Then $\vec{w}=(w_1,\dots, w_m)\in A_{(\vec p,q), \vec r}$ and
$[\vec w]_{A_{(\vec p,q), \vec r}}
\le [W]_{A_{\frac{p_m}{r_m},\frac{\delta_{m+1}}{r_m}} (\widehat{w})}^{\frac1{\delta_{m+1}}}
[\widehat{w}]_{A_{\kappa \varrho}}^{\frac1 {\varrho}}
\prod_{i=1}^{m-1}\big[w_i^{\theta_i}\big]_{A_{\kappa \theta_i}}^{\frac1{\theta_i}}.$

\item For any measurable function $f\ge0 $ and in the context of $(i)$ or $(ii)$, there hold
\begin{equation}\label{LHS-rew}
\|f\|_{L^q(w^{q})}
=
\big\|\big(f\widehat{w}^{-\frac1{r_{m+1}'}}\big)^{r_m}\big\|_{L^\frac{q}{r_m}(W^{\frac{q}{r_m}} d\widehat{w})}^{\frac1{r_m}}
\end{equation}
and
\begin{equation}\label{RHS-rew}
\|f\|_{L^{p_m}(w_m^{p_{m}})}
=
\big\|\big(f\widehat{w}^{-\frac1{r_m}}\big)^{r_m}\big\|_{L^\frac{p_m}{r_m}(W^{\frac{p_m}{r_m}} d\widehat{w})}^{\frac1{r_m}}.
\end{equation}
\end{list}
\end{lem}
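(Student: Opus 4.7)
My plan is to reduce all three parts of the lemma to per-cube computations organised around three algebraic identities that follow from the two equivalent forms of $W$ in~\eqref{formula-1}: (a) $W^{\delta_{m+1}/r_m}\widehat{w}=w^{\delta_{m+1}}$, (b) $W^{-(p_m/r_m)'}\widehat{w}=w_m^{-\delta_m}$ (using $r_m(p_m/r_m)'=\delta_m$), and (c) $w=W^{1/r_m}\widehat{w}^{1/\delta_{m+1}}$ (using $1/\delta_{m+1}+1/\delta_m=1/\varrho$). These identities act as the bridge between the various $A$-constants appearing in~(i), (ii), and~(iii).

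For~(i.1), I would write $w_i=w\prod_{j\ne i}w_j^{-1}$ and apply H\"older's inequality with exponents $\delta_{m+1}/\theta_i$ and $\delta_j/\theta_i$ for $j\ne i$, $j\le m$; their reciprocals sum to $\theta_i\sum_{j\ne i}1/\delta_j=1$ by the very definition of $\theta_i$. This yields $\ave{w_i^{\theta_i}}_Q^{1/\theta_i}\le\ave{w^{\delta_{m+1}}}_Q^{1/\delta_{m+1}}\prod_{j\ne i,\,j\le m}\ave{w_j^{-\delta_j}}_Q^{1/\delta_j}$, and multiplying by $\ave{w_i^{-\delta_i}}_Q^{1/\delta_i}$ gives~(i.1). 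Part~(i.2) is analogous: the factorisation $\prod_{i=1}^{m-1}w_i^{\varrho}=w^{\varrho}w_m^{-\varrho}$ combined with $\varrho/\delta_{m+1}+\varrho/\delta_m=1$ handles the numerator of $[\widehat{w}]_{A_{\kappa\varrho}}^{1/\varrho}$, while a second H\"older (with exponents $\delta_i(\kappa\varrho-1)/\varrho$) applied to $\widehat{w}^{-1/(\kappa\varrho-1)}=\prod_{i=1}^{m-1}w_i^{-\varrho/(\kappa\varrho-1)}$ handles the denominator. For~(i.3) I substitute~(a) and~(b) into the $d\widehat{w}$-averages in $[W]_{A_{p_m/r_m,\delta_{m+1}/r_m}(\widehat{w})}^{1/\delta_{m+1}}$, whose value becomes $\sup_Q\ave{w^{\delta_{m+1}}}_Q^{1/\delta_{m+1}}\ave{w_m^{-\delta_m}}_Q^{1/\delta_m}\ave{\widehat{w}}_Q^{-1/\varrho}$. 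The leftover factor $\ave{\widehat{w}}_Q^{-1/\varrho}$ is absorbed via the auxiliary bound $\ave{f^a}_Q^{-1/a}\le\ave{f^{-b}}_Q^{1/b}$ (an immediate consequence of H\"older) applied with $f=\prod_{i=1}^{m-1}w_i$, $a=\varrho$, and $b=\varrho/(\kappa\varrho-1)$, followed by one more H\"older that converts $\ave{\prod w_i^{-b}}_Q^{1/b}$ into $\prod_{i=1}^{m-1}\ave{w_i^{-\delta_i}}_Q^{1/\delta_i}$.

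Part~(ii) is the converse. With $w_m$ defined by~\eqref{formula-2}, identities~(a) and~(b) give per cube
\[
\ave{w^{\delta_{m+1}}}_Q^{1/\delta_{m+1}}\ave{w_m^{-\delta_m}}_Q^{1/\delta_m}=\ave{\widehat{w}}_Q^{1/\varrho}\cdot\ave{W^{\delta_{m+1}/r_m}}_{Q,\widehat{w}}^{1/\delta_{m+1}}\ave{W^{-(p_m/r_m)'}}_{Q,\widehat{w}}^{1/\delta_m},
\]
and the second factor is bounded uniformly by $[W]_{A_{p_m/r_m,\delta_{m+1}/r_m}(\widehat{w})}^{1/\delta_{m+1}}$. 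What remains is the per-cube estimate
\[
\ave{\widehat{w}}_Q^{1/\varrho}\prod_{i=1}^{m-1}\ave{w_i^{-\delta_i}}_Q^{1/\delta_i}\le[\widehat{w}]_{A_{\kappa\varrho}}^{1/\varrho}\prod_{i=1}^{m-1}[w_i^{\theta_i}]_{A_{\kappa\theta_i}}^{1/\theta_i},
\]
which, after dividing through using the $A$-definitions, collapses to $1\le\ave{\widehat{w}^{-1/(\kappa\varrho-1)}}_Q^{(\kappa\varrho-1)/\varrho}\prod_{i=1}^{m-1}\ave{w_i^{\theta_i}}_Q^{1/\theta_i}$. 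I expect this last inequality to be the main obstacle, because direct H\"older gives the wrong direction. My strategy is to invoke Jensen's inequality $\ave{e^{g}}\ge e^{\ave{g}}$: each side is bounded below by an exponential of a linear combination of $\ave{\log w_i}_Q$, and the coefficients match thanks to the identity $\bigl(\varrho/(\kappa\varrho-1)\bigr)\cdot\bigl((\kappa\varrho-1)/\varrho\bigr)=1$.

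Finally, part~(iii) is bookkeeping with no new ideas. Expanding $w^q=W^{q/r_m}\widehat{w}^{q/\delta_{m+1}}$ inside $\|f\|_{L^q(w^q)}^{r_m}$ yields~\eqref{LHS-rew} once one checks $1-q/r_{m+1}'=q/\delta_{m+1}$, which is equivalent to $1/r_{m+1}'+1/\delta_{m+1}=1/q$ and follows from $1/\delta_{m+1}=1/r_{m+1}-(1-1/q)$. Similarly $w_m^{p_m}=W^{p_m/r_m}\widehat{w}^{-p_m/\delta_m}$ together with $1-p_m/r_m=-p_m/\delta_m$ (the definition of $\delta_m$) gives~\eqref{RHS-rew}.
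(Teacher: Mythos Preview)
Your proposal is essentially correct and follows the same architecture as the paper's proof: both reduce everything to per-cube manipulations built on the identities $W^{\delta_{m+1}/r_m}\widehat w=w^{\delta_{m+1}}$ and $W^{-(p_m/r_m)'}\widehat w=w_m^{-\delta_m}$, and both handle (i.1)--(i.3) by H\"older with the obvious exponent splittings. Part~(iii) is, as you say, pure bookkeeping and matches the paper exactly.

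Two points deserve comment. First, your argument implicitly assumes the generic case $\delta_i^{-1}>0$ for all $i$ (i.e.\ $r_i<p_i$). The lemma allows $\vec r\preceq(\vec p,q)$, so some $\delta_i$ may be infinite, in which case the corresponding averages become $\esssup$'s and identity~(b) degenerates. The paper treats these endpoint situations in separate cases (splitting on whether $\sum_{i\le m}1/\delta_i$ or $\sum_{i\le m-1}1/\delta_i$ vanishes); you should at least flag that the same limiting conventions carry through your computations.

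Second, for the key inequality in~(ii),
\[
1\le \big\langle \widehat w^{\,-1/(\kappa\varrho-1)}\big\rangle_Q^{(\kappa\varrho-1)/\varrho}\prod_{i=1}^{m-1}\big\langle w_i^{\theta_i}\big\rangle_Q^{1/\theta_i},
\]
your Jensen argument is valid and arguably cleaner than the paper's route. The paper instead writes $1=\langle \widehat w^{-1/((m-1)\kappa\varrho)}\widehat w^{1/((m-1)\kappa\varrho)}\rangle_Q^{\kappa(m-1)}$ and applies H\"older with exponents $\eta_i=\kappa(m-1)\theta_i$ ($1\le i\le m-1$) and $\eta_m=(m-1)(\kappa\varrho)'$; so ``direct H\"older gives the wrong direction'' is too pessimistic---a suitable H\"older does work, but your Jensen shortcut reaches the same inequality without having to guess those exponents.
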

\begin{proof}
It follows from \eqref{formula-1}, \eqref{formula2-1} and \eqref{formula-2} that
\begin{equation}\label{qrfera}
\big\langle  W^{\frac{\delta_{m+1}}{r_m}} \big\rangle_{Q,\widehat{w}}
=
\big\langle  \widehat{w}\big\rangle_{Q}^{-1}\big\langle  w^{\delta_{m+1}}
\big\rangle_{Q}
\end{equation}
and $\big\langle   W^{-(\frac{p_m}{r_m})'}\big\rangle_{Q,\widehat{w}}
=\big\langle   w_m^{-r_m(\frac {p_m}{r_m})'}\widehat{w}^{\frac{r_m}{\delta_m}(\frac {p_m}{r_m})'}
\big\rangle_{Q,\widehat{w}}
=\big\langle  \widehat{w}\big\rangle_{Q} ^{-1}
\big\langle   w_m^{-\delta_m} \big\rangle_{Q}$ when $\delta_m^{-1}\neq 0$ (i.e., $r_m<p_m$).

This yields that for $\delta_m^{-1} \neq 0$,
\begin{equation}\label{aaa:1}
\big\langle  W^{\frac{\delta_{m+1}}{r_m}}   \big\rangle_{Q,\widehat{w}}^{\frac{r_m}{\delta_{m+1}}}
\big\langle   W^{-(\frac{p_m}{r_m})'}\big\rangle_{Q,\widehat{w}}^{\frac{1}{(\frac{p_m}{r_m})'}}
=\big\langle  \widehat{w} \big\rangle_{Q}^{-\frac{r_{m}}{\varrho}}
\big\langle  w^{\delta_{m+1}} \big\rangle_{Q} ^{\frac{r_m}{\delta_{m+1}}}
\big\langle   w_m^{-\delta_m} \big\rangle_{Q}^{\frac{r_m}{\delta_m}}.
\end{equation}
Thus, when $\delta_m^{-1}\neq0$  and $\sum_{i=1}^{m-1}\frac1{\delta_i}>0$, we have
\begin{equation}\label{aaa:0}
\begin{aligned}
&\big\langle  W^{\frac{\delta_{m+1}}{r_m}}   \big\rangle_{Q,\widehat{w}}^{\frac{r_m}{\delta_{m+1}}}
\big\langle   W^{-(\frac{p_m}{r_m})'}\big\rangle_{Q,\widehat{w}}^{\frac{1}{(\frac{p_m}{r_m})'}}
\\
&\quad =
\big[
\big\langle  \widehat{w}^{1-(\kappa \varrho)'}\big\rangle_{Q} ^{\frac1{\varrho}(\kappa \varrho-1)}
\big\langle  w^{\delta_{m+1}}  \big\rangle_{Q}  ^{\frac 1{\delta_{m+1}}}
\big\langle    w_m^{-\delta_m}\big\rangle_{Q}  ^{\frac1{\delta_m}}
\big\langle  \widehat{w}\big\rangle_{Q} ^{-\frac1{\varrho}}
\big\langle  \widehat{w}^{1-(\kappa \varrho)'}\big\rangle_{Q}^{-\frac1{\varrho}(\kappa \varrho-1)}
\big]^{r_{m}}.
\end{aligned}
\end{equation}
On the other hand, when $\sum_{i=1}^{m-1}\frac1{\delta_i}>0$ and $\delta_m^{-1}=0$,  due to $\varrho=\delta_{m+1}$,
then it holds that
\begin{equation}\label{aaa:2}
\begin{aligned}
&\big\langle  W^{\frac{\delta_{m+1}}{r_m}}   \big\rangle_{Q,\widehat{w}}^{\frac{r_m}{\delta_{m+1}}}
=\big\langle  \widehat{w}\big\rangle_{Q} ^{-\frac{r_{m}}{\varrho}}\big\langle  w^{\delta_{m+1}}
\big\rangle_{Q} ^{\frac{r_m}{\delta_{m+1}}}
\\
&\ =
\big[
\big\langle  \widehat{w}^{1-(\kappa \varrho)'}\big\rangle_{Q} ^{\frac1{\varrho}(\kappa \varrho-1)}
\big\langle  w^{\delta_{m+1}}  \big\rangle_{Q} ^{\frac 1{\delta_{m+1}}}
\big\langle  \widehat{w}\big\rangle_{Q} ^{-\frac1{\varrho}}
\big\langle  \widehat{w}^{1-(\kappa \varrho)'}\big\rangle_{Q} ^{-\frac1{\varrho}(\kappa \varrho-1)}
\big]^{r_{m}}.
\end{aligned}
\end{equation}

We now show part $(i)$. At first, we demonstrate $(i.1)$. Define
$$\mathcal{I} = \{j : 1 \leq j \leq m, \delta_j^{-1} \neq 0\}\quad \text{and}\quad \mathcal{I}' 
= \{1, \ldots, m\} \setminus \mathcal{I}.$$
Set $\frac{1}{\eta_i} = \frac{\theta_i}{\delta_{m+1}}$ for  $1 \leq i \leq m-1$
and $\frac{1}{\eta_j} = \frac{\theta_i}{\delta_j}$ for $j \in \mathcal{I}$ with $j \neq i$,
which yields $\frac{1}{\eta_i} + \sum_{j \neq i, j \in \mathcal{I}} \frac{1}{\eta_j} = 1.$
Then it follows from H\"{o}lder's inequality that
\begin{equation}\label{YH-5}
\begin{aligned}
\big\langle  w_i^{\theta_i} \big\rangle_{Q}
=
\big\langle
 w^{\theta_i}\prod_{\substack{1\le j \le m\\j\neq i} }w_j^{-\theta_i }\big\rangle_{Q}
&\le
\big\langle  w^{\theta_i\eta_i}\big\rangle_{Q}^{\frac{1}{\eta_i}}
\prod_{i\neq j\in\mathcal{I}}  \big\langle    w_j^{-\theta_i \eta_j}  \big\rangle_{Q}^{\frac{1}{\eta_j}}
\prod_{i\neq j \in\mathcal{I}'} \esssup_{Q} w_j^{-\theta_i }
\\
&=
\big\langle  w^{\delta_{m+1}}\big\rangle_{Q}^{\frac{\theta_i}{\delta_{m+1}}}
\prod_{i\neq j\in\mathcal{I}} \big\langle    w_j^{-\delta_j}  \big\rangle_{Q}^{\frac{\theta_i}{\delta_j}}
\big(\prod_{i\neq j \in\mathcal{I}'} \esssup_{Q} w_j^{-1} \big)^{\theta_i}.
\end{aligned}
\end{equation}
When $p_i = r_i$, one has $\theta_i = \kappa$. Then \eqref{YH-5} implies that $w_i^{\theta_i} \in A_1$ and
$\big[w_i^{\theta_i}\big]_{A_1} \leq [\vec{w}]_{A_{(\vec{p}, q), \vec{r}}}^{\theta_i}.$
When $p_i > r_i$, due to $\theta_i(1 - (\kappa \theta_i)') = -\delta_i$, then it holds that
$w_i^{\theta_i} \in A_{\kappa \theta_i}$ and $\big[w_i^{\theta_i}\big]_{A_{\kappa \theta_i}} 
\leq [\vec{w}]_{A_{(\vec{p}, q), \vec{r}}}^{\theta_i}.$
\medskip

To obtain $(i.2)$,  the following three cases are investigated.

\vskip 0.2 true cm
\noindent\textbf{Case 1. $\sum_{i=1}^{m}\frac1{\delta_i}=0$}

\vskip 0.1 true cm
In this case, one has $p_j=r_j$ for $1\le j\le m$
and $\frac1{\varrho}=\frac1{\delta_{m+1}}=\kappa$.
Then $\widehat{w}\in A_1$ and
\begin{equation*}
\begin{aligned}
\big\langle  \widehat{w} \big\rangle_{Q}
&=
\big\langle  w^{\delta_{m+1}}w_m^{-\delta_{m+1}} \big\rangle_{Q}
\le
\big\langle  w^{\delta_{m+1}}\big\rangle_{Q}  \esssup_Q w_m^{-\delta_{m+1}}
\\
&\le
[w]_{A_{\vec{p},\vec{r}, q }}^{\delta_{m+1}}\,\big(\prod_{i=1}^{m-1}\essinf_{Q} w_i\big)^{\delta_{m+1}}
\le
[w]_{A_{\vec{p},\vec{r}, q }}^{\varrho}\, \essinf_{Q}\widehat{w}.
\end{aligned}
\end{equation*}

\vskip 0.1 true cm

\noindent\textbf{Case 2. $\sum_{i=1}^{m-1}\frac1{\delta_i}=0$}

\vskip 0.1 true cm

In this case, it holds that $p_j=r_j$ for $1\le j\le m-1$, $\delta_m^{-1}\neq 0$
and $\frac{1}{\varrho} = \frac{1}{\delta_{m+1}} + \frac{1}{\delta_m} = \kappa.$
We now show $\widehat{w} \in A_1$. In fact, it follows from H\"{o}lder's inequality with $\frac{\delta_{m+1}}{\varrho} = 1 + \frac{\delta_{m+1}}{\delta_m} > 1$ that
\begin{equation*}
\begin{aligned}
\big\langle  \widehat{w} \big\rangle_{Q}
&=
\big\langle  w^{\varrho}w_m^{-\varrho } \big\rangle_{Q}
\le
\big\langle  w^{\delta_{m+1}}\big\rangle_{Q} ^{\frac{\varrho}{\delta_{m+1}}}
\big\langle  w_m^{-\delta_m}\big\rangle_{Q}  ^{\frac{\varrho}{\delta_m}}
\\
&\le
[w]_{A_{\vec{p},\vec{r}, q }}^{\varrho}\,\big(\prod_{i=1}^{m-1}\essinf_{Q} w_i\big)^{\varrho}
\le
[w]_{A_{\vec{p},\vec{r}, q }}^{\varrho}\, \essinf_{Q}\widehat{w}.
\end{aligned}
\end{equation*}

\vskip 0.1 true cm

\noindent\textbf{Case 3. $\sum_{i=1}^{m-1}\frac1{\delta_i}>0$}

\vskip 0.1 true cm

At this time, it holds that $\kappa=\sum_{i=1}^{m+1}\frac1{\delta_i}
=\sum_{i=1}^{m-1}\frac1{\delta_i}+\frac1{\varrho}>\frac1{\varrho}.$
Let
$\mathcal{I}=\{i: 1\le i\le m-1, \delta_i^{-1}\neq 0\}\neq\emptyset$
and $\mathcal{I}'=\{1,\dots,m-1\}\setminus\mathcal{I}$.
For $i\in\mathcal{I}$, set
\[\frac1{\eta_i}
:=\frac1{\delta_i}
\big(\sum_{j=1}^{m-1} \frac1{\delta_j}\big)^{-1}
=\frac1{\delta_i}
\big(\sum_{j=1}^{m+1} \frac1{\delta_j}-\frac1{\delta_m}-\frac1{\delta_{m+1}}\big)^{-1}
=\frac1{\delta_i}\big(\kappa -\frac1{\varrho}\big)^{-1}.
\]
Due to
$\sum_{i\in\mathcal{I}}\frac1{\eta_i}=1$, then H\"{o}lder's inequality leads to
\begin{multline}\label{q35t5}
\big\langle  \widehat{w}^{1-(\kappa \varrho)'}\big\rangle_{Q} ^{\kappa \varrho-1}
=
\big\langle  \prod_{i\in\mathcal{I}} w_i^{-\frac{\delta_i}{\eta_i}} \prod_{i\in\mathcal{I}'} w_i^{-\varrho((\kappa \varrho)'-1)} \big\rangle_{Q} ^{\kappa \varrho-1}
\le
\prod_{i\in\mathcal{I}} \big\langle  w_i^{-\delta_i} \big\rangle_{Q} ^{\frac{\varrho}{\delta_i}}
\big(\prod_{i\in\mathcal{I}'} \esssup_{Q} w_i^{-1} \big)^{\varrho}.
\end{multline}
On the other hand, if $\delta_m^{-1}\neq 0$, then we use H\"{o}lder's inequality with
$\frac{\delta_{m+1}}{\varrho} =1+\frac{\delta_{m+1}}{\delta_m}>1$ to obtain
\begin{equation}\label{qt53ae}
\big\langle  \widehat{w} \big\rangle_{Q}
=
\big\langle  w^{\varrho}w_m^{-\varrho } \big\rangle_{Q}
\le
\big\langle  w^{\delta_{m+1}}\big\rangle_{Q}  ^{\frac{\varrho}{\delta_{m+1}}}
\big\langle  w_m^{-\delta_m}\big\rangle_{Q} ^{\frac{\varrho}{\delta_m}}.
\end{equation}
If  $\delta_m^{-1}=0$, then $\varrho=\delta_{m+1}$ and
\begin{equation}\label{sgtgt}
\big\langle  \widehat{w} \big\rangle_{Q}
=
\big\langle  w^{\varrho}w_m^{-\varrho} \big\rangle_{Q}
\le
\big\langle  w^{\delta_{m+1}}\big\rangle_{Q} ^{\frac{\varrho}{\delta_{m+1}}}
\big(\esssup_{Q} w_m^{-1}\big)^{\varrho}.
\end{equation}
Combining \eqref{q35t5} with either \eqref{qt53ae} or \eqref{sgtgt} yields $\widehat{w} \in A_{\kappa \varrho}$ and
$[\widehat{w}]_{A_{\kappa \varrho}} \leq [\vec{w}]_{A_{(\vec{p}, q), \vec{r}}}^{\varrho}.$
This completes the proof of $(i.2)$.

\vskip 0.2 true cm

To show $(i.3)$, we discuss the following three cases.

\vskip 0.2 true cm

\noindent\textbf{Case 1. $\sum_{i=1}^{m}\frac1{\delta_i}=0$}

\vskip 0.1 true cm
In this case, we have $p_j=r_j$ for $1\le j\le m$,
$\varrho = \delta_{m+1}$ and $W = w_m^{\frac{r_m}{p_m}} = w_m$ by $p_m = r_m$. Note that
\begin{equation}\label{holder-trivial-case1}
1
=
\big\langle  \widehat{w}\widehat{w}^{-1}\big\rangle_{Q}
\le
\big\langle  \widehat{w}\big\rangle_{Q}  \esssup_{Q}\widehat{w}^{-1}
\le
\big\langle  \widehat{w}\big\rangle_{Q}
\prod_{i=1}^{m-1} \esssup_{Q} w_i^{-\varrho}.
\end{equation}
This and \eqref{qrfera} imply
\begin{multline*}
\big\langle  W^{\frac{\delta_{m+1}}{r_m}} \big\rangle_{Q,\widehat{w}}
=
\big\langle  \widehat{w}\big\rangle_{Q} ^{-1}\big\langle  w^{\delta_{m+1}}
\big\rangle_{Q}
\le
[\vec w]_{A_{(\vec p,q), \vec r}}^{\delta_{m+1}}
\big(\prod_{i=1}^{m-1} \esssup_{Q} w_i^{-\delta_{m+1}}\big)
\big(\prod_{i=1}^m \esssup_{Q} w_i^{-1}\big)^{-\delta_{m+1}}
\\
=
[\vec w]_{A_{(\vec p,q), \vec r}}^{\delta_{m+1}}
\big(\esssup_{Q} w_m^{-1}\big)^{-\delta_{m+1}}
=
[\vec w]_{A_{(\vec p,q), \vec r}}^{\delta_{m+1}}
\big(\esssup_{Q} W^{-\frac{\delta_{m+1}}{r_m}}  \big)^{-1}.
\end{multline*}
Thus, $W \in A_{1,\frac{\delta_{m+1}}{r_m}}(\widehat{w})$
and $[W]_{A_{1,\frac{\delta_{m+1}}{r_m}}(\widehat{w})} \leq [\vec{w}]_{A_{(\vec{p}, q), \vec{r}}}^{\delta_{m+1}}.$

\vskip 0.2 true cm
\noindent\textbf{Case 2. $\sum_{i=1}^{m-1}\frac1{\delta_i}=0$}

\vskip 0.1 true cm

In this case, it holds $p_j=r_j$ for $1\le j\le m-1$) and $\delta_m^{-1}\neq 0$.
By \eqref{aaa:1} and \eqref{holder-trivial-case1}, we have
\begin{align*}
\big\langle  W^{\frac{\delta_{m+1}}{r_m}}& \big\rangle_{Q,\widehat{w}}
\big\langle   W^{-(\frac{p_m}{r_m})'}\big\rangle_{Q,\widehat{w}}^{\frac{\frac{\delta_{m+1}}{r_m}}{(\frac{p_m}{r_m})'}}
=
\big\langle  \widehat{w} \big\rangle_{Q} ^{-1-\frac{\delta_{m+1}}{\delta_m}}
\big\langle  w^{\delta_{m+1}} \big\rangle_{Q}
\big\langle   w_m^{-\delta_m} \big\rangle_{Q} ^{\frac{\delta_{m+1}}{\delta_m}}
\\
&
\qquad\le
\big\langle  w^{\delta_{m+1}} \big\rangle_{Q}
\big\langle   w_m^{-\delta_m} \big\rangle_{Q} ^{\frac{\delta_{m+1}}{\delta_m}}
\big(\prod_{i=1}^{m-1} \esssup_{Q} w_i^{-1}\big)^{\delta_{m+1}}
\le
[\vec w]_{A_{(\vec p,q), \vec r}}^{\delta_{m+1}}.
\end{align*}

\vskip 0.2 true cm

\noindent\textbf{Case 3. $\sum_{i=1}^{m-1}\frac1{\delta_i}>0$}

\vskip 0.1 true cm

At this time, one has $\kappa \varrho > 1$. Therefore, applying H\"{o}lder's inequality derives
\begin{align*}
1=
\big\langle \widehat{w}^{\frac{\varrho}{\kappa }} \widehat{w}^{-\frac{\varrho}{\kappa }}\big\rangle_{Q}^{\kappa \varrho}
	\le
	\big\langle \widehat{w}\big\rangle_{Q}
\big\langle  \widehat{w}^{1-(\kappa \varrho)'}\big\rangle_{Q} ^{\kappa \varrho-1}.
\end{align*}
This, together with \eqref{aaa:0} and \eqref{q35t5}, yields that  when  $\delta_m^{-1}\neq 0$ (that is $r_m<p_m$),
\begin{align*}
&
\big[\big\langle  W^{\frac{\delta_{m+1}}{r_m}}   \big\rangle_{Q,\widehat{w}}
\big\langle   W^{-(\frac{p_m}{r_m})'}\big\rangle_{Q,\widehat{w}}
^{\frac{\frac{\delta_{m+1}}{r_m}}{(\frac{p_m}{r_m})'}}\big]^{\frac1{\delta_{m+1}}}
\le
\big\langle  \widehat{w}^{1-\big(\kappa \varrho\big)'}\big\rangle_{Q} ^{\frac1{\varrho}(\kappa \varrho-1)}
\big\langle  w^{\delta_{m+1}}  \big\rangle_{Q}  ^{\frac 1{\delta_{m+1}}}
\big\langle    w_m^{-\delta_m}\big\rangle_{Q}  ^{\frac1{\delta_m}}
\\
&\le
\big\langle  w^{\delta_{m+1}}  \big\rangle_{Q}  ^{\frac 1{\delta_{m+1}}}
\big\langle    w_m^{-\delta_m}\big\rangle_{Q}  ^{\frac1{\delta_m}}
\big(\prod_{i\in\mathcal{I}}\! \big\langle  w_i^{-\delta_i} \big\rangle_{Q} ^{\frac{1}{\delta_i}}\big)
\big(\!\prod_{i\in\mathcal{I}'}\! \esssup_{Q} w_i^{-1} \big)\!
\\
&\le
[\vec w]_{A_{(\vec p,q), \vec r}}.
\end{align*}
Thus, $W\in A_{\frac{p_m}{r_m},\frac{\delta_{m+1}}{r_m}}(\widehat{w})$
and $[W]_{A_{\frac{p_m}{r_m},\frac{\delta_{m+1}}{r_m}} (\widehat{w})}
\le [\vec w]_{A_{(\vec p,q), \vec r}}^{\delta_{m+1}}$.

On the other hand, when $\delta_m^{-1}=0$, i.e., $r_m=p_m$,  due to $\varrho=\delta_{m+1}$ and $W=w_m^{\frac{r_m}{p_m}}=w_m$,
then collecting \eqref{aaa:2} and \eqref{q35t5} yields
\begin{multline*}
\big\langle  W^{\frac{\delta_{m+1}}{r_m}} \big\rangle_{Q,\widehat{w}}
\le
\big[
\big\langle  \widehat{w}^{1-(\kappa \varrho)'}\big\rangle_{Q} ^{\frac1{\varrho}(\kappa \varrho-1)}
\big\langle  w^{\frac{\delta_{m+1}}{p}}  \big\rangle_{Q}  ^{\frac 1{\delta_{m+1}}}
\big]^{\delta_{m+1}}
\\
\le
\big[
\big\langle  w^{\delta_{m+1}}  \big\rangle_{Q}  ^{\frac 1{\delta_{m+1}}}
\big(\prod_{i\in\mathcal{I}} \big\langle  w_i^{-\delta_i} \big\rangle_{Q} ^{\frac{1}{\delta_i}}\big)
\big(\prod_{i\in\mathcal{I}'} \esssup_{Q} w_i^{-1} \big)
\big]^{\delta_{m+1}}
\\
\le
[\vec w]_{A_{(\vec p,q), \vec r}}^{\delta_{m+1}}\essinf_{Q} w_m^{\delta_{m+1}}
=
[\vec w]_{A_{(\vec p,q), \vec r}}^{\delta_{m+1}}\essinf_{Q}  W^{\frac{\delta_{m+1}}{r_m}}.
\end{multline*}
This completes the proof of $(i.3)$ and hence that of $(i)$.

\vskip 0.1 true cm

We now turn our attention to $(ii)$. For $w_i^{\theta_i}\in A_{\kappa \theta_i}$ ($1\le i\le m-1$),
by \eqref{formula2-1}, one has $\widehat{w}\in A_{\kappa \varrho}$ and $W\in A_{\frac{p_m}{r_m},\frac{\delta_{m+1}}{r_m}} (\widehat{w})$.
Let $w_m$ be as in \eqref{formula-2}. Next we show
$\vec{w}\in A_{(\vec p,q), \vec r}$.

\vskip 0.2 true cm

\noindent\textbf{Case 1. $\sum_{i=1}^{m}\frac1{\delta_i}=0$}
\vskip 0.1 true cm

In this case,  one has $p_j=r_j$ for $1\le j\le m$.
Note that $\theta_i=\frac 1{\kappa }$ holds for $1\le i\le m-1$. This and H\"{o}lder's inequality yield
\begin{equation}\label{6awf}
\begin{aligned}
\essinf_{Q} \Big(\prod_{i=1}^{m-1} w_i\Big)
&\le
\big(\dashint_{Q} \prod_{i=1}^{m-1} w_i^{\frac{\theta_i}{m-1}}dx\big)^{(m-1)\kappa }
\le
\prod_{i=1}^{m-1} \big(\dashint_{Q} w_i^{\theta_i}dx\big)^{\frac1{\theta_i}}\le
\prod_{i=1}^{m-1} \big[w_i^{\theta_i}\big]_{ A_1}^{\frac1{\theta_i}}
\essinf_{Q} w_i,
\end{aligned}
\end{equation}
where $\kappa \theta_i=1$ is used in the last estimate. It follows from \eqref{6awf}
and \eqref{qrfera} that
\begin{align*}
\big\langle  w^{\delta_{m+1}}  \big\rangle_{Q}  ^{\frac 1{\delta_{m+1}}}
&=
\big\langle  W^{\frac{\delta_{m+1}}{r_m}} \big\rangle_{Q,\widehat{w}}^{\frac 1{\delta_{m+1}}}
\big\langle  \widehat{w}\big\rangle_{Q} ^{\frac 1{\delta_{m+1}}}
\le
[W]_{A_{1,\frac{\delta_{m+1}}{r_m}} (\widehat{w})}^{\frac1{\delta_{m+1}}}
[\widehat{w}]_{A_1}^{^{\frac1{\delta_{m+1}}}}
\essinf_Q W^{\frac{1}{r_m}}
\essinf_Q \widehat{w}^{\frac{1}{\delta_{m+1}}}
\\
&=
[W]_{A_{1,\frac{\delta_{m+1}}{r_m}} (\widehat{w})}^{\frac1{\delta_{m+1}}}
[\widehat{w}]_{A_1}^{^{\frac1{\delta_{m+1}}}}
\essinf_Q w_m
\essinf_{Q} \big(\prod_{i=1}^{m-1} w_i\big)
\\
&\le
[W]_{A_{1,\frac{\delta_{m+1}}{r_m}} (\widehat{w})}^{\frac1{\delta_{m+1}}}
[\widehat{w}]_{A_1}^{^{\frac1{\delta_{m+1}}}}
\big(
\prod_{i=1}^{m-1} \big[w_i^{\theta_i }\big]_{ A_1}^{\frac1{\theta_i}}
\big)
\big(\prod_{i=1}^{m} \essinf_{Q} w_i\big),
\end{align*}
where $p_m=r_m$, $w_m=W^{\frac{1}{r_m}}=W$ and $\varrho=\delta_{m+1}$ are used.

\vskip 0.2 true cm

\noindent\textbf{Case 2. $\sum_{i=1}^{m-1}\frac1{\delta_i}=0$}
\vskip 0.1 true cm

This means  $p_j=r_j$ for $1\le j\le m-1$) and $\delta_m^{-1}\neq 0$.
It follows from \eqref{aaa:1} and \eqref{6awf} that
\begin{align*}
&\big\langle  w^{\delta_{m+1}} \big\rangle_{Q} ^{\frac1{\delta_{m+1}}}
\big\langle   w_m^{-\delta_m} \big\rangle_{Q} ^{\frac1{\delta_m}}
=\big\langle  W^{\frac{\delta_{m+1}}{r_m}}   \big\rangle_{Q,\widehat{w}}^{\frac1{\delta_{m+1}}}
\big\langle   W^{-(\frac{p_m}{r_m})'}\big\rangle_{Q,\widehat{w}}^{\frac{1}{r_m(\frac{p_m}{r_m})'}}
\big\langle  \widehat{w} \big\rangle_{Q} ^{\frac1{\varrho}}
\\
&\qquad\le
[W]_{A_{\frac{p_m}{r_m},\frac{\delta_{m+1}}{r_m}} (\widehat{w})}^{\frac1{\delta_{m+1}}}
[\widehat{w}]_{A_1}^{\frac1 {\varrho}}\essinf_{Q} \big(\prod_{i=1}^{m-1} w_i\big)
\\
&\qquad\le
[W]_{A_{\frac{p_m}{r_m},\frac{\delta_{m+1}}{r_m}} (\widehat{w})}^{\frac1{\delta_{m+1}}}
[\widehat{w}]_{A_1}^{^{\frac1{\delta_{m+1}}}}
\big(
\prod_{i=1}^{m-1} \Big[w_i^{\theta_i}\big]_{ A_1}^{\frac1{\theta_i}}
\big)
\big(\prod_{i=1}^{m-1} \essinf_{Q} w_i\big),
\end{align*}
which yields $\vec{w}\in A_{\vec{p},\vec{r}, q}$.

\vskip 0.2 true cm

\noindent\textbf{Case 3. $\sum_{i=1}^{m-1}\frac1{\delta_i}>0$}

\vskip 0.1 true cm

Set $\eta_i=\kappa (m-1)\theta_i$ for $1\le i\le m-1$ and $\eta_m=(m-1)(\kappa \varrho)'$.
Note that
\begin{align*}
\sum_{i=1}^m\frac1{\eta_i}
&=
\frac{1}{(m-1)\kappa }\sum_{i=1}^{m-1}\frac1{\theta_i}+ \frac1{(m-1)(\kappa \varrho)'}
\\
&=
1-\frac{1}{(m-1)\kappa }\big(\kappa -\frac1{\varrho}\big)
+
\frac1{m-1}-\frac1{m-1}\frac{1}{\kappa \varrho}
=1.
\end{align*}
Thus, it follows from  Holder's inequality with the exponents $\eta_i$ for $1\le i\le m$ that
\begin{align*}
1&=
\big\langle  \widehat{w}^{-\frac{1}{(m-1)\kappa \varrho}} \widehat{w}^{\frac{1}{(m-1)\kappa \varrho}}\big\rangle_{Q}^{\kappa (m-1)}\\
&=
\big\langle  \widehat{w}^{(1-(\kappa \varrho)')\frac1{\eta_m}} \prod_{i=1}^{m-1} w_i^{\theta_i\frac1{\eta_i}}\big\rangle_{Q}^{\kappa (m-1)}
\le
\big\langle  \widehat{w}^{1-(\kappa \varrho)'}\big\rangle_{Q}^{\frac1{\varrho}(\kappa \varrho-1)}
\prod_{i=1}^{m-1} \big\langle  w_i^{\theta_i}\big\rangle_{Q}^{\frac1{\theta_i}}.
\end{align*}
On the other hand, if we define $\mathcal{I} = \{j : 1 \leq j \leq m-1, \delta_j^{-1} \neq 0\} \neq \emptyset$
and $\mathcal{I}' = \{1, \dots, m-1\} \setminus \mathcal{I}$, due to
$\frac{\theta_i}{p_i} \left( (\kappa \theta_i)' - 1 \right) = \frac{\delta_i}{p_i}$, then
$$
\big(\prod_{i\in\mathcal{I}}  \big\langle  w_i^{-\delta_i} \big\rangle_{Q} ^{\frac{1}{\delta_i}}\big)
\big(\prod_{i\in\mathcal{I}'}  \esssup_{Q} w_i^{-1}\big)
\le
\prod_{i=1}^{m-1}  \big[w_i^{\theta_i}\big]_{A_{\kappa \theta_i}}^{\frac1{\theta_i}} \big\langle  w_i^{\theta_i} \big\rangle_{Q}^{-\frac1{\theta_i}}
\le
\big(\prod_{i=1}^{m-1}  \big[w_i^{\theta_i}\big]_{A_{\kappa \theta_i}}^{\frac1{\theta_i}} \big)
\big\langle  \widehat{w}^{1-(\kappa \varrho)'}\big\rangle_{Q}^{\frac1{\varrho}(\kappa \varrho-1)}.
$$
This and \eqref{aaa:0} derive that when $\frac1{\delta_m}\neq 0$,
\begin{align*}
&\big\langle  w^{\delta_{m+1}}  \big\rangle_{Q} ^{\frac 1{\delta_{m+1}}}
\big\langle    w_m^{-\delta_m}\big\rangle_{Q} ^{\frac1{\delta_m}}
\big(\prod_{i\in\mathcal{I}}  \big\langle  w_i^{-\delta_i} \big\rangle_{Q} ^{\frac{1}{\delta_i}}\big)
\big(\prod_{i\in\mathcal{I}'}  \esssup_{Q} w_i^{-1}\big)
\\
&\qquad\le
\big(\prod_{i=1}^{m-1}  \big[w_i^{\theta_i}\big]_{A_{\kappa \theta_i}}^{\frac1{\theta_i}} \big)
\big[\big\langle  W^{\frac{\delta_{m+1}}{r_m}}   \big\rangle_{Q,\widehat{w}}
\big\langle   W^{-(\frac{p_m}{r_m})'}\big\rangle_{Q,\widehat{w}}^{\frac{\frac{\delta_{m+1}}{r_m}}{(\frac{p_m}{r_m})'}}\big]^{\frac1{\delta_{m+1}}}
\big\langle  \widehat{w}\big\rangle_{Q} ^{\frac1{\varrho}}
\big\langle  \widehat{w}^{1-(\kappa \varrho)'}\big\rangle_{Q}^{\frac1{\varrho}(\kappa \varrho-1)}.
\\
&\qquad\le
[W]_{A_{\frac{p_m}{r_m},\frac{\delta_{m+1}}{r_m}} (\widehat{w})}^{\frac1{\delta_{m+1}}}
[\widehat{w}]_{A_{\kappa \varrho}}^{\frac1 {\varrho}}
\prod_{i=1}^{m-1}\big[w_i^{\frac {\theta_i}{ p_i}}\big]_{A_{\kappa \theta_i}}^{\frac1{\theta_i}}.
\end{align*}
When $\frac{1}{\delta_m} = 0$, by $p_m=r_m$, $\varrho=\delta_{m+1}$, $W=w_m^{\frac{r_m}{p_m}}=w_m$  and \eqref{aaa:2},
we have
\begin{align*}
&\big\langle  w^{\delta_{m+1}}  \big\rangle_{Q}  ^{\frac 1{\delta_{m+1}}}
\big( \esssup_{Q} w_m^{-1}\big)
\big(\prod_{i\in\mathcal{I}}  \big\langle  w_i^{-\delta_i} \big\rangle_{Q} ^{\frac{1}{\delta_i}}\big)
\big(\prod_{i\in\mathcal{I}'}  \esssup_{Q} w_i^{-1}\big)
\\
&\qquad\le
\big(\prod_{i=1}^{m-1}  \big[w_i^{\theta_i}\big]_{A_{\kappa \theta_i}}^{\frac1{\theta_i}} \big)
\big[\big\langle  W^{\frac{\delta_{m+1}}{r_m}}   \big\rangle_{Q,\widehat{w}} \big( \esssup_{Q} W^{-\frac{\delta_{m+1}}{r_m}}\big)
\big]^{\frac1{\delta_{m+1}}}
\big\langle  \widehat{w}\big\rangle_{Q} ^{\frac1{\varrho}}
\big\langle  \widehat{w}^{1-(\kappa \varrho)'}\big\rangle_{Q}^{\frac1{\varrho}(\kappa \varrho-1)}
\\
&\qquad\le
[W]_{A_{1,\frac{\delta_{m+1}}{r_m}} (\widehat{w})}^{\frac1{\delta_{m+1}}}
[\widehat{w}]_{A_{\kappa \varrho}}^{\frac1 {\varrho}}
\prod_{i=1}^{m-1}\big[w_i^{\theta_i}\big]_{A_{\kappa \theta_i}}^{\frac1{\theta_i}}.
\end{align*}
This completes the proof of $(ii)$.

\medskip

Finally, \eqref{LHS-rew} and \eqref{RHS-rew} follow directly from the definition of $\varrho$ and either \eqref{formula-1} for case $(i)$ or \eqref{formula-2} for case $(ii)$.
\end{proof}

\noindent
{\bf Proof of Theorem \ref{ex-Appqu}.}
For simplicity and without loss of generality, we fix $p_i$ for $1\le i\le m-1$ and vary $p_m$.
Otherwise, we can rearrange the $f_i$'s. Fix $0<q^*<\infty$ and $\vec p^*=(p^*_1,\dots, p^*_{m-1}, p^*_m)$
with  $\vec{r}\preceq(\vec{p^*},q^*)$, $r_m<p^*_m$ and $p^*_i=p_i$ ($1\le i\le m-1$).
Assume $\vec v \in A^{u}_{(\vec p^*, q^*), \vec{r}}$. Set $\frac1{p^*}:=\sum_{i=1}^{m}\frac1{p^{*}_i}$  and $v:=\prod_{i=1}^mv_i$.
Define
\[
\frac1r:=\sum_{i=1}^{m+1} \frac1{r_i};
\quad
\frac1{p_{m+1}}:=1-\frac1p;
\quad
\frac1{p^*_{m+1}}:=1-\frac1{p^*};
\quad
\kappa:=\frac{1}{r}-\frac{1}{p}-\frac{1}{q'}=\frac{1}{r}-\frac{1}{p^*}-\frac{1}{(q^*)'},
\]
and for $i=1,\dots, m+1,$
$\frac1{\delta_i}=\frac1{r_i}-\frac1{p_i}$,
$\frac1{\widetilde{\delta}_i}=\frac1{r_i}-\frac1{p^*_i }$.

Observe that $\delta_i=\widetilde{\delta}_i$ for $1\le i\le m-1$. This means that in view of \eqref{def-rho-kappa},
\[
\frac1{\varrho}
:=
\frac 1{r_m}-\frac 1{r_{m+1}'}+\sum_{i=1}^{m-1}\frac1{p_i}
=
\frac 1{r_m}-\frac 1{r_{m+1}'}+\sum_{i=1}^{m-1}\frac1{p^*_i }
=
\frac1{\delta_m}+\frac1{\delta_{m+1}}
=
\frac1{\widetilde{\delta}_m}+\frac1{\widetilde{\delta}_{m+1}}.
\]
For $1\le i \le m-1$, set $w_i:=v_i$. Then by applying Lemma \ref{ex-lem-main} $(i)$ to $\vec v \in A^{u}_{(\vec p^*, q^*),\vec{r}}$ and  $(i.1)$,
we have that for $1\le i\le m-1$,
$w_i^{\theta_i}=w_i^{\theta_i}\in A_{\kappa \theta_i }$ with $\frac1{\theta_i}
:=\kappa-\frac1{\widetilde{\delta}_i}$ holds;
while $(i.2)$ gives $\widehat{w}:=\big(\prod_{i=1}^{m-1}w_i\big)^{\varrho}\in A_{\kappa \varrho}$;
and finally $(i.3)$ implies
\begin{equation}\label{aferer}
V
:=
v^{r_m} \widehat{w}^{- \frac{r_m}{\widetilde{\delta}_{m+1}}}
\in
A^{u}_{\frac{p^*_m}{r_m},\frac{\widetilde{\delta}_{m+1}}{r_m}}(\widehat{w}).
\end{equation}
In particular, $\widehat{w}\in A_\infty$ holds.

Let $W\in A^{u}_{\frac{p_m}{r_m},\frac{\delta_{m+1}}{r_m}}(\widehat{w})$ and $w_m
:=W^{\frac{1}{r_m}} \widehat{w}^{\frac{1}{\delta_m}}$ as in \eqref{formula-2}.
Since $w_i^{\theta_i }\in A_{\kappa \theta_i }$ for $1\le i\le m-1$ and $\widehat{w}\in A_{\kappa \varrho}$, one can 
apply Lemma \ref{ex-lem-main} $(ii)$ with $\vec{p}$ and $\vec{r}$ to derive $\vec{w}=(w_1,\dots, w_m)\in A^{u}_{(\vec p, q), \vec{r}}$
(notice that $\varrho$ is fixed and depends on  $p_i=p^*_i $ for $1\le i \le m-1$, $r_m$ and $r_{m+1}$).
By Lemma \ref{ex-lem-main} $(iii)$, we see that for $(f,f_1,\dots,f_m)\in\mathcal{F}$,
\begin{multline}\label{faefe}
\big\|\big(f\widehat{w}^{-\frac1{r_{m+1}'}}\big)^{r_m}\big\|_{L^\frac{q}{r_m}(W^{\frac{q}{r_m}}\d\widehat{w})}^{\frac1{r_m}}
=
\|f\|_{L^{q}(w^{q})}
\lesssim
\prod_{i=1}^m\|f_i\|_{L^{p_i}(w_i^{p_{i}})}
\\
=
\big(\prod_{i=1}^{m-1}\|f_i\|_{L^{p_i}(w_i^{p_{i}})}\big)
\big\|\big(f_m\widehat{w}^{-\frac1{r_m}}\big)^{r_m}\big\|_{L^\frac{p_m}{r_m}(W^{\frac{p_m}{r_m}}\d\widehat{w})}^{\frac1{r_m}}.
\end{multline}
Let
\[
\mathcal{G}
:=
\big\{
\big(\big(f\widehat{w}^{-\frac1{r_{m+1}'}}\big)^{r_m}, \big(\prod_{i=1}^{m-1}\|f_i\|_{L^{p_i}(w_i^{p_{i}})}f_m\widehat{w}^{-\frac1{r_m}}\big)^{r_m}\big):
(f,f_1,\dots,f_m)\in\mathcal{F}
\big\}.
\]
Then \eqref{faefe} can be written as
\[
\|F\|_{L^\frac{q}{r_m}(W^{\frac{q}{r_m}}\d\widehat{w})}
\lesssim
\|G \|_{L^\frac{p_m}{r_m}(W^{\frac{p_m}{r_m}}\d\widehat{w})},
\qquad
(F,G)\in\mathcal{G},
\]
which holds for $W \in A^{u}_{\frac{p_m}{r_m}, \frac{\delta_{m+1}}{r_m}}(\widehat{w})$. Consequently, by Theorem \ref{ex-Apqu},
for any $s_m$ satisfying $r_m < s_m < \infty$, $0 < s, \tau < \infty$ and
\begin{equation}\label{fraefwera}
\frac 1s-\frac 1{p_{0}}=\frac1{\tau}-\frac 1{\delta_{m+1}}=\frac 1{s_m}-\frac 1{p_m},
\end{equation}
and for any $U\in A^{u}_{\frac{s_m}{r_m},\frac{\tau}{r_m}} (\widehat{w})$, we have
\begin{equation}\label{wegsrt54}
\|F\|_{L^\frac{s}{r_m}(U^{\frac{s}{r_m}}\d\widehat{w})}
\lesssim
\|G \|_{L^\frac{s_m}{r_m}(U^{\frac{s_m}{r_m}}\d\widehat{w})},
\qquad
(F,G)\in\mathcal{G}.
\end{equation}

Next, we set $s := q^*$, $s_m := p^*_m$, and $\tau = \widetilde{\delta}_{m+1}$.
By assumption, $r_m < p^*_m = s_m$ holds. Given $p^*_i = p_i$ for $1 \leq i \leq m-1$, it follows that
\[
\frac 1s-\frac 1{q}
=
\frac 1{q^*}-\frac 1{q}
=
\sum_{i=1}^m \Big(\frac1{p^*_i }-\frac1{p_i}\Big)
=
\frac1{p^*_m}-\frac1{p_m}
=
\frac 1{s_m}-\frac 1{p_m}
\]
and
\[
\frac1{\tau}-\frac1{\delta_{m+1}}
=
\frac1{\widetilde{\delta}_{m+1}}-\frac1{\delta_{m+1}}
=
\frac 1{p_{m+1}}-\frac 1{p^*_{m+1}}
=
\frac 1{q^*}-\frac 1{q}
=
\frac 1{s}-\frac 1{q}.
\]
Thus, \eqref{fraefwera} is satisfied. On the other hand, \eqref{aferer} implies
$V\in
A^{u}_{\frac{p^*_m}{r_m},\frac{\widetilde{\delta}_{m+1}}{r_m}}(\widehat{w})=A^{u}_{\frac{s_m}{r_m},\frac{\tau}{r_m}} (\widehat{w})
.$
Therefore, \eqref{wegsrt54} holds for $U = V$ and some suitable choices of related parameters.
Consequently, applying Lemma \ref{ex-lem-main} $(iii)$ with $\vec{p^*}$
and $\vec{r}$ gives that for $(f, f_1, \dots, f_m) \in \mathcal{F}$,
\begin{multline*}
\|f\|_{L^{q^*}(v^{q^*})}
=
\big\|\big(f\widehat{w}^{-\frac1{r_{m+1}'}}\big)^{r_m}\big\|_{L^\frac{q}{r_m}(V^{\frac{q^*}{r_m}}\d\widehat{w})}^{\frac1{r_m}}
=
\|F\|_{L^\frac{q^*}{r_m}(V^{\frac{q^*}{r_m}}\d\widehat{w})}^{\frac1{r_m}}
\lesssim
\|G \|_{L^\frac{p^*_m}{r_m}(V^{\frac{p^*_m}{r_m}}\d\widehat{w})}^{\frac1{r_m}}
\\
=
\big( \prod_{i=1}^{m-1}\|f_i\|_{L^{p_i}(w_i^{p_{i}})}\big)
\big\|\big(f_m\widehat{w}^{-\frac1{r_m}}\big)^{r_m}\big\|_{L^\frac{q_m}{r_m}(W^{\frac{p^*_m}{r_m}}\d\widehat{w})}^{\frac1{r_m}}
=\prod_{i=1}^{m}\|f_i\|_{L^{p^*_i }(w_i^{p^*_i })}.\qed
\end{multline*}

\section{Partial weight estimates for fractional integrals and applications}\label{estimates}

\subsection{Partial weight estimates for fractional integrals in Lebesgue space}

We first establish the weighted boundedness characterization of the multilinear fractional maximal function in endpoint spaces.

\begin{thm}\label{Mam}
Let $0\leq \alpha<md$, $1< p_{1},\ldots, p_{m}\leq \infty$, $0<q< \infty$ and $\frac{1}{p}:=\sum_{i=1}^m\frac{1}{p_{i}}=\frac{1}{q}+\frac{\alpha}{d}>0$. Then
\begin{equation*}\label{YH-6}
\Big(\int_{\R^d}\big(\mathcal{M}_{\alpha,m}(\vec f)(x) w(x)\big)^{q}\Big)^{\frac{1}{q}}\lesssim \prod_{i=1}^{m}\Big(\int_{\R^d}|f_{i}(x)w_{i}(x)|^{p_{i}}dx\Big)^{\frac{1}{p_{i}}}
\end{equation*}
holds if and only if $\vec w\in A_{\vec p, q}$.
\end{thm}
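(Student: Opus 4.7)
For necessity, I would apply the hypothesized inequality to standard test functions. Fixing an arbitrary cube $Q\subset\R^d$, set $f_i:=w_i^{-p_i'}\chi_Q$ (with the essential-supremum analogue when $p_i=\infty$, i.e.\ $p_i'=1$). Since $\mathcal M_{\alpha,m}(\vec f)(x)\ge |Q|^{\alpha/d}\prod_{i=1}^m\ave{f_i}_Q$ for every $x\in Q$, restricting the assumed inequality to $Q$ yields
\begin{equation*}
|Q|^{\alpha/d+1/q-1/p}\ave{w^q}_Q^{1/q}\prod_{i=1}^m\ave{w_i^{-p_i'}}_Q^{1/p_i'}\lesssim 1.
\end{equation*}
The scaling hypothesis $\alpha/d=1/p-1/q$ collapses the power of $|Q|$ to zero, and what remains is exactly the $A_{\vec p,q}$ condition uniform in $Q$.

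For sufficiency I would split into two regimes. When every $p_i<\infty$ the inequality is classical: it follows from Moen's theorem \cite{M2009} when $\alpha>0$ and from the LOPTT result \cite{LOPTT2009} when $\alpha=0$, so I would simply cite these. The genuinely new contribution is the endpoint situation in which at least one $p_i=\infty$, and for this I plan to invoke the multilinear Rubio de Francia extrapolation of Corollary \ref{ex-Appq}. Let $\mathcal F:=\{(\mathcal M_{\alpha,m}(\vec f),f_1,\dots,f_m):f_i\ge 0\}$, take $\vec r=(1,\dots,1)\in\R^{m+1}$ so that $A_{(\vec p,q),\vec r}$ reduces to $A_{\vec p,q}$, and choose base exponents $\vec p_0$ with every $p_{0,i}\in(1,\infty)$ and $1/p_0-1/q_0=\alpha/d$; the finite-exponent case just recalled supplies the required base hypothesis of the corollary. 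Because $r_i=1<p_i^*$ for every $p_i^*\in(1,\infty]$ and $r_{m+1}'=\infty>q^*$, the strict relation $\vec r\prec(\vec p^*,q^*)$ holds automatically, and the corollary transfers boundedness to all $\vec w\in A_{\vec p^*,q^*}$, including those configurations with some $p_i^*=\infty$.

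The main obstacle is the careful verification that all structural preconditions of Corollary \ref{ex-Appq} are satisfied simultaneously: the scaling relation $1/p-1/q=1/p^*-1/q^*=\alpha/d$ must be preserved through the extrapolation transformation, the compatibilities $\vec r\preceq(\vec p_0,q_0)$ and $\vec r\prec(\vec p^*,q^*)$ must be consistent with the single choice $\vec r=(1,\dots,1)$, and the non-degeneracy $1/p^*>\max\{0,(1-q^*)/q^*\}$ must be checked (it follows from $1/p^*\ge 1/q^*$ together with $\alpha\ge 0$). The essential reason the endpoint $p_i^*=\infty$ is reachable at all is that the corollary requires only the strict inequality $r_i<p_i^*$, which remains valid in the limit $p_i^*\to\infty$; this is precisely the gap the extrapolation machinery closes over the classical finite-exponent theorem.
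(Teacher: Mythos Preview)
Your proposal is correct and follows essentially the same route as the paper: necessity via the test functions $f_i=w_i^{-p_i'}\chi_Q$, and sufficiency by citing \cite{LOPTT2009,M2009} for the finite-exponent case and then applying Corollary~\ref{ex-Appq} with $\vec r=(1,\dots,1)$ to reach the endpoint $p_i=\infty$. The additional hypothesis-checking you outline (scaling preservation, $\vec r\prec(\vec p^*,q^*)$, positivity of $1/p^*$) is exactly what the paper leaves implicit.
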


\begin{proof}
When $1< p_{1},\ldots, p_{m}< \infty$, Theorem \ref{YH-6} has been proved by \cite{LOPTT2009} and \cite{M2009}. Combining this with
Corollary \ref{ex-Appq} for $r_{1}=\cdots=r_{m+1}=1$, one can conclude the boundedness for $1< p_{1},\ldots, p_{m}\leq \infty$.
The necessity is similar to that in \cite{LOPTT2009} and \cite{M2009}, it only suffices to take $f_{i} = w^{-p'_{i}}_{i}\chi_{Q}$.
\end{proof}

Consequently, we can obtain the following partial multiple weight estimates for the fractional maximal function.

\begin{thm}\label{Lp-Lq-M}
Let $0\leq \beta< \alpha< d$ and $1<p\leq q<\infty$ with $\frac{1}{p}-\frac{1}{q}=\frac{\beta}{d}$. If $u\in A_{1}\bigcap L^{\frac{d}{\alpha-\beta},1}(\R^d)$ and $w\in A_{p,q}^u$, then
$\|M_{\alpha}(f)\|_{L^{q}(u^{q}w^{q})}\lesssim \|u\|_{L^{\frac{d}{\alpha-\beta},1}(\R^d)}\|f\|_{L^{p}(w^p)}.$
\end{thm}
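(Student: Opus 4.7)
The strategy is to reduce the linear fractional maximal operator $M_\alpha$ to the bilinear fractional maximal operator $\mathcal{M}_{\beta,2}$ via the already-established pointwise inequality \eqref{Ma-Mb}, and then to apply Theorem \ref{Mam} at the endpoint where one of the exponents equals infinity.

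First I would start from the pointwise bound \eqref{Ma-Mb}:
\begin{equation*}
M_\alpha(f)(x)\le \|u\|_{L^{\frac{d}{\alpha-\beta},1}(\R^d)}\,\mathcal{M}_{\beta,2}(f,u^{-1})(x),\qquad x\in\R^d.
\end{equation*}
Taking $L^q(u^q w^q)$-norms on both sides of this inequality immediately reduces the whole problem to the single bilinear estimate
\begin{equation*}
\|\mathcal{M}_{\beta,2}(f,u^{-1})\|_{L^q(u^q w^q)}\lesssim \|f\|_{L^p(w^p)}.
\end{equation*}

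Next I would invoke Theorem \ref{Mam} with $m=2$, with the exponent vector $\vec p=(p_1,p_2)=(p,\infty)$, and with the weight vector $\vec w=(w_1,w_2)=(w,u)$, so that $\prod_i w_i=uw$. The scaling hypothesis of Theorem \ref{Mam} becomes $\frac{1}{p_1}+\frac{1}{p_2}=\frac{1}{p}=\frac{1}{q}+\frac{\beta}{d}$, which is exactly our assumption. The weight condition $\vec w\in A_{\vec p,q}$ with $\vec p=(p,\infty)$ translates, in view of Definition \ref{defn:Apqu}, into $w\in A^u_{p,q}$, which is also a hypothesis. Since the factor on $f_2=u^{-1}$ in the bilinear bound is $\|u^{-1}\cdot u\|_{L^\infty(\R^d)}=1$, the endpoint slot is trivial, and Theorem \ref{Mam} yields
\begin{equation*}
\|\mathcal{M}_{\beta,2}(f,u^{-1})\|_{L^q(u^q w^q)}\lesssim \|f\|_{L^p(w^p)}\cdot\|u^{-1}\cdot u\|_{L^\infty(\R^d)}=\|f\|_{L^p(w^p)}.
\end{equation*}
Combining this with the reduction above produces the desired inequality.

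The most delicate point is that Theorem \ref{Mam} must be invoked at the endpoint exponent $p_2=\infty$; for $1<p_1,p_2<\infty$ the bound is classical (\cite{LOPTT2009,M2009}), while the extension to $p_i=\infty$ rests on the multilinear extrapolation result (Corollary \ref{ex-Appq}) applied with $r_1=r_2=r_3=1$, as already noted in the justification of Theorem \ref{Mam}. Once that endpoint bilinear estimate is in hand, the rest of the argument is essentially a two-line combination of \eqref{Ma-Mb} with Theorem \ref{Mam}, and no further ingredients are needed.
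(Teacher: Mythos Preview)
Your proposal is correct and matches the paper's proof essentially line for line: the paper also starts from the pointwise bound \eqref{Ma-Mb} and then applies Theorem \ref{Mam} with $m=2$, $\vec p=(p,\infty)$, $\vec w=(w,u)$, using that $w\in A^u_{p,q}$ is by Definition \ref{defn:Apqu} precisely the condition $(w,u)\in A_{(p,\infty),q}$ and that $\|u^{-1}\cdot u\|_{L^\infty}=1$. Your remark that the endpoint $p_2=\infty$ in Theorem \ref{Mam} rests on Corollary \ref{ex-Appq} is also exactly the justification the paper gives.
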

\begin{proof}
For $u\in A_{1}\bigcap L^{\frac{d}{\alpha-\beta},1}(\R^d)$ and $w\in A_{p,q}^u$,
it follows from \eqref{Ma-Mb} and Theorem \ref{Mam} that
\begin{equation*}
\begin{aligned}
&\|M_{\alpha}(f)\|_{L^{q}(u^{q}w^{q})}\lesssim \|u\|_{L^{\frac{d}{\alpha-\beta},1}(\R^d)} \|\mathcal{M}_{\beta,2}(f,u^{-1})\|_{L^{q}(u^{q}w^{q})}\\
&\lesssim \|u\|_{L^{\frac{d}{\alpha-\beta},1}(\R^d)}\|u^{-1}\cdot u\|_{L^{\infty}(\R^d)}\|f\|_{L^{p}(w^{p})}\lesssim \|u\|_{L^{\frac{d}{\alpha-\beta},1}(\R^d)}\|f\|_{L^{p}(w^{p})}.
\end{aligned}
\end{equation*}
\end{proof}

For $U\in L^{\frac{d}{\alpha-\beta},s}(\R^d)$ and $1<r<s<\infty$, it is known for
$M(U^{r})^{1/r}\in A_{1}\bigcap  L^{\frac{d}{\alpha-\beta},s}(\R^d)$.
\begin{cor}\label{Lp-Lq-U}
Let $0\leq  \beta< \alpha< d$ and $1<r<s<p\leq q<\infty$ with $\frac{1}{p}-\frac{1}{q}=\frac{\beta}{d}$. If $U\in L^{\frac{d}{\alpha-\beta},s}(\R^d)$, then
$\|M_{\alpha}(f)U\|_{L^{q}(w^{q})}\lesssim \|f\|_{L^{p}(w^p)},$
where $u=M(U^{r})^{1/r}$ and $w\in A_{p,q}^u$.
\end{cor}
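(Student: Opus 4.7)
The plan is to reduce the estimate to the proof template of Theorem \ref{Lp-Lq-M}, operating at the Lorentz exponent $s$ in place of $1$. First I would invoke the Lebesgue differentiation theorem to obtain $U(x)\le M(U^r)(x)^{1/r}=u(x)$ for a.e.\ $x\in\R^d$, so that
\begin{equation*}
\|M_\alpha(f)U\|_{L^q(w^q)}\le \|M_\alpha(f)\|_{L^q(u^q w^q)};
\end{equation*}
the remark preceding the corollary guarantees $u\in A_1\cap L^{d/(\alpha-\beta),s}(\R^d)$, while $w\in A_{p,q}^u$ holds by hypothesis. This reduces the problem to controlling $\|M_\alpha(f)\|_{L^q(u^q w^q)}$ purely in terms of $\|f\|_{L^p(w^p)}$ and the Lorentz norm of $u$.

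Next, I would replay the proof of Theorem \ref{Lp-Lq-M} with one adjustment: in the pointwise bound \eqref{Ma-Mb}, I would replace the Lorentz--H\"older step with its $L^{d/(\alpha-\beta),s}$-version. Explicitly, applying the Lorentz--H\"older inequality with the pair $L^{d/(\alpha-\beta),s}(\R^d)$ and $L^{d/(d-\alpha+\beta),s'}(\R^d)$ to $\int u\chi_Q\,dx$ yields $|Q|^{(\alpha-\beta)/d}\langle u\rangle_Q \lesssim \|u\|_{L^{d/(\alpha-\beta),s}(\R^d)}$. Combined with the elementary bound $\langle u\rangle_Q\langle u^{-1}\rangle_Q\ge 1$, this produces
\begin{equation*}
M_\alpha(f)(x)\lesssim \|u\|_{L^{d/(\alpha-\beta),s}(\R^d)}\,\mathcal{M}_{\beta,2}(f,u^{-1})(x).
\end{equation*}
Then Theorem \ref{Mam} applied with $m=2$, $(p_1,p_2)=(p,\infty)$, and weights $(w,u)$ (noting that $w\in A_{p,q}^u$ is precisely $(w,u)\in A_{(p,\infty),q}$), together with $\|u^{-1}\|_{L^\infty(u)}=1$, delivers
\begin{equation*}
\|M_\alpha(f)\|_{L^q(u^q w^q)}\lesssim \|u\|_{L^{d/(\alpha-\beta),s}(\R^d)}\|f\|_{L^p(w^p)}.
\end{equation*}

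Finally, the Hardy-Littlewood maximal operator is bounded on the Lorentz space $L^{d/(r(\alpha-\beta)),s/r}(\R^d)$ (valid since $d/(r(\alpha-\beta))>1$), so that
\begin{equation*}
\|u\|_{L^{d/(\alpha-\beta),s}(\R^d)}=\|M(U^r)\|^{1/r}_{L^{d/(r(\alpha-\beta)),s/r}(\R^d)}\lesssim \|U\|_{L^{d/(\alpha-\beta),s}(\R^d)},
\end{equation*}
and this last factor is absorbed into the implicit constant, completing the proof. The main obstacle is simply justifying the Lorentz--H\"older step that underlies the modified version of \eqref{Ma-Mb} and verifying that Theorem \ref{Mam} applies cleanly with an $\infty$-exponent; both are essentially bookkeeping exercises mirroring the proof of Theorem \ref{Lp-Lq-M}.
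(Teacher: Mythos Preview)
Your proposal is correct and follows precisely the route the paper has in mind: the paper states the corollary without proof, offering only the preceding remark that $u=M(U^r)^{1/r}\in A_1\cap L^{d/(\alpha-\beta),s}(\R^d)$, and implicitly expects the reader to rerun the proof of Theorem \ref{Lp-Lq-M} with the Lorentz exponent $s$ in place of $1$ in \eqref{Ma-Mb}. Your write-up supplies exactly those details, including the Lorentz--H\"older step and the application of Theorem \ref{Mam} at $(p,\infty)$.
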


The next lemma illustrates the relation between $I_{\alpha}$ and $M_{\alpha}$. For its proof, one can be referred to \cite{MW1974}.
\begin{lem}\label{Ia-Ma}
Suppose that $0<\alpha<d, 0<q<\infty$ and $w\in A_{\infty}$. Then there exists a constant $C>0$ independent of $f$ such that
$$\int_{\R^d}|I_{\alpha}(f)(x)|^{q}w(x)dx\leq C\int_{\R^d}|M_{\alpha}(f)(x)|^{q}w(x)dx.$$
\end{lem}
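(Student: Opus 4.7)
The plan is to prove Lemma \ref{Ia-Ma} via the classical good-$\lambda$ method of Coifman--Fefferman, adapted to the fractional setting as in Muckenhoupt--Wheeden. The crux is to establish the Lebesgue-measure estimate: there exists $\theta = d/(d-\alpha) > 1$ and a constant $C_{0}$ such that for every $\lambda > 0$ and every $\gamma$ sufficiently small,
\begin{equation}\label{good-lambda-plan}
\big|\{x\in\R^{d}:|I_{\alpha}f(x)|>2\lambda,\; M_{\alpha}f(x)\le \gamma\lambda\}\big|
\le C_{0}\gamma^{\theta}\,\big|\{x\in\R^{d}:|I_{\alpha}f(x)|>\lambda\}\big|.
\end{equation}
Once \eqref{good-lambda-plan} is in hand, the $A_\infty$ property of $w$ (which supplies constants $C,\delta>0$ with $w(E)/w(Q)\le C(|E|/|Q|)^{\delta}$ for every cube $Q$ and measurable $E\subset Q$) upgrades \eqref{good-lambda-plan}, via the Whitney cubes of $\{|I_\alpha f|>\lambda\}$, to the weighted version
$$w\big(\{|I_{\alpha}f|>2\lambda,\;M_{\alpha}f\le \gamma\lambda\}\big)\le C\,\gamma^{\delta\theta}\,w\big(\{|I_{\alpha}f|>\lambda\}\big).$$
Multiplying by $q\lambda^{q-1}$ and integrating in $\lambda$ yields
$\int |I_{\alpha}f|^{q}w\le 2^{q}C\gamma^{\delta\theta}\int |I_{\alpha}f|^{q}w+(2/\gamma)^{q}\int |M_{\alpha}f|^{q}w$,
and choosing $\gamma$ small enough so that $2^{q}C\gamma^{\delta\theta}<1/2$ absorbs the first term. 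A standard truncation of $f$ (working with $f\chi_{B(0,R)}$ and passing to the limit via Fatou, having first reduced to $f\ge 0$) guarantees $\int|I_{\alpha}f|^{q}w<\infty$ during this absorption step, so no circularity arises.

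To prove \eqref{good-lambda-plan}, fix $\lambda>0$, set $\Omega=\{|I_{\alpha}f|>\lambda\}$, assume $|\Omega|<\infty$ (by the truncation above), and take a Whitney decomposition $\Omega=\bigcup_{j}Q_{j}$ with $\mathrm{diam}(Q_{j})\approx \mathrm{dist}(Q_{j},\Omega^{c})$. It suffices to bound $|E_{j}|$ with $E_{j}:=Q_{j}\cap\{|I_{\alpha}f|>2\lambda,\,M_{\alpha}f\le \gamma\lambda\}$. If $E_{j}\ne\emptyset$, pick $x_{0}\in Q_{j}$ with $M_{\alpha}f(x_{0})\le \gamma\lambda$, pick $y_{j}\in \overline{cQ_{j}}\setminus\Omega$ (so $|I_{\alpha}f(y_{j})|\le \lambda$), and split $f=f_{1}+f_{2}$ with $f_{1}=f\chi_{2Q_{j}}$. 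For the far part $f_{2}$, a direct mean-value estimate on the kernel $|x-y|^{\alpha-d}$ yields the pointwise bound
$|I_{\alpha}f_{2}(x)-I_{\alpha}f_{2}(y_{j})|\lesssim M_{\alpha}f(x_{0})\lesssim \gamma\lambda$ for $x\in Q_{j}$,
whence $|I_{\alpha}f_{2}(x)|\le (1+C\gamma)\lambda$. Consequently, on $E_{j}$ with $\gamma$ small, $|I_{\alpha}f_{1}(x)|\ge \lambda/2$. Invoking the classical weak $(1,\theta)$-inequality $\|I_{\alpha}g\|_{L^{\theta,\infty}}\lesssim \|g\|_{L^{1}}$ and using $\|f_{1}\|_{L^{1}}\le |2Q_{j}|^{1-\alpha/d}M_{\alpha}f(x_{0})\lesssim |Q_{j}|^{1/\theta}\gamma\lambda$, one obtains
$$|E_{j}|\le\big|\{x\in Q_{j}:|I_{\alpha}f_{1}(x)|>\lambda/2\}\big|\lesssim \big(\|f_{1}\|_{L^{1}}/\lambda\big)^{\theta}\lesssim \gamma^{\theta}|Q_{j}|.$$
Summing over $j$ gives \eqref{good-lambda-plan}, completing the scheme.

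The main obstacle I anticipate is the pointwise oscillation estimate $|I_{\alpha}f_{2}(x)-I_{\alpha}f_{2}(y_{j})|\lesssim M_{\alpha}f(x_{0})$ for $x\in Q_{j}$: it requires decomposing the complement $(2Q_{j})^{c}$ into dyadic annuli $2^{k+1}Q_{j}\setminus 2^{k}Q_{j}$, estimating $||x-y|^{\alpha-d}-|y_{j}-y|^{\alpha-d}|\lesssim \mathrm{diam}(Q_{j})\,|y_{j}-y|^{\alpha-d-1}$, and summing the resulting geometric series against averages of $|f|$ on $2^{k+1}Q_{j}$, each controlled by $(2^{k+1}\ell(Q_{j}))^{d-\alpha}M_{\alpha}f(x_{0})\cdot |2^{k+1}Q_{j}|^{-1}$. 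Once this annular summation is executed carefully, the remaining ingredients (Whitney decomposition, weak-type bound for $I_{\alpha}$, and the $A_{\infty}$ upgrade) are essentially routine.
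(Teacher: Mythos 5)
The paper does not actually prove this lemma: it simply cites Muckenhoupt--Wheeden \cite{MW1974}, and your argument is essentially the classical good-$\lambda$ proof given there (Whitney decomposition, local splitting $f=f_1+f_2$, oscillation estimate for the far part against $M_\alpha f$, weak $(1,\frac{d}{d-\alpha})$ bound for the near part, then the $A_\infty$ comparison of measures). So the route is the intended one, and the core good-$\lambda$ estimate as you set it up is sound (note that the bound $|I_\alpha f_2(y_j)|\le\lambda$ does require the reduction to $f\ge 0$, which you do make).

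There is, however, a genuine flaw in your absorption step. You claim that truncating $f$ to $f\chi_{B(0,R)}$ ``guarantees $\int|I_\alpha f|^qw<\infty$''; this is false for general $w\in A_\infty$. For example, with $f$ bounded and compactly supported one has $I_\alpha f(x)\approx|x|^{\alpha-d}$ at infinity, and if $w(x)\approx|x|^{a}$ with $a$ large (such power weights are in $A_\infty$), then $\int|I_\alpha f|^qw=\infty$; the inequality still holds because the right-hand side is also infinite, but you cannot subtract the term $2^qC\gamma^{\delta\theta}\int|I_\alpha f|^qw$ from both sides. The standard repair is to absorb at the level of the truncated distribution integral: for $f$ bounded with compact support, $I_\alpha f(x)\to0$ as $|x|\to\infty$, so each superlevel set $\{|I_\alpha f|>\lambda\}$ is bounded and hence has finite $w$-measure; therefore $I_N:=\int_0^N q\lambda^{q-1}w(\{|I_\alpha f|>2\lambda\})\,d\lambda<\infty$, the good-$\lambda$ inequality gives $I_N\le 2^qC\gamma^{\delta\theta}I_N+(2/\gamma)^q\int|M_\alpha f|^qw$, one absorbs and lets $N\to\infty$, and finally one passes to general $f$ by monotone convergence ($f_k=\min(f,k)\chi_{B_k}\uparrow f$, $I_\alpha f_k\uparrow I_\alpha f$, $M_\alpha f_k\le M_\alpha f$). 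A second, minor, point: you cut off at $2Q_j$ but take $y_j\in\overline{cQ_j}\setminus\Omega$; since Whitney cubes only give $y_j$ in a dilate $cQ_j$ with $c>2$, the kernel mean-value estimate $\bigl||x-y|^{\alpha-d}-|y_j-y|^{\alpha-d}\bigr|\lesssim\operatorname{diam}(Q_j)\,|y_j-y|^{\alpha-d-1}$ can fail for $y$ near $y_j$; you should take $f_1=f\chi_{c'Q_j}$ with $c'$ a fixed dilate strictly larger than $c$, which changes nothing else in the computation.
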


\medskip

\noindent
{\bf Proof of Theorem \ref{Lp-Lq-Ia}.} Applying Theorems \ref{Lp-Lq-M} and Lemma \ref{Ia-Ma}, we can get the weighted boundedness of $I_{\alpha}$. \qed

\subsection{Fefferman-Phong inequality related to partial Muckenhoupt weights}

As mentioned in the introduction, the author in \cite{FP1983} established that if $U \in L^{s,1}(\R^d)$ with $2<s<\infty$,
then the following inequality holds:
\begin{equation}\label{YH-11}
\|f U\|_{L^2(\R^d)} \lesssim \|\nabla f\|_{L^2(\R^d)}.
\end{equation}
In particular, if $U(x) = |x|^{-1}$, then \eqref{YH-11} corresponds to the Hardy inequality. In addition,
\eqref{YH-11} has been extended to $L^p(\R^d)$ case in \cite{CF1990}.

It is pointed out that the connection between the Riesz potential and the gradient has been established
through the following two inequalities (see \cite{CF2013, GT1998}).
The first inequality is that for $f \in C_{c}^{\infty}(\R^d)$,
\begin{equation}\label{eqfIa-1}
|f(x)| \lesssim I_{1}(|\nabla f|)(x).
\end{equation}
The second inequality is that for any bounded, convex domain $\Omega$ and for all $x \in \Omega$,
\begin{equation}\label{eqfIa-2}
|f(x) - \ave{f}_{\Omega}| \leq I_{1}(|\nabla f| \chi_{\Omega})(x).
\end{equation}
Note that by the definition of the fractional Laplacian and the properties of the Fourier transformation,
one has that for $\alpha\in [0,1]$,
$$I_{2\alpha}((-\Delta)^{\alpha}f)(x)=f(x).$$
We next give some extension of \eqref{YH-11}.
\begin{thm}\label{FP}
Let $1 < p \leq q < \infty$, $0\leq \beta < d$,  $\frac{1}{p} - \frac{1}{q}  = \frac{\beta}{d}$
and $U \in L^{\frac{d}{1-\beta},q_{0}}(\R^d)$ with $1<q_{0}\leq q$. For any $w \in A_{p,q}^u$
with $u = M(U^r)^{\frac{1}{r}}$, for some $1 < r < q_{0}$, we have that for $f \in C^{\infty}_{c}(\R^d)$,
$$\|f U\|_{L^{q}(w^{q})} \lesssim \|\nabla f\|_{L^{p}(w^{p})}.$$
\end{thm}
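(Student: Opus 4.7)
The plan is to reduce Theorem \ref{FP} to Theorem \ref{Lp-Lq-Ia} via the pointwise bound \eqref{eqfIa-1} and to absorb the size condition on $U$ using the Lorentz--Morrey embedding that has already been used implicitly in Lemmas \ref{D-OD-2-lem}--\ref{D-OD-3-lem}.

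\textbf{Step 1: Pointwise control by the Riesz potential.}  For $f\in C_{c}^{\infty}(\R^{d})$, the estimate \eqref{eqfIa-1} yields $|f(x)|\lesssim I_{1}(|\nabla f|)(x)$ for all $x\in\R^{d}$.  Multiplying by $U$ and taking the $L^{q}(w^{q})$-norm therefore gives
\begin{equation*}
\|fU\|_{L^{q}(w^{q})}\lesssim \|I_{1}(|\nabla f|)\,U\|_{L^{q}(w^{q})}.
\end{equation*}
This converts the Fefferman--Phong inequality into a weighted boundedness statement for $I_{1}$ with a multiplier $U$, which is precisely the object handled by Theorem \ref{Lp-Lq-Ia}.

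\textbf{Step 2: Placing $U$ in the right Morrey space.}  With $\alpha=1$, Theorem \ref{Lp-Lq-Ia} requires $U\in M^{d/(1-\beta)}_{s}(\R^{d})$ for some $s$ with $1<r<s<\infty$, along with $u=M(U^{r})^{1/r}$ and $w\in A_{p,q}^{u}$, which is exactly our standing hypothesis.  To produce such an $s$, I will pick any $s$ with $r<s<d/(1-\beta)$ (this is possible since $r<q_{0}\le q$ and by Proposition \ref{Apqu-q+} one may slightly enlarge/shrink $r$ so that $r<d/(1-\beta)$).  Then by Lemma \ref{Lorentz-emb} the embedding $L^{d/(1-\beta),q_{0}}\subset L^{d/(1-\beta),\infty}$ holds, and Kolmogorov's inequality (already exploited in the proof of Lemma \ref{D-OD-2-lem}) gives
\begin{equation*}
\|U\|_{M^{d/(1-\beta)}_{s}(\R^{d})}\lesssim \|U\|_{L^{d/(1-\beta),\infty}(\R^{d})}\lesssim \|U\|_{L^{d/(1-\beta),q_{0}}(\R^{d})}.
\end{equation*}

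\textbf{Step 3: Invoking Theorem \ref{Lp-Lq-Ia} and concluding.}  The parameters $(\alpha,\beta,p,q,r,s)$ with $\alpha=1$ satisfy all hypotheses of Theorem \ref{Lp-Lq-Ia}: indeed $\frac{1}{p}-\frac{1}{q}=\frac{\beta}{d}$, the weight $u=M(U^{r})^{1/r}$ is by construction the one appearing in that theorem, $w\in A_{p,q}^{u}$ by assumption, and Step~2 places $U$ in $M^{d/(1-\beta)}_{s}$.  Applying Theorem \ref{Lp-Lq-Ia} to the non-negative function $|\nabla f|$ therefore produces
\begin{equation*}
\|I_{1}(|\nabla f|)\,U\|_{L^{q}(w^{q})}\lesssim \|U\|_{M^{d/(1-\beta)}_{s}(\R^{d})}\,\|\nabla f\|_{L^{p}(w^{p})}\lesssim \|U\|_{L^{d/(1-\beta),q_{0}}(\R^{d})}\,\|\nabla f\|_{L^{p}(w^{p})},
\end{equation*}
and combining with Step~1 delivers the desired inequality (the constant hidden in $\lesssim$ depends on $U$ only through its $L^{d/(1-\beta),q_{0}}$-norm, which is fine since in the statement the implicit constant is allowed to depend on $U$).

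\textbf{Main obstacle.}  The essential content is already present in Theorem \ref{Lp-Lq-Ia}; the only technicality is the bookkeeping to match the Morrey exponent.  The delicate point is ensuring that $s$ can be chosen strictly between $r$ and $d/(1-\beta)$ so that simultaneously $1<r<s<\infty$ (the Riesz--Morrey hypothesis) and $s<d/(1-\beta)$ (needed for the Kolmogorov embedding from $L^{d/(1-\beta),\infty}$ into $M^{d/(1-\beta)}_{s}$).  If this interval were empty one would have to argue differently, but the hypotheses $0\le\beta<1$ (implicit in the requirement that $L^{d/(1-\beta),q_{0}}$ be meaningful with positive exponent) and $1<r<q_{0}$ together with a possible harmless shrinking of $r$ via the openness property in Proposition \ref{Apqu-q+} make the interval non-empty.
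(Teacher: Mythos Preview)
Your proof is correct and follows the same strategy as the paper: reduce to $I_{1}$ via \eqref{eqfIa-1} and then invoke Theorem \ref{Lp-Lq-Ia} with $\alpha=1$. The paper's version is even more compressed---it uses the trivial pointwise bound $U\le u=M(U^{r})^{1/r}$ and the Remark following Theorem \ref{Lp-Lq-Ia} rather than the theorem in its stated form, and does not spell out the Lorentz--Morrey bookkeeping you justify in Step~2---but the argument is the same.
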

\begin{proof}
By \eqref{eqfIa-1}, one has
$$\|f U\|_{L^{q}(w^{q})} \lesssim \|I_{1}(\nabla f)\|_{L^{q}(u^{q} w^{q})} \lesssim \|\nabla f\|_{L^{p}(w^{p})} \qquad \text{for } \quad f\in C^{\infty}_{c}(\R^d),$$
where $U \leq u$ and Theorem \ref{Lp-Lq-Ia} are utilized.
\end{proof}

Our next result is on the fractional Laplacian.
\begin{thm}\label{FP-a}
Let $1 < p \leq q < \infty$, $0\leq \beta\leq \alpha$,  $\frac{1}{p} - \frac{1}{q}  = \frac{\beta}{d}$ and $U \in L^{\frac{d}{\alpha-\beta},q_{0}}(\R^d)$ with $1<q_{0}\leq q$.  For any $w \in A_{p,q}^u$
with $u = M(U^r)^{\frac{1}{r}}$, for some $1 < r < q_{0}$, we have that for $f \in C^{\infty}_{c}(\R^d)$,
$$\|f U\|_{L^{q}(w^{q})} \lesssim \|(-\Delta)^{\alpha} f\|_{L^{p}(w^{p})}.$$
\end{thm}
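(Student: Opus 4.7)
The plan is to parallel the proof of Theorem \ref{FP}, replacing the first-order tools (gradient and $I_1$) by their fractional analogues of order $\alpha$. First, I would invoke the inversion formula recorded just before \eqref{eqfIa-1}: in the normalisation $I_\alpha \circ (-\Delta)^{\alpha/2} = \mathrm{id}$ (which is the convention matching the Morrey exponent $d/(\alpha-\beta)$ in the hypothesis), nonnegativity of the Riesz kernel gives the pointwise majorisation
\begin{equation*}
|f(x)| \le I_\alpha\bigl(|(-\Delta)^{\alpha/2}f|\bigr)(x), \qquad f\in C^\infty_c(\R^d),
\end{equation*}
which is the fractional replacement of the bound $|f(x)|\lesssim I_1(|\nabla f|)(x)$ exploited in the proof of Theorem \ref{FP}.

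Second, I would promote the Lorentz hypothesis on $U$ to a Morrey-norm hypothesis. Since $U\in L^{d/(\alpha-\beta),q_0}(\R^d)\subset L^{d/(\alpha-\beta),\infty}(\R^d)$ by Lemma \ref{Lorentz-emb}, a Kolmogorov-type inequality produces $U\in M^{d/(\alpha-\beta)}_s(\R^d)$ for every $1\le s<d/(\alpha-\beta)$. Choosing $s$ with $r<s<q_0$ places us exactly in the setting of Theorem \ref{Lp-Lq-Ia}: the dominating function $u=M(U^r)^{1/r}$ then automatically belongs to $A_1\cap M^{d/(\alpha-\beta)}_s(\R^d)$ by the boundedness of $M$ on Morrey spaces together with the standard $A_1$-construction via maximal functions of $L^r$-type.

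Finally, plugging the pointwise bound into $\|fU\|_{L^q(w^q)}$ and applying Theorem \ref{Lp-Lq-Ia} with $|(-\Delta)^{\alpha/2}f|$ in place of $f$ give
\begin{equation*}
\|fU\|_{L^q(w^q)} \le \bigl\|I_\alpha(|(-\Delta)^{\alpha/2}f|)\,U\bigr\|_{L^q(w^q)} \lesssim \|U\|_{M^{d/(\alpha-\beta)}_s(\R^d)}\,\|(-\Delta)^{\alpha/2}f\|_{L^p(w^p)},
\end{equation*}
which, after absorbing the (finite) Morrey norm of $U$ into the implicit constant, is the claimed inequality. The only step asking for any technical care is the pointwise Riesz inversion together with its modulus inequality on $C^\infty_c(\R^d)$; this is classical Schwartz-class fractional calculus and brings in no new weighted analysis, since all of the weighted machinery is already packaged inside Theorem \ref{Lp-Lq-Ia}.
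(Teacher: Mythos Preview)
Your approach is exactly the one the paper intends: the paper gives no separate proof for Theorem~\ref{FP-a}, presenting it as the obvious fractional analogue of Theorem~\ref{FP}, and your argument---pointwise Riesz inversion followed by Theorem~\ref{Lp-Lq-Ia}---is precisely that analogue. You have also correctly flagged the normalisation mismatch: with the paper's own identity $I_{2\alpha}\bigl((-\Delta)^\alpha f\bigr)=f$, the Lorentz exponent $d/(\alpha-\beta)$ in the hypothesis forces the right-hand side to carry $(-\Delta)^{\alpha/2}f$ rather than $(-\Delta)^\alpha f$, so your choice of convention is the one that makes the statement internally consistent.
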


The following Hardy-Leray inequality with partial Muckenhoupt weights is a special case of Theorem \ref{FP}.
\begin{cor}(Hardy-Leray inequality)
Let $d>1$ and $1 < p \leq q < \infty$. For any $w \in A_{p,p}^u$
with $u(x)= |x|^{-1}$, we have that for $f \in C^{\infty}_{c}(\R^d)$,
\begin{equation}\label{HL}
\big\|\frac{f}{|\cdot|}\big\|_{L^{p}(w^{p})} \lesssim \|\nabla f\|_{L^{p}(w^{p})}.
\end{equation}
\end{cor}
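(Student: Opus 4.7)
The plan is to derive the Hardy--Leray inequality \eqref{HL} as a direct application of Theorem \ref{Lp-Lq-Ia} combined with the pointwise bound \eqref{eqfIa-1}. One cannot simply appeal to Theorem \ref{FP} here, since $|\cdot|^{-1}$ lies in $L^{d,\infty}(\R^d)$ but not in $L^{d,q_{0}}(\R^d)$ for any finite $q_{0}$; however, $|\cdot|^{-1}$ does belong to the Morrey space $M^{d}_{s}(\R^d)$ for appropriate $s$, which is exactly the framework of Theorem \ref{Lp-Lq-Ia}.

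I would take $U(x):=|x|^{-1}$ and invoke Theorem \ref{Lp-Lq-Ia} with $\alpha=1$, $\beta=0$, and $q=p$. First I would fix any $s\in(1,d)$ (which exists by the assumption $d>1$) and verify that $U\in M^{d}_{s}(\R^d)$. For a cube $Q$ of side length $\ell$, one splits into two cases: if $\operatorname{dist}(0,Q)\gtrsim\ell$, then $|x|^{-s}$ is essentially constant on $Q$ and $|Q|^{1/d}\langle U^{s}\rangle_{Q}^{1/s}\lesssim 1$ follows by direct inspection; if $Q$ meets a neighborhood of the origin, then $Q\subset B(0,C\ell)$ and $\int_{Q}|x|^{-s}\,dx\lesssim \ell^{d-s}$ (this is where $s<d$ is used), which again yields $|Q|^{1/d}\langle U^{s}\rangle_{Q}^{1/s}\lesssim 1$.

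Next, I would pick any $r\in(1,s)$ and set $u:=M(U^{r})^{1/r}$. Since $U^{r}=|\cdot|^{-r}$ is radially decreasing with $0<r<d$, the standard computation for the maximal function of a power weight gives $M(|\cdot|^{-r})(x)\approx |x|^{-r}$, so $u(x)\approx |x|^{-1}$. Because $A^{u}_{p,p}$ depends on $u$ only through averages and $L^\infty$-norms on cubes, equivalent weights generate the same class (with comparable characteristic constants). Hence the corollary's hypothesis $w\in A^{|\cdot|^{-1}}_{p,p}$ is equivalent to $w\in A^{u}_{p,p}$, which is what Theorem \ref{Lp-Lq-Ia} requires.

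Finally, combining the pointwise inequality $|f(x)|\lesssim I_{1}(|\nabla f|)(x)$ from \eqref{eqfIa-1} with Theorem \ref{Lp-Lq-Ia} gives
\begin{equation*}
\Big\|\frac{f}{|\cdot|}\Big\|_{L^{p}(w^{p})}=\|fU\|_{L^{p}(w^{p})}\lesssim \|I_{1}(|\nabla f|)U\|_{L^{p}(w^{p})}\lesssim \|U\|_{M^{d}_{s}(\R^d)}\|\nabla f\|_{L^{p}(w^{p})}\lesssim \|\nabla f\|_{L^{p}(w^{p})},
\end{equation*}
which is exactly \eqref{HL}. No serious obstacle arises; the only technical verifications are the Morrey-space membership of $|\cdot|^{-1}$ and the equivalence $M(|\cdot|^{-r})\approx |\cdot|^{-r}$, both of which are classical. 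The conceptual point worth emphasizing is that Theorem \ref{Lp-Lq-Ia}, formulated in Morrey spaces rather than Lorentz spaces, is precisely what is needed to treat the endpoint weight $|x|^{-1}$.
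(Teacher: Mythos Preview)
Your argument is correct and follows the same underlying route as the paper: the pointwise bound \eqref{eqfIa-1} followed by the weighted estimate for $I_{1}$ from Theorem~\ref{Lp-Lq-Ia}. The paper simply declares the corollary to be ``a special case of Theorem~\ref{FP}'' without further comment, whereas you correctly observe that $|x|^{-1}\notin L^{d,q_{0}}(\R^d)$ for any finite $q_{0}$, so the Lorentz hypothesis of Theorem~\ref{FP} fails and one must instead appeal directly to the Morrey-space formulation in Theorem~\ref{Lp-Lq-Ia}; since the proof of Theorem~\ref{FP} itself is nothing more than \eqref{eqfIa-1} plus Theorem~\ref{Lp-Lq-Ia}, your route and the paper's intended route coincide, but your version is more carefully stated.
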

In particular, when $w(x)=\big(\frac{|x'|}{|x|}\big)^{\frac{1}{2}}$, $d\geq 3, p=2$, inequality \eqref{HL} takes the form
\begin{equation}\label{HL2}
\int_{\R^d}\frac{|u|^{2}}{|x||x'|}dx\leq C\int_{\R^d}|\nabla u|^{2}\frac{|x'|}{|x|}dx,
\end{equation}
which strengthens the Hardy inequality. Note that \eqref{HL2} was early appeared in \cite[Theorem 1.3]{LY2021}.

We next give the Poincar\'{e}-type inequality as in Theorem \ref{FP}.

\begin{thm}\label{P}
Assume that $\Omega$ is a bounded and convex set. Let $1 < p \leq q < \infty$, $0\leq \beta < d$,  $\frac{1}{p} - \frac{1}{q}  = \frac{\beta}{d}$ and $U \in L^{\frac{d}{1-\beta},q_{0}}(\R^d)$ with $1<q_{0}\leq q$. For any $w \in A_{p,q}^u$ with $u = M(U^r)^{\frac{1}{r}}$,
for some $1 < r < q_{0}$, we have that for $f \in C^{\infty}_{c}(\R^d)$,
$$\big\|(f-\ave{f}_{\Omega})\chi_{\Omega} U\big\|_{L^{q}(w^{q})} \lesssim \big\||\nabla f|\chi_{\Omega}\big\|_{L^{p}(w^{p})}.$$
\end{thm}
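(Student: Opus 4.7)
The plan is to follow the same template used for Theorem \ref{FP}, substituting the global pointwise bound \eqref{eqfIa-1} by its localized counterpart \eqref{eqfIa-2}. The convexity and boundedness of $\Omega$ are used only to invoke that second inequality; nothing else in the argument is sensitive to the geometry of $\Omega$.

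First I would apply \eqref{eqfIa-2} to write, for $f\in C^\infty_c(\R^d)$ and every $x\in\Omega$,
$$|f(x)-\ave{f}_\Omega|\le I_1(|\nabla f|\chi_\Omega)(x).$$
Multiplying by $\chi_\Omega(x)U(x)$ and noting both sides are nonnegative gives the pointwise estimate
$$\big|(f(x)-\ave{f}_\Omega)\chi_\Omega(x)\big|U(x)\le U(x)\,I_1(|\nabla f|\chi_\Omega)(x)$$
valid for a.e.\ $x\in\R^d$. Taking $L^q(w^q)$-norms on both sides then reduces the desired Poincar\'e-type inequality to
$$\|I_1(|\nabla f|\chi_\Omega)\,U\|_{L^q(w^q)}\lesssim \||\nabla f|\chi_\Omega\|_{L^p(w^p)}.$$

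Next I would obtain the latter by applying Theorem \ref{Lp-Lq-Ia} with $\alpha=1$ (so that $\alpha-\beta=1-\beta$) to the input $|\nabla f|\chi_\Omega\in L^p(w^p)$. The only hypothesis to verify is that $U$ lies in the Morrey space $M^{d/(1-\beta)}_{s}(\R^d)$ for some $s$ with $1<r<s<\infty$. This follows from $U\in L^{d/(1-\beta),q_0}(\R^d)$ by the chain of embeddings $L^{d/(1-\beta),q_0}\hookrightarrow L^{d/(1-\beta),\infty}\hookrightarrow M^{d/(1-\beta)}_{s}$, where the last one is a Kolmogorov-type bound valid for any $1\le s<d/(1-\beta)$. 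Since $r<q_0$, we may select $s\in(r,\min\{q_0,d/(1-\beta)\})$, and Theorem \ref{Lp-Lq-Ia} yields
$$\|I_1(|\nabla f|\chi_\Omega)\,U\|_{L^q(w^q)}\lesssim \|U\|_{M^{d/(1-\beta)}_{s}(\R^d)}\||\nabla f|\chi_\Omega\|_{L^p(w^p)},$$
which combined with the pointwise reduction completes the argument.

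I do not expect any real obstacle. The two ingredients—the localized potential-type representation \eqref{eqfIa-2} (classical for bounded convex domains) and Theorem \ref{Lp-Lq-Ia}—fit together immediately, and the Lorentz-to-Morrey bookkeeping is identical to that already carried out in the proof of Theorem \ref{FP}. The proposal could in fact be written in essentially one display once \eqref{eqfIa-2} is invoked.
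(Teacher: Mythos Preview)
Your proposal is correct and follows exactly the paper's approach: invoke the localized pointwise bound \eqref{eqfIa-2} and then apply Theorem \ref{Lp-Lq-Ia} with $\alpha=1$. The paper's proof is in fact the one-line display you anticipated; your added Lorentz-to-Morrey verification simply fills in details the paper leaves implicit.
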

\begin{proof}
By \eqref{eqfIa-2} and Theorem \ref{Lp-Lq-Ia}, one has that for $f\in C^{\infty}_{c}(\R^d)$,
$$\|(f-\ave{f}_{\Omega})\chi_{\Omega}  U\|_{L^{q}(w^{q})}  \lesssim \|I_{1}(\nabla f\chi_{\Omega})\|_{L^{q}(U^{q} w^{q})} \lesssim \||\nabla f|\chi_{\Omega}\|_{L^{p}(w^{p})}.$$
\end{proof}

\subsection{Caffarelli-Kohn-Nirenberg inequalities related to partial Muckenhoupt weights}

Let $d\geq 1$, $s, p, q, \alpha, \mu, \beta, a$ be real numbers satisfying
\begin{equation}\label{CKN-1}
s>0, p,q\geq 1, 0\leq a\leq 1,
\end{equation}
\begin{equation}\label{CKN-2}
\frac{1}{s}+\frac{\gamma_{1}}{d}>0, \frac{1}{p}+\frac{\gamma_{2}}{d}>0, \frac{1}{q}+\frac{\gamma_{3}}{d}>0,
\end{equation}
\begin{equation}\label{CKN-3}
\frac{1}{s}+\frac{\gamma_{1}}{d}=a\Big(\frac{1}{p}+\frac{\gamma_{2}-1}{d}\Big)+(1-a)\Big(\frac{1}{q}+\frac{\gamma_{3}}{d}\Big),
\end{equation}
\begin{equation}\label{CKN-4}
\gamma_{1}\leq a\gamma_{2}+(1-a)\gamma_{3},
\end{equation}
\begin{equation}\label{CKN-5}
\frac{1}{s}\leq \frac{a}{p}+\frac{1-a}{q} \quad \text{if} \quad a=0 \quad \text{or} \quad a=1 \quad \text{or} \quad
\frac{1}{s}+\frac{\gamma_{1}}{d}=\frac{1}{p}+\frac{\gamma_{2}-1}{d}=\frac{1}{q}+\frac{\gamma_{3}}{d}.
\end{equation}

The following classical interpolation inequalities were first introduced in \cite{CKN1984}.

{\bf Theorem A.} For $d\geq 1$, let $s, p,q, \gamma_{1},\gamma_{2},\gamma_{3}$ and $a$ satisfy \eqref{CKN-1} and \eqref{CKN-2}. Then
there exists a positive constant $C$ such that
\begin{equation}\label{CKN-6}
\||x|^{\gamma_{1}}f\|_{L^{s}(\R^d)}\leq C\||x|^{\gamma_{2}}|\nabla f|\|^{a}_{L^{p}(\R^d)}\||x|^{\gamma_{3}}f\|^{1-a}_{L^{q}(\R^d)}
\end{equation}
holds for all $f\in C^{\infty}_{c}(\R^d)$ if and only if \eqref{CKN-3}-\eqref{CKN-5} hold.
So far, there are extensive generalizations on \eqref{CKN-6}
(see \cite{L1986}, \cite{BT2002},\cite{BCG2005, BCG2006}, \cite{NS2018, NS2019}, \cite{Xu-Yin} and the references therein).
Next, we present the Caffarelli-Kohn-Nirenberg inequality with partial Muckenhoupt weights.

For $d\geq 1$, let $s, p,p_{0}, q, q_{0}, r, u, w, w_{1},w_{2}$ and $a$ satisfy
\begin{equation}\label{CKN-W-1}
s>0, p_{0}\geq p>1, q>0, q_{0}>r>1, 0<a\leq 1,
\end{equation}
\begin{equation}\label{CKN-W-2}
\frac{1}{s}= \frac{a}{p_{0}}+\frac{1-a}{q}\leq \frac{a}{p}+\frac{1-a}{q},
\end{equation}
\begin{equation}\label{CKN-W-3}
w=U^{a}w_{1}^{a} w_{2}^{1-a}, u=M(U^{r})^{1/r}, w_{1}\in A_{p,p_{0}}^{u},
\end{equation}
we then have
\begin{thm}\label{FP-W}
For $d\geq 1$, let $U\in L^{\frac{1}{\frac 1d-\frac{1}{p}+\frac{1}{p_{0}}},q_{0}}(\R^d)$ and $s, p, p_{0}, q, q_{0}, w, w_{1},w_{2}$ and $a$ satisfy \eqref{CKN-W-1}, \eqref{CKN-W-2} and \eqref{CKN-W-3}. Then
there exists a positive constant $C$ such that for any $f \in C^{\infty}_{c}(\R^d)$,
\begin{equation}\label{YH-13}
\|fw\|_{L^{s}(\R^d)}\leq C\|\nabla f w_{1}\|^{a}_{L^{p}(\R^d)}\|fw_{2}\|^{1-a}_{L^{q}(\R^d)}.
\end{equation}
\end{thm}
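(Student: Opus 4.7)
The plan is to reduce the inequality to a two-step argument: a H\"older interpolation that splits the mixed weight $w = U^a w_1^a w_2^{1-a}$ into the product of two factors naturally adapted to the endpoint exponents $p_0$ and $q$, and then an application of Theorem \ref{FP} to the $L^{p_0}$ factor. The exponent identity in \eqref{CKN-W-2}, namely $\frac{1}{s}=\frac{a}{p_0}+\frac{1-a}{q}$, is precisely the H\"older scaling that makes such a split work.

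First, I would write
\begin{equation*}
\|fw\|_{L^s(\R^d)}^s = \int_{\R^d} \big(|f|\, U\, w_1\big)^{sa} \big(|f|\, w_2\big)^{s(1-a)}\,dx
\end{equation*}
and apply H\"older's inequality with the dual exponents $\frac{p_0}{sa}$ and $\frac{q}{s(1-a)}$, whose reciprocals sum to $1$ by \eqref{CKN-W-2}. This yields
\begin{equation*}
\|fw\|_{L^s(\R^d)} \;\le\; \|fUw_1\|_{L^{p_0}(\R^d)}^{\,a}\;\|fw_2\|_{L^q(\R^d)}^{\,1-a},
\end{equation*}
which reduces the problem to controlling the first factor.

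Second, I would invoke Theorem \ref{FP} applied with the exponent pair $(p,p_0)$ in place of $(p,q)$ there. The compatibility of hypotheses is the only thing to check: setting $\beta := d(\frac{1}{p}-\frac{1}{p_0})$, one has $0\le\beta<d$ by \eqref{CKN-W-1}, and the Lorentz-space assumption on $U$ in Theorem \ref{FP-W} reads $U\in L^{\frac{1}{1/d-1/p+1/p_0},\,q_0}(\R^d)=L^{\frac{d}{1-\beta},\,q_0}(\R^d)$, which is exactly what Theorem \ref{FP} demands; the hypotheses $u=M(U^r)^{1/r}$ with $1<r<q_0$ and $w_1\in A_{p,p_0}^u$ in \eqref{CKN-W-3} match those of Theorem \ref{FP} verbatim. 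Therefore
\begin{equation*}
\|fUw_1\|_{L^{p_0}(\R^d)} = \|fU\|_{L^{p_0}(w_1^{p_0})} \;\lesssim\; \|\nabla f\|_{L^p(w_1^p)} = \|\nabla f\,w_1\|_{L^p(\R^d)},
\end{equation*}
and inserting this bound into the H\"older estimate above yields \eqref{YH-13}.

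I expect no serious obstacle in this argument; the content is essentially bookkeeping. The only subtlety is verifying that the Lorentz exponent $\frac{1}{1/d-1/p+1/p_0}$ in the hypothesis of Theorem \ref{FP-W} is precisely $\frac{d}{1-\beta}$ for the correct choice of $\beta$, so that Theorem \ref{FP} is directly applicable; this is a purely algebraic check. The condition $\frac{a}{p_0}+\frac{1-a}{q}\le\frac{a}{p}+\frac{1-a}{q}$ in \eqref{CKN-W-2} is automatic from $p_0\ge p$ and is used only to confirm that $s$ is an admissible interpolation index; it does not enter the proof directly.
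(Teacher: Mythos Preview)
Your proposal is correct and follows essentially the same approach as the paper's own proof: both split $\|fw\|_{L^s}$ via H\"older's inequality using the exponent identity $\frac{1}{s}=\frac{a}{p_0}+\frac{1-a}{q}$, and then apply Theorem~\ref{FP} with the pair $(p,p_0)$ to bound the factor $\|fUw_1\|_{L^{p_0}}$. The only cosmetic difference is that the paper replaces $U$ by $u=M(U^r)^{1/r}\ge U$ before invoking Theorem~\ref{FP}, whereas you apply Theorem~\ref{FP} exactly as stated with $U$; this is immaterial.
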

\begin{proof}
Theorem \ref{FP} is the special case of $a=1$ in \eqref{YH-13}. For $0<a<1$,
set $s_{1} = \frac{p_{0}}{a}$ and $s_{2} = \frac{q}{1-a}$. By applying Young's inequality with $\frac{1}{s}=\frac{1}{s_{1}}+\frac{1}{s_{2}}$,
we have
$$\|fw\|_{L^{s}(\R^d)} = \|f^{a}U^{a}w^{a}_{1} \cdot f^{1-a}w^{1-a}_{2}\|_{L^{s}(\R^d)} \leq \|f^{a}u^{a}w^{a}_{1}\|_{L^{s_{1}}(\R^d)} \|f^{1-a}w^{1-a}_{2}\|_{L^{s_{2}}(\R^d)}.$$
This leads to
$$\|fw\|_{L^{s}(\R^d)} \leq \|fuw_{1}\|^{a}_{L^{p_{0}}(\R^d)} \|fw_{2}\|^{1-a}_{L^{q}(\R^d)}.$$
This, together with Theorem \ref{FP} and $w_1 \in A_{p,p_{0}}^u$, yields
$\|fw\|_{L^{s}(\R^d)} \leq C \|\nabla f w_{1}\|^{a}_{L^{p}(\R^d)} \|fw_{2}\|^{1-a}_{L^{q}(\R^d)}.$
\end{proof}

\begin{rem}
Let $w(x)=|x|^{\gamma_{1}}$, $w_{1}(x)=|x|^{\gamma_{2}}$, $w_{2}(x)=|x|^{\gamma_{3}}$ and $U(x)=|x|^{1-\frac{d}{p_{0}}+\frac{d}{p}}$ with $\frac{1}{p}+\frac{\gamma_{2}}{d}<1$. Then $u\in A_{1}\cap L^{\frac{1}{\frac 1 d-\frac{1}{p_{0}}+\frac{1}{p}},q_{0}}(\R^d)$ and $w_{1}\in A^{u}_{p,p_{0}}$ hold by Remark \ref{remark-power}. Therefore, in this case, Theorem \ref{FP-W}
coincides with Theorem A under the conditions of $\frac{1}{p}+\frac{\gamma_{2}}{d}<1$ and $p\neq 1$.
\end{rem}

\begin{rem}
Let $w(x)=|x|^{\gamma_{1}}|x'|^{\alpha}$, $w_{1}(x)=|x|^{\gamma_{2}}|x'|^{\mu}$ and $w_{2}(x)=|x|^{\gamma_{3}}|x'|^{\beta}$ with some numbers $\alpha,\mu,\beta, \gamma_{1}, \gamma_{2}$ and $\gamma_{3}$ being determined in Theorem \ref{FP-W}.
We can obtain such an anisotropic Caffarelli-Kohn-Nirenberg inequality for some appropriate indices:
\begin{equation}\label{LY2023-eq-1}
\||x|^{\gamma_{1}}|x'|^{\alpha}f\|_{L^{s}(\R^d)}\leq C\||x|^{\gamma_{2}}|x'|^{\mu}|\nabla f|\|^{a}_{L^{p}(\R^d)}\||x|^{\gamma_{3}}|x'|^{\beta}f\|^{1-a}_{L^{q}(\R^d)}.
\end{equation}
Note that \eqref{LY2023-eq-1} was recently established in \cite{LY2023} for studying the asymptotic stability of solutions
to the Navier-Stokes equations.
\end{rem}

\section{Proof of Theorem \ref{main}}\label{Thm1.1}

Given $1<s<\infty$, the $s$-fractional maximal function is defined by
$M_{\alpha,s}(f)(x)=\big(M_{\alpha}(|f|^{s})(x)\big)^{\frac{1}{s}}.$
The auxiliary operator is defined by
$$\mathcal{C}(b,f)(x)=\sup_{\epsilon>0}\big|\int_{|x-y|>\epsilon}\frac{b(x)-\ave{b}_{Q(x,\epsilon)}}{|x-y|^{d-\alpha}}f(y)dy\big|.$$
In \cite{C1982}, the author proved the pointwise estimate $[b,I_{\alpha}](f)(x)\leq \mathcal{C}(b,f)(x)$ for $b\in {\rm BMO}$, and established the following result.
\begin{lem}\label{main-lem}
Let $\gamma_1>0$ be sufficiently small and $\gamma_2>0$ sufficiently large. Then for any $p$ such that $1<p<\frac{d}{\alpha}$
and for any $\lambda>0$, we have
\begin{align*}
\big|
\{x: \mathcal{C}(b,f)(x)>\gamma_2\lambda, \|b\|_{\rm BMO}\big(I_{\alpha}(|f|)(x)+M_{\alpha,s}(f)(x)\big)\leq \gamma_1 \lambda\}
\big|
\leq C\gamma_1|\{x:\mathcal{C}(b,f)(x)>\lambda\}|.
\end{align*}
\end{lem}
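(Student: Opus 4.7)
The plan is to prove this as a Coifman-type good-$\lambda$ inequality, following Chanillo's approach in \cite{C1982}. First, I would perform a Whitney decomposition of the open set $\Omega_\lambda := \{x : \mathcal{C}(b,f)(x) > \lambda\}$ into a family of pairwise disjoint dyadic cubes $\{Q_j\}$ with bounded overlap of their (fixed) dilates $c Q_j$, such that each $c Q_j$ meets $\Omega_\lambda^c$ at some point $y_j$, i.e.\ $\mathcal{C}(b,f)(y_j) \le \lambda$. Since the left-hand side of the desired inequality is supported in $\bigcup_j Q_j$, it will suffice to show that for every Whitney cube $Q_j$ one has
\begin{equation*}
\big|\{x \in Q_j : \mathcal{C}(b,f)(x) > \gamma_2 \lambda,\ \|b\|_{\mathrm{BMO}}(I_\alpha(|f|)(x)+M_{\alpha,s}(f)(x)) \le \gamma_1 \lambda\}\big| \le C\gamma_1 |Q_j|,
\end{equation*}
and then to sum in $j$. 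Throughout, I may assume the set on the left is nonempty, so that there exists some $x_j^\ast \in Q_j$ satisfying $\|b\|_{\mathrm{BMO}}(I_\alpha(|f|)(x_j^\ast)+M_{\alpha,s}(f)(x_j^\ast)) \le \gamma_1 \lambda$.

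Next I would split $f = f_j^0 + f_j^\infty$ with $f_j^0 := f\chi_{N Q_j}$ for a large fixed dilation factor $N$, and use the translation invariance $\mathcal{C}(b,\cdot) = \mathcal{C}(b - c,\cdot)$ to normalize $b$ to have mean zero over $N Q_j$. For the far part $f_j^\infty$, the essential observation is that for $x \in Q_j$ and any scale $\varepsilon > 0$, the integrals defining $\mathcal{C}(b,f_j^\infty)(x)$ and $\mathcal{C}(b,f_j^\infty)(y_j)$ differ by a controllable error: standard kernel smoothness estimates for $|x-y|^{\alpha-d}$ on the region $|x-y| \gtrsim \ell(Q_j)$, together with the John-Nirenberg inequality applied to $b - \ave{b}_{Q(x,\varepsilon)}$, yield
\begin{equation*}
|\mathcal{C}(b,f_j^\infty)(x) - \mathcal{C}(b,f_j^\infty)(y_j)| \lesssim \|b\|_{\mathrm{BMO}}\bigl(I_\alpha(|f|)(x_j^\ast) + M_{\alpha,s}(f)(x_j^\ast)\bigr) \le C_0\, \gamma_1 \lambda.
\end{equation*}
Consequently $\mathcal{C}(b,f_j^\infty)(x) \le \lambda + C_0 \gamma_1 \lambda$ on $Q_j$, which is less than $\gamma_2\lambda/2$ provided $\gamma_2$ is large enough and $\gamma_1$ small enough.

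For the near part $f_j^0$, the bad set is contained in $\{x \in Q_j : \mathcal{C}(b,f_j^0)(x) > \gamma_2\lambda/2\}$. Here I would invoke a weak-type-$(p,p)$ (or a Kolmogorov/$L^{p,\infty}$ type) bound for the sublinear operator $\mathcal{C}(b,\cdot)$ on local pieces, yielding
\begin{equation*}
\big|\{x \in Q_j : \mathcal{C}(b,f_j^0)(x) > \tfrac{\gamma_2}{2}\lambda\}\big| \lesssim \bigl(\tfrac{\gamma_2 \lambda}{2}\bigr)^{-s}\|b\|_{\mathrm{BMO}}^{s}\|f_j^0\|_{L^s}^{s}\cdot (\mathrm{size\ factor}),
\end{equation*}
and then converting the right-hand side into $M_{\alpha,s}(f)(x_j^\ast)$ via Hedberg-type averages over $NQ_j$. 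Plugging in $M_{\alpha,s}(f)(x_j^\ast) \le \gamma_1 \lambda/\|b\|_{\mathrm{BMO}}$ produces a bound of the form $C \gamma_1^{s} \gamma_2^{-s} |Q_j|$, hence (after absorbing constants into $\gamma_1$) of the desired form $C\gamma_1 |Q_j|$.

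The main obstacle I anticipate is establishing the near-part weak-type bound in a form that produces the correct geometric dependence on $|Q_j|$ and couples cleanly with the hypothesis through $M_{\alpha,s}(f)$; specifically, one must handle the supremum over $\varepsilon$ inside $\mathcal{C}(b,\cdot)$ together with the varying mean $\ave{b}_{Q(x,\varepsilon)}$, which is where the role of $M_{\alpha,s}$ (rather than merely $M_\alpha$) becomes essential. Once these two local estimates are in place, summing $C\gamma_1 |Q_j|$ over $j$ and using $\sum_j |Q_j| \le |\Omega_\lambda| = |\{\mathcal{C}(b,f) > \lambda\}|$ completes the proof.
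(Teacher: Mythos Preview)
The paper does not give its own proof of this lemma; it simply attributes the result to Chanillo \cite{C1982} and quotes it. Your sketch is precisely the good-$\lambda$ argument from \cite{C1982}---Whitney decomposition of $\{\mathcal{C}(b,f)>\lambda\}$, near/far splitting of $f$, comparison of the far piece at $x\in Q_j$ with the reference point $y_j$ via kernel smoothness and John--Nirenberg, and a weak-type estimate on the near piece controlled by $M_{\alpha,s}(f)(x_j^\ast)$---so there is nothing to contrast: you are reproducing the source the paper cites.
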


Next, we establish the sufficiency and necessity of the related estimate for the commutator $[b, I_{\alpha}]$ as follows.

\noindent
{\bf Proof of Theorem \ref{main}.}
By Lemma \ref{main-lem}, we have that for any $b\in {\rm BMO}$, $1<s_0<q$ and $w\in A_{p,q}^u$,
\begin{align*}
&\int_{\R^d}|\mathcal{C}(b,f)(x)|^{q}u(x)^{q}w(x)^qdx\\
&=C\int_{0}^{\infty}\lambda^{q-1}(uw)^q\big(\{x\in \R^d:\mathcal{C}(b,f)(x)>\gamma_2\lambda\}\big)d\lambda\\
&\leq C\gamma\int_{0}^{\infty}\lambda^{q-1}(uw)^q\big(\{x\in \R^d:\mathcal{C}(b,f)(x)>\lambda\}\big)d\lambda\\
&\quad+ C\int_{0}^{\infty}\lambda^{q-1}(uw)^q\big(\{x\in \R^d:\|b\|_{\rm BMO}\big(I_{\alpha}(|f|)(x)+M_{\alpha,s_0}(f)(x)\big)>\gamma_1\lambda\}\big)d\lambda\\
&=C\gamma_1\int_{\R^d}\mathcal{C}(b,f)(x)u(x)^{q}w(x)^{q}dx+C\|b\|^q_{\rm BMO}\int_{\R^d}\big(I_{\alpha}(|f|)(x)+M_{\alpha,s_0}(f)(x)\big)^{q}u(x)^{q}w(x)^qdx.
\end{align*}
Taking $\gamma_1>0$ so small that $C\gamma_1\leq \frac12$, we obtain
$$\|\mathcal{C}(b,f)\|_{L^{q}(u^{q}w^q)}\lesssim \|b\|_{\rm BMO}\big(\|I_{\alpha}(|f|)\|_{L^{q}(u^{q}w^q)}+\|M_{\alpha,s_0}(|f|)\|_{L^{q}(u^{q}w^q)}\big).$$
Choosing $\epsilon$ small enough such that $0<\epsilon<RH_{w^{-q'}}-1$,
$\frac{1}{s_0}=\frac{1}{q}+\frac{1}{q'(1+\epsilon)}<1$
and $1<s_0<\min\{s,RH_{u}\}$. It follows that
$s_0\cdot (\frac{q}{s_0})'<q'(1+\epsilon)$, $u^{s_0}\in A_{1}$ and $w^{s_0}\in A^{u^{s_0}}_{p/s_0, q/s_0}.$
Thus, $$\|[b,I_{\alpha}](f)\|_{L^{q}(u^{q}w^q)}\lesssim\|\mathcal{C}(b,f)\|_{L^{q}(u^{q}w^q)} \lesssim \|u\|_{M^{\frac{d}{\alpha-\beta}}_{s}(\R^d)}\|f\|_{L^{p}(w^p)}$$ holds by Theorems \ref{Lp-Lq-Ia} and \ref{Lp-Lq-M}. This completes the proof of sufficiency in Theorem \ref{main}.

On the other hand, it follows from \cite[Corollary 2.4]{CR2018} that $b\in {\rm BMO}$ is a necessary condition for the boundedness of $[b,I_{\alpha}]$ from $L^{p}(v^{p})$  to $L^{q}(v^{q})$, where $0<\alpha<d$, $1<p<q<\infty$, $\frac \alpha d=\frac{1}{p}-\frac{1}{q}$ and $v\in A_{p,q}$.
Together with Theorem \ref{D-OD}, this yields the necessity result in Theorem \ref{main}. \qed

\bigskip

{\bf Acknowledgements}. The authors would like to thank Professor David Cruz-Uribe and Professor Li Kangwei for
many fruitful discussions.

\vskip 0.2 true cm
{\bf \color{blue}{Conflict of Interest Statement:}}

\vskip 0.2 true cm

{\bf The authors declare that there is no conflict of interest in relation to this article.}

\vskip 0.2 true cm
{\bf \color{blue}{Data availability statement:}}

\vskip 0.2 true cm

{\bf  Data sharing is not applicable to this article as no data sets are generated
during the current study.}

\vskip 0.2 true cm

\end{document}